\newtheorem{dfn}{Definition}[section]
\newtheorem{thm}[dfn]{Theorem}
\newtheorem{prop}[dfn]{Proposition}
\newtheorem{cor}[dfn]{Corollary}
\newtheorem{lem}[dfn]{Lemma}
\newtheorem{rem}[dfn]{Remark}
\newcommand{\D}{\displaystyle}
\newcommand{\del}{\partial}
\newcommand{\x}{\times}
\newcommand{\Ad}{\mathrm{Ad}}
\newcommand{\mto}{\longmapsto}
\newcommand{\ra}{\longrightarrow}
\newcommand{\Ra}{\Longrightarrow}
\newcommand{\vol}{\mathrm{vol}}
\newcommand{\pt}{\mathrm{pt}}
\newcommand{\dvol}{\mathrm{dvol}}
\newcommand{\Iso}{\mathrm{Iso}}
\newcommand{\Ind}{\mathrm{Ind}}
\newcommand{\ds}{\mathrm{ds}}
\newcommand{\dt}{\mathrm{dt}}
\newcommand{\dz}{\mathrm{dz}}
\newcommand{\im}{\mathrm{im}}
\newcommand{\coker}{\mathrm{coker}}
\newcommand{\Res}{\mathrm{Res}}
\newcommand{\codim}{\mathrm{codim}}
\newcommand{\eps}{\varepsilon}
\newcommand{\mcA}{\mathcal{A}}
\newcommand{\mcB}{\mathcal{B}}
\newcommand{\mcC}{\mathcal{C}}
\newcommand{\mcD}{\mathcal{D}}
\newcommand{\mcE}{\mathcal{E}}
\newcommand{\mcF}{\mathcal{F}}
\newcommand{\mcG}{\mathcal{G}}
\newcommand{\mcI}{\mathcal{I}}
\newcommand{\mcJ}{\mathcal{J}}
\newcommand{\mcL}{\mathcal{L}}
\newcommand{\mcM}{\mathcal{M}}
\newcommand{\mcS}{\mathcal{S}}
\newcommand{\mcV}{\mathcal{V}}
\newcommand{\CP}{\mathbbm C\mathrm P}
\newcommand{\EG}{\mathrm{EG}}
\newcommand{\BG}{\mathrm{BG}}
\newcommand{\EH}{\mathrm{EH}}
\newcommand{\BH}{\mathrm{BH}}
\newcommand{\EK}{\mathrm{EK}}
\newcommand{\ET}{\mathrm{ET}}
\newcommand{\BT}{\mathrm{BT}}
\newcommand{\LG}{{\mathfrak g}}
\newcommand{\LH}{{\mathfrak h}}
\newcommand{\LK}{{\mathfrak k}}
\newcommand{\LT}{{\mathfrak t}}
\newcommand{\LU}{{\mathfrak u}}
\newcommand{\C}{\mathbbm C}
\newcommand{\N}{\mathbbm N}
\newcommand{\Q}{\mathbbm Q}
\newcommand{\R}{\mathbbm R}
\newcommand{\T}{\mathbbm T}
\newcommand{\Z}{\mathbbm Z}
\newcommand{\unit}{\mathbbm 1}
\begin{document}

\title{Vortex Invariants and Toric Manifolds}
\author{Jan Wehrheim}

\maketitle

\begin{abstract}
We consider the symplectic vortex equations for a Hamiltonian action of a torus $T$ on $\C^n$. We show that the associated genus zero moduli space itself is homotopic (in the sense of a regular homotopy of $T$-moduli problems) to a toric manifold with combinatorial data directly obtained from the original torus action. This allows to view the wall crossing formula for the computation of vortex invariants by Cieliebak and Salamon \cite{CS} as a consequence of a generalized Jeffrey-Kirwan localization formula for integrals over symplectic quotients.
\end{abstract}

\tableofcontents

\section{Introduction}

Let $(X,\omega)$ be a symplectic manifold. Suppose that a compact Lie group $G$ acts on $X$ in a Hamiltonian way with moment map $\mu$. This is the general setting for the symplectic vortex equations that where introduced by Cieliebak, Gaio, Mundet and Salamon in \cite{CGMS}. They are of the form
$$
(*) \quad \left\{ \quad
\begin{array}{c@{\quad = \quad}c}
\bar{\partial}_{J,A} u & 0\\
\ast F_A + \mu(u) & \tau.
\end{array}
\right.
$$
Here $u : P \rightarrow X$ is a $G$-equivariant map from a principal $G$-bundle $P$ over a closed Riemann surface $\Sigma$ and $A \in \Omega^1(P)$ is a connection form on $P$ with curvature $F_A$. The additional data entering the equations are a $G$-invariant, $\omega$-compatible almost complex structure $J$ on $X$ that gives rise to the Cauchy-Riemann operator $\bar{\partial}_{J,A}$, a metric on $\Sigma$ that defines the Hodge-operator $\ast$, and a parameter $\tau$. For the motivation to study these equations we refer to Cieliebak, Gaio and Salamon \cite{CGS}. There are two central results: By Cieliebak, Gaio, Mundet and Salamon \cite{CGMS} the solutions to $(*)$ give rise to well defined invariants in many cases. And it is shown by Gaio and Salamon in \cite{GS} that in certain cases these vortex invariants coincide with Gromov-Witten invariants of the symplectic quotient $X/\!/G(\tau) := \mu^{-1}(\tau)/G$. The latter result is obtained by introducing a parameter $\eps$ in the second equation of $(*)$ in front of the term $\mu(u)$ and an adiabatic limit analysis for $\eps \ra \infty$. In this limit the solutions to the vortex equations degenerate to holomorphic curves $\Sigma \ra X/\!/G(\tau)$. We study the symplectic vortex equations on $\C^N$ with its standard symplectic and complex structure and with a torus $T$ acting by a representation $\rho : T \ra U(N)$. Symplectic quotients of such linear torus actions are called toric manifolds. In this setup vortex invariants are well defined.

Our main result can be viewed as the counterpart to the adiabatic limit of Gaio and Salamon in \cite{GS}. We introduce the same parameter $\eps$ (with a slight modification if the principal bundle $P$ is not trivial) but we consider the other limit $\eps \ra 0$. One can also interprete this deformation as a rescaling of the symplectic form by $\eps$. We show that this deformation gives rise to a homotopy of regular $T$-moduli problems (Theorem \ref{thm:homotopy}). The main issue is to prove compactness for the parametrized moduli space. Our result then shows that the invariants associated to the deformed vortex equations with $\eps = 0$ agree with the usual vortex invariants. And in fact this deformed picture is very nice: We show that the moduli space to the deformed genus zero vortex equations itself carries the structure of a toric manifold (Theorem \ref{thm:genus_0_moduli_space}). If $\Sigma$ is a surface of arbitrary genus we show more generally that under some additional assumptions the vortex moduli space is a fiber bundle over the Jacobian torus $\left( H^1(\Sigma;\R) / H^1(\Sigma;\Z) \right)^{\dim T}$ with toric fiber (Theorem \ref{thm:genus_g_moduli_space}).

As an application we show how these observations simplify the computation of genus zero vortex invariants. We can express vortex invariants as integrals over toric manifolds (Theorem \ref{thm:computation_of_vortex_invariants}). The wall crossing formula of Cieliebak and Salamon \cite[Theorem 1.1]{CS} that was used for the original computation of these invariants then is a consequence of a localization formula for certain integrals over toric manifolds (Theorem \ref{thm:wall_crossing}). To prove this localization formula we develop a relative version of Atiyah-Bott localization (Theorem \ref{thm:relative_localization}) that applies to the case of invariant integration. This technique is interesting on its own account and we develop it in more generality than needed for the computation of vortex invariants. It gives an alternative approach to integration formulae that are generally referred to as Jeffrey-Kirwan localization.

In sction \ref{chap:equivariant_cohomology} we review equivariant cohomology of manifolds that carry a group action by some compact Lie group $G$. In particular we review the Cartan model and the Cartan map and its generalization in the case of a normal subgroup $H \lhd G$ acting locally freely. Invariant integration is introduced in section \ref{chap:invariant_integration} and generalized in two directions. One is the relative case of invariant integration with respect to a normal subgroup $H \lhd G$. The other is the extension to more general push-forwards in equivariant cohomology. The result is the notion of $H$-invariant push-forward. Section \ref{chap:localization} then features the theory of localization adapted to the setting of invariant integration. We deduce the above-mentioned integration formula (Theorem \ref{thm:relative_localization}).

In section \ref{chap:moduli_problems} we turn to the notion of $G$-moduli problems and the equi\-vari\-ant Euler class from Cieliebak, Mundet and Salamon \cite{CMS}. This is the technique that is used to define invariants via the solutions to the vortex equations. It uses invariant integration. So we examine what our ge\-ne\-ra\-li\-za\-tions to invariant integration from section \ref{chap:invariant_integration} yield in this context.

We turn to toric manifolds in section \ref{chap:toric_manifolds}. We apply the results from all preceding sections and prove the wall crossing formula for certain integrals over toric manifolds (Theorem \ref{thm:wall_crossing}). We make a short digression that puts this result into the context of Jeffrey-Kirwan localization and the related works by Guillemin and Kalkman \cite{GK} and Martin \cite{Mar}.

The symplectic vortex equations and invariants in the case of a linear torus action are introduced in section \ref{chap:vortex_invariants}. We perform the above-mentioned deformation, prove the essential property of being a homotopy of regular $T$-moduli problems and derive the main results.

In section \ref{chap:generalizations} we attempt to extend our deformation result to more general settings. In fact we have no such generalization and we explain the reason for that failure. Finally in section \ref{chap:giventals_toric_map_spaces} we explain the relation of our toric structure on the vortex moduli spaces to Givental's toric map spaces as presented in the work of Iritani \cite{Iri}.

\section{Equivariant cohomology}

\label{chap:equivariant_cohomology}

Throughout this section let $G$ be a compact Lie group and $X$ a closed manifold with a smooth $G$-action from the left. We introduce the Borel and the Cartan model for equivariant cohomology $H_G^*(X)$. In particular we consider a closed normal subgroup $H \lhd G$ and look at the consequences it has on $H_G^*(X)$ if the induced $H$-action is free or trivial.

\subsection{The Borel model}

\label{sec:Borel_model}

We denote by $\BG$ the classifying space for the group $G$. It is defined uniquely up to homotopy equivalence. Given any contractible topological space $\EG$ with a free $G$-action we can take $\BG := \EG / G$ as a representative. We will call any such $\EG$ a classifying total space for $G$. See tom Dieck \cite{tomD} for details.

The Borel construction of the $G$-manifold $X$ is the topological space
$$
X_G := X \times_G \EG := \left( X \times \EG \right) / G,
$$  
where the quotient is taken with respect to the diagonal $G$-action on $X \times \EG$. Then one defines the $G$-equivariant cohomology of $X$ to be the ordinary singular cohomology of its Borel construction,
$$
H_G^*(X) := H^*(X_G).
$$
Throughout we will take either real or complex coefficients.

\begin{rem}
The projection onto the second factor turns $X_G$ into a fiber bundle over $\BG$ with fiber $X$. In particular this makes equivariant co\-ho\-mo\-lo\-gy $H_G^*(X)$ a module over $H^*(\BG)$ via pullback.
\end{rem} 

Now suppose that $H \lhd G$ is a closed normal subgroup of $G$ that acts freely on $X$. We denote by $K := G/H$ the quotient group. On the classifying total space $\EK$ we let $G$ act via the projection $G \ra K$. Hence $H$ acts trivially on $\EK$. Now we can replace $\EG$ by $\EG \times \EK$ as a model for the classifying total $G$-space, because the product of two contractible spaces is still contractible, and the diagonal $G$-action is free, because it is free on the first factor. So up to homotopy equivalence we obtain
$$
X_G = \left( X \times \EG \times \EK \right) / G = \left( \left( X \times_H \EG \right) \times \EK \right) / K.
$$
Now $H$ acts freely on $X$ and $\EG$ is contractible. Hence $X \times_H \EG$ is homotopy equivalent to $X/H$ and this homotopy equivalence extends to the bundle $\left( \left( X \times_H \EG \right) \times \EK \right) / K$, as shown in tom Dieck \cite[I 8.16]{tomD}. So we obtain
$$
X_G = \left( X/H \times \EK \right) / K = \left( X/H \right)_K
$$
up to homotopy equivalence and we deduce

\begin{equation} \label{eqn:free_H_action}
H_G^*(X) = H_K^*(X/H).
\end{equation}

Now suppose on the contrary that a closed normal subgroup $H \lhd G$ acts trivially on $X$. Again we write $X_G = (X \times \EK \times \EG) / G$, but now $H$ acts trivially on $X$ and $\EK$ and we obtain
$$
X_G = \left( X \times \EK \times \EG/H \right) / K.
$$
This shows that up to homotopy equivalence $X_G$ is a fibration over $X_K$ with fiber $\EG/H = \BH$, because $\EG$ also serves as a model for $\EH$: It is contractible and the subgroup $H$ acts freely. This fibration is determined by the $K$-action on $\EG/H$ and it determines the relation between $H_G^*(X)$, $H_K^*(X)$ and $H^*(\BH)$.

If we additionally assume that there exists a monomorphism $G \ra K \times H$ then we get a free $G$-action on the contractible space $\EK \times \EH$ and we can write $X_G$ as $(X \times \EK \times \EH) / G$. Now $H$ again acts trivially on $X$ and $\EK$, whereas $K$ acts trivially on $\EH$ and hence also on $\EH/H = \BH$. So we get $X_G = X_K \times \BH$, i.e.~the fibration is trivial and the relation becomes particularly nice: 
\begin{equation} \label{eqn:trivial_H_action}
H_G^*(X) = H_K^*(X) \otimes H^*(\BH).
\end{equation}

\subsection{The Cartan model}

\label{sec:Cartan_model}

The Cartan model is the de Rham model for equivariant cohomology. The topological constructions on spaces in the Borel model are mimicked on the algebraic level of differential forms.

\subsubsection*{Equivariant de Rham theory}

The Lie algebra of $G$ is denoted by $\LG$, its dual space by $\LG^*$. We write $S(\LG^*)$ for the symmetric algebra generated by the vector space $\LG^*$. We assign the degree of $2$ to all elements of $\LG^*$ and view $S(\LG^*)$ as a graded algebra.

The group $G$ acts from the left on $\LG^*$ by the coadjoint representation $\Ad^*$ defined by
$$
\langle \Ad^*_g(x) , \xi \rangle \;:=\; \langle x , \Ad_{g^{-1}}(\xi) \rangle
$$
for any $x \in \LG^*$ and $\xi \in \LG$, where $\langle .,. \rangle$ denotes the pairing between dual spaces and $\Ad_g(x) = gxg^{-1}$ is the adjoint action of $G$ on its Lie algebra. This action extends element-wise to $S(\LG^*)$.

We denote by $\Omega^*(X)$ the differential forms on $X$. We get a left action of $G$ on $\Omega^*(X)$ by taking inverses and pulling back: $g.\omega := (g^{-1})^*\omega$. For an element $\xi \in \LG$ we define the infinitesimal vector field $X_\xi$ at a point  $p \in X$ as
\begin{equation} \label{def:X_xi}
X_\xi (p) \;:=\; \left. \frac{\mathrm{d}}{\dt} \right|_{t=0} \left[ \exp(-t\xi) . p \right].
\end{equation}
We abbreviate the operation of plugging the vector field $X_\xi$ into the first argument of a differential form by $\iota_\xi$.

\begin{rem}
We introduce the minus sign in the definition of $X_\xi$ in order to be consistent with the conventions for principal bundles later on. This convention also agrees with the one by Guillemin and Sternberg \cite{GS} but differs from the one by Cieliebak, Mundet and Salamon \cite{CMS}.
\end{rem}

We can now define $G$-equivariant differential forms on $X$ as elements in
$$
\Omega_G^*(X) \;:=\; \left[ S(\LG^*) \otimes \Omega^*(X) \right]^G,
$$
where the superscript $G$ means that we take only $G$-invariant elements of the tensor product. One can think of taking the tensor product with $S(\LG^*)$ as an algebraic analogue of taking the product with some representative of $\EG$ in the topological setting. And restricting to $G$-invariant elements corresponds to taking the $G$-quotient of $\EG \times X$. In analogy to the Borel construction $X_G$ for the space $X$ this algebraic version is also called the Cartan construction for $\Omega^*(X)$ and we write
$$
C_G(B) \;:=\; \left[ S(\LG^*) \otimes B \right]^G
$$
for any suitable algebra $B$. Hence $\Omega_G^*(X) = C_G( \Omega^*(X) )$.

To define the $G$-equivariant differential $d_G$ we think of elements $\alpha \in \Omega_G^*(X)$ as polynomials on $\LG$ with values in $\Omega^*(X)$. Then the value of $d_G \alpha$ on an element $\xi \in \LG$ is defined to be
$$
\mathrm{d}_G \alpha (\xi) \;:=\; \mathrm{d} \alpha(\xi) - \iota_\xi \alpha(\xi).
$$
If we choose a basis $(\xi_a)$ for $\LG$ and take the dual basis $(x^a)$ on $\LG^*$ this can be expressed as
$$
\mathrm{d}_G \;=\; 1 \otimes \mathrm{d} - \sum_a x^a \otimes \iota_{\xi_a},
$$
showing that this differential indeed rises the degree by $+1$, because $x^a$ has degree $2$. It is an easy exercise to show that $\mathrm{d}_G^2 = 0$, but the fact that the homology of the Cartan complex indeed computes equivariant cohomology is a nontrivial result.

\begin{thm}[H.~Cartan] \label{thm:Cartan}

$$
H_G^*(X) \;=\; H \left( C_G(\Omega^*(X)) , \mathrm{d}_G \right)
$$

\end{thm}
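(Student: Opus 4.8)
The plan is to bridge the two objects at hand: the left-hand side $H_G^*(X) = H^*(X_G)$ is defined topologically through the Borel construction, whereas the right-hand side is the purely algebraic cohomology of the Cartan complex. I would connect them by replacing the infinite-dimensional $\EG$ with finite-dimensional smooth approximations, so that ordinary de Rham theory becomes available, and then identify the resulting de Rham complex with the Cartan complex by means of a Chern--Weil substitution.

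First I would fix, for each $N$, a finite-dimensional smooth manifold $E_N$ carrying a free $G$-action and being $N$-connected. Since $G$ is compact it embeds in some $U(n)$, and one may take $E_N$ to be a complex Stiefel manifold, restricting the free $U(n)$-action to $G$; as $N \ra \infty$ these approximate $\EG$. Because $E_N$ is $N$-connected, the map $X_G \la (X \x E_N)/G$ induced by $E_N \hookrightarrow \EG$ gives isomorphisms on $H^k$ for all $k < N$ (compare the two fibre bundles over $\BG$ and $E_N/G$ with common fibre $X$), so that $H_G^k(X) = H^k\big((X \x E_N)/G\big)$ in any fixed degree once $N$ is large. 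The quotient $(X \x E_N)/G$ is a smooth manifold, so its singular cohomology agrees with its de Rham cohomology, and de Rham forms on it are exactly the $G$-\emph{basic} forms on $X \x E_N$, i.e.\ the $G$-invariant forms annihilated by every $\iota_\xi$.

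Next I would choose a connection form $\theta \in \big(\Omega^1(E_N) \otimes \LG\big)^G$ on the principal bundle $E_N \ra E_N/G$ with curvature $F_\theta$, and use it to define the Cartan (Chern--Weil) map from $C_G(\Omega^*(X)) = [S(\LG^*) \otimes \Omega^*(X)]^G$ into the basic forms on $X \x E_N$: on an equivariant form one substitutes $F_\theta$ for the polynomial variable in $S(\LG^*)$ and renders the $\Omega^*(X)$-part horizontal by means of $\theta$. A direct computation, using the structure equation $\mathrm{d}\theta = F_\theta - \tfrac12[\theta,\theta]$, shows that this map intertwines the Cartan differential $\mathrm{d}_G = 1 \otimes \mathrm{d} - \sum_a x^a \otimes \iota_{\xi_a}$ with the ordinary de Rham differential $\mathrm{d}$ on basic forms. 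This is the step at which the two differentials are matched, and it is essentially formal once the connection is in place.

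The hard part is to show that this Cartan map is a quasi-isomorphism in degrees growing with $N$. The cleanest route is to factor it through the Weil model: one introduces the Weil algebra $W(\LG) = S(\LG^*) \otimes \Lambda(\LG^*)$, the algebraic model for $\Omega^*(\EG)$, observes that it is acyclic (it has the cohomology of a point), and identifies the basic subcomplex of $W(\LG) \otimes \Omega^*(X)$ with $C_G(\Omega^*(X))$ via the Mathai--Quillen automorphism $\exp\!\big(\sum_a \theta^a \otimes \iota_{\xi_a}\big)$. The acyclicity of $W(\LG)$, together with the high connectivity of $E_N$ (which makes the Chern--Weil map $W(\LG) \ra \Omega^*(E_N)$ a quasi-isomorphism through degree $N$), feeds a spectral-sequence or homotopy-operator argument showing that the fibre directions contribute nothing to cohomology and that the Cartan map is an isomorphism onto $H^*\big((X\x E_N)/G\big)$ below degree $N$. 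Passing to the limit $N \ra \infty$ then yields the theorem in every fixed degree. I expect this acyclicity-plus-invariance argument to be the genuine obstacle, the remaining steps being standard.
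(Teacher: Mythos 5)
The paper does not prove this theorem at all: it is quoted as a classical result of H.~Cartan, with the remark that the nontrivial content lies precisely in showing that the Cartan complex computes Borel equivariant cohomology, and the exposition is deferred to Guillemin and Sternberg \cite{GS}. Your outline is therefore not competing with an argument in the text; it reconstructs the standard proof from that reference, and as an outline it is sound. The architecture is right: $N$-connected finite-dimensional Stiefel approximations $E_N$ of $\EG$ reduce the Borel side to $H^k\bigl((X \x E_N)/G\bigr)$ for $k < N$; basic forms on the free $G$-manifold $X \x E_N$ compute the de Rham cohomology of the quotient; and a connection on $E_N \ra E_N/G$ furnishes the Chern--Weil substitution, which is best organized by passing through the Weil model and the Mathai--Quillen twist. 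The one caveat is that you have compressed two logically separate lemmas into the final ``acyclicity-plus-invariance'' step: (i) Cartan's theorem that for a \emph{free} action the basic subcomplex of $W(\LG) \otimes \Omega^*(M)$ is quasi-isomorphic to $\Omega^*(M/G)$, which uses acyclicity of $W(\LG)$ together with the chosen connection, and (ii) the homotopy-invariance statement that the inclusion $C_G(\Omega^*(X)) \ra C_G(\Omega^*(X \x E_N))$ is a quasi-isomorphism in degrees below $N$, which requires its own spectral-sequence comparison exploiting the $N$-connectivity of $E_N$ and does not follow from (i). Both are proved in \cite{GS}; with them supplied your sketch closes up into a complete proof.
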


\subsubsection*{The generalized Cartan map}

We discussed the case of a normal subgroup $H \lhd G$ acting freely on $X$ in the Borel model and obtained identity \ref{eqn:free_H_action}. With the Cartan model we can obtain the corresponding result also for locally free $H$-actions. A group $H$ acts locally freely if the isotropy subgroups
$$
\Iso_p(H) := \left\{ h \in H \;|\; h.p = p \right\}
$$
are finite for all $p \in X$, or equivalently if the infinitesimal action $\xi \mto X_\xi(p)$ is everywhere injective. We will also use the term \emph{regular} for a locally free action and use the notation $\Iso_p(H) =: H_p$ for isotropy groups.

\begin{dfn}
A form $\omega \in \Omega^*(X)$ is called \emph{$H$-horizontal} if 
$$
\iota_\eta \omega \;=\; 0
$$
for all $\eta \in \LH$. The set of $H$-horizontal forms is denoted by $\Omega_{H-\mathrm{hor}}^*(X)$. A form $\omega$ is called \emph{$H$-basic} if it is $H$-horizontal and $H$-invariant. The set of $H$-basic forms is denoted by $\Omega_{H-\mathrm{bas}}^*(X)$.
\end{dfn}

Note that in fact $\Omega_{H-\mathrm{bas}}^*(X) = \Omega^*(X/H)$ if the $H$-action is free. But even in presence of nontrivial isotropy the space $\Omega_{H-\mathrm{bas}}^*(X)$ still serves perfectly well as an algebraic model for differential forms on the quotient: If $K := G/H$ we get a $K$-action on $\Omega_{H-\mathrm{bas}}^*(X)$ and for elements $\theta \in \LK = \mathrm{Lie}(K)$ we get operations $\iota_\theta$ by lifting $\theta$ to $\overline{\theta} \in \LG$ and taking $\iota_{\overline{\theta}}$. Hence we can perform the $K$-Cartan construction on $\Omega_{H-\mathrm{bas}}^*(X)$.

\begin{prop} \label{prop:gen_Cartan}
If a closed normal subgroup $H \lhd G$ acts locally freely on $X$ then the inclusion
$$
\Omega_G^*(X) = \left[ S(\LG^*) \otimes \Omega^*(X) \right]^G \supset \left[ S(\LK^*) \otimes \Omega_{H-\mathrm{bas}}^*(X) \right]^K = C_K \left( \Omega_{H-\mathrm{bas}}^*(X) \right)
$$
induces an isomorphism
$$
H_G^*(X) \;\cong\; H \left( C_K \left( \Omega_{H-\mathrm{bas}}^*(X) \right) , \mathrm{d}_K \right).
$$
\end{prop}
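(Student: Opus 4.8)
The plan is to exhibit the stated inclusion as a quasi-isomorphism of cochain complexes and to reduce the problem, via the $K$-Cartan construction, to the absolute case $K = 1$. First I would record that the inclusion is a chain map. Writing $\mathrm{d}_G = 1 \otimes \mathrm{d} - \sum_a x^a \otimes \iota_{\xi_a}$ and splitting the basis $(\xi_a)$ of $\LG$ into one of $\LH$ together with a complement mapping to a basis of $\LK$, every contraction $\iota_\eta$ with $\eta \in \LH$ annihilates an $H$-horizontal form; hence on $S(\LK^*) \otimes \Omega_{H-\mathrm{bas}}^*(X)$ the operator $\mathrm{d}_G$ loses its $\LH^*$-part and restricts to $\mathrm{d}_K$. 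That $\mathrm{d}$ preserves $\Omega_{H-\mathrm{bas}}^*(X)$ follows from $H$-invariance and Cartan's formula $\iota_\eta \mathrm{d} = \mathcal{L}_\eta - \mathrm{d}\iota_\eta$.

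I would then rewrite the ambient complex more symmetrically. Choosing a $G$-invariant splitting $\LG = \LH \oplus \LK$ and using that the $G$-action on $\LK = \mathrm{Lie}(K)$ factors through $K$, so that $H$ acts trivially on $S(\LK^*)$, taking $H$-invariants before $K$-invariants yields $\Omega_G^*(X) = C_K(\Omega_H^*(X))$, with $\mathrm{d}_G$ equal to the $K$-Cartan differential built on the $K$-differential graded algebra $(\Omega_H^*(X), \mathrm{d}_H)$. Under this identification the subcomplex in the proposition is exactly $C_K(\Omega_{H-\mathrm{bas}}^*(X))$. It then suffices to prove two things. Step A is the absolute case: the inclusion $(\Omega_{H-\mathrm{bas}}^*(X), \mathrm{d}) \hookrightarrow (\Omega_H^*(X), \mathrm{d}_H)$ is a quasi-isomorphism of $K$-differential graded algebras; by Theorem \ref{thm:Cartan} this is the assertion $H_H^*(X) \cong H(\Omega_{H-\mathrm{bas}}^*(X), \mathrm{d})$ for the locally free action. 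Step B is that the functor $C_K(-)$ carries such a quasi-isomorphism to a quasi-isomorphism.

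For Step B I would filter both $K$-Cartan complexes by polynomial degree in $S(\LK^*)$; the contraction part of $\mathrm{d}_K$ raises this degree, so the associated graded differential is the internal one, and since $S(\LK^*)$ is flat and taking $K$-invariants is exact (averaging over the compact group $K$), the first page is $[S(\LK^*) \otimes H(-)]^K$. A quasi-isomorphism of the underlying $K$-differential graded algebras therefore induces an isomorphism on $E_1$, and since the filtration is bounded below and exhaustive, on the abutment. The heart of the matter is Step A, and this is where local freeness is indispensable. Because the infinitesimal action $\eta \mapsto X_\eta(p)$ is injective, one can build an $\LH$-valued one-form $\theta$ with $\iota_\eta \theta = \eta$ for all $\eta \in \LH$, and compactness of $G$ lets one average it to a $G$-invariant connection. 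Using $\theta$ and its curvature I would construct an explicit contracting homotopy, via a Chern--Weil substitution together with horizontal projection, showing that the projection $\Omega_H^*(X) \to \Omega_{H-\mathrm{bas}}^*(X)$ is a homotopy inverse to the inclusion.

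I expect the main obstacle to lie precisely in this acyclicity: one must show that the vertical, Koszul-type part of the Cartan complex contributes nothing beyond the basic forms, and the argument has to be carried out entirely inside the basic subcomplex, since the finite isotropy groups mean $X/H$ is only an orbifold and the naive identification $\Omega_{H-\mathrm{bas}}^*(X) = \Omega^*(X/H)$ is unavailable.
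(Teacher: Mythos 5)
Your argument is correct, but it takes a genuinely different route from the paper's. The paper proves the proposition in one step by constructing the generalized Cartan map $c_A : \Omega_G^*(X) \ra C_K\left( \Omega_{H-\mathrm{bas}}^*(X) \right)$ attached to a $G$-equivariant $H$-connection $A$ (horizontal projection $\pi_A^*$ on the form part, substitution of the equivariant curvature $F_{A,G}$ on the polynomial part) and then invoking \cite[Theorem 3.8]{CMS} for the fact that $c_A$ is a chain map which is chain homotopic to the identity and restricts to the identity on the basic subcomplex, so that it is a chain homotopy inverse to the inclusion. You instead split off the $K$-direction first, identifying $\Omega_G^*(X) \cong C_K(\Omega_H^*(X))$ via an $\Ad$-invariant complement to $\LK^* \subset \LG^*$, reduce to the absolute statement $H_H^*(X) \cong H(\Omega_{H-\mathrm{bas}}^*(X),\mathrm{d})$ for the locally free $H$-action (which is again the Cartan map, but only for $H$), and then propagate the quasi-isomorphism through the functor $C_K(-)$ by the spectral sequence of the filtration by polynomial degree in $S(\LK^*)$. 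That comparison argument is sound: the filtration is finite in each total degree, $S(\LK^*)$ is free, and averaging over the compact group $K$ makes taking invariants exact, so an isomorphism on $E_1$ does pass to the abutment. Both routes rest on the same geometric input, namely the existence of a $G$-invariant $\LH$-valued connection, which uses compactness of $G$ and normality of $H$ exactly as you say. What your route buys is modularity --- only the classical $H=G$ Cartan isomorphism has to be established by hand, and no generalized Cartan map need be constructed. What it gives up is the explicit operator $c_A$ itself, which the paper needs anyway in later sections (the computations in sections \ref{sec:example_invariant_integration} and \ref{sec:example_localization} and the construction of $H$-basic Thom forms in section \ref{sec:equivariant_thom_forms}), so the paper's proof doubles as the construction of a tool rather than merely a verification of the isomorphism.
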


\begin{rem}
This result is certainly not new. It is well known (although we do not know a suitable reference) that one can deal with orbifolds just as well as with manifolds, so that there is no trouble with taking the quotient with respect to an action that is only locally free. Nevertheless we think it is worthwhile to treat the whole theory without leaving the realm of smooth manifolds. We do not go to the partial $H$-quotient but keep track of the whole action on the original manifold and this turns out to be helpful for the discussion of integration and localization later on.
\end{rem}

The essential tool for the proof of proposition \ref{prop:gen_Cartan} (i.e.~the homotopy inverse to the inclusion) is the Cartan map. In case $H = G$ this map is due to H.~Cartan (see Guillemin and Sternberg \cite{GS} for a nice exposition). The generalized map for arbitrary $H$ appears in the work of Cieliebak, Mundet and Salamon \cite{CMS} and is a slight extension of the things said in \cite[section 4.6]{GS} for commuting actions of two groups. For clearity we give an extensive review of the construction.

\begin{dfn} \label{def:G_equivariant_H_connection}
A \emph{$G$-equivariant $H$-connection} on $X$ is a form $A \in \Omega^1(X,\LH)$ that satisfies
$$
A_{g.p}(g_*v ) \;=\; g A_p(v) g^{-1} \quad , \qquad A_p(X_\eta(p)) \;=\; \eta
$$
for all $g \in G$, $p \in X$, $v \in T_pX$, $\eta \in \LH$.
\end{dfn}

\begin{rem}
Suppose $X$ is a principal $G$-bundle with right action 
$$
X \x G \ra X \; ; \; (p,g) \mto pg.
$$
Then $X$ also carries the \emph{left} $G$-action $G \x X \ra X \; ; \; (g,p) \mto pg^{-1}$. A $G$-equivariant $G$-connection on $X$ with this left $G$-action is then the same as a connection form on the principal bundle $X$ in the usual sense.
\end{rem}

\begin{rem}
Such $G$-equivariant $H$-connections do exist if the $H$-action is regular. Here we make use of the compactness of $G$ and of $H$ being normal.
\end{rem}

Let us fix a $G$-equivariant $H$-connection $A$ for the moment. This connection determines a $G$-equivariant projection
\begin{equation} \label{def:pi_A}
\pi_A^* \;:\; \Omega^*(X) \;\ra\; \Omega_{H-\mathrm{hor}}^*(X)
\end{equation}
via the projection $\pi_A$ onto the kernel of $A$:
\begin{align*}
\pi_A \;:\; T_pX & \;\ra\; T_pX \\
v & \;\mto\; v - X_{A_p(v)}(p)
\end{align*}
The curvature $F_A \in \Omega^2(X,\LH)$ is defined by
$$
F_A \;:=\; \mathrm{d}A + \frac{1}{2} \left[ A \wedge A \right],
$$
where $[. \wedge .]$ denotes the product on $\Omega^*(X,\LH) = \Omega^*(X) \otimes \LH$ via the wedge-product on the form part and the Lie bracket on $\LH$. Note that $F_A$ inherits the $G$-invariance from $A$ and that $F_A$ is $H$-horizontal. The $G$-equivariant curvature $F_{A,G}$ is the element of
$$
\Omega_G^*(X,\LG) \;:=\; C_G( \Omega^*(X,\LG) ) \;=\; \left[ S(\LG^*) \otimes \Omega^*(X,\LG) \right]^G
$$
given by
$$
F_{A,G}(\xi) \;:=\; F_A + \xi - \iota_\xi A.
$$
The lemma below shows that $F_{A,G}$ really is an invariant element of the tensor product $S(\LG^*) \otimes \Omega^*(X,\LG)$. We choose a basis $(\xi_j)$ on $\LG$ such that $(\xi_b)_{b \in B}$ is a basis for the subspace $\LH$ for some index set $B$, and we take the dual basis $(x^j)$ for $\LG^*$. Then the $G$-equivariant curvature can be expressed as
$$
F_{A,G} \;=\; 1 \otimes F_A \quad + \quad \sum_j x^j \otimes \left( \xi_j - \iota_{\xi_j} A \right).
$$
In the sum actually only indices $j \not\in B$ appear, because $\eta - \iota_\eta A = 0$ for all $\eta \in \LH$.

\begin{lem}
The equivariant curvature $F_{A,G}$ in fact defines an element in the subspace 
$$
\left[ S(\LK^*) \otimes \Omega_{H-\mathrm{bas}}^*(X,\LG) \right]^K \;\subset\; \left[ S(\LG^*) \otimes \Omega^*(X,\LG) \right]^G.
$$
\end{lem}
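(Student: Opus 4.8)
The plan is to work from the explicit expression
$$
F_{A,G} \;=\; 1 \otimes F_A \;+\; \sum_{j \not\in B} x^j \otimes \left( \xi_j - \iota_{\xi_j} A \right)
$$
recorded just before the lemma, and to verify in turn that (a) the polynomial factors lie in $S(\LK^*)$, (b) every form coefficient is $H$-basic, and (c) the resulting element is $K$-invariant. For (a) I would note that $\LK^*$ is canonically the annihilator $\mathrm{Ann}(\LH) \subset \LG^*$, and that the dual basis vectors $x^j$ with $j \not\in B$ are precisely a basis of this annihilator; since only those indices survive in the sum, the polynomial part takes values in $S(\LK^*)$.

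The technical heart is (b). First I would dispose of horizontality: the coefficients $\xi_j - \iota_{\xi_j} A$ are functions (degree-zero forms) and hence automatically $H$-horizontal, while $F_A$ is $H$-horizontal by the remark preceding the lemma. For invariance I would introduce the linear map $\Phi_p : \LG \ra \LG$, $\Phi_p(\xi) := \xi - A_p(X_\xi(p))$, whose components are exactly the coefficient functions. The two structural inputs are the equivariance of the infinitesimal vector fields, $g_* X_\xi(p) = X_{\Ad_g \xi}(g.p)$, which follows directly from the definition \ref{def:X_xi} of $X_\xi$, and the defining equivariance $A_{g.p}(g_* v) = \Ad_g A_p(v)$ of the $G$-equivariant $H$-connection. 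Combining the two gives
$$
\Phi_{g.p}(\xi) \;=\; \Ad_g \, \Phi_p\!\left( \Ad_{g^{-1}} \xi \right) \qquad \text{for all } g \in G,
$$
equivalently $g.\Phi(\xi) = \Phi(\Ad_g \xi)$ for the combined pullback-and-adjoint action on $\Omega^0(X,\LG)$.

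To conclude invariance of the individual coefficients I would restrict to $g = h \in H$ and use normality of $H$: since $\LH$ is an ideal, $\Ad_h \xi - \xi \in \LH$ for every $\xi$, and because the second defining property of $A$ gives $\Phi_p(\eta) = \eta - \iota_\eta A = 0$ for all $\eta \in \LH$, linearity yields $\Phi(\Ad_h \xi_j) = \Phi(\xi_j)$. Hence $h.\Phi(\xi_j) = \Phi(\xi_j)$, so each coefficient $\xi_j - \iota_{\xi_j} A$ is $H$-invariant, and together with horizontality it is $H$-basic; the coefficient $F_A$ is $H$-basic since it inherits $G$-invariance from $A$. The same vanishing $\Phi|_{\LH} = 0$ is what reduces the sum to indices $j \not\in B$. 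Finally, for (c) I would observe that the displayed equivariance holds for all $g \in G$, so $F_{A,G}$ is $G$-invariant in $S(\LG^*) \otimes \Omega^*(X,\LG)$; since it now lies in $S(\LK^*) \otimes \Omega_{H-\mathrm{bas}}^*(X,\LG)$, a subspace on which $H$ acts trivially (trivially on $\LK^* = \mathrm{Ann}(\LH)$ by normality, and trivially on $H$-basic forms by definition), this $G$-invariance descends to the required $K$-invariance.

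The main obstacle I anticipate is deriving the equivariance identity for $\Phi$ with the correct placement of $\Ad_g$ versus $\Ad_{g^{-1}}$, where the minus-sign convention in the definition of $X_\xi$ and the left-versus-right conventions must be tracked carefully. Once that identity is in hand, the reduction to $j \not\in B$, the $H$-invariance of each coefficient, and the passage from $G$- to $K$-invariance are all formal consequences of the normality of $H$.
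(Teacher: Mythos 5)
Your proof is correct and follows essentially the same route as the paper: the same term-by-term decomposition, the same observation that $\xi_j - \iota_{\xi_j}A$ transforms under $G$ exactly like $\xi_j$ (your equivariance identity for $\Phi$), and the same appeal to the normality of $H$. The only cosmetic difference is in the last step: you verify the $H$-invariance of each coefficient directly from $\Phi|_{\LH}=0$ and $\Ad_h\xi-\xi\in\LH$, whereas the paper deduces it from the $G$-invariance of the full sum together with the triviality of the $H$-action on $\LK^*$ --- two phrasings of the same fact.
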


\begin{proof}
Since $F_A$ is $G$-invariant and $H$-horizontal the first summand $1 \otimes F_A$ obviously lies in the correct space. For the remaining part we first note that for $j \not\in B$ we have $x^j \in \LK^* = \{ x \in \LG \;|\; x|_\LH = 0 \}$. Furthermore the $\xi_j - \iota_{\xi_j} A$ are all $H$-horizontal. Now $A$ is $G$-invariant and $\iota_{\xi_j} A$ is linear in $\xi_j$. Hence $\xi_j - \iota_{\xi_j} A$ transforms under $G$ just like $\xi_j$. Thus the sum over all $x^j \otimes (\xi_j - \iota_{\xi_j} A)$ is $G$-invariant, because $\sum_j x^j \otimes \xi_j$ is. This shows that
$$
F_{A,G} \;\in\; \left[ S(\LK^*) \otimes \Omega_{H-\mathrm{hor}}^*(X,\LG) \right]^G.
$$
But since $H$ acts trivially on $\LK^*$ it must also act trivially on all terms $\xi_j - \iota_{\xi_j} A$ with $j \not\in B$ and the result follows.
\end{proof}

Now we consider the pairing between $\LG^*$ and the $\LG$-factor of $F_{A,G}$:
\begin{align*}
\LG^* & \;\ra\; S(\LK^*) \otimes \Omega_{H-\mathrm{hor}}^*(X) \\
x \; & \;\mto\; \qquad \langle x , F_{A,G} \rangle
\end{align*}
This map is $G$-equivariant because $F_{A,G}$ is $G$-invariant. We extend it to a $G$-equivariant map
\begin{equation} \label{def:F_AG}
S(\LG^*) \;\ra\; S(\LK^*) \otimes \Omega_{H-\mathrm{hor}}^*(X).
\end{equation}
Now the Cartan map $c_A$ for a given $G$-equivariant $H$-connection $A$ is defined by the tensor product of the two maps \ref{def:pi_A} and \ref{def:F_AG}
$$
\Omega_G^*(X) \;=\; \left[ S(\LG^*) \otimes \Omega^*(X) \right]^G \;\ra\; \left[ S(\LK^*) \otimes \Omega_{H-\mathrm{hor}}^*(X) \otimes \Omega_{H-\mathrm{hor}}^*(X) \right]^G
$$
followed by taking the wedge product of the two $\Omega_{H-\mathrm{hor}}^*(X)$ factors. Since all these operations are $G$-equivariant the map stays in the $G$-invariant part and we in fact obtain a map
\begin{align} \label{def:c_A}
c_A \;:\; \Omega_G^*(X) & \;\ra\; \left[ S(\LK^*) \otimes \Omega_{H-\mathrm{hor}}^*(X) \right]^G \;=\; \left[ S(\LK^*) \otimes \Omega_{H-\mathrm{bas}}^*(X) \right]^K \\
\alpha \quad & \;\mto\; \quad \alpha_A \;:=\; c_A(\alpha) \nonumber
\end{align}
\begin{dfn}
A $G$-equivariant differential form $\alpha$ is called \emph{$H$-basic}, if it is an element of
$$
C_K \left( \Omega_{H-\mathrm{bas}}^*(X) \right) = \left[ S(\LK^*) \otimes \Omega_{H-\mathrm{bas}}^*(X) \right]^K \subset \left[ S(\LG^*) \otimes \Omega^*(X) \right]^G = \Omega_G^*(X).
$$
\end{dfn}

So in case of a regular $H$-action one can use the Cartan map to make $G$-equivariant differential forms $H$-basic.

\begin{lem} \label{lem:basic_forms}
Let $\alpha \in \Omega_G^*(X)$ be $H$-basic and denote by $\alpha^{[m]}$ those components of $\alpha$ with form part having degree $m$. Then the following holds.
\begin{enumerate}
\item $\mathrm{d}_G \alpha = \mathrm{d}_K \alpha$.
\item If the $H$-action is regular then $\alpha^{[m]} = 0$ for all $m > \dim(X) - \dim(H)$.
\item If the $H$-action is regular and $m \ge \dim(X) - \dim(H) - 2$ then
$$
\left( \mathrm{d}_G \alpha \right)^{[m]} = (1 \otimes \mathrm{d}) \left( \alpha^{[m-1]} \right).
$$
\end{enumerate}
\end{lem}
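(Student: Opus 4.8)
The plan is to treat the three parts in order, since each uses the $H$-horizontality of $\alpha$ in a slightly different way. Throughout I would work in the adapted basis $(\xi_j)$ of $\LG$ with $(\xi_b)_{b\in B}$ spanning $\LH$, and dual basis $(x^j)$, so that $\mathrm{d}_G = 1\otimes\mathrm{d} - \sum_j x^j\otimes\iota_{\xi_j}$.

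For part (1), because $\alpha$ is $H$-horizontal we have $\iota_{\xi_b}\alpha = 0$ for every $b\in B$, so only the indices $j\notin B$ survive in the sum. For those indices $x^j\in\LK^*$, and on horizontal forms the contraction $\iota_{\xi_j}$ agrees with $\iota_{\theta_j}$, where $\theta_j\in\LK$ is the image of $\xi_j$ (two lifts of $\theta_j$ differ by an element of $\LH$, whose contraction vanishes on horizontal forms). Assembling the surviving terms reproduces exactly the expression for $\mathrm{d}_K\alpha$, which is the claim. Part (2) is pointwise linear algebra: fix $p\in X$ and let $V_p\subset T_pX$ be the image of the infinitesimal action $\eta\mapsto X_\eta(p)$; regularity says this map is injective, so $\dim V_p = \dim H$. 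Since the form part of $\alpha^{[m]}$ is $H$-horizontal, $\alpha^{[m]}_p$ annihilates $V_p$ in every argument and hence factors through $\Lambda^m (T_pX/V_p)^*$. As $\dim (T_pX/V_p) = \dim X - \dim H$, this exterior power is zero whenever $m > \dim X - \dim H$, giving the vanishing.

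For part (3) I would expand $\mathrm{d}_G\alpha = \mathrm{d}_K\alpha$ via part (1) and sort by form degree: $1\otimes\mathrm{d}$ raises the form degree by one while each contraction $\iota_{\xi_a}$ with $a\notin B$ lowers it by one, so collecting the degree-$m$ contribution gives
\[
(\mathrm{d}_G\alpha)^{[m]} = (1\otimes\mathrm{d})(\alpha^{[m-1]}) - \sum_{a\notin B} x^a\otimes\iota_{\xi_a}(\alpha^{[m+1]}).
\]
The desired identity is thus equivalent to the vanishing of the correction term $\sum_{a\notin B} x^a\otimes\iota_{\xi_a}(\alpha^{[m+1]})$. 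By part (2) the component $\alpha^{[m+1]}$ vanishes outright as soon as $m+1 > \dim X - \dim H$, so the correction term disappears immediately for $m\ge \dim X - \dim H$; this is the mechanism driving the stated degree bound.

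The main obstacle, and the step I would spend the most care on, is the borderline degrees $m = \dim X - \dim H - 1$ and $m = \dim X - \dim H - 2$, where $\alpha^{[m+1]}$ is a horizontal form of near-maximal degree and need not vanish. Here I would analyse $\iota_{\xi_a}(\alpha^{[m+1]})$ directly, using that it is again $H$-basic — horizontality is preserved because $H\lhd G$ forces $[\eta,\xi_a]\in\LH$, so that $\mathcal{L}_\eta\iota_{\xi_a} = \iota_{\xi_a}\mathcal{L}_\eta + \iota_{[\eta,\xi_a]}$ kills $\alpha^{[m+1]}$ — and then exploit the $K$-invariance of the assembled $K$-equivariant correction to show that these top-degree contractions contribute nothing. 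Controlling this near-top behaviour is where I expect the real work to lie, the remaining degrees following formally from parts (1) and (2).
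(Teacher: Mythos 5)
Your parts (1) and (2) are correct and follow the same route as the paper: horizontality kills every contraction $\iota_{\xi_b}$ with $b\in B$, identifying the surviving sum with $\mathrm{d}_K$, and the pointwise observation that the vectors $X_\eta$, $\eta\in\LH$, span a $\dim(H)$-dimensional distribution forces horizontal forms of degree above $\dim(X)-\dim(H)$ to vanish.

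For part (3) you have located exactly the right issue --- the decomposition $(\mathrm{d}_G\alpha)^{[m]} = (1\otimes\mathrm{d})(\alpha^{[m-1]}) - \sum_{a\notin B} x^a\otimes\iota_{\xi_a}(\alpha^{[m+1]})$ combined with part (2) removes the correction term only when $m\ge\dim(X)-\dim(H)$ --- but the plan you sketch for the two borderline degrees cannot be carried out. The paper's own proof consists precisely of this substitution and establishes nothing beyond that range; in the borderline degrees the correction term is genuinely nonzero and no invariance argument will remove it. Concretely, take $X=S^1\times S^1$ with $G=T^2$ acting by translations, $H=S^1\times\{\unit\}$, and $\alpha=1\otimes\mathrm{d}\theta_2$, which is $H$-basic. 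Here $\dim(X)-\dim(H)=1$, so $m=0$ lies in the stated range, yet $(\mathrm{d}_G\alpha)^{[0]}=-x^2\otimes\iota_{\xi_2}\mathrm{d}\theta_2$ is a nonzero multiple of $x^2\otimes 1$ while $(1\otimes\mathrm{d})(\alpha^{[-1]})=0$. So the identity fails below $m=\dim(X)-\dim(H)$, and the correct hypothesis is $m\ge\dim(X)-\dim(H)$ (equivalently $m+1>\dim(X)-\dim(H)$), which is the case your argument already proves completely and the one actually needed when the lemma is combined with Stokes' formula in the proof of Proposition \ref{prop:invariant_integration}. You should state and prove part (3) in that range rather than invest effort in the two extra degrees, where the claim is false as written.
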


\begin{proof}
The first statement follows because by definition $\iota_\eta \omega = 0$ for all $\eta \in \LH$ and  $\omega \in \Omega_{H-\mathrm{bas}}^*(X)$. For the second statement observe that in case of a regular $H$-action the vectors $X_\eta$ with $\eta \in \LH$ span a $\dim(H)$-dimensional distribution. The third statement follows by plugging the second into the definition of $\mathrm{d}_G$.
\end{proof}

The proof of proposition \ref{prop:gen_Cartan} is now completed by the list of properties of the generalized Cartan map below. In particular the identification
$$
H_G^*(X) \;\cong\;  H ( C_K ( \Omega_{H-\mathrm{bas}}^*(X) ) , \mathrm{d}_K )
$$
does not depend on the chosen connection $A$. We refer to Cieliebak, Mundet and Salamon \cite[Theorem 3.8]{CMS} for the proof of the following:

\begin{prop} \label{prop:Cartan_map_properties}
Let $A$ be a $G$-equivariant $H$-connection on $X$ and $c_A$ the corresponding Cartan map.
\begin{enumerate}
\item $(\mathrm{d}_G \alpha)_A = \mathrm{d}_K (\alpha_A)$, i.e.~the operator $c_A$ is a chain map.
\item If $\mathrm{d}_G \alpha = 0$ and $A'$ is another $G$-equivariant $H$-connection then there exists an $H$-basic element $\beta \in \Omega_G^*(X)$ such that $\alpha_A - \alpha_{A'} = \mathrm{d}_K \beta$.
\item The operator $c_A$ is chain homotopic to the identity.
\end{enumerate}
\end{prop}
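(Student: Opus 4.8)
The plan is to prove the three properties in increasing order of difficulty, resting everything on two structural inputs: the affine structure of the space of connections and a Bianchi-type identity for the equivariant curvature. First I would record that the difference of two $G$-equivariant $H$-connections is a $G$-equivariant (in the adjoint sense, using that $H \lhd G$) and $H$-horizontal element of $\Omega^1(X,\LH)$, so that the space $\mcA$ of such connections is affine, modelled on $\left[ \Omega^1_{H-\mathrm{hor}}(X,\LH) \right]^G$; any two connections $A,A'$ are then joined by the straight-line path $A_t := (1-t)A' + tA$. I would also isolate the covariant constancy of the equivariant curvature $F_{A,G}$ (the equivariant Bianchi identity), since this is the mechanism making the curvature substitution compatible with the differentials.

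For statement (1) I would write $c_A$ explicitly as ``substitute $F_{A,G}$ for the polynomial variable and apply the horizontal projection $\pi_A^*$ to the form part,'' and then verify $c_A(\mathrm{d}_G\alpha) = \mathrm{d}_K(c_A\alpha)$ by a direct Chern--Weil computation. The two ingredients are the Bianchi identity, which shows that the curvature substitution intertwines $\mathrm{d}_G$ with the ordinary $\mathrm{d}$, and the observation that on $H$-horizontal forms the surviving contractions are exactly the $\LK$-contractions $\iota_\theta$, so that $\mathrm{d}$ followed by horizontal projection reproduces $\mathrm{d}_K$. This is routine but requires care in separating the $\LH$-indices, which are annihilated by horizontality, from the $\LK$-indices, which persist.

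Statement (2) I expect to follow from a transgression formula. Differentiating along the path $A_t$ I would establish an identity of the form $\tfrac{\mathrm{d}}{\dt} c_{A_t}(\alpha) = \mathrm{d}_K\big(h_t(\alpha)\big) + h_t\big(\mathrm{d}_G\alpha\big)$ for an explicit degree $-1$ operator $h_t$ built from $\dot A_t = A - A'$ and $F_{A_t,G}$. Integrating from $t=0$ to $t=1$ and using $\mathrm{d}_G\alpha = 0$ then yields $\alpha_A - \alpha_{A'} = \mathrm{d}_K\beta$ with $\beta := \int_0^1 h_t(\alpha)\,\dt$, which lies in $C_K(\Omega_{H-\mathrm{bas}}^*(X))$ precisely because $\dot A_t$ is $G$-equivariant and $H$-horizontal. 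Here the content is deriving the variation formula; the integration is then formal.

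The hard part will be statement (3), the chain homotopy between $c_A$ and the identity on $\Omega_G^*(X)$; transgression in the connection cannot reach it, since that only compares two Cartan maps. The approach I would take is the classical acyclicity argument for the Weil algebra, adapted to the subgroup $H$: the connection and its curvature define a Weil homomorphism from $S(\LH^*) \otimes \Lambda(\LH^*)$ to $\Omega^*(X)$ sending the exterior generators to $A$ and the symmetric generators to $F_A$, and the standard contracting homotopy of this acyclic complex transfers through the homomorphism to a degree $-1$ operator $h$ on $\Omega_G^*(X)$ with $c_A - \Id = \mathrm{d}_G h + h\,\mathrm{d}_G$. The delicate points are that $h$ must respect $G$-invariance and must contract only the $\LH$-directions while leaving the spectator $\LK$-directions and the residual $K$-action untouched; checking that $h$ lands in the correct space and that the homotopy identity survives the horizontal projection is the real work, and is exactly where normality of $H$ and $G$-invariance of $A$ are used. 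Granting such an $h$, statements (1) and (3) show that $c_A$ and the inclusion $C_K(\Omega_{H-\mathrm{bas}}^*(X)) \hookrightarrow \Omega_G^*(X)$ are homotopy inverse (using Lemma \ref{lem:basic_forms}(1) to identify $\mathrm{d}_G$ and $\mathrm{d}_K$ on the image), while statement (2) shows the induced isomorphism of Proposition \ref{prop:gen_Cartan} is independent of the chosen connection $A$.
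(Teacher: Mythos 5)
The paper does not prove this proposition at all: it is quoted verbatim from Cieliebak, Mundet and Salamon \cite[Theorem 3.8]{CMS}, so there is no in-text argument to compare yours against. Your outline is the standard proof and is essentially the one carried out in \cite{CMS} and in Guillemin--Sternberg \cite{GS}: (1) by the Chern--Weil/Bianchi computation with the $\LH$-contractions killed by horizontality and the $\LK$-contractions surviving, (2) by transgression along the affine path of connections (the difference of two $G$-equivariant $H$-connections is indeed $G$-equivariant and $H$-horizontal, so the path stays in the space of connections and the primitive $\beta$ is $H$-basic), and (3) by acyclicity of the Weil algebra $S(\LH^*)\otimes\Lambda(\LH^*)$ transferred through the Mathai--Quillen identification of the Weil and Cartan models. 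One point worth making explicit in your final paragraph: on the $H$-basic subcomplex $c_A$ is the identity on the nose (since $F_A$ and $A$ are $\LH$-valued, the curvature substitution fixes $S(\LK^*)$, and $\pi_A^*$ fixes horizontal forms), so (3) genuinely upgrades $c_A$ from a one-sided to a two-sided homotopy inverse of the inclusion, which is what Proposition \ref{prop:gen_Cartan} needs.
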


\section{Invariant integration}

\label{chap:invariant_integration}

In \cite{AB} Atiyah and Bott explained how the push-forward operation in ordinary cohomology extends to the equivariant theory. On the other hand Cieliebak and Salamon \cite{CS} introduced $G$-invariant integration over manifolds with locally free $G$-action. If one understands this invariant integration as the push-forward of the map $X/G \ra \{ \pt \}$ it is clear that equivariant push-forward and invariant integration should be two special cases of one generalized integration operation. In this section we will construct this \emph{invariant push-forward}. Again this is well known if one is willing to deal with fiber integration along orbibundles. The point of this section is to give a clear picture without leaving the world of smooth manifolds.

We end this section with the computation of invariant push-forward in the simple example of a torus acting on $S^{2n-1}$ via a homomorphism onto $S^1$. This example explains very clearly the origin of the residue operations that later appear in our localization formula, and hence also the residue operations in the wall crossing formula of Cieliebak and Salamon \cite{CS}.

\subsection{Equivariant push-forward}

\label{sec:equivariant_push_forward}

Given a proper map $f : X \ra Y$ between oriented manifolds one has the push-forward in ordinary cohomology
$$
f_* \;:\; H^*(X) \ra H^{*-q}(Y),
$$
which shifts the degree by $q = \dim(X) - \dim(Y)$. If $f$ is a fiber bundle the push-forward is integration along the fiber. If $f$ is an inclusion the push-forward is defined via the Thom isomorphism for the normal bundle of $f(X) \subset Y$ . A general map $f$ is decomposed into an inclusion and a fibration by the graph construction.

Now if $X$ and $Y$ carry orientation preserving $G$-actions and $f$ is equi\-va\-ri\-ant one obtains an induced map $f^G : X_G \ra Y_G$ between the Borel constructions and the above construction can be applied to give the equivariant push-forward
$$
f^G_* \;:\; H_G^*(X) \ra H_G^{*-q}(Y).
$$

\begin{rem} \label{rem:equivariant_push_forward_in_Cartan_model}
In the Cartan model one can apply the ordinary push-forward to the form part of equivariant differential forms. For fiber bundles it is shown in Guillemin and Sternberg \cite[Section 10.1]{GS} that this operation indeed defines a map on equivariant cohomology. And going through the proof for the equivalence of the Borel and the Cartan model one can show that this operation in fact agrees with the above definition of equivariant push-forward in the Borel picture. Of course this is valid for any proper, equivariant map and not only for fibrations.
\end{rem}

\subsection{$G$-invariant integration}

\label{sec:G_invariant_integration}

Suppose $X$ is a compact, oriented $G$-manifold with a locally free action. In this case proposition \ref{prop:gen_Cartan} tells us that $H_G^*(X) = H(\Omega_{G-\mathrm{basic}}^*(X),\mathrm{d})$ with the usual differential on ordinary differential forms. Now basic forms can be integrated over slices for the group action. For this purpose we have to fix an orientation on the Lie group $G$. Recall that Lie groups have trivial tangent bundle and are hence orientable, and the action of any subgroup of $G$ by multiplication on G is orientation preserving.

For every point $x \in X$ there exists a triple $(U_x,\varphi_x,G_x)$ with the following properties (see for example Audin \cite[Theorem I.2.1]{Aud}):

\begin{itemize}

\item $G_x \subset G$ is a finite subgroup.

\item $U_x \subset \R^m$, $m := \dim(X) - \dim(G)$ is an oriented, $G_x$-invariant open neighbourhood of zero with compact closure and orthogonal $G_x$-action.

\item $\varphi_x : U_x \ra X$ is a $G_x$-equivariant embedding such that $\varphi_x(0) = x$ and the induced map
\begin{align*}
G \x_{G_x} U_x & \ra \quad X \\
[g,u] \quad & \mto g . \varphi_x(u)
\end{align*}
is a $G$-equivariant, orientation preserving diffeomorphism onto a $G$-invariant open neighbourhoud of $x$. Here $g \in G_x$ acts on $G$ by right-multiplication with $g^{-1}$ and $G$ acts on $G \x_{G_x} U_x$ by left-multiplication on the $G$-factor.

\end{itemize}

Choose finitely many local slices $(U_i,\varphi_i,G_i)$ such that the open sets $G.\varphi_i(U_i)$ cover $X$ and also choose a partition of unity $(\rho_i)$ by $G$-invariant functions $\rho_i$ subordinate to this cover. Define a map
$$
\int_{X/G} \;:\; \Omega_{G-\mathrm{bas}}^*(X) \ra \R \;\cong\; \Omega^*(\pt)
$$
by setting
\begin{equation} \label{def:G_inv_int}
\int_{X/G} \alpha \;:=\; \sum_{i} \frac{1}{|G_i|} \int_{U_i} \varphi_i^*( \rho_i \alpha ).
\end{equation}
It is shown by Cieliebak and Salamon \cite[Proposition 4.2]{CS} that the integral $\int_{X/G}$ does not depend on the choices for the local slices and the partition of unity. Furthermore Stokes' formula still holds and thus the integral descends to $H_G^*(X)$. The integral vanishes unless $\deg(\alpha) = \dim(X) - \dim(G)$. Instead of requiring $X$ to be compact one can equally well restrict to forms with compact support.

\subsection{$H$-invariant push-forward}

\label{sec:H_invariant_push_forward}

Suppose a normal subgroup $H \lhd G$ of positive dimension acts locally freely on a compact oriented $G$-manifold $X$. Then the equivariant push-forward $\pi^G_*$ of the projection $\pi : X \ra \{ \pt \}$ vanishes: By proposition \ref{prop:gen_Cartan} any $G$-equivariant class on $X$ can be represented by an $H$-basic $G$-equivariant differential form. Any such object vanishes upon integration over $X$: By lemma \ref{lem:basic_forms} it has a form part of degree at most $\dim(X) - \dim(H) < \dim(X)$. This shows that in presence of a regularly acting subgroup $H$ the $G$-equivariant push-forward is not the correct operation. One should quotient out any such subgroup before pushing forward, as it is done in the case of $G$-invariant integration.

We will restrict our construction to the case of fiber bundles because this is all we need in our applications and we do not want to argue with Thom forms at this point. But of course the extension to general maps works just as explained by Atiyah and Bott \cite{AB}.

Let $f : X \ra Y$ be an oriented $G$-equivariant fiber bundle. Denote by $\Omega_0^*(X)$ differential forms on $X$ with compact support and by $\Omega_\mathrm{vc}^*(X)$ those with vertically (i.e.~fiberwise) compact support.

\begin{prop} \label{prop:invariant_integration}
Suppose that a closed oriented normal subgroup $H \lhd G$ acts locally freely on $X$ and trivially on $Y$. Denote by $K := G / H$ the quotient group. Then there exists a linear map
$$
\left( f / H \right)_* \;:\; \Omega_{\mathrm{vc},H-\mathrm{bas}}^*(X) \ra \Omega^{*-q}(Y)
$$
with $q = \dim(X) - \dim(Y) - \dim(H)$ which satisfies the following properties.

\begin{enumerate}

\item The map $(f/H)_*$ induces a $\mathrm{d}_K$-chain map on the $K$-Cartan complexes $C_K( \Omega_{\mathrm{vc},H-\mathrm{bas}}^*(X) ) \ra  C_K( \Omega^{*-q}(Y) )$. The induced map on homology is denoted by
$$
\left( f / H \right)^G_* \;:\; H \left( C_K \left( \Omega_{\mathrm{vc},H-\mathrm{bas}}^*(X) \right) , \mathrm{d}_K \right) \ra H_K^*(Y)
$$
and is called \emph{$H$-invariant push-forward}.

\item For the trivial group $H = \{ e \} \subset G$ we recover the equivariant push-forward of Atiyah and Bott: $( f / \{ e \} )^G_* = f^G_*$.

\item For $Y = \{ \pt \}$ and $H = G$ we recover the $G$-invariant integration of Cieliebak and Salamon: $( f / G )^G_* = \int_{X/G}$.

\item For $\alpha \in \Omega_{0,H-\mathrm{bas}}^*(X)$ and the projections $\pi_X,\pi_Y : X,Y \ra \{ \pt \}$ we have the functoriality property
$$
\left( \pi_X / H \right)^G_* (\alpha) \;=\; \left( \pi_Y \right)^K_* \circ \left( f / H \right)^G_* (\alpha).
$$
If the remaining $K$-action on $Y$ is regular then
$$
\left( \pi_X / G \right)^G_* (\alpha) \;=\; \left( \pi_Y / K \right)^K_* \circ \left( f / H \right)^G_* (\alpha).
$$

\item For all $\alpha \in \Omega_{0,H-\mathrm{bas}}^*(X)$ and $\beta \in \Omega^*(Y)$ we have
$$
\left( f / H \right)_* \left( \alpha \wedge f^*\beta \right) \;=\; \left( f / H \right)_* \alpha \wedge \beta,
$$
i.~e.~$(f/H)_*$ is a homomorphism of $\Omega^*(Y)$-modules.

\end{enumerate}

\end{prop}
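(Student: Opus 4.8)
The plan is to construct $(f/H)_*$ by adapting the slice construction of $G$-invariant integration from Section \ref{sec:G_invariant_integration} to the fibration $f$, to establish independence of choices by reducing to \cite[Proposition 4.2]{CS} fibrewise, and then to verify the five listed properties one at a time. The key structural observation to use throughout is that, since $H$ acts trivially on $Y$ and $f$ is $G$-equivariant, the map $f$ is $H$-invariant; hence each fibre $f^{-1}(y)$ is an $H$-invariant submanifold on which $H$ still acts locally freely, and $(f/H)_*$ should be fibrewise $H$-invariant integration over $f^{-1}(y)/H$ assembled into a form on $Y$.

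For the construction I would choose finitely many slice triples $(U_i,\varphi_i,H_i)$ for the \emph{$H$-action} exactly as in Section \ref{sec:G_invariant_integration}, with $U_i \subset \R^m$ and $m = \dim(X) - \dim(H)$, but arranged in addition so that $f \circ \varphi_i : U_i \ra Y$ is a submersion onto an open subset of $Y$. This is possible because $f$ collapses precisely the $H$-orbit directions, so after shrinking each slice maps submersively to $Y$ with $q$-dimensional fibres. Choosing an $H$-invariant partition of unity $(\rho_i)$ subordinate to the cover $\{H.\varphi_i(U_i)\}$, I would set, for $\alpha \in \Omega_{\mathrm{vc},H-\mathrm{bas}}^*(X)$,
\begin{equation*}
\left( f / H \right)_* \alpha \;:=\; \sum_i \frac{1}{|H_i|} \left( f \circ \varphi_i \right)_* \varphi_i^*( \rho_i \alpha ),
\end{equation*}
where $(f\circ\varphi_i)_*$ is ordinary integration along the $q$-dimensional fibres of the submersion $f\circ\varphi_i$. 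Vertically compact support makes each fibre integral a well-defined form on $Y$, and the $H$-basic hypothesis together with Lemma \ref{lem:basic_forms} controls the degrees. Independence of the choices is, for each fixed $y$, exactly the invariant integration of \cite{CS} for the locally free $H$-action on the fibre $f^{-1}(y)$, which is choice-independent by \cite[Proposition 4.2]{CS}; smoothness in $y$ is inherited from fibre integration.

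It then remains to check the properties. Properties 2 and 3 are the degenerate cases $H=\{e\}$ and $Y=\{\pt\}$ of the displayed formula, in which it specialises to ordinary fibre integration (hence to the Atiyah--Bott push-forward by Remark \ref{rem:equivariant_push_forward_in_Cartan_model}) and to formula \ref{def:G_inv_int} respectively. Property 5 is the projection formula: since $f^*\beta$ is constant along the fibres it pulls out of each $(f\circ\varphi_i)_*$. Property 4 is integration in stages, both the statement over $\{\pt\}$ and, under the regularity assumption, the decomposition of $G$-invariant integration as $H$-integration followed by $K$-integration; the weights $1/|H_i|$ combine correctly once the two families of slices are refined to a common one. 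For Property 1 I would write $\mathrm{d}_K = 1 \otimes \mathrm{d} - \sum_a x^a \otimes \iota_{\theta_a}$ in a basis $(\theta_a)$ of $\LK$ and treat the two summands separately: commutation with $1\otimes\mathrm{d}$ is Stokes' theorem, the vertically compact support killing the boundary term; commutation with each $\iota_{\theta_a}$ uses that a lift $\overline{\theta}_a \in \LG$ generates a field $X_{\overline{\theta}_a}$ on $X$ that is $f$-related to $X_{\theta_a}$ on $Y$, so that fibre integration intertwines the two contractions.

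The main obstacle is this last point, the chain-map property for $\mathrm{d}_K$ rather than merely for $\mathrm{d}$. Contraction with $X_{\overline{\theta}_a}$ must be split into its vertical and horizontal parts relative to $f$: the horizontal part reproduces $\iota_{\theta_a}$ after pushing forward, while one must show the vertical part contributes nothing after fibre integration. I would establish this by mimicking, slice by slice, the proof that ordinary equivariant fibre integration is a $\mathrm{d}_K$-chain map (Remark \ref{rem:equivariant_push_forward_in_Cartan_model}, following Guillemin and Sternberg) for the residual $K$-action, invoking Lemma \ref{lem:basic_forms}(1) to replace $\mathrm{d}_G$ by $\mathrm{d}_K$ on basic forms; the weights $1/|H_i|$ and the partition of unity enter only linearly and do not interfere with the pointwise contraction and Stokes computations. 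A secondary subtlety, worth flagging but routine, is the simultaneous compatibility of the slices with both the locally free $H$-action and the fibration $f$, and the verification that the weighted fibre integrals patch to a single globally defined smooth form on $Y$.
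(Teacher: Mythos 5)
Your proposal is correct and follows essentially the same route as the paper: define $(f/H)_*$ by weighted fibre integration over local $H$-slices, reduce independence of choices and property 3 to the $G$-invariant integration of Cieliebak--Salamon, get property 1 from the fact that ordinary fibre integration commutes with $\mathrm{d}$ and with contractions by projectable vector fields, property 4 from Fubini together with the multiplicativity of isotropy orders, and property 5 from the projection formula. The only (cosmetic) difference is that the paper first trivializes the bundle over a chart and takes slices inside a single fibre $X_y$, so that the slices are literally products $V_y \times U_i$ and the fibre integration is a coordinate projection, whereas you take slices in the total space and impose the submersion condition on $f\circ\varphi_i$; also note that for merely vertically compact support one needs a locally finite, not finite, family of slices.
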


\begin{rem}
If $X$ is compact and $Y = \{\pt\}$ then we also write the $H$-invariant push-forward $(\pi/H)^G_*$ as
$$
\int_{X/H} \;:\; H_G^*(X) \ra H_K^*(\pt) = S(\LK^*)^K.
$$
\end{rem}

\begin{rem}
We usually drop the superscript $G$ if the big group with respect to which we do the equivariant theory is clear from the context.
\end{rem}

\begin{proof}
We first explain the situation in the Borel model. For this we have to assume that $H$ acts freely on $X$. Then we can write the Borel construction for $X$ as $X_G = (X/H)_K$. Since $H$ acts trivially on $Y$ the $G$-equivariant map $f$ descends to a $K$-equivariant map $f/H : X/H \ra Y$ and induces a map $(f/H)^G : X_G \ra Y_K$ between the Borel constructions. We then take the push-forward of this map in cohomology to obtain $(f/H)^G_*$. It shifts the degree by $\dim(X/H) - \dim(Y)$, which coincides with the formula in the proposition.

We now define the $H$-invariant push-forward for $f : X \ra Y = \{ \pt \}$ in the Cartan model. Forget about the $G$ action and consider the $H$-invariant integral
$$
( f / H )_* \;:=\; \int_{X/H} : \Omega_{0,H-\mathrm{bas}}^*(X) \ra \Omega^*( \pt )
$$
as in \ref{def:G_inv_int}. Given one collection of local slices $(U_i,\varphi_i,H_i)$ for the $H$-action with partition functions $(\rho_i)$ and any element $g \in G$ one obtains another admissible collection of slices and partition functions by
$$
\left(\, U_i \,,\, g \varphi_i \,,\, g H_i g^{-1} \,\right) \quad \mathrm{and} \quad ( g^*\rho_i ).
$$
This shows that integration $\int_{X/H}$ is $G$-invariant. In order to extend a map to the $K$-Cartan models this map has to be $K$-equivariant. So since $K$ acts trivially on $\Omega^*( \pt )$ this extension works. Now
$$
\left( C_K \left( \Omega^*( \pt ) \right) , \mathrm{d}_K \right) = \left( S(\LK^*)^K , 0 \right),
$$
hence it suffices to show that the integral $\int_{X/H}$ of $\mathrm{d}_K$-exact forms vanishes. This follows by lemma \ref{lem:basic_forms} and Stokes' formula. Thus property $3$ is satisfied.

Now consider a fiber bundle $f : X \ra Y$ with $H$ acting trivially on the base. For a point $y \in Y$ pick a chart $\psi_y : V_y \ra Y$ with $V_y \subset \R^n$, $n:=\dim(Y)$, an open neighbourhood of zero with compact closure, and an $H$-equivariant trivialization of $\psi_y^* X \cong V_y \x X_y$ with $H$ acting trivially on $V_y$ and by the induced action on the fiber $X_y \subset X$ over $y$. Choose local slices $(U_i,\varphi_i,H_i)$ and partition functions $(\rho_i)$ for the regular $H$-action on $X_y$ as before. These give rise to local slices
$$
\left(\, V_y \x U_i \,,\, \psi_y \x \varphi_i \,,\, H_i \,\right)
$$
for the $H$-action on $X$ covering all of $f^{-1}(\psi_y(V_y))$. Recall that $U_i \subset \R^m$ with $m = \dim(X_y) - \dim(H) = \dim(X) - \dim(Y) - \dim(H) = q$. Denote integration over the $\R^m$-coordinates of $V_y \x U_i \subset \R^n \x \R^m$ by
$$
{\pi_{(y,i)}}_* \;:\; \Omega^*(V_y \x U_i) \ra \Omega^{*-m}(V_y)
$$
and define the map
$$
\left( f / H \right)_* \;:\; \Omega_{\mathrm{vc},H-\mathrm{bas}}^*(X) \ra \Omega^{*-m}(Y)
$$
for forms $\alpha$ with support in $f^{-1}(\psi_y(V_y))$ by
$$
\left( f / H \right)_* \alpha \;:=\; \sum_{i} \frac{1}{|H_i|} {\pi_{(y,i)}}_* \left[ \left( \psi_y \x \varphi_i \right)^* ( \rho_i \alpha ) \right].
$$
Extend this to all of $\Omega_{\mathrm{vc},H-\mathrm{bas}}^*(X)$ by choosing a locally finite atlas $(\psi_{y_j})$ for $Y$, trivializing $X$. Property $5$ now follows immediately from this definition and the corresponding property for the fiber integrals ${\pi_{(y_j,i)}}_*$. Note that $f^*\beta$ is $H$-basic for every $\beta \in \Omega^*(Y)$ since $H$ acts trivially on $Y$ and $f$ is equivariant.

Now as before the $G$-action on $X$ moves the above $H$-slices around. This shows that the map $(f/H)_*$ is $K$-equivariant. It commutes with the ordinary differential $\mathrm{d}$ and the contractions $\iota_\xi$, because ordinary fiber integration does (see for example Bott and Tu \cite[6.14.1]{BT} and Guillemin Sternberg \cite[10.5]{GS}). This proves part $1$. Part $2$ follows by remark \ref{rem:equivariant_push_forward_in_Cartan_model}. The first functoriality property in part $4$ is a consequence of Fubini's theorem. For the second identity first note that regularity of the remaining $K$-action on $Y$ together with regularity of the $H$-action on $X$ implies that the whole group $G$ acts locally freely on $X$. Hence all operations are well defined. Now for a point $x \in X$ the orders of the involved isotropy subgroups satisfy
$$
|G_x| \;=\; |H_x| \cdot |K_{f(x)}|
$$
and the second identitiy also follows by applying Fubini's theorem.
\end{proof}

\subsection{Example}

\label{sec:example_invariant_integration}

The first invariant integrals that we can compute by hand without the technique of localization come from torus actions on $S^{2n-1}$ that factor through a weighted $S^1$-action. Let $T$ be a torus with Lie algebra $\LT$. We denote the integral lattice in $\LT$ by
$$
\Lambda \;:=\; \left\{ \xi \in \LT \;|\; \exp \left( \xi \right) = \unit \in T \right\}
$$
and its dual by
$$
\Lambda^* \;:=\; \left\{ w \in \LT^* \;|\; \forall \xi \in \Lambda : \left\langle w,\xi \right\rangle \in \Z \right\}.
$$
Here $\langle \;,\; \rangle$ denotes the pairing between $\LT$ and its dual space $\LT^*$. We consider the $T$-action on the unit sphere $S^{2n-1} \subset \C^n$ that is induced by the representation
\begin{equation} \label{def:torus_action}
\rho \;:\; T \ra \mathrm{U}(n) \;;\; \exp(\xi) \mto \mathrm{diag}\left( e^{-2 \pi i \langle w_j , \xi \rangle} \right)_{j = 1 , \ldots , n}
\end{equation}
for a given collection of weight vectors $w_j \in \Lambda^* \setminus \{0\}$. We assume in this section that the weights $w_j$ are all positive multiples of one element $w \in \Lambda^*$. We pick a primitive element $e_1 \in \Lambda$ such that $\langle w , e_1 \rangle \ne 0$ and denote by $T_1 \subset T$ the subtorus generated by $e_1$ and by $\LT_1 \subset \LT$ its Lie algebra. The quotient group and its Lie algebra are denoted by $T_0 := T/T_1$ and $\LT_0 := \LT/\LT_1$ respectively. We can identify
$$
\LT_0^* \;\cong\; \left\{ x \in \LT^* \;|\; \langle x , e_1 \rangle = 0 \right\}.
$$
Now $T_1$ acts locally freely on $S^{2n-1}$ with isotropy group $\Z / \langle w , e_1 \rangle \Z$ and we want to compute the $T_1$-invariant push-forward
$$
\left( \pi / T_1 \right)_* \;:\; H_T^*\left(S^{2n-1}\right) \ra H_{T_0}^*(\pt) \cong S\left( \LT_0^* \right)
$$
of the projection $\pi : S^{2n-1} \ra \{\pt\}$. Suppose we are given elements $x_j \in \LT^*$ for $j = 1 , \ldots , m$. Then the product
$$
\mathbbm{x} \;:=\; \prod_{j = 1}^m x_j \in S(\LT^*)
$$
represents a class $[\mathbbm{x}] \in H_T^{2m}(S^{2n-1})$ if we view $\mathbbm{x}$ as a $\mathrm{d}_T$-closed $T$-equivariant differential form. In fact it follows from Kirwan's theorem (Theorem \ref{thm:Kirwan}) that every class in $H_T^*(S^{2n-1})$ can be represented in such a way. Hence our goal is to compute $( \pi / T_1 )_*[\mathbbm{x}]$.

\begin{prop} \label{prop:T_1_invariant_integration}
For any $\xi \in \LT$ we have
\begin{equation} \label{eqn:T_1_invariant_integration}
\left( \pi / T_1 \right)_* \left[ \mathbbm{x} \right] (\xi) \;=\; \frac{1}{2 \pi i} \oint \frac{ \prod_{j = 1}^m \langle x_j , \xi + z e_1 \rangle }{ \prod_{j = 1}^n \langle w_j , \xi + z e_1 \rangle } \; \dz \; .
\end{equation}
The integral is around a circle in the complex plane enclosing all poles of the integrand.
\end{prop}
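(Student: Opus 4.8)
The plan is to reduce the statement to a purely algebraic computation in $H_{T_0}^*\left(S^{2n-1}/T_1\right)$ via the Cartan map, and then to match the resulting linear functional with the residue. Write $\nu_j := \langle w_j, e_1\rangle$, so that $e_1$ acts on the $j$-th factor of $\C^n$ with weight $\nu_j$; since all $w_j$ are positive multiples of $w$ and $\langle w, e_1\rangle \ne 0$, the $\nu_j$ are nonzero of a common sign and $T_1$ acts locally freely. First I would fix a $T$-equivariant $T_1$-connection $A = A^0 e_1$ in the sense of Definition \ref{def:G_equivariant_H_connection}, for instance the normalised contact form of the round metric, so that $A^0(X_{e_1}) = 1$, with curvature $F^0 := \mathrm{d}A^0$. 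Let $u \in H_{T_0}^2\left(S^{2n-1}/T_1\right) = H_T^2(S^{2n-1})$ (the identification being Proposition \ref{prop:gen_Cartan}) be the class of its equivariant curvature. A direct evaluation of the Cartan map \ref{def:c_A} on a generator $x \in \LT^*$ gives $c_A(x) = \langle x, e_1\rangle F^0 + \langle x,\xi\rangle - \langle x, e_1\rangle A^0(X_\xi)$; choosing the lift of $\xi \in \LT_0$ inside a fixed complement of $\LT_1$ one checks that this represents the class $\langle x, \xi + u\, e_1\rangle = \langle x,\xi\rangle + \langle x, e_1\rangle u$. Since $c_A$ is multiplicative on cohomology, $[\mathbbm{x}]$ is then represented by $\prod_{j=1}^m \langle x_j, \xi + u\, e_1\rangle$, a polynomial in the single degree-$2$ class $u$ with coefficients in $S(\LT_0^*)$.

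Next I would identify the one relation these classes satisfy. The tautological inclusion $S^{2n-1}\hookrightarrow\C^n\setminus\{0\}$ is a nowhere-zero $T$-equivariant section of the trivial bundle $\C^n$, so the $T$-equivariant Euler class $\prod_{j=1}^n \langle w_j, \cdot\rangle$ of $\C^n$ restricts to zero in $H_T^*(S^{2n-1})$; in the $u$-description this reads $\prod_{j=1}^n\left(\langle w_j,\xi\rangle + \nu_j u\right) = 0$. Together with the surjectivity of $S(\LT^*) \to H_T^*(S^{2n-1})$ recorded before the proposition (Theorem \ref{thm:Kirwan}), this shows that $H_{T_0}^*\left(S^{2n-1}/T_1\right)$ is generated over $S(\LT_0^*)$ by $1, u, \dots, u^{n-1}$, with higher powers reduced through the relation.

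Now I would pin down the push-forward. By the $\Omega^*$-module property of Proposition \ref{prop:invariant_integration} the map $(\pi/T_1)_* = \int_{S^{2n-1}/T_1}$ is $S(\LT_0^*)$-linear, and it lowers cohomological degree by $2(n-1) = \dim\left(S^{2n-1}/T_1\right)$. Hence it annihilates $u^k$ for $k \le n-2$ for degree reasons, sends $u^{n-1}$ to the constant $\int_{S^{2n-1}/T_1} u^{n-1} = 1/\prod_j \nu_j$, and evaluates $u^k$ for $k \ge n$ through the relation above. The residue operation on the right-hand side of \ref{eqn:T_1_invariant_integration}, viewed as the $S(\LT_0^*)$-linear functional $g(z) \mapsto \frac{1}{2\pi i}\oint \frac{g(z)}{\prod_j(\langle w_j,\xi\rangle + \nu_j z)}\,\dz$, has exactly these properties: it kills $z^k$ for $k \le n-2$ (the integrand then decays faster than $1/z$), sends $z^{n-1}$ to $1/\prod_j\nu_j$, and annihilates every multiple of $\prod_j(\langle w_j,\xi\rangle + \nu_j z)$ (the integrand becomes a polynomial, whose contour integral vanishes). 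Since both functionals are $S(\LT_0^*)$-linear, agree on $1, u, \dots, u^{n-1}$, and respect the same relation, they coincide; applying the common functional to $g(z) = \prod_{j=1}^m\langle x_j, \xi + z\, e_1\rangle$ yields the claimed identity.

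The point requiring genuine work is the normalisation $\int_{S^{2n-1}/T_1} u^{n-1} = 1/\prod_j\nu_j$: this orbifold degree is where the weights, and the isotropy $\Z/\langle w, e_1\rangle\Z$ hidden in the factors $1/|H_i|$ of the slice definition \ref{def:G_inv_int}, actually enter, and I would obtain it by a direct integration of $(F^0)^{n-1}$ over a fundamental domain, equivalently over the moment simplex $\{\sum_j s_j = 1,\ s_j \ge 0\}$ in the coordinates $s_j = |z_j|^2$. The secondary technical point is the verification that $c_A(x)$ really represents $\langle x, \xi + u\, e_1\rangle$, which amounts to choosing the lift $\LT_0 \hookrightarrow \LT$ compatibly with the splitting of $x$ along $e_1$. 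As a consistency check, for $n = 1$ the quotient $S^1/T_1$ is a single point with isotropy $\Z/\langle w, e_1\rangle\Z$, and both sides of \ref{eqn:T_1_invariant_integration} reduce to $1/\langle w, e_1\rangle$.
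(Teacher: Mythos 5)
Your argument is correct, and it shares the two essential computational ingredients with the paper's proof: the explicit $T$-equivariant $T_1$-connection $A = \frac{\alpha_\ell}{\langle w,e_1\rangle}\otimes e_1$ together with the evaluation of the Cartan map on $\mathbbm{x}$, and the normalization $\int_{S^{2n-1}/T_1}u^{n-1} = 1/\prod_j\nu_j$, which packages the degree $1/\prod_j\ell_j$ of the weighted projective space and the generic isotropy $\Z/\langle w,e_1\rangle\Z$. Where you diverge is in how the residue is matched. The paper uses functoriality of the invariant push-forward to factor $(\pi/T_1)_*$ through $H_{T_0}^*(\CP^{n-1}_\ell)$, then simply extracts the coefficient of $(d\alpha_\ell)^{n-1}$ from $\mathbbm{x}_A$; in the colinear case the substitution $z' = \langle w,\xi+ze_1\rangle$ turns the contour integral into a single pole of order $n$ at $z'=0$, so the residue \emph{is} that coefficient and nothing more needs to be said. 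You instead invoke the presentation $H_T^*(S^{2n-1}) = S(\LT^*)/(\prod_j w_j)$ (via the tautological nowhere-zero section and Kirwan surjectivity) and characterize both $(\pi/T_1)_*$ and the contour integral as the unique linear functional vanishing on the ideal $\bigl(\prod_j(\langle w_j,\xi\rangle+\nu_jz)\bigr)$ and on $1,z,\dots,z^{n-2}$, with value $1/\prod_j\nu_j$ on $z^{n-1}$; since $\C[z]$ splits as polynomials of degree $<n$ plus that ideal, this pins both functionals down. Your version is slightly longer to set up but makes explicit why the answer only depends on the class of $\mathbbm{x}$ modulo the Euler-class relation, which is exactly the structural fact that survives in the non-colinear generalization of Proposition \ref{prop:example_localization}, where the poles separate and the coefficient-extraction picture breaks down.

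Two small caveats. First, the normalization $\int_{S^{2n-1}/T_1}u^{n-1}=1/\prod_j\nu_j$ is the one genuinely quantitative input and you only sketch it; this is the same point the paper itself defers to \cite{JW}, so it is not a gap relative to the paper, but the sign and the factor $1/|\langle w,e_1\rangle|$ from the slice definition \ref{def:G_inv_int} do need the careful bookkeeping you indicate (your $n=1$ check is a good control). Second, when you say the classes $\langle x,\xi+u\,e_1\rangle$ live in $C_K(\Omega_{T_1-\mathrm{bas}}^*)$, note that $u$ itself has polynomial part $-w/\langle w,e_1\rangle\notin S(\LT_0^*)$; the honest statement is $c_A(x)=\bar{x}+\langle x,e_1\rangle\,\frac{\mathrm{d}\alpha_\ell}{\langle w,e_1\rangle}$ with $\bar{x}:=x-\frac{\langle x,e_1\rangle}{\langle w,e_1\rangle}w\in\LT_0^*$, and your functional comparison should be read pointwise in $\xi$, exactly as in Remark \ref{rem:z}. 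Neither point affects the validity of the argument.
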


\begin{proof}
We write $w_j = \ell_j \cdot w$ with positive integers $\ell_j$. By changing $w$ we can arrange that the $\ell_j$ do not have a common divisor different from $1$. Thus the $T$-action factors through a free $S^1$-action with weights $\ell := \left( \ell_1 , \ldots , \ell_n \right)$. The quotient of this free $S^1$-action is the weighted projective space $\CP^{n-1}_\ell$. By the functoriality property of invariant push-forward we obtain $( \pi / T_1 )_*$ as the composition of $T_1$-invariant push-forward $H_T^*(S^{2n-1}) \ra H_{T_0}^*(\CP^{n-1}_\ell)$ with $T_0$-equivariant integration $H_{T_0}^*(\CP^{n-1}_\ell) \ra H_{T_0}^*(\pt)$. Note that $T_0$ acts trivially on $\CP^{n-1}_\ell$, whence $H_{T_0}^*(\CP^{n-1}_\ell) \cong S(\LT_0^*) \otimes H^*(\CP^{n-1}_\ell)$ and this last $T_0$-equivariant integration is just evaluation on the fundamental cycle of $\CP^{n-1}_\ell$ in the second factor.

We need a $T$-equivariant $T_1$-connection $A$ on $S^{2n-1}$. Let $\alpha_\ell \in \Omega^1(S^{2n-1})$ be a connection form on the principal $S^1$-bundle $S^{2n-1} \ra \CP^{n-1}_\ell$. Here we identify $\mathrm{Lie}(S^1) \cong \R$ such that for $x \in \R$ we have $\exp(x) = e^{2 \pi i x}$. It follows that
\begin{equation} \label{def:T_equivariant_T_1_connection}
A := \frac{ \alpha_\ell }{ \langle w , e_1 \rangle } \otimes e_1
\end{equation}
is such a $T$-equivariant $T_1$-connection.

A short computation now shows that the image $\mathbbm{x}_A$ of $\mathbbm{x}$ under the Cartan map $c_A$ is given by
\begin{equation} \label{eqn:xx_A}
\mathbbm{x}_A \;=\; \prod_{j = 1}^m \left( x_j - \langle x_j , e_1 \rangle \frac{ \mathrm{d}\alpha_\ell - w }{ \langle w , e_1 \rangle } \right).
\end{equation}
The first step of integration $H_T^*(S^{2n-1}) \ra H_{T_0}^*(\CP^{n-1}_\ell)$ now simply consists of dividing by $\langle w , e_1 \rangle$ and interpreting $\mathrm{d}\alpha_\ell$ as a form on $\CP^{n-1}_\ell$. Evaluation on the fundamental cycle of $\CP^{n-1}_\ell$ in the second step then amounts to expanding the above expression for $\mathbbm{x}_A$ as a polynomial in $\mathrm{d}\alpha_\ell$ and picking the coefficient in front of ${\mathrm{d}\alpha_\ell}^{n-1}$. On can express this as taking the residue of a certain rational complex function. We leave the details to the reader and refer to \cite{JW} for the detailed computation.
\end{proof}

\begin{rem}
Formula \ref{eqn:T_1_invariant_integration} remains true even without the assumption of this section that the weights $w_j$ are colinear. To prove this we need the technique of localization. So we will return to this in the example at the end of the next section.
\end{rem}

\begin{rem} \label{rem:z}
The above computations yield the following recipe how to evaluate $( \pi / T_1 )_*$ on any $T$-equivariant differential form $\alpha \in \Omega_T^*(S^{2n-1})$ that is $T_1$-basic and has the following polynomial presentation
$$
\alpha \;=\; \sum_{k \ge 0} r_k \cdot \left( \frac{ \mathrm{d}\alpha_\ell - w }{ \langle w , e_1 \rangle } \right)^k
$$
for some $r_k \in S(\LT^*)$ and with $\alpha_\ell$ from \ref{def:T_equivariant_T_1_connection}. Note that the property of being $T_1$-basic imposes constraints on the $r_k$ in terms of $w$ and $e_1$. But we do not need to know them explicitly. It suffices to know that $\alpha = \alpha_A$. So in our computations we can replace $\mathbbm{x}_A$ by the above sum, because by \ref{eqn:xx_A} it is of exactly this form. We obtain the generalized formula
\begin{eqnarray} \label{eqn:gen_T_1_invariant_integration}
\left( \pi / T_1 \right)_* \left[ \alpha \right] (\xi) & = & \frac{1}{2 \pi i} \oint \frac{ \sum_{k \ge 0} r_k(\xi) \cdot z^k }{ \prod_{j = 1}^n \langle w_j , \xi + z e_1 \rangle } \; \dz \nonumber\\
& = & \Res_{z = - \frac{ \langle w , \xi \rangle }{ \langle w , e_1 \rangle } } \left\{ \frac{ \sum_{k \ge 0} r_k(\xi) \cdot z^k }{ \prod_{j = 1}^n \langle w_j , \xi + ze_1 \rangle } \right\},
\end{eqnarray}
that will be used later on.
\end{rem}

\section{Localization}

\label{chap:localization}

The abstract localization theorem of Atiyah and Bott in \cite{AB} gives rise to a formula for integrating equivariant $n$-forms over an $n$-dimensional manifold $X$ that is equipped with an action of a torus $T$. In the notation of section \ref{chap:invariant_integration} this integration is the restriction of the $T$-equivariant push-forward of $X \ra \{ \pt \}$ to degree $n$. The integration formula expresses such an integral in terms of data associated to the fixed point set of the $T$-action.

This fits neatly with our observation at the beginning of section \ref{sec:H_invariant_push_forward}: Such an integral vanishes as soon as there is a subgroup $H$ of positive dimension acting locally freely, i.e.~if there are no fixed points. But in that case $H$-invariant integration is defined and one expects a generalized localization formula for this adapted integration operation. In fact if $H$ acts freely one can lift the localization formula for the $T/H$-action on $X/H$ to obtain a formula for $H$-invariant integration over $X$. Since localization only works in a truely equivariant context, we have to assume that $T/H$ still has positive dimension, i.~e.~the codimension of $H$ in $T$ has to be at least one.

We will generalize the Atiyah-Bott localization formula to $H$-invariant push-forward in the case of a regular $H$-action and call the result \emph{relative Atiyah-Bott localization}. Before we can do this we start with some general facts and constructions concerning torus actions. Then we review the construction and properties of equivariant Thom forms. We finish the section with the computation of an explicit example that generalizes the one in section \ref{sec:example_invariant_integration} and that will be used later on.

\subsection{Torus actions}

\label{sec:torus_actions}

Suppose the $k$-dimensional torus $T$ acts on a closed $n$-dimensional manifold $X$. We denote the infinitesimal action at a point $p \in X$ by
$$
L_p \;:\; \LT \ra T_pX \;;\; \xi \mto X_\xi(p),
$$
with the infinitesimal vector field $X_\xi$ defined as in \ref{def:X_xi}, set $\LT_p := \ker \left( L_p \right) \subset \LT$ and denote the isotropy subgroup at a point $p \in X$ by $S_p \subset T$. Note that $\LT_p = \mathrm{Lie}(S_p)$.

Since $T$ is abelian there is only a finite number of isotropy groups $S_p$. This follows by the slice theorem (see for example tom Dieck \cite[Theorem 5.11]{tomD}). Hence the set of all subspaces in $\LT$ that occur as $\LT_p$ for some $p \in X$ is also finite. Let $k_p := \dim(\LT_p)$ and set
$$
k_0 \;:=\; \max_{p \in X} \, k_p \quad \mathrm{and} \quad k_1 \;:=\; k - k_0.
$$
We fix a $k_1$-dimensional subtorus $T_1 \subset T$ such that its Lie algebra $\LT_1 \subset \LT$ satisfies
$$
\LT_1 \cap \LT_p \;=\; \left\{ 0 \right\} \quad \mbox{for all} \quad p \in X.
$$
So $T_1$ is a subtorus of maximal dimension that acts locally freely on $X$ and $T_1$-invariant integration is the operation that we want to study. Invariant integration with respect to any larger subgroup is not possible and invariant integration with respect to any smaller subgroup would be zero. Of course there are many possible choices for $T_1 \subset T$ with the above property.

\begin{dfn} \label{def:relative_fixedpoint_set}
Given any closed subgroup $H \subset T$ we define the \emph{$H$-relative fixed point set} $F_H$ as
$$
F_H \;:=\; \left\{ p \in X \;|\;  L_p(\LT) = L_p(\LH) \right\}.
$$
Here $\LH \subset \LT$ denotes the Lie algebra of $H$.
\end{dfn}

Elements of $F_H$ represent fixed points of the $T / H$-action on the quotient $X / H$. So $F_H$ is the subset to which we would like to localize $H$-invariant integration. One prerequisite is the following

\begin{prop} \label{prop:F_H_submanifold}
Suppose the closed subgroup $H \subset T$ acts locally freely on $X$. Then the set $F_H \subset X$ is a smooth (but not necessarily connected) submanifold.
\end{prop}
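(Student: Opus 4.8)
The plan is to reduce the defining condition to a statement about the rank of the infinitesimal action and then to read off local linear models for $F_H$ from the slice theorem, exactly as in the classical argument that the fixed point set of a torus action is a submanifold. First I would rewrite $F_H$ in terms of ranks. Since $H$ acts locally freely, $L_p|_{\LH}$ is injective, so $\dim L_p(\LH) = \dim H$ for every $p \in X$. As $\LH \subseteq \LT$ we always have $L_p(\LH) \subseteq L_p(\LT)$, and therefore the condition $L_p(\LT) = L_p(\LH)$ is equivalent to $\dim L_p(\LT) = \dim H$, i.e.~to $\dim \LT_p = k - \dim H$, i.e.~to $\LT = \LT_p \oplus \LH$. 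Thus $F_H$ is exactly the locus on which $L_p$ has the smallest rank its rank can take, namely $\dim H$, equivalently the locus on which the isotropy Lie algebra $\LT_p$ is a complement of $\LH$ in $\LT$. (If no such point exists, $F_H = \emptyset$ and there is nothing to prove.)

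Next, fix $p_0 \in F_H$ with isotropy group $S := S_{p_0}$ and set $V_0 := \LT_{p_0} = \mathrm{Lie}(S)$, so that $\dim V_0 = k - \dim H$. I would apply the slice theorem (tom Dieck \cite{tomD}) to obtain a $T$-invariant neighbourhood of the orbit $T \cdot p_0$ that is $T$-equivariantly diffeomorphic to $T \times_S W$, where $W$ is the slice, carrying a linear $S$-action. The crucial point is that $T$ is abelian: the isotropy group of a point $[t,w]$ is $t S_w t^{-1} = S_w \subseteq S$, independent of $t$, where $S_w$ is the stabiliser of $w$ in the linear $S$-action on $W$. Consequently $\LT_{[t,w]} = \mathrm{Lie}(S_w) \subseteq V_0$, and by the reformulation above $[t,w] \in F_H$ if and only if $\dim \mathrm{Lie}(S_w) = \dim V_0$, i.e.~$\mathrm{Lie}(S_w) = V_0$, i.e.~$S^0 \subseteq S_w$ (with $S^0$ the identity component of $S$), i.e.~$w$ lies in the fixed subspace $W^{S^0}$ of the linear $S^0$-action.

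Finally I would assemble the local model. The fixed subspace $W^{S^0}$ is a linear subspace of $W$, and it is $S$-invariant because $S$ is abelian: if $s_0 \in S^0$, $s \in S$ and $w \in W^{S^0}$, then $s_0(sw) = s(s_0 w) = sw$, so $sw \in W^{S^0}$. Hence $T \times_S W^{S^0}$ is a smooth submanifold of $T \times_S W$, and the computation of the previous paragraph identifies it with $F_H$ inside the chosen neighbourhood. Since $p_0 \in F_H$ was arbitrary, $F_H$ is a smooth submanifold. The contributions coming from different isotropy Lie algebras $V$ may have different dimensions and are separated from one another — near $p_0$ upper semicontinuity of the isotropy (i.e.~$S_{[t,w]} = S_w \subseteq S$) forces $\LT_p \subseteq V_0$, so $\LT_p = V_0$ for $p \in F_H$ close to $p_0$ — which is precisely why $F_H$ is in general neither connected nor of pure dimension.

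I expect the main obstacle to be pinning down the local model correctly, namely verifying that in slice coordinates the relative-fixed-point condition becomes the genuinely linear condition $w \in W^{S^0}$. This hinges on using commutativity of $T$ twice: once to see that the isotropy is constant along orbits, so that $S_{[t,w]} = S_w$ and hence $\LT_{[t,w]} \subseteq V_0$, and once to see that $W^{S^0}$ is $S$-invariant so that the associated bundle $T \times_S W^{S^0}$ is well defined. The auxiliary facts — finiteness of the occurring isotropy Lie algebras and injectivity of $L_p|_{\LH}$ — are immediate from the abelian hypothesis and from local freeness, respectively, so the real content is the slice-theoretic linearisation.
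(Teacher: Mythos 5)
Your proof is correct and follows essentially the same route as the paper: reduce the defining condition to $\LT = \LH \oplus \LT_q$ using local freeness of $H$, pass to a slice $T \times_{S_{p_0}} W$ via the slice theorem, and identify $F_H$ locally with $T \times_{S_{p_0}}$ of a linear subspace of the slice. Your identification of that subspace as the fixed locus $W^{S^0}$ of the identity component of the isotropy group is a slightly more explicit justification of the linearity and $S$-invariance that the paper asserts for its set $W = \{v \mid \LT_v = \LT_{p_0}\}$, but it is the same subspace and the same argument.
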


\begin{proof}
It suffices to show that every $p \in F_H$ has an open neighbourhood $U \subset X$ such that $F_H \cap U$ is a submanifold in $U$. For $U$ we pick a tubular neighbourhood of the $T$-orbit through $p$. By the local slice theorem such a tube can be chosen to be equivariantly diffeomorphic to
$$
U \;\cong\; T \x_{S_p} V.
$$
Here $V$ is a linear space on which the isotropy group $S_p$ acts by orthogonal transformations. The orbit of the $T$-action through $p$ is identified with the points $[g,0] \in  U$ with $g \in T$ and $0 \in V$. Now for any point $q \in X$ we have
$$
\dim L_q(\LT) \;=\; \dim\LT - \dim\LT_q \;=\; k - k_q,
$$
and
$$
\dim L_q(\LH) \;=\; \dim\LH - \dim(\LH \cap \LT_q) \;=\; \dim\LH - \dim\LH_q \;=\; l - l_q,
$$
where we introduced $\LH_q := \LH \cap \LT_q$ and $\dim\LH =: l$ and $\dim\LH_q =: l_q$. So we have
$$
q \in F_H \quad \iff \quad k \;=\; k_q + l  - l_q,
$$
which is equivalent to saying that the intersection $\LT_q \cap \LH \subset \LT$ is transversal. The assumption of a regular $H$-action implies that $\LH \cap \LT_q = \{ 0 \}$ for all $q \in X$. Hence we get the refined equivalence
$$
q \in F_H \quad \iff \quad \LT \;=\; \LH \oplus \LT_q.
$$
For a point $q = [g,v] \in U$ we have $S_q = S_{p,v} \subset S_p$, where $S_{p,v}$ denotes the isotropy subgroup at $v$ for the orthogonal $S_p$-action on $V$. We write
$$
\LT_v \;:=\; \mathrm{Lie}(S_{p,v}) \subset \LT_p
$$
instead of $\LT_q$. Now we consider the subset
$$
W \;:=\; \left\{ v \in V \;|\; \LT = \LH \oplus \LT_v \right\}.
$$
Since $p \in F_H$ we have $\LT = \LH \oplus \LT_p$ and since $\LT_v \subset \LT_p$ for all $v \in V$ we in fact have $W = \left\{ v \in V \;|\; \LT_v = \LT_p \right\}$. This shows that $W$ is a linear and $S_q$-invariant subspace of $V$. Thus we finally obtain
$$
F_H \cap U \;\cong\; T \x_{S_p} W,
$$
which is a submanifold of $U \cong T \x_{S_p} V$.
\end{proof}

\begin{rem}
This proposition is not true without the assumption of $H$ acting locally freely. Consider the action of the two-torus $T = S^1 \x S^1$ on $\C \x \C$, where the first $S^1$ acts by complex multiplication on the first component and trivially on the second, and the second $S^1$ acts vice versa. Let $H \cong S^1$ be the diagonal in $T$. Then we claim that
$$
F_H \;=\; \left\{ ( z_1 , z_2 ) \;|\; z_1 \cdot z_2 = 0 \right\} \subset \C \x \C.
$$
In fact at points $p = ( z_1 , z_2 )$ with $ z_1 \cdot z_2 \ne 0$ the $T$-isotropy is trivial, so $p \not\in F_H$. But as soon as one component of $p$ is zero the tangent space to one $S^1$ factor in $T = S^1 \x S^1$ is contained in $\LT_p$. So $\LT_p$ intersects transversally with $\LH$ and hence $p \in F_H$.
\end{rem}

\subsection{Equivariant Thom forms}

\label{sec:equivariant_thom_forms}

The construction in this section does not require the acting group to be abelian. So for the moment we consider an arbitrary compact Lie group $G$ acting smoothly on a closed and oriented $n$-dimensional manifold $X$, preserving the orientation.

Given a $G$-equivariant differential form $\alpha$ one can integrate all the form-parts over $X$ to obtain an element in $S(\LT^*)$. If we restrict to $\alpha \in \Omega_G^n(X)$ then we in fact obtain $\int_X \alpha \in \R$, because only forms of degree $n$ contribute and so the polynomial remainder is of degree zero. This is the operation that in the following will be called \emph{integration} of an equivariant form and is denoted as above.

Suppose $i : Z \hookrightarrow X$ is a closed $G$-invariant submanifold of codimension $m$. Then given any tubular neighbourhood $U \subset X$ of $Z$ there exists an equivariant Thom form $\tau$ with support in $U$, i.~e.~an element $\tau \in \Omega_G^m(X)$ such that
\begin{itemize}
\item $\mathrm{d}_G\,\tau = 0$,
\item $\mathrm{supp}(\tau) \subset U$,
\item $\D \int_X \alpha\wedge\tau \;=\; \int_Z i^*\alpha$ for all closed forms $\alpha \in \Omega^*(X)$.
\end{itemize}
See Cieliebak and Salamon \cite[Chapter 5]{CS} for an explicit construction or Guillemin and Sternberg \cite[Chapter 10]{GS} for a canonical algebraic discussion. We give a short summary in order to see what happens in presence of a normal subgroup $H$ that acts locally freely.

The key ingredient is an $\mathrm{SO}(m)$-equivariant \emph{universal Thom form} on $\R^m$. This is a $\mathrm{d}_{\mathrm{SO}(m)}$-closed form $\rho \in \Omega_{\mathrm{SO}(m)}^m(\R^m)$ with compact support and integral equal to $1$. This universal Thom form is transplanted to the normal bundle $N$ of $Z \subset X$ by the following construction. Let $P$ be the principal $\mathrm{SO}(m)$-bundle of oriented orthonormal frames of $N$, such that
$$
N \;\cong\; P \x_{\mathrm{SO}(m)} \R^m.
$$
Of course we use a $G$-invariant metric on $X$ to define $P$. The $G$-action on $Z$ then extends to $P$ and commutes with the $\mathrm{SO}(m)$-action. We pull back $\rho$ to $P \x \R^m$ and use a $G \x \mathrm{SO}(m)$-equivariant $\mathrm{SO}(m)$-connection $A$ to get the $\mathrm{SO}(m)$-basic form $\rho_A$ by virtue of the Cartan map \ref{def:c_A}. Thus $\rho_A$ descends to a $\mathrm{d}_G$-closed form $\tau$ on $N$ with compact support and fiber integral equal to $1$. Finally $N$ can be identified equivariantly with an arbitrarily small tubular neighbourhood of $Z$.

Now given a closed normal subgroup $H \lhd G$ that acts with at most finite isotropy on $X$ we can in fact take a $G \x \mathrm{SO}(m)$-equivariant $H \x \mathrm{SO}(m)$-connection $A$ in the above construction. In the end we obtain a Thom form $\tau \in \Omega_G^m(X)$ that is $H$-basic. If we denote by $l := \dim(H)$ the dimension of $H$ then we obtain the following result.

\begin{prop} \label{prop:H_invariant_Thom_integration}
Let $\alpha \in \Omega_{H-\mathrm{bas}}^{n-m-l}(X)$ be closed and $\tau$ be a $G$-equivariant Thom form for $Z \subset X$ as above. Then
$$
\int_{Z/H} i^*\alpha \;=\; \int_{X/H} \alpha \wedge \tau.
$$
\end{prop}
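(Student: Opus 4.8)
The plan is to reduce the identity $\int_{Z/H} i^*\alpha = \int_{X/H}\alpha\wedge\tau$ to the defining property of the Thom form for ordinary integration, carried out fiberwise in the local slice charts that define $H$-invariant integration. First I would recall that, since $\tau$ is a $G$-equivariant Thom form constructed so as to be $H$-basic, both $\alpha\wedge\tau$ and $i^*\alpha$ are $H$-basic forms of the appropriate degree, so both sides are legitimate inputs for the $H$-invariant integrals $\int_{X/H}$ and $\int_{Z/H}$ of \ref{def:G_inv_int}. A preliminary degree count confirms the statement is non-vacuous: $\alpha\wedge\tau$ has form-degree $(n-m-l)+m = n-l = \dim(X)-\dim(H)$, which is exactly the degree in which $\int_{X/H}$ is nonzero, and $i^*\alpha$ has degree $n-m-l = \dim(Z)-\dim(H)$, the right degree for $\int_{Z/H}$.

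The key idea is to unwind both invariant integrals into honest integrals over Euclidean slices and then invoke the non-equivariant Thom identity $\int_X \beta\wedge\tau = \int_Z i^*\beta$ fiberwise. Concretely, I would choose local slices $(U_i,\varphi_i,H_i)$ for the regular $H$-action on $X$ together with a subordinate $H$-invariant partition of unity $(\rho_i)$, so that by \ref{def:G_inv_int}
$$
\int_{X/H}\alpha\wedge\tau \;=\; \sum_i \frac{1}{|H_i|}\int_{U_i}\varphi_i^*\bigl(\rho_i\,\alpha\wedge\tau\bigr).
$$
Since $\tau$ is supported in a tubular neighbourhood $U$ of $Z$ which is itself $H$-invariant, I may arrange the slices adapted to $Z$: near $Z$ each slice $U_i\subset\R^m$ splits as a slice-direction for $Z/H$ times the normal $\R^m$-fiber, and $\varphi_i^*\tau$ restricted to such a slice is precisely the transplanted universal Thom form $\rho_A$ on that $\R^m$-factor. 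Performing the $\R^m$-fiber integration first (Fubini), the universal Thom form integrates to $1$ against the normal directions and evaluates $\varphi_i^*\alpha$ on the zero-section, which is the slice for $Z/H$. The partition functions $\rho_i$ restrict to an admissible partition of unity on $Z$, and the isotropy orders $|H_i|$ agree with those of the induced slices on $Z$ because the normal $\R^m$-action of $H_i$ is trivial on $Z$. Summing the resulting fiber-integrated contributions reassembles exactly $\sum_i \frac{1}{|H_i|}\int_{U_i\cap Z}\varphi_i^*(\rho_i\, i^*\alpha) = \int_{Z/H} i^*\alpha$.

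I expect the main obstacle to be the careful bookkeeping at points with nontrivial isotropy: one must check that the slices for the $H$-action on $X$ can be chosen compatibly with $Z$, so that the finite isotropy groups $H_i$ and the orientation conventions match up on both sides, and that the splitting of the slice into a $Z/H$-direction and a normal $\R^m$-direction is honored by the construction of $\tau$ via the $H\x\mathrm{SO}(m)$-basic Cartan map. Once this local compatibility is in place, the argument is just Fubini together with the fiber-integral property $\int_{\R^m}\rho_A=1$ of the universal Thom form, applied slice by slice. An alternative, cleaner route I would keep in mind is to deduce the identity from Proposition \ref{prop:invariant_integration}(5) (the $\Omega^*(Y)$-module property) applied to the disc-bundle projection $f:N\to Z$ together with the fiber-integral normalization of $\tau$; but since the statement is phrased purely in terms of the two invariant integrals, the direct slice computation above is the most transparent proof.
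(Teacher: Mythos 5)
Your proposal is essentially correct, but it takes a different route from the paper. The paper's proof is a two-step reduction: first it observes that since $\alpha$ has form degree $n-m-l$, only the top form part $\tau^{[m]}$ of the equivariant Thom form contributes to $\int_{X/H}\alpha\wedge\tau$; then it invokes \cite[Theorem 5.3]{CS} (any two equivariant Thom forms differ by a $\mathrm{d}_G$-exact form) to replace $\tau$ by an $H$-basic one, notes that $\tau^{[m]}$ is then an ordinary $H$-equivariant Thom form for the $H$-action alone, and cites \cite[Corollary 6.3]{CS} for the resulting identity about invariant integration against a Thom form. You instead prove that last identity directly, by unwinding both invariant integrals into local slice integrals, choosing slices adapted to $Z$ that split off the normal $\R^m$-fiber, and applying Fubini together with the normalization $\int_{\R^m}\rho_A=1$. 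Your route is more self-contained and makes the mechanism visible (it is, in effect, the proof of the result the paper cites), at the cost of the slice-compatibility bookkeeping you correctly identify as the delicate point; the paper's route is shorter but leans on external results.

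One step you should not skip: you assume from the outset that $\tau$ is the $H$-basic Thom form produced by the $H\x\mathrm{SO}(m)$-equivariant connection. The statement, as the paper reads it, allows an arbitrary $G$-equivariant Thom form for $Z$, and the right hand side must first be shown independent of that choice. This is exactly what the exactness of differences of Thom forms buys: if $\tau-\tau'=\mathrm{d}_G\beta$, then $\int_{X/H}\alpha\wedge(\tau-\tau')=0$ by Stokes for the $H$-invariant integral (using that $\alpha$ is closed and $H$-basic, so only the exterior-derivative part of $\mathrm{d}_G$ survives in the relevant degree, as in Lemma \ref{lem:basic_forms}). With that one sentence added, your argument covers the full statement.
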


\begin{proof}
The left hand side is just the $H$-invariant integral of an ordinary $H$-basic form in the sense of section \ref{sec:G_invariant_integration} and hence a real number. The right hand side is the $H$-invariant integral of a $G$-equivariant form. But since $\tau$ has degree $m$ its polynomial parts have degree less than $m$ and vanish upon integration, because $\alpha$ is assumed to have degree $n-m-l$. Hence we only have to look at $\tau^{[m]}$, i.~e.~the form part of $\tau$ in degree $m$.

It is shown by Cieliebak and Salamon \cite[Theorem 5.3]{CS} that the difference of any two Thom forms is $\mathrm{d}_G$-exact. Hence the right hand side does not depend on the particular choice for $\tau$ and we may assume that $\tau$ is $H$-basic. Next we observe that $\tau^{[m]}$ for an $H$-basic $G$-equivariant Thom form $\tau$ is in fact an $H$-equivariant Thom form for $Z \subset X$ if we just consider the $H$-action. Hence the result follows from Cieliebak and Salamon \cite[Corollary 6.3]{CS}, where the corresponding result is proven for $G$-invariant integration in the case of a regular $G$-action.
\end{proof}

We want to extend this rule to $H$-invariant integration of elements $[ \alpha ] \in H_G^*(X)$ with values in $H_K^*(\pt) = S(\LK^*)$ with $K := G/H$. As shown in proposition \ref{prop:gen_Cartan} we can represent the given class by a form
$$
\alpha \;\in\; C_K \left( \Omega_{H-\mathrm{bas}}^*(X) \right) \;=\; \left[ S(\LK^*) \otimes \Omega_{H-\mathrm{bas}}^*(X) \right]^K.
$$
Now since $\mathrm{d}_G \alpha = 0$ we have $\mathrm{d} \left( \alpha^{[n-m-l]} \right) = 0$ by lemma \ref{lem:basic_forms}. Hence the above lemma applies and since $H$-invariant integration is $K$-equivariant the formula extends to $\mathrm{d}_G$-closed elements of $C_K \left( \Omega_{H-\mathrm{bas}}^*(X) \right)$. So we conclude that
\begin{equation} \label{eqn:H_invariant_Thom_integration}
 \int_{X/H} \alpha \wedge \tau \;=\; \int_{Z/H} i^*\alpha \;\in\; S(\LK^*)
\end{equation}
for all $[ \alpha ] \in H_G^*(X)$, where $\tau$ is any $G$-equivariant Thom form for the $G$-invariant submanifold $Z \subset X$.

\subsection{Relative Atiyah-Bott localization}

\label{sec:relative_atiyah_bott_localization}

If one wants to use equation \ref{eqn:H_invariant_Thom_integration} in order to compute an integral $\int_{X/H} \beta$ then there are two obstacles: We can only integrate forms $\beta$ that can be written as $\alpha \wedge \tau$ and hence have support localized around some submanifold with Thom form $\tau$. And then one needs to 'divide' $\beta$ by $\tau$ in order to obtain $\alpha$. Generally this division will not be possible in $H_G^*(X)$ --- one needs to work in a localized ring.

In the case of a torus action both these steps of localization can be made explicit, so we return to the setup of section \ref{sec:torus_actions}. The construction is analogous to the one by Guillemin and Sternberg \cite[Chapter 10]{GS}. We first consider the case that the torus $T_1$, which acts locally freely and with respect to which we want to integrate invariantly, has codimension $1$ in the whole torus $T$. This corresponds to looking at an $S^1$-action in the non-relative case.

\subsubsection*{Geometric localization}

Denote by $Z_i$ the connected components of the $T_1$-relative fixed point set $F_{T_1}$. Let $U_i$ be sufficiently small disjoint $T_1$-invariant tubular neighbourhoods of the $Z_i$. We introduce the notation
$$
X' \;:=\; X \setminus F_{T_1}.
$$
Recall that $\dim(T_1) = k_1$, so the $T_1$-invariant integral over $X$ is nonzero only for forms of degree at least $n - k_1$.

\begin{lem} \label{lem:geometric_localization}
For any $[ \alpha ] \in H_T^d(X)$ with $d \ge n - k_1$ there exists a collection of $\mathrm{d}_T$-closed forms $\alpha_i$ with support in $U_i$ such that
$$
\int_{ X / T_1 } \alpha \;=\; \sum_i \int_{ X / T_1 } \alpha_i
$$
and furthermore $\alpha$ and $\alpha_i$ agree on some open neighbourhood of $Z_i$.
\end{lem}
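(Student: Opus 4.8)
The plan is to localize by producing a global primitive for $\alpha$ away from the fixed locus and then cutting it off near $F_{T_1}$. First I would exploit the codimension-one hypothesis, i.e.\ $k_0 = 1$: every point of $X' = X \setminus F_{T_1}$ then has $k_p < 1$, hence trivial infinitesimal isotropy, so the \emph{full} torus $T$ acts locally freely on $X'$. Applying Proposition \ref{prop:gen_Cartan} with $G = H = T$ (so that the quotient group is trivial) identifies $H_T^*(X')$ with the cohomology of $(\Omega_{T-\mathrm{bas}}^*(X'),\mathrm{d})$. By Lemma \ref{lem:basic_forms}(2) the basic forms vanish in form-degree exceeding $\dim(X') - \dim(T) = n - k = n - k_1 - 1$, and since the quotient group is trivial there is no polynomial contribution to the total degree. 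Hence $H_T^d(X') = 0$ exactly in the range $d \ge n - k_1$, which is precisely the hypothesis on $d$. It follows that $[\alpha]|_{X'} = 0$, so there is a $T$-equivariant form $\theta \in \Omega_T^{d-1}(X')$ with $\alpha|_{X'} = \mathrm{d}_T\theta$.

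Next I would choose a $T$-invariant cutoff function $\chi$ (averaging over the compact group $T$ if necessary) that is identically $1$ on a neighbourhood of $F_{T_1}$ and has support contained in $\bigcup_i U_i$. Because $1 - \chi$ vanishes near $F_{T_1}$, the form $\eta := (1-\chi)\theta$ extends by zero to a globally defined element of $\Omega_T^{d-1}(X)$. Setting $\tilde\alpha := \alpha - \mathrm{d}_T\eta$ and using $\mathrm{d}_T\theta = \alpha$ on $X'$ together with $\mathrm{d}_T(1-\chi) = -\mathrm{d}\chi$, a direct computation gives
$$
\tilde\alpha \;=\; \chi\,\alpha \;+\; \mathrm{d}\chi \wedge \theta .
$$
Both summands are supported in $\mathrm{supp}(\chi) \subset \bigcup_i U_i$, while on the open set $\{\chi = 1\}$ one has $\tilde\alpha = \alpha$. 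Since the $U_i$ are pairwise disjoint, $\tilde\alpha$ decomposes as a sum of forms $\alpha_i := \tilde\alpha|_{U_i}$ (extended by zero). Each $\alpha_i$ is supported in $U_i$, each is $\mathrm{d}_T$-closed because $\tilde\alpha$ is $\mathrm{d}_T$-closed, and each agrees with $\alpha$ on the neighbourhood of $Z_i$ where $\chi = 1$.

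Finally, since $\tilde\alpha$ and $\alpha$ differ by the $\mathrm{d}_T$-exact form $\mathrm{d}_T\eta$, they represent the same class in $H_T^*(X)$, so $\int_{X/T_1}\alpha = \int_{X/T_1}\tilde\alpha$ (the invariant integral being defined on cohomology and annihilating coboundaries). Combining this with $\tilde\alpha = \sum_i \alpha_i$ and linearity yields $\int_{X/T_1}\alpha = \sum_i \int_{X/T_1}\alpha_i$, as desired.

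The one genuinely substantive step, and the part I would treat most carefully, is the vanishing $H_T^d(X') = 0$: this is what guarantees the global primitive $\theta$ on $X'$, and it is exactly here that both the local freeness of the full $T$-action on $X'$ and the degree bound $d \ge n - k_1$ are indispensable. Once $\theta$ is in hand, the remaining cutoff-and-Stokes manipulations are routine; the only points worth checking are that $\chi$ can be taken $T$-invariant and that $(1-\chi)\theta$ extends smoothly by zero across $F_{T_1}$, both of which follow immediately from $\chi \equiv 1$ near $F_{T_1}$.
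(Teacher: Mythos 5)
Your proposal is correct and follows essentially the same route as the paper: local freeness of the full $T$-action on $X' = X\setminus F_{T_1}$ plus the degree bound give $H_T^d(X')=0$, hence a primitive $\theta$, which is then cut off near $F_{T_1}$ and subtracted; your single invariant cutoff $\chi$ plays exactly the role of the paper's sum $\sum_i\rho_i$ of componentwise cutoffs, and your $\tilde\alpha=\chi\alpha+\mathrm{d}\chi\wedge\theta$ is the paper's $\beta$.
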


\begin{proof}
First observe that $T$ acts locally freely on $X'$. In fact if $T$ had a whole $1$-dimensional subgroup fixing some point $p \in X$ this would imply $p \in F_{T_1}$, because $T_1$ has only codimension $1$ in $T$. This implies $H_T^d(X') = 0$, because any $T$-equivariant form on $X'$ can be made $T$-basic. But since $d \ge n - k_1 > \dim(X) - \dim(T)$ there are no nonzero $T$-basic forms in degree $d$ by lemma \ref{lem:basic_forms}. So if we restrict $\alpha$ to $X'$ we obtain
$$
\alpha \;=\; \mathrm{d}_T \gamma' \quad \mbox{for some} \quad \gamma' \in \Omega_T^{d-1}(X').
$$
For every $i$ we fix a smooth $T$-invariant function $\rho_i$ with support in $U_i$ such that $\rho_i = 1$ on some open neighbourhood of $Z_i$. Then we introduce
$$
\gamma \;:=\; \gamma' - \sum_i \rho_i \cdot \gamma'.
$$
Note that $\gamma$ extends over $F_{T_1}$ to an element of $\Omega_T^{d-1}(X)$. Next we set
$$
\beta \;:=\; \alpha - \mathrm{d}_T \gamma
$$
and obtain $\int_{X/T_1} \alpha = \int_{X/T_1} \beta$. Now $\beta$ vanishes outside the disjoint union of all the $U_i$, because there we have $\gamma = \gamma'$. Hence we can write
$$
\beta \;=:\; \sum_i \alpha_i
$$
with $\mathrm{d}_T$-closed forms $\alpha_i$ having support in $U_i$. Finally on a neighbourhood of each component $Z_i$ we have $\rho_i = 1$ and hence $\gamma = 0$. Hence we indeed obtain $\alpha_i = \alpha$ on these neighbourhoods.
\end{proof}

\subsubsection*{Algebraic localization}

We now only consider one component $Z$ of $F_{T_1}$ and a $\mathrm{d}_T$-closed form $\alpha$ with compact support in a small $T$-invariant tubular neighbourhood $U$ of $Z$. We denote by $i : Z \ra U$ the inclusion and by $\pi : U \ra Z$ the projection and choose a Thom form $\tau$ for $Z$ with support in $U$. We denote by
$$
i^*[\tau] \;=:\; e_T(Z) \in H_T^m(Z)
$$
the equivariant Euler class of $Z$. More precisely this is the $T$-equivariant Euler class of the normal bundle $N$ to $Z \subset X$. If $m = \codim(Z)$ is odd then $e_T(Z) = 0$, so we assume that $m = 2 \ell$ is even. From the proof of proposition \ref{prop:F_H_submanifold} we read off that on the connected component $Z$ we can choose a  $T_0 \subset T$ that acts trivially on $Z$ and such that $\LT = \LT_0 \oplus \LT_1$. By the discussion preceding equation \ref{eqn:trivial_H_action} we conclude that
$$
H_T^*(Z) \;=\; S(\LT_0^*) \otimes H_{T_1}^*(Z).
$$
Recall that $T_1$ acts locally freely. So we can represent $e_T(Z)$ by a sum
\begin{equation} \label{eqn:euler_class_sum}
e_T(Z) \;=\; f_\ell + f_{\ell-1} \cdot \theta_1 + \ldots + f_1 \cdot \theta_{\ell-1} + \theta_\ell
\end{equation}
with $f_j \in S^j(\LT_0^*)$ and closed differential forms $\theta_j \in \Omega_{T_1-\mathrm{bas}}^{2j}(Z)$. Since all the terms involving some $\theta_j$ are nilpotent we can formally invert this sum if we assume that $f_\ell \ne 0$. We define
$$
\Theta \;:=\; f_\ell - e_T(Z)
$$
and observe that $\Theta^r = 0$ for $r-1  := \lfloor \dim(Z) / 2 \rfloor$. We set
$$
\beta \;:=\; f_\ell^{r-1} + f_\ell^{r-2} \cdot \Theta + \ldots + f_\ell \cdot \Theta^{r-1}
$$
and compute
\begin{eqnarray*}
e_T(Z) \cdot \beta & = & \left( f_\ell - \Theta \right) \cdot \left( f_\ell^{r-1} + f_\ell^{r-2} \cdot \Theta + \ldots + f_\ell \cdot \Theta^{r-1} \right) \\
& = & f_\ell^r.
\end{eqnarray*}
Formally we can write this as $\frac{1}{e_T(Z)} = \frac{\beta}{f_\ell^r}$. Note that $\beta$ defines an element in $H_T^*(Z)$. Thus we can invert the Euler class $e_T(Z)$ if we localize $H_T^*(Z)$ at the monomial $f_\ell^r$. Without working in a localized ring we can write
\begin{eqnarray} \label{eqn:algebraic_localization}
f_\ell^r \cdot \int_{U/T_1} \alpha & = & \int_{U/T_1} \alpha \wedge \pi^* \beta \wedge \pi^* e_T(Z) \nonumber\\
& = & \int_{U/T_1} \alpha \wedge \pi^* \beta \wedge \tau \\
& = & \int_{Z/T_1} i^* \alpha \wedge \beta. \nonumber
\end{eqnarray}
We write this integration formula more suggestively as
$$
\int_{U/T_1} \alpha \;=\; \int_{Z/T_1} \frac{ i^* \alpha }{ e_T(Z) }
$$
but have in mind that the meaning of this formula is given by equation \ref{eqn:algebraic_localization} above. Thus we have proved the following relative localization theorem.

\begin{thm} \label{thm:relative_localization}
Suppose the subtorus $T_1 \subset T$ acts locally freely on $X$ with relative fixed point set $F_{T_1}$. Then for any $[ \alpha ] \in H_T^*(X)$ we have
$$
\int_{X/T_1} \alpha \;=\; \sum_i \int_{Z_i/T_1} \frac{ i_{Z_i}^* \alpha }{ e_T(Z_i) } \; \in S(\LT_0^*).
$$
Here the sum is over all connected components $Z_i$ of $F_{T_1}$, $i_{Z_i}$ denotes the inclusion into $X$ and $e_T(Z_i)$ is the $T$-equivariant Euler class of the normal bundle to the submanifold $Z_i \subset X$.
\end{thm}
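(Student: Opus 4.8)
The plan is to reduce the general case to the two special cases that have already been assembled in the preceding subsections, namely the \emph{geometric localization} of Lemma \ref{lem:geometric_localization} and the \emph{algebraic localization} encapsulated in equation \ref{eqn:algebraic_localization}, and then to remove the codimension-one restriction by an induction on the codimension $k_1$ of $T_1$ in $T$. Concretely, I would first treat the case $k_1 = 1$, where the theorem follows almost immediately: given $[\alpha] \in H_T^*(X)$, the integral vanishes unless $\deg(\alpha) \ge n - k_1$, so Lemma \ref{lem:geometric_localization} applies and writes $\int_{X/T_1}\alpha = \sum_i \int_{X/T_1}\alpha_i$ with each $\alpha_i$ supported in a small tubular neighbourhood $U_i$ of the component $Z_i$ of $F_{T_1}$, agreeing with $\alpha$ near $Z_i$. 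Each summand is then computed by the algebraic localization formula \ref{eqn:algebraic_localization}, which yields $\int_{U_i/T_1}\alpha_i = \int_{Z_i/T_1} i_{Z_i}^*\alpha_i / e_T(Z_i)$. Since $\alpha_i = \alpha$ on a neighbourhood of $Z_i$ we have $i_{Z_i}^*\alpha_i = i_{Z_i}^*\alpha$, and summing over $i$ gives exactly the stated formula.

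The first point to verify carefully is that the algebraic localization step is legitimate for an arbitrary class, not merely for a form with the special product structure $\alpha \wedge \tau$. Here I would lean on equation \ref{eqn:H_invariant_Thom_integration}, which established that $\int_{X/T_1}(\beta\wedge\tau) = \int_{Z/T_1} i^*\beta$ holds for \emph{all} equivariant classes, together with the invertibility of $e_T(Z)$ after localizing $H_T^*(Z)$ at the monomial $f_\ell^r$. The construction preceding \ref{eqn:algebraic_localization} produces an explicit inverse $\beta/f_\ell^r$ using the nilpotence $\Theta^r = 0$, and the assumption $f_\ell \ne 0$ is precisely the statement that the $T$-equivariant Euler class of the normal bundle is non-degenerate along the $\LT_0$-directions; this is guaranteed because $T_0$ acts with isolated weights on the fibres of $N$ and the codimension-one setup forces $\LT = \LT_0 \oplus \LT_1$, as read off from the proof of Proposition \ref{prop:F_H_submanifold}. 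I would remark that the formula $\int_{U/T_1}\alpha = \int_{Z/T_1} i^*\alpha / e_T(Z)$ is to be interpreted, as the text insists, through \ref{eqn:algebraic_localization} after clearing the denominator $f_\ell^r$.

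For the general case $k_1 \ge 2$ I expect the main obstacle to lie in organizing the induction so that the relative fixed point sets match up correctly at each stage. The natural approach is to factor $T_1$ as a tower of subtori, peeling off a circle $S^1 \subset T_1$ of codimension one inside some intermediate torus, applying the codimension-one result to reduce $\int_{X/T_1}$ to a sum of invariant integrals over the relative fixed point set of the smaller torus, and then invoking the functoriality of invariant push-forward (Proposition \ref{prop:invariant_integration}, part 4) to compose the successive localizations. The delicate book-keeping is to check that the iterated Euler classes assemble into the single factor $e_T(Z_i)$ for the components $Z_i$ of the full relative fixed point set $F_{T_1}$, rather than into a product of partial Euler classes over intermediate fixed loci; this requires the multiplicativity of the equivariant Euler class under the splitting of the normal bundle $N$ into the pieces normal to each intermediate fixed set, together with the compatibility of the successive splittings $\LT = \LT_0 \oplus \LT_1$ afforded by Proposition \ref{prop:F_H_submanifold}. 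Once this normal-bundle decomposition is verified, the telescoping of the functoriality identities collapses the iterated sum into the single sum over components of $F_{T_1}$ with the correct denominators, completing the proof.
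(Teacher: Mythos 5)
Your treatment of the codimension-one case is essentially identical to the paper's: Lemma \ref{lem:geometric_localization} replaces $\alpha$ by closed forms $\alpha_i$ supported in the tubes $U_i$ and agreeing with $\alpha$ near $Z_i$, equation \ref{eqn:H_invariant_Thom_integration} together with the explicit nilpotent inverse $\beta/f_\ell^r$ of $e_T(Z_i)$ yields \ref{eqn:algebraic_localization}, and $i_{Z_i}^*\alpha_i = i_{Z_i}^*\alpha$ closes the argument. That part of your proposal is correct and is exactly the route the paper takes.

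The higher-codimension step is where you diverge, and where your sketch has a genuine gap. Your induction must, at its first stage, localize a $T$-equivariant integral onto a relative fixed point set whose complement still carries isotropy groups of positive dimension; there Lemma \ref{lem:geometric_localization} fails. Its proof rests on $H_T^d(X')=0$ for $X' = X\setminus F_{T_1}$ in degrees $d > \dim(X)-\dim(T)$, which is deduced from $T$ acting locally freely on $X'$ --- and that deduction uses $\codim(T_1)=1$: only then does $\dim\LT_p\ge 1$ together with $\LT_p\cap\LT_1=\{0\}$ force $\LT=\LT_1\oplus\LT_p$ and hence $p\in F_{T_1}$. For $\codim(T_1)\ge 2$ there are in general points outside $F_{T_1}$ with positive-dimensional isotropy, so $\alpha|_{X'}$ need not be $\mathrm{d}_T$-exact and the geometric step of your induction cannot start. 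Moreover, applying the codimension-one theorem to an intermediate pair $T_1\subset\widetilde{T}$ only computes the restriction of the polynomial $\int_{X/T_1}\alpha\in S(\LT_0^*)$ to the line $\widetilde{\LT}/\LT_1\subset\LT_0$, so recovering the full answer forces you to vary $\widetilde{T}$ in any case. That is precisely what the paper does instead of an induction: for a generic $\xi\in\LT$ such that $\LT_1\oplus\R\xi$ is the Lie algebra of a closed subtorus $\widetilde{T}\supset T_1$ of codimension one over $T_1$, the evaluation $\int_{X/T_1}\alpha(\xi)$ factors through the restriction $H_T^*(X)\ra H_{\widetilde{T}}^*(X)$, the codimension-one theorem applies there, and density of such $\xi$ plus continuity gives the identity of polynomials --- no iterated Euler classes or normal-bundle splittings are needed. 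The iterated scheme you outline is essentially the Guillemin--Kalkman procedure discussed in section \ref{chap:Jeffrey_Kirwan_localization}; it can be made to work but requires localizing again inside each intermediate fixed locus, which is substantially more than book-keeping and is not the paper's argument.
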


\subsubsection*{Higher codimension}

We now extend the relative localization formula \ref{thm:relative_localization} to the case of $T_1$ having higher codimension in $T$. The procedure is analogous to the extension of the $S^1$-localization formula to arbitrary tori in \cite[Section 10.9]{GS}. The trouble is that the geometric localization does not work well enough if the torus $T$ has positive dimensional isotropy on points $p \in X \setminus F_{T_1}$. But in fact the auxiliary forms $\alpha_i$ in lemma \ref{lem:geometric_localization} do not appear in the final localization formula. So if we want to compute the value of
$$
\int_{X/T_1} \alpha (\xi) \quad \mathrm{for} \quad \xi \in \LT
$$
we can use different local forms $\alpha_i$ for different $\xi$. First suppose that for a given element $\xi \in \LT$ the space $\widetilde{\LT} := \LT_1 \oplus \R\xi$ is the Lie algebra of a closed subgroup $\widetilde{T} \subset T$ such that $T_1 \subset \widetilde{T}$ has codimension $1$. The inclusion $\widetilde{T} \ra T$ induces the restriction map $\LT^* \ra \widetilde{\LT}^*$, which extends to a map on equivariant cohomology $H_T^*(X) \ra H_{\widetilde{T}}^*(X)$. And in fact for any $[ \alpha ] \in H_T^*(X)$ the computation of $\int_{X/T_1} \alpha (\xi)$ factors through this map. Hence we can apply our relative localization formula from \ref{thm:relative_localization} to obtain
$$
\int_{X/T_1} \alpha (\xi) \;=\; \sum_i \int_{Z_i/T_1} \frac{ i_{Z_i}^* \alpha }{ e_T(Z_i) } \, (\xi) \; \in \R.
$$
But now the set of $\xi \in \LT$, for which we derived this formula, is dense in $\LT$. Hence by continuity it actually holds for all $\xi$ and so the localization theorem above indeed holds independently of the codimension of $T_1$ in $T$.

\subsubsection*{The equivariant Euler class}

From the explicit construction of an equivariant Thom form for any $G$-equivariant vector bundle $E \ra X$ in Cieliebak, Mundet and Salamon \cite[Chapter 5]{CMS} one can read off two things: If the rank of $E$ is odd the Euler class vanishes, and if the rank is even then one gets the following representative for the Euler class $e_G(E)$.

\begin{prop}[{\cite[Lemma 4.3]{CS}}] \label{prop:equivariant_Euler_class}
Suppose $E$ is a rank $n$ complex vector bundle and the action of $G$ is complex linear on the fibers. Fix a $G$-invariant Hermitian metric on $E$ and let $P \ra X$ denote the unitary frame bundle of $E$. Let $A \in \Omega^1(P,\LU(n))$ be a $G$-invariant $U(n)$-connection form on $P$. Then the $G$-equivariant Euler class of $E$ is represented by the $\mathrm{d}_G$-closed form $e_G(E)$ that is given by
$$
e_G(E)(\xi) \;=\; \det\left( \frac{i}{2\pi} F_A + \frac{i}{2\pi} A(X_\xi) \right),
$$
where $F_A \in \Omega(P,\LU(n))$ denotes the curvature of $A$.
\end{prop}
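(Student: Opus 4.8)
The plan is to trace the formula through the explicit construction of the equivariant Thom form in \cite[Chapter 5]{CMS} and to evaluate it on the zero section. By definition the equivariant Euler class is $e_G(E) = i^*[\tau]$, where $i : X \ra E$ is the zero section and $\tau \in \Omega_G^{2n}(E)$ is a $G$-equivariant Thom form for $X \subset E$; so it suffices to understand what the Thom construction produces at the origin of each fibre.

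First I would recall the transplantation scheme. Writing $E \cong P \x_{U(n)} \C^n$ with $P$ the unitary frame bundle, one regards the structure group $U(n)$ and the lifted $G$-action as commuting actions on $P$, so that $A$ is a $G \x U(n)$-equivariant $U(n)$-connection in the sense of Definition \ref{def:G_equivariant_H_connection}. Let $\rho \in \Omega_{U(n)}^{2n}(\C^n)$ be the universal $U(n)$-equivariant Thom form. Then $\tau$ is obtained by pulling $\rho$ back to $P \x \C^n$, applying the Cartan map $c_A$ to render it $U(n)$-basic, and descending to $E$. Restriction along the zero section corresponds to restriction to $P \x \{0\}$, and this commutes with both the pullback and the Cartan map (all operations being natural and fibrewise local, since $A$ does not involve the $\C^n$-direction). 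Hence $i^*\tau$ equals the image under $c_A$ of the restriction $\rho|_0$, now viewed as a $U(n)$-equivariant form on $P$ that is constant along the fibre.

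The next step is the universal computation. I would identify $\rho|_0$ with the $U(n)$-equivariant Euler class of the standard representation $\C^n$. In the Cartan model $H_{U(n)}^*(\pt) = S(\LU(n)^*)^{U(n)}$, and this class is represented by the $\Ad$-invariant polynomial $\eta \mto \det\!\left( \frac{i}{2\pi} \eta \right)$ on $\LU(n)$, the standard equivariant Chern--Weil representative of the top Chern class of the fibre $\C^n$. Pushing this polynomial through $c_A$ substitutes the variable $\eta \in \LU(n)$ by the $\LU(n)$-component of the equivariant curvature $F_{A, G \x U(n)}$; by the formula $F_{A,G}(\xi) = F_A + \xi - \iota_\xi A$ this component equals $F_A + A(X_\xi)$, the sign being fixed by our convention \ref{def:X_xi} for the fundamental vector field. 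Invariance of the determinant makes the substitution legitimate and gives
$$
e_G(E)(\xi) \;=\; \det\!\left( \frac{i}{2\pi} F_A + \frac{i}{2\pi} A(X_\xi) \right).
$$
This form is $U(n)$-basic --- horizontal because each curvature entry is annihilated by $\iota_\eta$ while $A(X_\xi)$ is a $0$-form, and invariant because $\det$ is conjugation-invariant --- so it descends to $X$; and it is $\mathrm{d}_G$-closed as the image under the chain map $c_A$ of a closed universal class (equivalently, by the equivariant Bianchi identity). The vanishing of the Euler class in odd real rank is read off the same way, since then $\rho|_0$ sits in odd degree.

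The main obstacle I anticipate is the universal computation together with the bookkeeping of the Cartan map: one must check cleanly that restricting the universal Thom form to the origin reproduces exactly the determinant polynomial, and that $c_A$ implements the substitution $\eta \mapsto F_A + A(X_\xi)$ with the correct sign dictated by \ref{def:X_xi}. Once these two points are settled, the remaining assertions (well-definedness on $X$ and $\mathrm{d}_G$-closedness) follow formally from naturality of the pullback and the chain-map property of $c_A$ recorded in Proposition \ref{prop:Cartan_map_properties}.
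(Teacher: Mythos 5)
Your outline follows exactly the route the paper itself indicates: the paper offers no proof of this proposition but quotes it from \cite[Lemma 4.3]{CS} and remarks that the formula ``can be read off'' from the explicit Thom-form construction of \cite[Chapter 5]{CMS}, which is precisely the transplantation-plus-restriction-to-the-zero-section argument you describe, so your proposal is a correct elaboration of the intended (and omitted) proof. The one point to watch is the sign of $A(X_\xi)$: with the paper's own convention \ref{def:X_xi} (which carries a minus sign and, as the paper notes, differs from that of \cite{CMS}) the $\LU(n)$-component of the equivariant curvature evaluated at $\xi \in \LG$ is $F_A - \iota_\xi A$, so the displayed formula $F_A + A(X_\xi)$ implicitly uses the \cite{CS} convention for the fundamental vector field rather than the one fixed in section \ref{sec:Cartan_model}.
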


In fact assume that $E = X \x \C^n$ is a trivial bundle with a diagonal $G$-action and the action on the fibers is given by a unitary representation
$$
\rho \;:\; G \ra U(n).
$$
Then the frame bundle of $E$ is the product bundle $P = X \x U(n)$ and with the obvious $G$-invariant and flat connection $A$ we obtain
$$
e_G(E) \;=\; \det\left( \frac{i}{2\pi} \dot{\rho} \right).
$$
This computation is contained in the proof of {\cite[Lemma 4.3]{CS}}. Note that the $G$-action on $X$ does not enter the formula. Now suppose the group $G$ is a torus $T$ and $\rho$ is given by
$$
\rho \;:\; T \ra \mathrm{U}(n) \;;\; \exp(\xi) \mto \mathrm{diag}\left( e^{-2 \pi i \langle w_j , \xi \rangle} \right)_{j = 1 , \ldots , n}
$$
for a given collection of weight vectors $w_j \in \Lambda^*$. Then we obtain the representative
\begin{equation} \label{eqn:torus_euler_class}
e_T(E) \;=\; \prod_{j=1}^n w_j.
\end{equation}

\subsection{Example}

\label{sec:example_localization}

We are now able to extend the computation from section \ref{sec:example_invariant_integration} to more general torus actions on $S^{2n-1}$ that do not factor through an $S^1$-action. We consider the $T$-action given by \ref{def:torus_action} on the unit sphere $S^{2n-1} \subset \C^n$. We assume that all weights $w_j$ are nonzero and that no two weights are a negative multiple of each other. But the weights no longer have to be colinear. We take a primitive element $e_1 \in \Lambda$ such that
$$
\langle w_j , e_1 \rangle \ne 0 \quad \mathrm{for\ all}\ j \in \{ 1 , \ldots , n \}.
$$
As before we denote by $T_1 \subset T$ the subtorus generated by $e_1$, by $\LT_1 \subset \LT$ its Lie algebra, by $T_0 := T/T_1$ the quotient group and by $\LT_0 := \LT/\LT_1$ its Lie algebra. In order to apply our localization technique to the $T_1$-invariant push-forward
$$
\left( \pi / T_1 \right)_* \;:\; H_T^*\left( S^{2n-1} \right) \ra H_{T_0}^*(\pt) \cong S\left( \LT_0^* \right)
$$
we have to determine the $T_1$-relative fixed point set
$$
F_{T_1} \;:=\; \left\{ z \in S^{2n-1} \;|\; L_z(\LT) = L_z(\LT_1) \right\}
$$
with the infinitesimal action $L_z$ at the point $z \in \C^n$ given by
$$
L_z \;:\; \LT \ra T_zS^{2n-1} \;;\; \xi \mto \left( 2 \pi i \langle w_j , \xi \rangle \cdot z_j \right)_{j = 1 , \ldots , n} \subset \C^n
$$
if we identify $T_zS^{2n-1} \subset T_z\C^n \cong \C^n$. For that purpose we have to arrange the $w_j$ into sets of colinear weights. Denote by $(w_\nu)_{\nu \in \{ 1 , \ldots , N\}}$ the collection of pairwise different weights, that are also not a multiple of any other weight $w_j$. Then for every $\nu$ there are unique integers $(\ell_{\nu_i})_{i \in \{ 1 , \ldots , n_\nu \}}$ such that the set of all $w_j$ equals
$$
\left\{ \ell_{1_1} w_1 , \ldots , \ell_{1_{n_1}} w_1 , \ldots , \ell_{\nu_1} w_\nu , \ldots , \ell_{\nu_{n_\nu}} w_\nu  , \ldots , \ell_{N_1} w_N , \ldots , \ell_{N_{n_N}} w_N \right\}.
$$
We can assume $\ell_{\nu_1} = 1$ for all $\nu = 1 , \ldots , N$ and furthermore by our assumptions on the $w_j$ we have all $\ell_{\nu_i} > 0$. Note that $\sum_{\nu = 1}^N n_\nu = n$. We introduce
$$
V_\nu \;:=\; \left\{ ( z_1 , \ldots , z_n ) \in \C^n \;|\; z_j = 0 \;\mathrm{if}\; w_j \nparallel w_\nu \right\} \subset \C^n,
$$
the linear subspace of $\C^n$ on which the $T$-action does factor through the weighted $S^1$-action given by the vector $w_\nu$ and the $\ell_\nu := (\ell_{\nu_i})_{i \in \{ 1 , \ldots , n_\nu \}}$. So we have $\dim_\C V_\nu = n_\nu$ and we can write $\C^n \cong \oplus_{\nu = 1}^N V_\nu$. In $V_\nu$ we consider the unit sphere
$$
S_\nu \;:=\; V_\nu \cap S^{2n-1}.
$$

\begin{lem}
The $T_1$-relative fixed point set $F_{T_1}$ is given by the disjoint union
$$
F_{T_1} \;=\; \bigsqcup_{\nu = 1}^N S_\nu.
$$
\end{lem}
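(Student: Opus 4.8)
The plan is to unwind the definition of $F_{T_1}$ pointwise and reduce the condition $L_z(\LT)=L_z(\LT_1)$ to a purely combinatorial statement about which coordinates of $z$ are nonzero. First I would record that $T_1$ acts locally freely on $S^{2n-1}$: if $t e_1\in\LT_z=\ker L_z$ then $\langle w_j, t e_1\rangle z_j=0$ for all $j$, and since $z\neq 0$ there is some $j$ with $z_j\neq 0$, for which $\langle w_j,e_1\rangle\neq 0$ by the choice of $e_1$; hence $t=0$ and $\LT_1\cap\LT_z=\{0\}$. This is exactly the hypothesis needed to invoke the refined criterion from the proof of Proposition~\ref{prop:F_H_submanifold}, so that $z\in F_{T_1}$ if and only if $\LT=\LT_1\oplus\LT_z$. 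Equivalently, since $L_z(\LT_1)\subseteq L_z(\LT)$ always holds, membership in $F_{T_1}$ is the dimension equality $\dim L_z(\LT)=\dim L_z(\LT_1)$.

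Next I would compute both dimensions from the explicit formula for $L_z$. Writing $J(z):=\{\,j\mid z_j\neq 0\,\}$ for the support of $z$, the expression $L_z(\xi)=(2\pi i\langle w_j,\xi\rangle z_j)_j$ shows that $\LT_z=\bigcap_{j\in J(z)}\ker\langle w_j,\cdot\rangle$, the annihilator of the active weights, so that
$$
\dim L_z(\LT)=\dim\LT-\dim\LT_z=\dim\,\mathrm{span}\{\,w_j\mid j\in J(z)\,\}.
$$
On the other hand $L_z(e_1)\neq 0$ by the same nonvanishing used above, so $\dim L_z(\LT_1)=1$. Combining with the previous paragraph, $z\in F_{T_1}$ precisely when the active weights $\{w_j\mid j\in J(z)\}$ span a one-dimensional subspace, i.e.~when they are all colinear.

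It remains to match this colinearity condition with the blocks $S_\nu$ and to check disjointness. Since two weights are parallel exactly when they are positive multiples of one another (using the hypotheses that all $w_j\neq 0$ and that no two weights are negative multiples), the parallelism classes are precisely those underlying the $w_\nu$. Thus if the active weights of $z$ are colinear they all lie in a single class $\nu$, which forces $z_j=0$ whenever $w_j\nparallel w_\nu$, i.e.~$z\in V_\nu$ and hence $z\in S_\nu$; conversely every $z\in S_\nu$ has $J(z)$ contained in the class of $w_\nu$, so its active weights are colinear and $z\in F_{T_1}$. Finally, for $\mu\neq\nu$ a common point of $S_\mu$ and $S_\nu$ would have a nonzero coordinate $z_j$ with $w_j\parallel w_\mu$ and $w_j\parallel w_\nu$, forcing $w_\mu\parallel w_\nu$, a contradiction; hence $V_\mu\cap V_\nu=\{0\}$ and the union is disjoint.

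The content here is essentially linear algebra, so I do not expect a serious obstacle; the one step deserving care is the identification of $\LT_z$ as the annihilator of the active weights, together with the bookkeeping that colinearity of those weights is equivalent to the support of $z$ lying in a single block $V_\nu$. The hypotheses on the weights enter at precisely three points: $\langle w_j,e_1\rangle\neq 0$ gives local freeness of $T_1$ and $L_z(e_1)\neq 0$; all $w_j\neq 0$ ensures the active span is genuinely one-dimensional rather than zero; and the absence of negatively proportional weights makes the parallelism classes coincide with the decomposition into the $V_\nu$.
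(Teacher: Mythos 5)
Your argument is correct and is essentially the paper's: both proofs reduce membership in $F_{T_1}$ to the condition that the weights $w_j$ supported on the nonzero coordinates of $z$ span a line, the paper by exhibiting an explicit $\xi\in\LT$ separating two non-parallel active weights, and you by the equivalent dimension count $\dim L_z(\LT)=\dim\,\mathrm{span}\{w_j \mid z_j\neq 0\}$ versus $\dim L_z(\LT_1)=1$. The disjointness step (the $V_\nu$ meet only in $0$, which is not on the sphere) is likewise identical.
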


\begin{proof}
The $V_\nu$ intersect only in $0 \in \C^n$ and since $0 \not\in S^{2n-1}$ the $S_\nu$ are indeed disjoint. Now if $z \in S_\nu$ then $L_z(\LT)$ is the real one-dimensional space spanned by $\left( i \ell_{\nu_j}z_j \right) \in T_zS^{2n-1}$. The same is true for $L_z(\LT_1)$ since $\langle w_\nu , e_1 \rangle \ne 0$ for any $\nu$. Hence $\sqcup_{\nu = 1}^N S_\nu \subset F_{T_1}$.

If on the other hand $z \in S^{2n-1}$ is not contained in any $S_\nu$ then it has two nonzero components $z_l , z_m \ne 0$ with $w_l \nparallel w_m$. Thus there exists an element $\xi \in \LT$ with $\langle w_l , \xi \rangle = 0$ but $\langle w_m , \xi \rangle \ne 0$. This implies $L_z(\xi) \not\in L_z(\LT_1)$, because by assumption $\langle w_l , e_1 \rangle$ and $\langle w_m , e_1 \rangle$ are both nonzero. Hence $z \not\in F_{T_1}$.
\end{proof}

Next we have to describe the equivariant normal bundles $N_\nu$ to $S_\nu$ in $S^{2n-1}$. They can explicitly be identified as
$$
N_\nu \;\cong\; S_\nu \x \bigoplus_{\nu' \ne \nu} V_{\nu'}.
$$
Hence by \ref{eqn:torus_euler_class} the $T$-equivariant Euler class of $N_\nu$ is given by
$$
e_T(N_\nu) \;=\; \prod_{w_j \nparallel w_\nu} w_j.
$$
As before we take elements $x_j \in \LT^*$ for $j = 1 , \ldots , m$ and denote the product by
$$
\mathbbm{x} \;:=\; \prod_{j = 1}^m x_j \in S(\LT^*).
$$
We write $\pi_\nu : S_\nu \ra \{ \pt \}$ for the projection of $S_\nu$ onto a point and obtain from the relative localization theorem \ref{thm:relative_localization} the following formula:
\begin{equation} \label{eqn:pi_T_1_localized}
\left( \pi / T_1 \right)_* \left[ \mathbbm{x} \right] \;=\; \sum_{\nu = 1}^N \left( \pi_\nu / T_1 \right)_* \left[ \frac{ \mathbbm{x} }{ e_T \left( N_\nu \right) } \right]
\end{equation}
Now every single summand can be computed by using the technique that was used to prove proposition \ref{prop:T_1_invariant_integration}, because with the $T$-action restricted to $S_\nu \subset V_\nu$ we are precisely in the setting of the example in section \ref{sec:example_invariant_integration}.

\begin{lem}
With the above notation we obtain for every $\nu \in \{ 1 , \ldots , N \}$
$$
\left( \pi_\nu / T_1 \right)_* \left[ \frac{ \mathbbm{x} }{ e_T \left( N_\nu \right) } \right] (\xi) \;=\; \frac{1}{2 \pi i} \oint_\nu \frac{ \prod_{j = 1}^m \langle x_j , \xi + ze_1 \rangle }{ \prod_{j = 1}^n \langle w_j , \xi + ze_1 \rangle } \; \dz \; .
$$
The integral is around a circle in the complex plane enclosing the pole at
$$
z = z_\nu := - \frac{ \langle w_\nu , \xi \rangle }{ \langle w_\nu , e_1 \rangle}
$$
but no other pole of the integrand.
\end{lem}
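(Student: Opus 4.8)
The plan is to reduce the computation to the colinear situation of Proposition \ref{prop:T_1_invariant_integration} applied to the single sphere $S_\nu$, treating the inverse Euler class $1/e_T(N_\nu)$ as extra data inside the integrand. First I would restrict attention to $S_\nu \subset V_\nu$. On $V_\nu$ the $T$-action factors through the weighted $S^1$-action with primitive weight $w_\nu$ and multipliers $\ell_\nu = (\ell_{\nu_i})$, so the pair $(S_\nu, T)$ lies precisely in the setup of section \ref{sec:example_invariant_integration}: the quotient of the induced free $S^1$-action is the weighted projective space $\CP^{n_\nu - 1}_{\ell_\nu}$, and exactly as in Proposition \ref{prop:T_1_invariant_integration} the push-forward $(\pi_\nu/T_1)_*$ factors as a $T_1$-invariant push-forward to $\CP^{n_\nu-1}_{\ell_\nu}$ followed by $T_0$-equivariant evaluation on the fundamental cycle. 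The internal weights on $V_\nu$ are precisely the $w_j$ with $w_j \parallel w_\nu$.

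Next I would fix the $T$-equivariant $T_1$-connection $A = \frac{\alpha_{\ell_\nu}}{\langle w_\nu , e_1 \rangle} \otimes e_1$ on $S_\nu$, as in \ref{def:T_equivariant_T_1_connection}, and apply the Cartan map $c_A$. By the computation \ref{eqn:xx_A} every factor $x \in \LT^*$ is sent to $x - \langle x , e_1 \rangle \frac{ \mathrm{d}\alpha_{\ell_\nu} - w_\nu }{ \langle w_\nu , e_1 \rangle }$. The subtle point is the Euler class: by \ref{eqn:torus_euler_class} we have $e_T(N_\nu) = \prod_{w_j \nparallel w_\nu} w_j$, and since none of these $w_j$ is proportional to $w_\nu$, after the Cartan map each factor $w_j$ has a non-constant (hence nilpotent) form part, so $1/e_T(N_\nu)$ expands to a genuine polynomial in $\frac{ \mathrm{d}\alpha_{\ell_\nu} - w_\nu }{ \langle w_\nu , e_1 \rangle }$ and is thus a unit in the localized ring governing the algebraic localization \ref{eqn:algebraic_localization}. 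I would therefore collect $\mathbbm{x}/e_T(N_\nu)$ into exactly the polynomial presentation $\sum_{k \ge 0} r_k \cdot \left( \frac{ \mathrm{d}\alpha_{\ell_\nu} - w_\nu }{ \langle w_\nu , e_1 \rangle } \right)^k$ required by Remark \ref{rem:z}.

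Finally I would invoke the residue recipe \ref{eqn:gen_T_1_invariant_integration} of Remark \ref{rem:z}: the push-forward followed by evaluation at $\xi$ amounts to the substitution that converts each transformed factor into $\langle \cdot , \xi + z e_1 \rangle$ exactly as in Proposition \ref{prop:T_1_invariant_integration}, and then takes the residue at $z = z_\nu = - \langle w_\nu , \xi \rangle / \langle w_\nu , e_1 \rangle$. The numerator $\mathbbm{x}$ produces $\prod_{j=1}^m \langle x_j , \xi + z e_1 \rangle$; the internal weights $w_j \parallel w_\nu$ contribute $\prod_{w_j \parallel w_\nu} \langle w_j , \xi + z e_1 \rangle$ to the denominator through the integration over $S_\nu$, while the inverse Euler class contributes the complementary factors $\prod_{w_j \nparallel w_\nu} \langle w_j , \xi + z e_1 \rangle$. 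Together these assemble into the full product $\prod_{j=1}^n \langle w_j , \xi + z e_1 \rangle$, which is the stated formula.

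The main obstacle I expect is the bookkeeping around the division by $e_T(N_\nu)$: one must check that inverting the Euler class in the localized ring, as prescribed by \ref{eqn:algebraic_localization}, is faithfully represented after the Cartan map by the denominator factors $\langle w_j , \xi + z e_1 \rangle$ with $w_j \nparallel w_\nu$, and that these introduce no pole at $z_\nu$ — which holds precisely because $w_j \nparallel w_\nu$ forces $\langle w_j , \xi + z_\nu e_1 \rangle \ne 0$. The complementary consistency check is that the factors $w_j \parallel w_\nu$, being scalar multiples $\ell_{\nu_i} w_\nu$, all contribute a pole at the single point $z_\nu$, whereas the non-parallel factors place their poles at the remaining $z_{\nu'}$ with $\nu' \ne \nu$; hence the contour $\oint_\nu$ encloses $z_\nu$ and no other pole, as claimed.
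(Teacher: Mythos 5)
Your proposal follows the paper's proof essentially step for step: restrict to $S_\nu$, use the connection \ref{def:T_equivariant_T_1_connection} with $w = w_\nu$ and apply the Cartan map, invert $e_T(N_\nu)_A$ using nilpotency of its form part, and feed the result into the residue recipe of Remark \ref{rem:z}, locating the single pole at $z_\nu$ exactly as the paper does. The one place where the paper is more careful than your sketch is the inversion itself: since Remark \ref{rem:z} requires coefficients $r_k \in S(\LT^*)$ rather than elements of a localized ring, the paper first multiplies by $\mathbbm{w}^n$ (the $n$-th power of the nonzero scalar part $\mathbbm{w} \in S(\LT_0^*)$ of $e_T(N_\nu)_A$) so that $\beta = \mathbbm{w}^n/e_T(N_\nu)_A$ is an honest polynomial expression, applies the recipe to $\mathbbm{x}_A \cdot \beta$, and only afterwards divides by $\mathbbm{w}^n(\xi)$ on the dense set where $\mathbbm{w}(\xi) \ne 0$, extending to all $\xi$ by continuity.
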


\begin{proof}
We use the $T$-equivariant $T_1$-connection $A$ from \ref{def:T_equivariant_T_1_connection} with $\ell = \ell_\nu$ and $w = w_\nu$ and obtain analogously to \ref{eqn:xx_A}
\begin{eqnarray} \label{eqn:e_T_A}
e_T \left( N_\nu \right)_A & = & \prod_{w_j \nparallel w_\nu} w_{j,A} \nonumber\\
& = & \prod_{w_j \nparallel w_\nu} \left( w_j - \langle w_j , e_1 \rangle \frac{ \mathrm{d}\alpha_\ell - w_\nu }{ \langle w_\nu , e_1 \rangle } \right).
\end{eqnarray}
In the expansion of this product the term not involving any power of $\mathrm{d}\alpha_\ell$ is the nonzero element
$$
\mathbbm{w} \;:=\; \prod_{w_j \nparallel w_\nu} \left( w_j - \frac{ \langle w_j , e_1 \rangle }{ \langle w_\nu , e_1 \rangle } \cdot w_\nu \right)  \in S( \LT_0^* ).
$$
Hence if we repeat the steps following equation \ref{eqn:euler_class_sum} in the abstract discussion we obtain
\begin{equation} \label{eqn:e_inversion}
\frac{ \mathbbm{w}^n }{ e_T \left( N_\nu \right)_A } \;=\; \beta
\end{equation}
with
$$
\beta \;:=\; \mathbbm{w}^{n-1} + \mathbbm{w}^{n-2} \cdot \Theta + \ldots + \mathbbm{w} \cdot \Theta^{n-1} \quad ; \quad \Theta \;:=\; \mathbbm{w} - e_T \left( N_\nu \right)_A,
$$
because $\left( \mathrm{d}\alpha_\ell \right)^n = 0$. This formula shows that $\beta$ is $T_1$-basic. Now by \ref{eqn:e_T_A} we have  $e_T \left( N_\nu \right)_A$ given as a polynomial in $\frac{ \mathrm{d}\alpha_\ell - w_\nu }{ \langle w_\nu , e_1 \rangle }$. This implies that we can also write
$$
\beta \;=\; \sum_{k = 0}^{n-1} r_k \cdot \left( \frac{ \mathrm{d}\alpha_\ell - w_\nu }{ \langle w_\nu , e_1 \rangle } \right)^k
$$
for some elements $r_k \in S(\LT^*)$. Now since $\mathbbm{w} \in S( \LT_0^* )$ we have $\mathbbm{w}_A = \mathbbm{w}$ and hence
\begin{eqnarray*}
\mathbbm{w}^n \cdot \left( \pi_\nu / T_1 \right)_* \left[ \frac{ \mathbbm{x} }{ e_T \left( N_\nu \right) } \right] & = & \left( \pi_\nu / T_1 \right)_* \left[ \frac{ \mathbbm{x}_A \cdot \mathbbm{w}^n }{ e_T \left( N_\nu \right)_A } \right] \\
& = &  \left( \pi_\nu / T_1 \right)_* \left[ \mathbbm{x}_A \cdot \beta \right].
\end{eqnarray*}
Now the form $\mathbbm{x}_A \cdot \beta$ is precisely of the type that we discussed in remark \ref{rem:z}. So by formula \ref{eqn:gen_T_1_invariant_integration} we obtain
$$
\left( \pi_\nu / T_1 \right)_* \left[ \mathbbm{x}_A \cdot \beta \right] (\xi) \;=\;  \Res_{z = - \frac{ \langle w_\nu , \xi \rangle }{ \langle w_\nu , e_1 \rangle } } \left\{ \frac{ \D \prod_{j = 1}^m \langle x_j , \xi + ze_1 \rangle \cdot \sum_{k = 0}^{n-1} r_k(\xi) \cdot z^k }{ \D \prod_{w_j \parallel w_\nu} \langle  w_j , \xi + ze_1 \rangle } \right\}.
$$
By equations \ref{eqn:e_T_A} and \ref{eqn:e_inversion} and the definition of the coefficients $r_k$ we get the identity
$$
\sum_{k = 0}^{n-1} r_k (\xi) \cdot z^k \;=\; \frac{ \mathbbm{w}^n (\xi) }{ \D \prod_{w_j \nparallel w_\nu} \langle  w_j , \xi + ze_1 \rangle }
$$
for all $z$ away from the zeros of the denominator. These zeros are all different from $z_\nu := - \frac{ \langle w_\nu , \xi \rangle }{ \langle w_\nu , e_1 \rangle }$. So we can plug this identity into the above residue formula to get
\begin{eqnarray*}
\left( \pi_\nu / T_1 \right)_* \left[ \mathbbm{x}_A \cdot \beta \right] (\xi) & = & \Res_{z = z_\nu} \left\{ \frac{ \D \prod_{j = 1}^m \langle x_j , \xi + ze_1 \rangle \cdot \mathbbm{w}^n (\xi) }{ \D \prod_{w_j \parallel w_\nu} \langle  w_j , \xi + ze_1 \rangle \cdot \prod_{w_j \nparallel w_\nu} \langle  w_j , \xi + ze_1 \rangle } \right\} \\
& = & \mathbbm{w}^n (\xi) \cdot \Res_{z = z_\nu } \left\{ \frac{ \prod_{j = 1}^m \langle x_j , \xi + ze_1 \rangle }{ \prod_{j = 1}^n \langle  w_j , \xi + ze_1 \rangle } \right\}.
\end{eqnarray*}
Now for all $\xi$ with $\mathbbm{w} (\xi) \ne 0$ this implies
$$
\left( \pi_\nu / T_1 \right)_* \left[ \frac{ \mathbbm{x} }{ e_T \left( N_\nu \right) } \right] (\xi) \;=\; \Res_{z = z_\nu } \left\{ \frac{ \prod_{j = 1}^m \langle x_j , \xi + ze_1 \rangle }{ \prod_{j = 1}^n \langle  w_j , \xi + ze_1 \rangle } \right\}.
$$
Since $\mathbbm{w} \ne 0$ the set of such $\xi$ is dense in $\LT$ and the result actually holds for all $\xi$ by continuity. The claimed identity finally follows by the residue theorem.
\end{proof}

Note the beauty of this formula: The integrand does not depend on $\nu$ --- only the point $z_\nu$ around which we have to integrate does. Hence we can perform the sum in \ref{eqn:pi_T_1_localized} by just integrating around all $z_\nu$, which also happen to be all poles of the integrand. This proves the following generalization of proposition \ref{prop:T_1_invariant_integration}:

\begin{prop} \label{prop:example_localization}
Suppose the torus $T$ acts on $S^{2n-1} \subset \C^n$ via a collection of $n$ weight vectors $w_j \in \Lambda^* \setminus \{ 0 \}$ as
$$
\exp( \xi ).z \;:=\; \left( e^{- 2 \pi i \langle w_1 , \xi \rangle} \cdot z_1 , \ldots ,  e^{- 2 \pi i \langle w_n , \xi \rangle} \cdot z_n \right)
$$
Assume that no two $w_j$ are negative multiples of each other. Let $e_1 \in \LT$ be such that $\langle w_j , e_1 \rangle \ne 0$ for all $j$. Then for any $\mathbbm{x} \;=\; \prod_{j = 1}^m x_j \in S(\LT^*)$ and $\xi \in \LT$ we have
$$
\left( \pi / T_1 \right)_* \left[ \mathbbm{x} \right] (\xi) \;=\; \frac{1}{2 \pi i} \oint \frac{ \prod_{j = 1}^m \langle x_j , \xi + z e_1 \rangle }{ \prod_{j = 1}^n \langle w_j , \xi + z e_1 \rangle } \; \dz \; .
$$
The integral is around a circle in the complex plane enclosing all poles of the integrand.
\end{prop}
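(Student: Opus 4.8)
The plan is to assemble the two preceding lemmas by feeding the relative localization theorem with the geometry of $F_{T_1}$ and then closing with the residue theorem. By hypothesis no two $w_j$ are negative multiples of each other, so within each colinearity class the weights are positive multiples $w_j = \ell_{\nu_i} w_\nu$ with $\ell_{\nu_i} > 0$; the relative fixed point set splits as $F_{T_1} = \bigsqcup_{\nu} S_\nu$, and the normal bundles carry Euler classes $e_T(N_\nu) = \prod_{w_j \nparallel w_\nu} w_j$ as established above. I would therefore begin by applying Theorem \ref{thm:relative_localization} to this decomposition, which produces precisely formula \ref{eqn:pi_T_1_localized},
$$
\left( \pi / T_1 \right)_* \left[ \mathbbm{x} \right] \;=\; \sum_{\nu=1}^N \left( \pi_\nu / T_1 \right)_* \left[ \frac{ \mathbbm{x} }{ e_T(N_\nu) } \right],
$$
expressing the invariant push-forward as a sum of contributions localized near the spheres $S_\nu$.

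Second, I would evaluate each summand by the preceding lemma, which identifies $\left( \pi_\nu / T_1 \right)_* [\mathbbm{x}/e_T(N_\nu)](\xi)$ with the contour integral of the single rational function
$$
R(z) \;:=\; \frac{ \prod_{j=1}^m \langle x_j , \xi + z e_1 \rangle }{ \prod_{j=1}^n \langle w_j , \xi + z e_1 \rangle }
$$
around a small circle enclosing only the pole $z_\nu = -\langle w_\nu , \xi \rangle / \langle w_\nu , e_1 \rangle$. The decisive observation is that $R(z)$ is the \emph{same} integrand for every $\nu$; only the enclosed pole varies. Indeed the poles of $R$ are the zeros $z = -\langle w_j , \xi \rangle / \langle w_j , e_1 \rangle$ of the denominator, and writing $w_j = \ell_{\nu_i} w_\nu$ with $\ell_{\nu_i} > 0$ collapses every such zero attached to a weight in the class $\nu$ onto $z_\nu$. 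Hence the distinct poles of $R$ are exactly $\{ z_1 , \ldots , z_N \}$, each $z_\nu$ receiving the pole contributed by the $n_\nu$ weights colinear to $w_\nu$.

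Finally I would sum over $\nu$ and invoke the residue theorem. For $\xi$ in the dense set where the $z_\nu$ are pairwise distinct, the small circles around the individual poles may be replaced by one large circle enclosing all of them, so that $\sum_\nu \Res_{z = z_\nu} R$ becomes $\frac{1}{2\pi i} \oint R \, \dz$ over a single contour enclosing every pole, which is the asserted formula; since both sides are continuous (indeed real-analytic) in $\xi$ and agree on a dense set, the identity extends to all $\xi$. I expect the only point genuinely requiring care to be this last step --- checking that $\{ z_\nu \}$ really exhausts the pole set of $R$, so that no residue is dropped, and handling the degenerate $\xi$ for which two of the $z_\nu$ collide, where the ``one pole per component'' picture of the preceding lemma breaks down but the combined large-contour integral nonetheless records every residue correctly.
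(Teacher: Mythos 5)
Your proposal follows the paper's own route exactly: apply Theorem \ref{thm:relative_localization} to the decomposition $F_{T_1} = \bigsqcup_\nu S_\nu$ to obtain \ref{eqn:pi_T_1_localized}, evaluate each summand via the preceding lemma as a residue of the $\nu$-independent integrand at $z_\nu$, and combine the small contours into one large contour enclosing all poles. Your extra remarks --- that the $z_\nu$ exhaust the pole set because colinear weights contribute coincident zeros of the denominator, and that degenerate $\xi$ are handled by continuity --- are correct and consistent with the paper's argument.
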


\section{Moduli problems}

\label{chap:moduli_problems}

Cieliebak, Mundet and Salamon \cite{CMS} introduced the notion of a \emph{$G$-moduli problem} and its associated \emph{Euler class}. Roughly speaking a $G$-moduli problem consists of an equivariant Hilbert space bundle $\mcE \ra \mcB$ with an equi\-variant Fredholm section $\mcS$ and the Euler class is the map
$$
\chi^{\mcB,\mcE,\mcS} \;:\; H_G^*(\mcB) \ra \R
$$
obtained by $G$-invariant integration over the zero set $\mcM := \mcS^{-1}(0)$. The point is that this map can be defined even if the section $\mcS$ is not transversal and cannot be made so by an equivariant perturbation.

We review this technique because we will use it frequently later on. In fact we give a generalized construction and build in the results from section \ref{sec:H_invariant_push_forward} on $H$-invariant push-forward to define a relative $G/H$-equivariant Euler class for $H$-regular $G$-moduli problems. Furthermore we can study fibered moduli problems, which for instance appear in the discussion of the wall crossing formula in section \ref{sec:toric_wall_crossing}.

\subsection{Definitions}

Throughout $G$ denotes a compact oriented Lie group.

\begin{dfn}
A \emph{$G$-moduli problem} is a triple $(\mcB,\mcE,\mcS)$ with the following properties:
\begin{itemize}
\item $\mcB$ is a Hilbert manifold with a smooth $G$-action.
\item $\mcE$ is a Hilbert space bundle over $\mcB$, equipped with a smooth $G$-action such that the projection $\pi : \mcE \ra \mcB$ is $G$-equivariant and the induced action between fibers of $\mcE$ is by isometries.
\item $\mcS : \mcB \ra \mcE$ is a smooth $G$-equivariant Fredholm section of constant Fredholm index. The determinant bundle $\det(\mcS) \ra \mcB$ is oriented, $G$ acts by orientation preserving isometries on $\det(\mcS)$, and the zero set
$$
\mcM \;:=\; \left\{ x \in \mcB \;|\; \mcS(x) = 0 \right\}
$$
is compact.
\end{itemize}
A $G$-moduli problem with finite-dimensional spaces $\mcB$ and $\mcE$ is called \emph{oriented} if $\mcB$ and $\mcE$ are oriented and the $G$-actions preserve orientations. Given a closed subgroup $H \subset G$ a $G$-moduli problem is called \emph{$H$-regular} if the induced $H$-action on $\mcM$ is regular. A $G$-regular $G$-moduli problem is simply called \emph{regular}.
\end{dfn}

Given any trivialization of $\mcE$ around a point $x \in \mcB$ one obtains the \emph{vertical differential} $\mcD_x$ by composing the differential $\mathrm{d}\mcS(x) : T_x\mcB \ra T_{\mcS(x)}\mcE$ of $\mcS$ with the projection $T_{\mcS(x)}\mcE \ra \mcE_x$ onto the fiber $\mcE_x$ of $\mcE$ over $x$:
$$
\mcD_x \;:\;  T_x\mcB \ra \mcE_x
$$
The Fredholm property of $\mcS$ asserts that $\mcD_x$ is a Fredholm operator for all $x$ in a small neighbourhood of $\mcM$ and that the index of $\mcD_x$ is independent of $x$. The \emph{index} of a $G$-moduli problem is defined to be
$$
\Ind(\mcS) \;:=\; \Ind(\mcD_x) - \dim(G).
$$

\begin{dfn} \label{def:morphism}
A \emph{morphism} between two $G$-moduli problems $(\mcB,\mcE,\mcS)$ and $(\mcB',\mcE',\mcS')$ is a pair $(f,F)$ with the following properties.
\begin{itemize}
\item $f : \mcB_0 \ra \mcB'$ is a smooth $G$-equivariant embedding of a neighbourhood $\mcB_0 \subset \mcB$ of $\mcM$ into $\mcB'$.
\item $F : \mcE|_{\mcB_0} \ra \mcE'$ is a smooth and injective bundle homomorphism over $f$.
\item The sections $\mcS$ and $\mcS'$ satisfy
$$
\mcS' \circ f \;=\; F \circ \mcS \quad \mathrm{and} \quad \mcM' \;=\; f(\mcM).
$$
\item For all $x \in M$ the linearized operator $\mathrm{d}f(x) : T_x\mcB \ra T_{f(x)}\mcB'$ and the linear operator $F_x : \mcE_x \ra \mcE'_{f(x)}$ induce isomorphisms
$$
\mathrm{d}f(x) \;:\; \ker \mcD_x \ra \ker \mcD'_{f(x)} \quad \mathrm{and} \quad F_x \;:\; \coker \mcD_x \ra \coker \mcD'_{f(x)}
$$
and the resulting isomorphism from $\det(\mcS)$ to $\det(\mcS')$ is orientation preserving.
\end{itemize}
\end{dfn}

To any regular $G$-moduli problem there exists its \emph{Euler class}, which is a homomorphism
$$
\chi^{\mcB,\mcE,\mcS} \;:\; H_G^*(\mcB) \ra \R.
$$
As shown in \cite{CMS} the Euler class is uniquely determined by the following two properties.
\begin{description}
\item[(Functoriality)] If $(f,F)$ is a morphism from $(\mcB,\mcE,\mcS)$ to $(\mcB',\mcE',\mcS')$ then
$$
\chi^{\mcB,\mcE,\mcS} \circ f_G^* \;=\; \chi^{\mcB',\mcE',\mcS'}.
$$
\item[(Thom class)] If $(B,E,S)$ is a finite-dimensional, oriented, regular $G$-moduli problem and $\tau \in \Omega_G^*(E)$ is an equivariant Thom form supported in an open neighbourhood $U \subset E$ of the zero section such that $U \cap E_x$ is convex for every $x \in B$, $U \cap \pi^{-1}(K)$ has compact closure for every compact set $K \subset B$, and $S^{-1}(U)$ has compact closure, then
$$
\chi^{\mcB,\mcE,\mcS} (\alpha) \;=\; \int_{B/G} \alpha \wedge S^*\tau
$$
for every $\alpha \in H_G^*(B)$.
\end{description}
The integral over $B/G$ is $G$-invariant integration as explained in section \ref{chap:invariant_integration}. We will review the definition of $\chi^{\mcB,\mcE,\mcS}$ in the next section, where we explain the generalization to $H$-regular problems.

\begin{dfn} \label{def:homotopy}
A \emph{homotopy of regular $G$-moduli problems} is a $G$-equi\-va\-ri\-ant Hilbert space bundle $\mcE \ra [0,1] \x \mcB$ and a smooth $G$-equivariant section $\mcS$ therein, such that $( \mcB , \mcE|_{\{t\}\x\mcB} , \mcS|_{\{t\}\x\mcB} )$ is a regular $G$-moduli problem for every $t \in [0,1]$, and the set
$$
\mcM \;:=\; \left\{ (t,x) \in [0,1] \x \mcB \;|\; \mcS(t,x) = 0 \right\}
$$
is compact.
\end{dfn}

A homotopy is an example for the more general notion of a \emph{cobordism} of $G$-moduli problems, which is defined in the obvious way:

\begin{dfn} \label{def:cobordism}
Two regular $G$-moduli problems $\left( \mcB_i , \mcE_i , \mcS_i \right)$, $i \in \{0,1\}$, are called \emph{cobordant} if there exist a $G$-equivariant Hilbert space bundle $\widetilde{\mcE} \ra \widetilde{\mcB}$ over a Hilbert manifold $\widetilde{\mcB}$ with boundary and a smooth oriented $G$-equivariant Fredholm section $\widetilde{\mcS} : \widetilde{\mcB} \ra \widetilde{\mcE}$ such that the zero set $\widetilde{\mcM} := \widetilde{\mcS}^{-1}(0)$ is compact, $G$ acts with finite isotropy on $\widetilde{\mcB}$, and
$$
\del \widetilde{\mcB} \;=\; \mcB_0 \cup \mcB_1 \quad , \quad \mcE_i \;=\; \widetilde{\mcE}|_{\mcB_i} \quad , \quad \mcS_i \;=\; \widetilde{\mcS}|_{\mcB_i}.
$$
Moreover, $\det(\widetilde{\mcS})$ carries an orientation which induces the orientation of $\det(\mcS_1)$ over $\mcB_1$ and the opposite of the orientation of $\det(\mcS_0)$ over $\mcB_0$. Here an orientation of $\det(\widetilde{\mcS})$ induces an orientation of the determinant bundle of $\mcS := \widetilde{\mcS}|_{\del \widetilde{\mcB}}$ via the natural isomorphism $\det(\widetilde{\mcS})|_{\del \widetilde{\mcB}} \cong \R v \otimes \det(\mcS)$ for an outward pointing normal vector field $v$ along $\del \widetilde{\mcB}$.
\end{dfn}

It is shown in \cite{CMS} that the Euler class satisfies the following property:
\begin{description}
\item[(Cobordism)] If $\left( \mcB_0 , \mcE_0 , \mcS_0 \right)$ and $\left( \mcB_1 , \mcE_1 , \mcS_1 \right)$ are cobordant $G$-moduli problems then
$$
\chi^{\mcB_0,\mcE_0,\mcS_0}(\iota_0^*\alpha) \;=\; \chi^{\mcB_1,\mcE_1,\mcS_1}(\iota_1^*\alpha)
$$
for every $\alpha \in H_G^*(\widetilde{\mcB})$, where $\iota_0 : \mcB_0 \ra \widetilde{\mcB}$ and $\iota_1 : \mcB_1 \ra \widetilde{\mcB}$ are the inclusions.
\end{description}

\subsection{The $G/H$-equivariant Euler class}

Suppose $(\mcB,\mcE,\mcS)$ is an $H$-regular $G$-moduli problem for some closed normal subgroup $H \lhd G$. We denote the quotient group by $K := G / H$ and its Lie algebra by $\LK$. In this section we will define the $G/H$-equivariant Euler class
$$
\chi^{\mcB,\mcE,\mcS}_{G,H} \;:\; H_G^*(\mcB) \ra H_K^*(\pt) = S(\LK^*).
$$
This is a generalization of the Euler class $\chi^{\mcB,\mcE,\mcS}$ from \cite{CMS} because for a $G$-regular $G$-moduli problem we will have
$$
\chi^{\mcB,\mcE,\mcS}_{G,G} \;=\; \chi^{\mcB,\mcE,\mcS}.
$$
The construction is analogous to the old one. The procedure of finite-dimensional reduction does not depend on the involved isotropy groups. Hence it suffices to define $\chi^{\mcB,\mcE,\mcS}_{G,H}$ for finite-dimensional, oriented, $H$-regular $G$-moduli problems.

A \emph{Thom structure} for such a problem is a pair $(U,\tau)$ with the following properties:
\begin{enumerate}
\item $U \subset \mcE$ is a $G$-invariant open neighbourhood of the zero section that intersects each fiber in a convex set and such that $U \cap \mcE|_K$ has compact closure for every compact subset $K \subset \mcB$.
\item $\mcS^{-1}(U)$ has compact clusure.
\item $\tau \in \Omega_G^*(\mcE)$ is an equivariant Thom form for the zero section in $\mcE$ with support in $U$.
\end{enumerate}
By the results cited in section \ref{sec:equivariant_thom_forms} it is clear that for every finite-dimensional, oriented $G$-moduli problem $(\mcB,\mcE,\mcS)$ and for every subset $U \subset \mcE$ that satisfies the first two points, there exists a form $\tau$ such that $(U,\tau)$ is a Thom structure. This also does not involve the isotropy of the $G$-action.

Now since $\mcS^{-1}(0) = \mcM$ is compact we find an open neighbourhood $\mcB_0$ of $\mcM$ in $\mcB$ that has compact closure and such that $H$ acts with finite isotropy on all of $\mcB_0$. Next we choose $U \subset \mcE$ small enough so that $\mcS^{-1}(U)$ lies in $\mcB_0$ and such that $U$ satisfies property (1). We then find a corresponding Thom form $\tau$. For a class $[ \alpha ] \in H_G^*(\mcB)$ we finally define
$$
\chi^{\mcB,\mcE,\mcS}_{G,H} ( [ \alpha ] ) \;:=\; \int_{\mcB/H} \alpha \wedge \mcS^* \tau \in S(\LK^*).
$$
Integration over $\mcB/H$ is $H$-invariant push-forward of the map $\mcB \ra \{\pt\}$ as defined in section \ref{sec:H_invariant_push_forward}. It is well defined because by construction the form $\mcS^*\tau$ has compact support. Independence of all involved choices follows as in \cite[Chapter 8]{CMS} if we use the fact that $G$-equivariant Thom forms behave well for $H$-invariant integration as shown in proposition \ref{prop:H_invariant_Thom_integration}.

\begin{rem}
There is one open question about the $G/H$-equivariant Euler class. The Euler class $\chi^{\mcB,\mcE,\mcS}$ from \cite{CMS} is rational, i.~e.~it satisfies the following property:
\begin{description}
\item[(Rationality)] If $\alpha \in H_G^*(\mcB;\Q)$ then $\chi^{\mcB,\mcE,\mcS} (\alpha) \in \Q$.
\end{description}
Does the corresponding property also hold for $\chi^{\mcB,\mcE,\mcS}_{G,H}$? One would need a generalization of the technique of representing rational classes by weighted branched submanifolds (see \cite[Chapter 9 and 10]{CMS}) to the case of non-regular $G$-actions. Such a theory would be interesting on its own account.
\end{rem}

\subsection{Fibered moduli problems}

\label{sec:fibred_G_moduli_problems}

Suppose the base space $\mcB$ of a moduli problem is a fiber bundle
$$
f \;:\; \mcB \ra \mcB_0.
$$
Furthermore assume that $f$ is $G$-equivariant, that $H$ acts trivially in $\mcB_0$, and that we have a $K$-moduli problem $\left( \mcB_0 , \mcE_0 , \mcS_0 \right)$ with $K := G/H$ and some closed normal subgroup $H \lhd G$. Now consider the pull-back
$$
\left(\,\mcB \,,\, \mcE := f^*\mcE_0 \,,\, \mcS := f^*\mcS_0 \,\right).
$$
If the fibers of $f$ are compact then the moduli space $\mcM := \mcS^{-1}(0)$ will be compact and $\left( \mcB , \mcE , \mcS \right)$ in fact is a $G$-moduli problem. This is the simplest instance of a \emph{fibered moduli problem}. Since this is also the only form in which we will actually apply the notion of a fibered moduli problem, we restrict to this case. We comment on more general settings below.

\begin{prop} \label{prop:fibred_Euler_class}
In the above setup suppose that $\left( \mcB , \mcE , \mcS \right)$ is $H$-regular. Then we have the identity
$$
\chi^{\mcB,\mcE,\mcS}_{G,H} \;=\; \chi^{\mcB_0,\mcE_0,\mcS_0}_{K,\{e\}} \circ \left( f / H \right)_* \;:\; H_G^*(\mcB) \ra H_K^*(\pt) = S(\LK^*).
$$
If in addition $\left( \mcB_0 , \mcE_0 , \mcS_0 \right)$ is $K$-regular, then $\left( \mcB , \mcE , \mcS \right)$ is $G$-regular and we have
$$
\chi^{\mcB,\mcE,\mcS}_{G,G} \;=\; \chi^{\mcB_0,\mcE_0,\mcS_0}_{K,K} \circ \left( f / H \right)_* \;:\; H_G^*(\mcB) \ra \R.
$$
\end{prop}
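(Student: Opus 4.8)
The plan is to compute both sides directly from the defining formula for the $G/H$-equivariant Euler class, namely $\chi^{\mcB,\mcE,\mcS}_{G,H}([\alpha]) = \int_{\mcB/H} \alpha \wedge \mcS^*\tau$, and to exploit the functoriality and module properties (parts 4 and 5) of $H$-invariant push-forward from Proposition~\ref{prop:invariant_integration}. The essential device is to build the Thom structure on the fibered problem by pulling one back from the base. So first I would fix a Thom structure $(U_0,\tau_0)$ for the $K$-moduli problem $(\mcB_0,\mcE_0,\mcS_0)$, let $\hat f : \mcE = f^*\mcE_0 \ra \mcE_0$ be the bundle map covering $f$, and set $U := \hat f^{-1}(U_0)$ and $\tau := \hat f^*\tau_0$. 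I would check $(U,\tau)$ is a valid Thom structure for $(\mcB,\mcE,\mcS)$: fiberwise convexity is inherited because the fibers of $\mcE$ and $\mcE_0$ coincide, while the two properness conditions follow from the corresponding conditions downstairs together with compactness of the fibers of $f$. Since $\mcS = f^*\mcS_0$ gives $\hat f\circ\mcS = \mcS_0\circ f$, the crucial factorization $\mcS^*\tau = f^*(\mcS_0^*\tau_0)$ holds. Because the Euler class is independent of the Thom structure, I am free to use this particular one.

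The key structural observation is that $\tau = \hat f^*\tau_0$ is $H$-basic: since $H$ acts trivially on $\mcE_0$ (through $G\ra K$), the infinitesimal fields $X_\eta$, $\eta\in\LH$, satisfy $\hat f_* X_\eta = 0$, whence $\iota_\eta\tau = 0$, and $H$-invariance is inherited from the $K$-invariance of $\tau_0$; consequently $\mcS^*\tau = f^*(\mcS_0^*\tau_0)$ is $H$-basic as well. Representing the restriction of $[\alpha]\in H_G^*(\mcB)$ to a neighbourhood of $\mcM$ on which $H$ acts regularly by an $H$-basic form (Proposition~\ref{prop:gen_Cartan}), the integrand $\alpha\wedge\mcS^*\tau$ becomes a compactly supported, $H$-basic, $\mathrm{d}_K$-closed form; here compactness of the fibers of $f$ guarantees the vertically compact support that $(f/H)_*$ requires. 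I would then chain the two properties of $(f/H)_*$: the first functoriality identity of Proposition~\ref{prop:invariant_integration}(4) rewrites $\int_{\mcB/H}\alpha\wedge\mcS^*\tau$ as $(\pi_{\mcB_0})^K_*\bigl[(f/H)^G_*(\alpha\wedge f^*(\mcS_0^*\tau_0))\bigr]$, and the module property (5) extracts the base factor to give $(\pi_{\mcB_0})^K_*\bigl[(f/H)^G_*(\alpha)\wedge\mcS_0^*\tau_0\bigr]$. By the definition of the Euler class with trivial normal subgroup this equals $\chi^{\mcB_0,\mcE_0,\mcS_0}_{K,\{e\}}\bigl((f/H)^G_*([\alpha])\bigr)$, which is the first claim.

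For the second statement I would first establish $G$-regularity. Since $\mcM = f^{-1}(\mcM_0)$ and $f$ restricts to a $G$-equivariant map $\mcM\ra\mcM_0$ with $H$ acting trivially on $\mcM_0$, at a point $p\in\mcM$ the image of the isotropy $G_p$ in $K$ fixes $f(p)$ and hence lies in $K_{f(p)}$; the exact sequence $1\ra H_p\ra G_p\ra K_{f(p)}$, together with finiteness of $H_p$ (by $H$-regularity) and of $K_{f(p)}$ (by $K$-regularity), forces $|G_p|$ finite, reproducing the order relation $|G_p| = |H_p|\cdot|K_{f(p)}|$ used in the proof of Proposition~\ref{prop:invariant_integration}. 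With $G$ acting regularly near $\mcM$, the integral $\int_{\mcB/G}$ is defined and the identical computation applies, now invoking the \emph{second} functoriality identity of Proposition~\ref{prop:invariant_integration}(4) — valid precisely because the residual $K$-action on $\mcB_0$ is regular — in place of the first, yielding $\chi^{\mcB,\mcE,\mcS}_{G,G} = \chi^{\mcB_0,\mcE_0,\mcS_0}_{K,K}\circ(f/H)^G_*$. The only real obstacle is the bookkeeping that makes Proposition~\ref{prop:invariant_integration} applicable: one must select the Thom form as a pullback so the integrand genuinely factors through $f$, verify this pullback is $H$-basic, and confirm that all support and regularity conditions survive. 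Once the integrand is put in the form $\alpha\wedge f^*(\mcS_0^*\tau_0)$ with $\alpha$ $H$-basic, the proposition is a formal consequence of the established properties of $H$-invariant push-forward.
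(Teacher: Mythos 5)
Your proposal is correct and follows essentially the same route as the paper: pull back a Thom structure $(U_0,\tau_0)$ from the base via $f$, use the module property (5) of Proposition \ref{prop:invariant_integration} to factor $(f/H)_*(\alpha\wedge\mcS^*f^*\tau_0) = (f/H)_*\alpha\wedge\mcS_0^*\tau_0$, and conclude by the functoriality properties (4). The extra details you supply --- verifying the pulled-back Thom structure, the $H$-basicness of $f^*(\mcS_0^*\tau_0)$, and the isotropy argument for $G$-regularity --- are points the paper leaves implicit, but they do not change the argument.
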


\begin{proof}
The first step is to show that we can choose finite-dimensional reductions that feature the same fibered picture. We omit this discussion and deal with finite-dimensional problems only. We will only apply this result in settings that are already finite-dimensional.

We choose a Thom structure $(U_0,\tau_0)$ for $\left( \mcB_0 , \mcE_0 , \mcS_0 \right)$. Then
$$
\left( U , \tau \right) \;:=\; \left( f^{-1}U_0 , f^*\tau_0 \right)
$$
is a Thom structure for $\left( \mcB , \mcE , \mcS \right)$. By property $(5)$ in proposition \ref{prop:invariant_integration} we have
\begin{eqnarray*}
\left( f / H \right)_* \left( \alpha \wedge S^*\tau \right) & = & \left( f / H \right)_* \left( \alpha \wedge S^*f^*\tau_0 \right) \\
& = & \left( f / H \right)_* \left( \alpha \wedge f^*{S_0}^*\tau_0 \right) \\
& = & \left( f / H \right)_* \alpha \wedge {S_0}^*\tau_0.
\end{eqnarray*}
Hence the claimed identities follow by the functoriality properties $(4)$ in proposition \ref{prop:invariant_integration} and the definition of the Euler classes.
\end{proof}

\begin{rem}
Another instance of a fibered moduli problem would be a \emph{parametrized} problem, that is a triple $\left( \mcB , \mcE , \mcS \right)$ with an equivariant fibration $f : \mcB \ra \mcB_0$ such that $H$ acts trivially on $\mcB_0$ and the restriction of $\left( \mcB , \mcE , \mcS \right)$ to any fiber of $f$ is an $H$-regular $G$-moduli problem. For such a parametrized problem to be a moduli problem one needs the total moduli space $\mcM$ to be compact. It does not suffice if only the base $\mcB_0$ is compact. A homotopy of moduli problems would be an example. But to really get computational simplifications in the spirit of proposition \ref{prop:fibred_Euler_class} from such a fibered setting, one would also need a good behaviour of the section $\mcS$ with respect to the fibration. The section $\mcS$ should split into components that either depend only on the point in the base or only on the point in the fiber.

The moduli problem associated to the vortex equations (see section \ref{chap:vortex_invariants}) has the property that $\mcB$ is a fiber bundle. The base $\mcB_0$ is the space of connections $\mcA(P)$ on a principal bundle $P$ modulo based gauge transformations $\mcG_0(P)$. The quotient group $G = \mcG(P) / \mcG_0(P)$ of all gauge transformations by the based ones acts trivially on connections, but locally freely on $\mcB$. But the section $\mcS$ does not behave well with respect to this fibration. One can interprete our deformation of the vortex equations in section \ref{sec:vortex_deformation} as the attempt to improve this situation and exploit the fiber form of $\mcB$.
\end{rem}

\section{Toric manifolds}

\label{chap:toric_manifolds}

We present toric manifolds as symplectic reduction of complex linear torus actions but adapt the notation to our purposes. In particular we want to have a setup in which we can easily change the multiplicities of the characters of a given torus action --- because this is what happens in section \ref{chap:vortex_invariants} when we look at the moduli space of the vortex equations associated to such a torus action.

Toric manifolds are the basic model for moduli problems as presented in section \ref{chap:moduli_problems}. The associated Euler class is the integral of elements in the image of the Kirwan map over the toric manifold. Knowing these integrals suffices to determine the whole cohomology ring of the toric manifold. In section \ref{sec:toric_wall_crossing} we study the effect it has on this Euler class if we change the chamber of the regular value $\tau$ at which we reduce. With the methods on invariant integration at hand we can derive an explicit formula that allows to compute these wall crossing numbers. Then by a sequence of wall crossings that connects the chamber of $\tau$ with the chamber outside of the image of the moment map we can completely determine the Euler class.

This wall crossing strategy to evaluate integrals over symplectic quotients is not new. It is used by Guillemin and Kalkman in \cite{GK} and Martin in \cite{Mar} to derive a formula for such integrals. Because of the results in \cite{JK} such formulae are generally referred to as \emph{Jeffrey-Kirwan localization}. We discuss the relation of our work with general Jeffrey-Kirwan localization at the end of this section.

\subsection{Torus actions on Hermitian vector spaces}

\label{sec:torus_actions_on_hermitian_vector_spaces}

Let $V$ be a Hermitian vector space with Hermitian form given by
$$
\left( v , w \right) \;=\; g \left( v , w \right) - i \; \omega \left( v , w \right)
$$
with inner product $g$ (given by the real part $\Re$ of the Hermitian form) and symplectic form $\omega$ (given by minus the imaginary part $\Im$ of the Hermitian form). The induced norm is denoted by $\left| v \right|^2 = g \left( v , v \right)$. Let $T$ be a $k$-dimensional torus with Lie algebra $\LT$. We denote the integral lattice in $\LT$ by
$$
\Lambda \;=\; \left\{ \xi \in \LT \;|\; \exp \left( \xi \right) = \unit \in T \right\}
$$
and its dual by
$$
\Lambda^* \;=\; \left\{ w \in \LT^* \;|\; \forall \xi \in \Lambda : \left\langle w,\xi \right\rangle \in \Z \right\}.
$$
As before $\langle \;,\; \rangle$ denotes the pairing between $\LT$ and its dual space $\LT^*$. Let $\rho \;:\; T \ra \mathrm{U}(1)$ be a character given by
$$
\exp \left( \xi \right) \mto e^{-2 \pi i \langle w , \xi \rangle}
$$
for some nonzero weight vector $w \in \Lambda^* \setminus \{ 0 \}$. Now $\mathrm{U}(1)$ acts by scalar multiplication on the complex vector space $V$. Hence $\rho$ induces an action of the torus $T$ on $V$. This action is Hamiltonian with respect to the symplectic form $\omega$ on $V$. A moment map for this action is given by
\begin{align*}
\mu \;:\; V & \;\ra\; \quad \LT^* \\
v & \;\mto\; \pi \left| v \right|^2 \cdot w.
\end{align*}
By definition a moment map satisfies the equation
$$
\mathrm{d} \langle \mu , \xi \rangle = - \iota_\xi \omega
$$
for all $\xi \in \LT$. The sign-conventions are such that $\omega - \mu$ is a $\mathrm{d}_T$-closed equivariant differential form.

We now consider a finite collection of characters $\rho_\nu$ given by weights $w_\nu \in \Lambda^*$ for $\nu=1,\ldots,N$. Given a corresponding collection of Hermitian vector spaces $V_\nu$ we get a diagonal action $\rho$ of the torus $T$ on the direct sum $V = \bigoplus_{\nu=1}^N V_\nu$. In fact every complex linear torus action decomposes in such a way. This action is again Hamiltonian and a moment map is given by the sum of the moment maps $\mu_\nu$ for the single $\rho_\nu$-induced $T$-actions on the $V_\nu$:
\begin{align*}
\mu = \sum_{\nu=1}^N \mu_\nu \;:\; V = \bigoplus_{\nu=1}^N V_\nu & \;\ra\; \quad \LT^* \\
\left( v_\nu \right) \quad & \;\mto\; \pi \sum_{\nu=1}^N \left| v_\nu \right|^2 \cdot w_\nu
\end{align*}

\begin{dfn} \label{def:proper}
A collection of characters $\rho_\nu$ is called \emph{proper} if there exists an element $\xi \in \LT$ such that $\left\langle w_\nu , \xi \right\rangle < 0$ for all $\nu \in \left\{ 1 , \ldots , N \right\}$, i.e.~all weight vectors are contained in the same connected component of $\LT^*$ with one hyperplane through the origin removed.
\end{dfn}

This notion is motivated by the following observation. The moment map $\mu$ associated to a collection of finite-dimensional vector spaces $V_\nu$ is proper (in the usual sense) if and only if the collection of characters $\rho_\nu$ is proper (in the above sense). Hence properness of the moment map is preserved if we change the multiplicities of the characters in the torus action, i.e.~the dimensions of the spaces $V_\nu$. We can even allow $\dim V_\nu = 0$. The following notation is taken from Guillemin, Ginzburg and Karshon \cite{GGK}.

\begin{dfn}
Given a collection of characters $\rho_\nu$ an element $\tau \in \LT^*$ is called \emph{regular} (for this collection) if the following holds: If $\tau = \sum_{j \in J} a_j w_j$ for some positive coefficients $a_j > 0$ and some index set $J \subset \left\{ 1 , \ldots , N \right\}$, then the weight vectors $\left( w_j \right)_{j \in J}$ span $\LT^*$. An element $\tau \in \LT^*$ is called \emph{super-regular} if the following holds: If $\tau = \sum_{j \in J} a_j w_j$ for some positive coefficients $a_j > 0$ and some index set $J \subset \left\{ 1,\ldots,N \right\}$, then the weight vectors $\left( w_j \right)_{j \in J}$ generate $\Lambda^*$ over $\Z$.
\end{dfn}

Here we observe that an element $\tau \in \LT^*$ is a regular value of the moment map $\mu$ if and only if $\tau$ is regular in the above sense. Again this does not depend on the spaces $V_\nu$. A super-regular element $\tau$ is in particular regular but has even nicer properties. The moment map is $T$-invariant, hence we get induced $T$-actions on every level set $\mu^{-1}(\tau)$. Recall that we call an action \emph{regular} if all isotropy subgroups are finite.

\begin{lem}
If $\tau \in \LT^*$ is regular, then the $T$-action on $\mu^{-1}(\tau)$ is regular. If $\tau$ is super-regular, then $T$ acts freely on $\mu^{-1}(\tau)$.
\end{lem}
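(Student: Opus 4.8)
The plan is to reduce both assertions to a pointwise computation of the isotropy in terms of which coordinates of a point are nonzero, and then to feed in the moment-map equation so that the combinatorial hypotheses (regular, super-regular) apply directly.

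First I would record the isotropy data at an arbitrary point $v = (v_\nu)_\nu \in V$. Write $J(v) := \{\nu \mid v_\nu \ne 0\}$ for the support. Since $T$ acts on $V_\nu$ through $\exp(\xi).v_\nu = e^{-2\pi i \langle w_\nu,\xi\rangle} v_\nu$, the infinitesimal action in the paper's convention is $X_\xi(v) = \left( 2\pi i \langle w_\nu,\xi\rangle\, v_\nu \right)_\nu$, so that the Lie algebra of the stabilizer is
$$
\LT_v \;=\; \left\{ \xi \in \LT \;\middle|\; \langle w_\nu,\xi\rangle = 0 \ \text{for all}\ \nu \in J(v) \right\} \;=\; \bigcap_{\nu \in J(v)} \ker w_\nu,
$$
while at the group level the stabilizer is $S_v = \bigcap_{\nu \in J(v)} \ker \rho_\nu = \{\exp \xi \mid \langle w_\nu,\xi\rangle \in \Z \ \text{for all}\ \nu \in J(v)\}$.

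Next I would insert the moment-map constraint. For $v \in \mu^{-1}(\tau)$ we have $\tau = \pi \sum_\nu |v_\nu|^2 w_\nu = \sum_{\nu \in J(v)} a_\nu w_\nu$ with $a_\nu := \pi |v_\nu|^2 > 0$. Thus the support of any point of the fiber realizes $\tau$ as a positive combination of weights — exactly the situation governed by the two definitions. If $\tau$ is regular, then $(w_\nu)_{\nu \in J(v)}$ span $\LT^*$, hence $\bigcap_{\nu \in J(v)} \ker w_\nu = \{0\}$, i.e.\ $\LT_v = 0$; a closed subgroup of the compact torus $T$ with trivial Lie algebra is discrete and compact, hence finite, so $S_v$ is finite and the action on $\mu^{-1}(\tau)$ is regular. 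If $\tau$ is super-regular, then $(w_\nu)_{\nu \in J(v)}$ generate $\Lambda^*$ over $\Z$, and from the description of $S_v$ above, $\exp \xi \in S_v$ iff $\langle w,\xi\rangle \in \Z$ for every $w$ in the subgroup generated by $\{w_\nu\}_{\nu \in J(v)}$; when that subgroup is all of $\Lambda^*$ the perfect duality $\Lambda = \{\xi \mid \langle \Lambda^*,\xi\rangle \subset \Z\}$ forces $\xi \in \Lambda$, i.e.\ $\exp \xi = \unit$, so $S_v = \{\unit\}$ and $T$ acts freely.

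The only genuinely non-formal ingredient is this last lattice statement — that a family of characters of $T$ has trivial common kernel as soon as it generates the full weight lattice $\Lambda^*$ over $\Z$ — which is precisely where super-regularity (generation over $\Z$) is needed in place of mere regularity (spanning over $\R$). Everything else is a direct translation of the moment-map equation into the combinatorics of the weights $w_\nu$, so I expect no further difficulty.
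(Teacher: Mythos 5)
Your proposal is correct and follows essentially the same route as the paper: both arguments reduce to the observation that the support $J(v)$ of a point $v\in\mu^{-1}(\tau)$ exhibits $\tau$ as a positive combination of the weights $(w_\nu)_{\nu\in J(v)}$, so that regularity (spanning of $\LT^*$) kills the isotropy Lie algebra and super-regularity (generation of $\Lambda^*$ over $\Z$) kills the isotropy group via the duality $\Lambda=\{\xi\mid\langle\Lambda^*,\xi\rangle\subset\Z\}$. The only cosmetic difference is that you compute $X_\xi(v)$ directly from the representation, whereas the paper detects its vanishing through the moment-map identity $\langle\mathrm{d}\mu(v),\xi\rangle=-\iota_\xi\omega$; the two are equivalent.
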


\begin{proof}
Let $\tau$ be regular and $v \in \mu^{-1}(\tau)$. Hence $\tau = \pi \sum_{\nu = 1}^N \left| v_\nu \right|^2 \cdot w_\nu$ and the collection $(w_j)_{j \in J}$ with $J = \left\{ \left. j \in \left\{ 1 , \ldots , N \right\} \;\right|\; v_j \ne 0 \right\}$ spans $\LT^*$. So the $1$-form $\left\langle \mathrm{d}\mu(v) , \xi \right\rangle = 2 \pi \sum_{\nu = 1}^N g( v_\nu , \cdot ) \, \langle w_\nu , \xi \rangle$ can only vanish for $\xi = 0$. Now since $\mu$ is a moment map $\langle \mathrm{d}\mu(v) , \xi \rangle = - \iota_\xi \omega$ and this implies that $X_\xi$ can only vanish for $\xi = 0$. Thus the torus acts with finite isotropy on $v$.

If $\exp(\xi) \in T$ fixes a point $v \in \mu^{-1}(\tau)$ then for all $j$ with $v_j \ne 0$ we must have $\langle w_j , \xi \rangle \in \Z$. If $\tau$ is super-regular this implies $\langle w , \xi \rangle \in \Z$ for all $w \in \Lambda^*$, hence $\xi \in \Lambda$ and $\exp(\xi) = \unit$.
\end{proof}

So if we have a collection of characters $\left( \rho_\nu \right)$ and a super-regular element $\tau \in \LT^*$ we can associate a \emph{toric manifold}
$$
X_{V,\tau} = \mu^{-1}(\tau) / T
$$
to any collection of spaces $V_\nu$. If $\tau$ is only regular we get orbifold singularities in the quotient. This quotient is compact if and only if $\left( \rho_\nu \right)$ is a proper collection. Smoothness and compactness do not depend on the choice of Hermitian vector spaces $V_\nu$. They can even be zero-dimensional. If we denote by $n_\nu$ the complex dimension of $V_\nu$ the real dimension of $X_{V,\tau}$ is given by
$$
\dim X_{V,\tau} = 2n - 2k \quad \mathrm{with} \quad n := \sum_{\nu = 1}^N n_\nu.
$$
The notation $X_{V,\tau}$ indicates that we think of the characters $\rho_\nu$ and hence the weight vectors $w_\nu \in \LT^*$ to be fixed, whereas the Hermitian vector spaces $V_\nu$ and the level $\tau$ can vary.

\begin{rem}
If the collection of weight vectors $w_\nu$ does not span the whole $\LT^*$ then there are no regular elements $\tau$ in the image of the moment map $\mu$. Hence any associated toric manifold $X_{V,\tau}$ would be empty. So we can restrict to torus actions that are given by collections of weight vectors that do span $\LT^*$.
\end{rem}

\subsection{Toric manifolds as moduli problems}

\label{sec:toric_manifolds_as_moduli_problems}

Consider a diagonal torus action given by a proper collection $\rho_\nu$ and a regular level $\tau \in \LT^*$ as above. We do not require super-regularity, so the quotient $X_{V,\tau}$ can have singularities. Using the notation from section \ref{chap:moduli_problems} this setup gives rise to a finite-dimensional $T$-moduli problem with base $\mcB = V$, bundle $\mcE = V \x \LT^*$ and section $\mcS = \mu - \tau$. Regularity of $\tau$ implies regularity for the moduli problem and transversality of $\mcS$ to the zero section. The complex orientation on $V$ and any choice of orientation on the torus $T$ gives an orientation of the finite-dimensional $T$-moduli problem $\left( \mcB , \mcE , \mcS \right)$ and hence we get an associated Euler class
$$
\chi^{V,\tau} : S^m \left( \LT^* \right) \ra \R
$$
with
$$
m = \frac{1}{2} \cdot \dim X_{V,\tau} = n - k.
$$
This Euler class is defined as follows. There is a natural identification of $S \left( \LT^* \right)$ with the equivariant cohomology $H_T^*(V)$. Now an equivariant class can be restricted to the $T$-invariant submanifold $\mu^{-1}(\tau)$ to get an element in $H_T^*( \mu^{-1}(\tau) )$. By regularity of $\tau$ the torus action on $\mu^{-1}(\tau)$ has at most finite isotropy. Hence $T$-invariant integration can be applied to obtain a real number. Here we again use the orientation on $T$ that was chosen above. Formally
$$
\chi^{V,\tau}(\alpha) = \int_{X_{V,\tau}} i_\tau^*(\alpha)
$$
with the inclusion $i_\tau : \mu^{-1}(\tau) \ra V$ and the induced pull-back $i_\tau^*$ on equivariant cohomology, and with integration over $X_{V,\tau}$ understood as $T$-invariant integration over $\mu^{-1}(0)$. The map
$$
i_\tau^* : H_T^*( V ) \ra H_T^*( \mu^{-1}(\tau) )
$$
is known as the \emph{Kirwan map} and due to Kirwan \cite{Ki} is the following result:

\begin{thm}[F.~Kirwan] \label{thm:Kirwan}
The Kirwan map is surjective.
\end{thm}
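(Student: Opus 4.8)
The plan is to prove surjectivity by Kirwan's Morse-theoretic argument, using the norm-square of the moment map as an equivariantly perfect Morse function. Fix a $T$-invariant inner product on $\LT^*$ and set $f := \tfrac{1}{2} | \mu - \tau |^2 : V \ra \R$. This function is $T$-invariant, bounded below by $0$, and proper, the latter because properness of the collection $(\rho_\nu)$ makes $\mu$, and hence $f$, proper. Its minimum set is exactly $f^{-1}(0) = \mu^{-1}(\tau)$, and the Kirwan map $i_\tau^*$ is the restriction map from $H_T^*(V)$ to the equivariant cohomology of this minimum.

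First I would analyze the critical points of $f$. A short moment-map computation (using $\mathrm{d}\langle\mu,\xi\rangle = -\iota_\xi\omega$) shows that $v$ is critical precisely when $X_\beta(v) = 0$ for $\beta := \mu(v) - \tau$, i.e.~when $v$ is fixed by the one-parameter subgroup generated by $\beta$. Because the action is diagonal with weights $w_\nu$, this cuts out a finite disjoint union of critical sets $C_\beta$ indexed by a finite set of values $\beta \in \LT^*$; each $C_\beta$ sits inside the coordinate subspace determined by a fixed sign pattern of the $\langle w_\nu , \beta\rangle$ and is again of the same linear type on a subspace of $V$. The essential geometric input is that the negative normal bundle of each $C_\beta$ splits into weight spaces, hence has even rank $\lambda_\beta$, with $T$-equivariant Euler class a product of nonzero weights as in \ref{eqn:torus_euler_class}.

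Next I would establish equivariant perfection. Ordering the critical values $0 = c_0 < c_1 < \cdots$ and writing $V^c := f^{-1}([0,c])$, the passage past a critical value $c$ carrying a critical set $C_\beta$ contributes, via the equivariant Thom isomorphism $H_T^*(V^{c+\eps},V^{c-\eps}) \cong H_T^{*-\lambda_\beta}(C_\beta)$, the long exact sequence
\begin{equation*}
\cdots \ra H_T^{*-\lambda_\beta}(C_\beta) \ra H_T^*(V^{c+\eps}) \ra H_T^*(V^{c-\eps}) \ra \cdots.
\end{equation*}
By the Atiyah--Bott lemma this splits into short exact sequences provided the equivariant Euler class of the negative normal bundle of $C_\beta$ is not a zero divisor in $H_T^*(C_\beta)$. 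This is the crux, and the step I expect to cost the most work: by \ref{eqn:torus_euler_class} that Euler class is an explicit product $\prod \langle w_j , \cdot\rangle$ of nonzero weights, and one must verify it is a non-zero-divisor in $H_T^*(C_\beta)$, using a product decomposition of the form \ref{eqn:trivial_H_action} arising from the subtorus that acts trivially on $C_\beta$.

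Granting the splitting, each restriction $H_T^*(V^{c+\eps}) \ra H_T^*(V^{c-\eps})$ is surjective; composing downward from $V$ (which deformation retracts onto $V^c$ for any $c$ above the top critical value) to the smallest sublevel set $V^\eps$ shows $H_T^*(V) \ra H_T^*(V^\eps)$ is surjective. Since $V^\eps$ equivariantly deformation retracts onto $\mu^{-1}(\tau)$ along the downward gradient flow, which converges because $f$ is proper, and this retraction identifies the composite with $i_\tau^*$, surjectivity of the Kirwan map follows. When $\tau$ is only regular the quotient is an orbifold, so I would take real or rational coefficients, under which the finite isotropy is invisible and $H_T^*(\mu^{-1}(\tau)) \cong H^*(X_{V,\tau})$. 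As an alternative that bypasses the Morse theory entirely, one may instead invoke the Danilov--Jurkiewicz presentation of $H^*(X_{V,\tau})$, which is generated by the toric divisor classes $[D_\nu] = i_\tau^*(w_\nu)$, yielding surjectivity directly; I would nonetheless present the Morse argument as primary, since it does not presuppose the structure theorem for toric cohomology.
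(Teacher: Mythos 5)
The paper does not prove this statement at all: it is quoted as Kirwan's theorem with a citation to \cite{Ki}. Your argument is therefore not an alternative route but a reconstruction of the cited proof --- the Morse-theoretic argument with $f = \tfrac12|\mu-\tau|^2$ and equivariant perfection via the Atiyah--Bott non-zero-divisor criterion of \cite{AB} is exactly Kirwan's strategy, and in the linear setting of the paper every ingredient becomes explicit and your outline is correct: the critical sets are the sets $\{v \in V^J \;|\; \mu(v)=\tau+\beta\}$ for the coordinate subspaces $V^J$ with $J=\{\nu \;|\; \langle w_\nu,\beta\rangle =0\}$ and finitely many $\beta$; the negative normal bundle is the sum of the $V_\nu$ with $\langle w_\nu,\beta\rangle<0$, so by \ref{eqn:torus_euler_class} its equivariant Euler class is $\prod w_\nu^{n_\nu}$ over those $\nu$; and since the one-parameter subgroup generated by $\beta$ acts trivially on $C_\beta$ while $\langle w_\nu,\beta\rangle\ne 0$ for each factor, the decomposition \ref{eqn:trivial_H_action} exhibits this class as a free-module element with nonzero leading polynomial term, hence a non-zero-divisor --- so the step you flag as the crux does go through. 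Two small corrections: the deformation retraction of $V^\eps$ onto $\mu^{-1}(\tau)$ does not follow from properness of $f$ (properness alone does not force gradient trajectories to converge); what you should invoke is either the {\L}ojasiewicz inequality for the polynomial $f$, or, more simply, that regularity of $\tau$ makes $f$ Morse--Bott along its minimum set, the transverse Hessian being $w \mto |\mathrm{d}\mu(v)w|^2$ with kernel exactly $T_v\mu^{-1}(\tau)$. Secondly, your Danilov--Jurkiewicz alternative \cite{Dan} as stated needs $\tau$ super-regular (or the rational-coefficient orbifold version for merely regular $\tau$), whereas the theorem is applied in the paper for regular $\tau$; the Morse argument you present as primary has no such restriction, which is a good reason to prefer it.
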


Hence any integral of a cohomology class over $X_{V,\tau}$ can be expressed in terms of this Euler class $\chi^{V,\tau}$.

\subsection{Orientation of toric manifolds}

\label{sec:orientation_of_toric_manifolds}

We digress a little to explain the orientation of a toric manifold in more detail. This is useful for later purposes because this is the finite-dimensional model case for the orientation of the vortex moduli space that we will discuss in section \ref{sec:orientations}.

There is a natural orientation on $X_{V,\tau} = \mu^{-1}(\tau) / T$ that does not depend on the choice of an orientation for $T$. Instead we fix the following complex structure on the space $\LT^* \oplus \LT^*$:
$$
\LT^* \oplus \LT^* \ra \LT^* \oplus \LT^* \; ; \; ( a , b ) \mto ( -b , a ).
$$
We write
$$
L_v : \LT \ra T_vV \; ; \; \xi \mto X_\xi(v)
$$
for the infinitesimal action at the point $v \in V$ and consider the adjoint map $L_v^* : T_vV \ra \LT^*$, where we identify $T_vV$ with its dual via the fixed metric on $V$. Given a point $[v] \in X_{V,\tau}$ we can now identify the tangent space $T_{[v]}X_{V,\tau}$ with the kernel of the map
$$
D_v := \left( \mathrm{d}\mu(v) , L_v^* \right) : T_vV \ra \LT^* \oplus \LT^*.
$$
The first component restricts to the tangent space along the level set of the moment map. The second component fixes a complement to the tangent space along the $T$-orbit. If we identify $V \cong T_vV$ a short computation shows that
$$
D_v ( z ) = \left( 2 \sum_{\nu = 1}^N \Re ( z_\nu , v_\nu ) \cdot w_\nu \; , \; 2 \sum_{\nu = 1}^N \Im ( z_\nu , v_\nu ) \cdot w_\nu \right).
$$
This shows that $D_v$ is complex linear (with respect to the natural complex structure on $V$ and the above chosen complex structure on $\LT^* \oplus \LT^*$). And $D_v$ is even surjective if $\tau$ is regular. In any case the complex orientation on source and target of $D_v$ define an orientation on its kernel and hence define an orientation on $X_{V,\tau}$.

This is our preferred way to define the orientation of a toric manifold, because it extends to an infinite-dimensional setting: The determinant of a complex linear Fredholm operator between complex Banach spaces admits a natural orientation. But if we want to understand $X_{V,\tau}$ as the oriented moduli space of the finite-dimensional $T$-moduli problem $\left( \mcB , \mcE , \mcS \right)$ then we need to specify an orientation of the torus. In fact, if we denote the vertical differential at a point $x \in \mcB$ by $\mcD : T_x\mcB \ra \LT^*$ then we obtain
\begin{eqnarray*}
\det(\mcD) & = & \Lambda^{\mathrm{max}} \left( \ker\,\mcD \right) \otimes \Lambda^{\mathrm{max}} \left( \coker\,\mcD \right)\\
& = & \Lambda^{\mathrm{max}} \left( \ker\,\mcD \right) \otimes \Lambda^{\mathrm{max}} \left( \frac{\LT^*}{\im\,\mcD} \right)\\
& \cong & \Lambda^{\mathrm{max}} \left( \ker\,\mcD \right) \otimes \Lambda^{\mathrm{max}} \left( \im\,\mcD \right) \otimes \Lambda^{\mathrm{max}} \left( \im\,\mcD \right) \otimes \Lambda^{\mathrm{max}} \left( \frac{\LT^*}{\im\,\mcD} \right)\\
& \cong & \Lambda^{\mathrm{max}} \left( \ker\,\mcD \right) \otimes \Lambda^{\mathrm{max}} \left( \frac{T_x\mcB}{\ker\,\mcD} \right) \otimes \Lambda^{\mathrm{max}} \left( \im\,\mcD \right) \otimes \Lambda^{\mathrm{max}} \left( \frac{\LT^*}{\im\,\mcD} \right)\\
& \cong & \Lambda^{\mathrm{max}} \left( T_x\mcB \right) \otimes \Lambda^{\mathrm{max}} \left( \LT^* \right).
\end{eqnarray*}
In the third line we use the canonical identification $\Lambda^{\mathrm{max}} (W) \otimes \Lambda^{\mathrm{max}} (W) \cong \R$ for any finite-dimensional vector space $W$. And in the last line we use the canonical identification $\Lambda^{\mathrm{max}} (W) \cong \Lambda^{\mathrm{max}} (U) \otimes \Lambda^{\mathrm{max}} \left( \frac{W}{U} \right)$ for any linear subspace $U \subset W$.

Now an orientation of $T$ is the same as an orientation of $\LT$. And if we identify $\LT$ with its dual via the choice of an inner product then this also fixes the orientation of $\LT^*$. The choice of the inner product does not affect the result, because the space of inner products in contractible. Hence the orientation of the determinant bundle of the $T$-moduli problem as well as the definition of the Euler class (because of the invariant integration that needs a specified orientation of the torus) depend on the choice of orientation for $T$. So if we both times use the same orientation then the resulting homomorphism $\chi^{V,\tau}$ does not depend on this choice. But it does depend on the conventions that we use for the ordering of basis vectors of the involved spaces.

A careful look at the construction reveals that we reproduce the preferred complex orientation from above via the following conventions: Take an oriented basis of $T_vV$ such that the image under $\mathrm{d}\mu(v)$ of the first $k$ vectors is an oriented basis of $\LT^*$ and the remaining vectors lie in the kernel of $\mathrm{d}\mu(v)$. The orientation of the latter determines the orientation of $T_v \left( \mu^{-1}(\tau) \right)$. Now take a local slice $(U_v,\varphi_v,T_v)$ at $v$ for the $T$-action on $\mu^{-1}(\tau)$ (see section \ref{sec:G_invariant_integration} for the notation) using the product orientation on $T \x_{T_v} U_v$ defined by first taking an oriented basis of $\LT$ and then the standard orientation on $U_v \subset \R^{2m}$.

\subsection{Wall crossing}

\label{sec:toric_wall_crossing}

Let $\left( \rho_\nu \right)_{\nu = 1 , \ldots , N}$ be a proper collection of characters with weight vectors $w_\nu \in \Lambda^*$ that span $\LT^*$. Fix a Hermitian vector space $V = \bigoplus_{\nu = 1}^N V_\nu$. For an index set $I \subset \left\{ 1 , \ldots , N \right\}$ we define the cone
$$
W_I = \left\{ \left. \sum_{i \in I} c_i w_i \in \LT^* \;\right|\; c_i \ge 0 \right\}.
$$

\begin{dfn} \label{def:wall}
A cone $W_I$ is called a \emph{wall} if
\begin{itemize}
\item $W_I \subset \LT^*$ has codimension one, i.e.~the family $(w_i)_{i \in I}$ has rank $(k-1)$.
\item The index set $I$ is complete, i.e.~it contains all indices $i$ with $w_i \in W_I$.
\end{itemize}
\end{dfn}

The set of non-regular elements in $\LT^*$ is precisely given by the union of all walls. The walls divide $\LT^*$ into open connected components of regular elements, called \emph{chambers}. If one element in a chamber is super-regular then super-regularity holds for all elements in that chamber.

Let $\tau_0 \in \LT^*$ be an element in a wall $W_I$. We assume that any other cone $W_J$ containing $\tau_0$ is completely contained in $W_I$, so that $\tau_0$ does not lie in the intersection of two different walls. Hence $\tau_0$ is contained in the closure of exactly two regular chambers and we want to compute the difference between the Euler classes $\chi^{V,\tau}$ for values of $\tau$ in either of them.

\subsubsection*{A cobordism}

Pick an element $\eta\in\LT^*$ transverse to $W_I$ and define
$$
\tau_t \;:=\; \tau_0 + t\eta.
$$
Fix a small enough $\eps > 0$ such that $\tau_t$ is regular for all $t \in \left[ -\eps , \eps \right] \setminus \left\{ 0 \right\}$. Now the set $\left\{ \left( v , t \right) \in V \x \left[ -\eps , \eps \right] \;|\; \mu(v) = \tau_t \right\}$ provides a smooth compact cobordism between the manifolds $\mu^{-1} \left( \tau_{-\eps} \right)$ and $\mu^{-1} \left( \tau_{\eps} \right)$. If the $T$ action on (the $V$ factor of) this cobordism was regular for all times $t$ this would even establish a cobordism of the $T$-moduli problems associated to $\tau_{-\eps}$ and $\tau_{\eps}$ and the Euler classes $\chi^{V,\tau_{\pm\eps}}$ would coincide. But regularity fails because of the wall crossing at $t=0$. So we have to cut out a neighbourhood of the locus with singular action and compute the invariant integral over the newly created boundary component.

To describe the situation around the singular locus we need some preparation. Since $W_I$ has codimension one we can fix a primitive $e_1 \in \Lambda$ such that
\begin{eqnarray*}
\left\langle w_i , e_1 \right\rangle & = & 0 \quad \mathrm{for\ all} \; i \in I,\; \mathrm{and} \\
\left\langle \eta , e_1 \right\rangle \, & > & 0.
\end{eqnarray*}
Denote by $T_1 \subset T$ the subtorus generated by $e_1$ and by $\LT_1 \subset \LT$ its Lie algebra. The quotient group and its Lie algebra are denoted by
$$
T_0 = T/T_1 \quad \mathrm{and} \quad \LT_0 = \LT/\LT_1
$$
and as before we identify $\LT_0^* \cong \left\{ w \in \LT^* \;|\; \langle w,e_1 \rangle = 0 \right\}$. Now $T_1$ acts trivially on
$$
V^I \;:=\; \left\{ \left( v_1 , \ldots , v_N \right) \in V \;|\; v_\nu = 0\ \mathrm{for}\ \nu \not\in I \right\}.
$$
For a small number $\delta > 0$ we define
$$
W_\delta \;:=\; \left( V \x \left[ -\eps , \eps \right] \right) \setminus N_\delta,
$$
where $N_\delta$ is the $\delta$-neighbourhood of the set
$$
V^I \x \left[ -\rho , \rho \right] \subset V \x \left[ -\eps , \eps \right]
$$
with
$$
\rho \;:=\; \frac{ \pi \cdot \max_{\nu \not\in I} \left| \left\langle w_\nu , e_1 \right\rangle \right| }{ \left\langle \eta , e_1 \right\rangle} \cdot \delta.
$$
The reason for this choice will become evident below. We choose $\delta$ small enough such that $N_\delta$ is contained in the interior of $V \x \left[ -\eps , \eps \right]$. Hence $W_\delta$ is a smooth connected oriented manifold with $\mcC^1$-boundary
$$
\partial W_\delta \;=\; (V \x \{ -\eps \}) \sqcup (V \x \{ \eps \}) \sqcup \partial N_\delta.
$$
If we orient $\partial W_\delta$ as the boundary of $W_\delta \subset V \x [ -\eps , \eps ]$ and $\partial N_\delta$ as part of this boundary then
$$
\partial W_\delta \;\cong\; (-V) \sqcup V \sqcup \partial N_\delta.
$$
On $W_\delta$ we have the $T$-action on the $V$-factor and we consider the equivariant map
$$
\Phi (v,t) \;:=\; \mu(v) - \tau_t.
$$
Denote by $M_\delta := \Phi^{-1}(0)$ the zero set of $\Phi$. Note that $\Phi (v,t) = 0$ implies that
\begin{equation} \label{eqn:t}
t \;=\; \frac{\pi}{ \langle \eta , e_1 \rangle} \sum_{\nu \not\in I} \left| v_\nu \right|^2 \langle w_\nu , e_1 \rangle.
\end{equation}
Now on $\partial N_\delta$ we have $\sum_{\nu \not\in I} \left| v_\nu \right|^2 \le \delta$ and hence the intersection of $M_\delta$ with the boundary component $\partial N_\delta$ is contained in the cylindrical part
$$
Z_\delta \;:=\; \left\{ (v,t) \in \partial N_\delta \,|\, |t| \le \rho \right\} = \left\{ v \in V \,\left|\, \sum_{\nu \not\in I} \left| v_\nu \right|^2 = \delta \right. \right\} \x [ -\rho,\rho ].
$$
We introduce the notation
$$
V_\delta^I \;:=\; \left\{ v \in V \,\left|\, \sum_{\nu \not\in I} \left| v_\nu \right|^2 = \delta \right. \right\}
$$
and
$$
S_\delta \;:=\; \left\{ (v_\nu)_{\nu \not\in I} \,\left|\, \sum_{\nu \not\in I} \left| v_\nu \right|^2 = \delta \right. \right\}
$$
and observe that $Z_\delta = V_\delta^I \x [ -\rho , \rho ]$ and $V_\delta^I \cong V^I \x S_\delta$.

\begin{prop}
For all $\eta$ in a dense and open subset of $\LT^*$ the value $0 \in \LT^*$ is regular for $\Phi : W_\delta \ra \LT^*$ and also for $\Phi|_{\partial W_\delta}$. The $T$-action on $\Phi^{-1}(0) = M_\delta$ is regular.
\end{prop}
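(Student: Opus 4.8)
The plan is to read regularity off the differential of $\Phi$ and then to isolate the single place where it can fail. At a point $(v,t)$ one has
$$
\mathrm{d}\Phi_{(v,t)}(z,s) \;=\; \mathrm{d}\mu(v)z - s\eta,
$$
and by the formula for $\mathrm{d}\mu$ in section \ref{sec:orientation_of_toric_manifolds} the image of $\mathrm{d}\mu(v)$ is $\mathrm{span}\{w_\nu \mid v_\nu \ne 0\}$. Hence the image of $\mathrm{d}\Phi_{(v,t)}$ is $\mathrm{span}\{w_\nu \mid v_\nu\ne0\} + \R\eta$, and $0$ is a regular value of $\Phi$ at $(v,t)$ exactly when this equals $\LT^*$. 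The basic dichotomy I would record first is this: for $t \ne 0$ the value $\tau_t$ is a regular value of $\mu$ by the choice of $\eps$, so $\mathrm{d}\mu(v)$ is already surjective and the full differential $\mathrm{d}\Phi_{(v,t)}$ is surjective as well. Thus every difficulty is concentrated at $t=0$.

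The crucial observation is that no zero of $\Phi$ with $t=0$ survives in $W_\delta$. Indeed, if $\mu(v)=\tau_0$ and $J=\{\nu\mid v_\nu\ne0\}$, then $\tau_0=\pi\sum_{\nu\in J}|v_\nu|^2 w_\nu$ lies in the cone $W_J$; by the standing assumption that every cone containing $\tau_0$ is contained in $W_I$ we get $W_J\subset W_I$, which forces $\langle w_\nu,e_1\rangle=0$ and hence (by completeness of $I$) $\nu\in I$ for all $\nu\in J$. So $v\in V^I$, whence $(v,0)\in V^I\times[-\rho,\rho]\subset N_\delta$ lies outside $W_\delta$. Consequently every point of $M_\delta$ has $t\ne0$. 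This immediately gives three of the required conclusions, for \emph{all} admissible $\eta$: the full differential $\mathrm{d}\Phi$ is surjective at every point of $M_\delta$, so $0$ is regular for $\Phi\colon W_\delta\to\LT^*$; it is regular for $\Phi|_{V\times\{\pm\eps\}}$ since $\tau_{\pm\eps}$ is a regular value of $\mu$; and the $T$-action on $M_\delta$ is regular, because at every point $\tau_t$ is regular and the lemma of section \ref{sec:torus_actions_on_hermitian_vector_spaces} then yields $\mathrm{span}\{w_\nu \mid v_\nu\ne0\}=\LT^*$, i.e.\ finite isotropy.

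It remains to treat $\Phi|_{\partial N_\delta}$, and this is where genericity of $\eta$ enters. Using \ref{eqn:t} together with the bound $\sum_{\nu\notin I}|v_\nu|^2\le\delta$ valid on $\partial N_\delta$ and the definition of $\rho$, I would first show that $M_\delta\cap\partial N_\delta$ is contained in the cylindrical wall $Z_\delta=V_\delta^I\times[-\rho,\rho]$, away from the spherical caps, so that near these points $\partial N_\delta$ coincides with $Z_\delta$. To obtain regularity of $\Phi|_{Z_\delta}$ for generic $\eta$ I would apply parametric transversality to
$$
\Psi(v,t,\eta) \;=\; \mu(v) - \tau_0 - t\eta
$$
on $Z_\delta\times O$, where $O\subset\LT^*$ is the open set of $\eta$ transverse to $W_I$ with $\langle\eta,e_1\rangle>0$. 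At any zero of $\Psi$ one has $t\ne0$ (again $\mu(v)=\tau_0$ would force $v\in V^I$, contradicting $v\in V_\delta^I$), so the partial derivative $\dot\eta\mapsto -t\dot\eta$ is already surjective and $\Psi$ is transverse to $0$. The parametric transversality theorem then shows that $\Phi|_{Z_\delta}=\Psi(\,\cdot\,,\,\cdot\,,\eta)$ is transverse to $0$ for a dense set of $\eta$; openness follows because $M_\delta$ is compact (the collection is proper, so $\mu$ is proper) and regularity is an open condition.

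The main obstacle is precisely this boundary analysis on $\partial N_\delta$: one must verify that the zero set meets only the cylindrical wall $Z_\delta$ and stays clear of the $\mathcal{C}^1$-corner where the wall meets the caps — which is exactly what the precise value of $\rho$ is designed to guarantee, up to an arbitrarily small enlargement of $\rho$ to make the bound $|t|\le\rho$ strict — and then set up the parametric argument cleanly. By contrast, the interior regularity, the regularity on $V\times\{\pm\eps\}$, and the regularity of the $T$-action are all formal consequences of the single fact that $\mu(v)=\tau_0$ forces $v\in V^I$, so that the genuinely singular fibre $\mu^{-1}(\tau_0)$ has already been excised by $N_\delta$.
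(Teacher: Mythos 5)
Your proof is correct, and it differs from the paper's argument in two substantive ways. First, you observe that the no-two-walls assumption together with completeness of $I$ forces $\mu^{-1}(\tau_0) \subset V^I$, so that every zero of $\Phi$ with $t=0$ lies in $N_\delta$ and has been excised from $W_\delta$ (and likewise no zero on $Z_\delta$ can have $t=0$, since points of $V_\delta^I$ have a nonzero component outside $I$). The paper does not exploit this: it runs a separate linear-algebra argument at $t=0$ points, both in the interior (showing that $\eta$ together with $(w_j)_{j\in J}$ spans $\LT^*$ because $(w_j)_{j \in J\cap I}$ has rank $k-1$) and on the cylinder, and once more in the proof that the $T$-action is regular; your observation shows that all of these $t=0$ cases are vacuous, which makes interior regularity, regularity on the caps $V \x \{\pm\eps\}$, and regularity of the $T$-action immediate for \emph{every} admissible $\eta$. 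Second, for $\Phi|_{Z_\delta}$ the paper computes the image of the restricted differential explicitly (the set $\mcI$ cut out by $\sum_{j \in J \setminus I} c_j = 0$) and exhibits the good set of $\eta$ concretely as the complement of the finite union of proper subspaces $\mathrm{span}(w_i)_{i\in I} \cup \bigcup_{J_0 \in \mcJ} E(J_0)$, which is manifestly open and dense; you instead invoke parametric transversality, using that $t \ne 0$ on the zero set makes the $\eta$-partial $\dot\eta \mto -t\dot\eta$ surjective. Your route is shorter and avoids the combinatorics, at the price of a non-constructive genericity statement: Sard gives density, and for openness you must (as you indicate) control the fact that $\tau_t$, $\rho$, $N_\delta$ and hence $W_\delta$ all move with $\eta$, using properness of $\mu$ to confine the zero sets to a fixed compact set. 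Both arguments leave the same small technical point at the corner $|t| = \rho$ of $\partial N_\delta$, which you at least flag explicitly.
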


\begin{proof}
Pick $(v,t) \in M_\delta$. For $t \ne 0$ the element $\tau_t$ is a regular value for the moment map $\mu : V \ra \LT^*$ and hence $\mathrm{d}\Phi_{(v,t)}$ is onto. This also holds for the differential of the restriction of $\Phi$ to the boundary components $V \x \{-\eps\}$ and $V \x \{\eps\}$. For $t = 0$ we have $\mu(v) = \tau_0$. Now the image of $\mathrm{d}\Phi_{(v,t)}$ is spanned by $\eta$ and the collection $(w_j)_{j \in J}$ with $J := \left\{ j \in \{ 1 , \ldots , N \} \,|\, v_j \ne 0 \right\}$. Suppose that $(w_j)_{j \in J}$ does not already span all of $\LT^*$. Then $\tau_0 \in W_J$ implies that $(w_j)_{j \in J \cap I}$ has rank $k-1$. Otherwise $\tau_0$ would be contained in two different walls, which we assumed not to be the case. So since $\eta$ is transversal to $W_I$ we also get surjectivity of $\mathrm{d}\Phi_{(v,t)}$. So for the first statement of the proposition it remains to show regularity for $\Phi_{|\partial N_\delta}$ and in fact only for $\Phi|_{Z_\delta}$ since $M_\delta \cap \partial N_\delta \subset Z_\delta$ as we remarked above. We define
$$
\varphi \;:=\; \Phi|_{Z_\delta} : Z_\delta \ra \LT^*.
$$
Let $(v,t) \in Z_\delta$. The tangent space $T_{(v,t)} Z_\delta$ is given by pairs $(x,s) \in V \x \R$ such that $\sum_{\nu \not\in I} g( v_\nu , x_\nu ) = 0$. Now
$$
\mathrm{d}\varphi_{(v,t)}(x,s) \;=\; 2 \pi \sum_{\nu = 1}^N g( v_\nu , x_\nu ) \cdot w_\nu - s \eta.
$$
So with $J := \left\{ j \in \{ 1 , \ldots , N \} \,|\, v_j \ne 0 \right\}$ the image of $\mathrm{d}\varphi_{(v,t)}$ is given by
$$
\mcI \;:=\; \left\{ \left. \sum_{j \in J} c_j w_j - s \eta \in \LT^* \,\right|\, \sum_{j \in J \setminus I} c_j = 0, \; c_j , s \in \R \right\}.
$$
Next we note that $(w_j)_{j \in J}$ spans $\LT^*$: For $t \ne 0$ this follows by regularity of $\tau_t$. For $t = 0$ we see as above that $(w_j)_{j \in J \cap I}$ has rank $k-1$. But now $\sum_{\nu \not\in I} \left| v_\nu \right|^2 = \delta$ implies that there is an index $l \in J \setminus I$ and hence $w_l$ spans the remaining dimension.

Given any $\tau \in \LT^*$ we thus find numbers $d_j \in \R$ with $\sum_{j \in J} d_j w_j = \tau$. In case $d := \sum_{j \in J \setminus I} d_j = 0$ this would already prove $\tau \in \mcI$. Else we pick $e_j \in \R$ with $\sum_{j \in J} e_j w_j = \eta$. Then if $e := \sum_{j \in J \setminus I} e_j \ne 0$ we can set
$$
c_j \;:=\; d_j - \frac{d}{e} e_j \quad , \quad s \;:=\; - \frac{d}{e}
$$
to obtain $\tau \in \mcI$. So for proving $\mcI = \LT^*$ it suffices to ensure by a suitable choice of $\eta$ that the equation $\sum_{j \in J} e_j w_j = \eta$ has at least one solution with $\sum_{j \in J \setminus I} e_j \ne 0$. Set
$$
\mcJ \;:=\; \left\{ J_0 \subset \{ 1 , \ldots , N \} \,|\, (w_j)_{j \in J_0} \ \mbox{is a basis for} \ \LT^* \right\}.
$$
For every $J_0 \in \mcJ$ we introduce
$$
E(J_0) \;:=\; \left\{ \sum_{j \in J_0} e_j w_j \,\left|\, \sum_{j \in J_0 \setminus I} e_j = 0 \right. \right\} \subset \LT^*
$$
and we choose
$$
\eta \in \LT^* \setminus \left( \mathrm{span}(w_i)_{i \in I} \cup \bigcup_{J_0 \in \mcJ} E(J_0) \right).
$$
This ensures that $\eta$ is indeed transversal to $W_I$. And to obtain the needed solution of $\sum_{j \in J} e_j w_j = \eta$ we pick a $J_0 \in \mcJ$ with $J_0 \subset J$. We then get a unique solution of $\sum_{j \in J_0} e_j w_j = \eta$ and setting all $e_j$ for $j \in J \setminus J_0$ equal to zero we get the desired solution of $\sum_{j \in J} e_j w_j = \eta$ with $\sum_{j \in J \setminus I} e_j \ne 0$. This finishes the proof of the first statement in the proposition.

The isotropy subgroups of the $T$-action at points $(v,t) \in M_\delta$ with $t \ne 0$ are finite by regularity of $\tau_t$. Now suppose $(v,0) \in M_\delta$ is fixed by a whole $1$-parameter family $\{ \exp(te) \in T \,|\, t \in \R \}$ for some $e \in \Lambda \setminus \{ 0 \}$. Again we set $J := \left\{ j \in \{ 1 , \ldots , N \} \,|\, v_j \ne 0 \right\}$ and observe
$$
j \in J \quad \iff \quad \langle w_j , e \rangle = 0.
$$
But similarly as above we see that $(w_j)_{j \in J} $ spans all of $\LT^*$, which would imply $e = 0$. This gives the desired contradiction.
\end{proof}

So if we pick a generic direction $\eta$ for the wall crossing we get a smooth cobordism $M_\delta$ with regular $T$-action. If we fix an orientation of $\LT^*$ we get induced orientations on $\mu^{-1}(\tau_{\pm \eps})$ and $M_\delta$. We equip the boundary components of $M_\delta$ with the induced boundary orientations and obtain
$$
\partial M_\delta \;\cong\; (-\mu^{-1}(\tau_{-\eps})) \sqcup \mu^{-1}(\tau_\eps) \sqcup \varphi^{-1}(0).
$$
By Stokes' formula for invariant integration we thus obtain the following result.

\begin{prop} \label{prop:wall_crossing_1}
If we orient $\varphi^{-1}(0)$ as part of the boundary of $M_\delta$ then
\begin{equation} \label{eqn:wall_crossing_1}
\chi^{V,\tau_\eps} - \chi^{V,\tau_{-\eps}} \;=\; - \int_{\varphi^{-1}(0) / T} \pi^* \;:\; H_T^*(V) \cong S(\LT^*) \ra \R.
\end{equation}
where $\pi : Z_\delta \ra V$ denotes the projection and integration over $\varphi^{-1}(0) / T$ is understood as $T$-invariant integration.
\end{prop}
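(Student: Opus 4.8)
The plan is to apply Stokes' formula for $T$-invariant integration to the cobordism $M_\delta$, which is precisely the tool announced before the statement. I would start from a class represented by an equivariantly closed $\alpha \in H_T^*(V) \cong S(\LT^*)$. The preceding proposition guarantees that $T$ acts with finite isotropy on $M_\delta$, so a $T$-equivariant $T$-connection $A$ on $M_\delta$ exists and Proposition \ref{prop:gen_Cartan} applies: the Cartan map $c_A$ turns $i^*\alpha$ into a $T$-basic form $\alpha_A \in \Omega_{T-\mathrm{bas}}^*(M_\delta)$ representing the same class. Since $\mathrm{d}_T\alpha = 0$, the chain-map property of Proposition \ref{prop:Cartan_map_properties}(1) gives $\mathrm{d}\alpha_A = 0$, hence each homogeneous piece $\alpha_A^{[p]}$ is closed. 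Writing $2m = \dim X_{V,\tau}$, the piece $\alpha_A^{[2m]}$ is an honest closed $2m$-form on the compact oriented orbifold-with-boundary $M_\delta/T$, which has dimension $2m+1$.

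The core step is then ordinary Stokes on $M_\delta/T$: since $\alpha_A^{[2m]}$ is closed,
$$
\int_{\partial M_\delta / T} \alpha_A^{[2m]} \;=\; \int_{M_\delta / T} \mathrm{d}\,\alpha_A^{[2m]} \;=\; 0.
$$
Here I use that $T$-invariant integration obeys Stokes' formula and descends to equivariant cohomology (section \ref{sec:G_invariant_integration}), so the result does not depend on the auxiliary connection $A$. The orientation computation already carried out identifies the boundary as
$$
\partial M_\delta \;\cong\; \left( -\mu^{-1}(\tau_{-\eps}) \right) \sqcup \mu^{-1}(\tau_\eps) \sqcup \varphi^{-1}(0),
$$
so the vanishing integral splits into three summands according to these components.

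It remains to recognize each of the three summands. On the two level-set components the restriction of $\alpha_A$ represents $i_{\tau_{\pm\eps}}^*\alpha$, so by the definition of the Euler class in section \ref{sec:toric_manifolds_as_moduli_problems} these contributions are exactly $\chi^{V,\tau_\eps}(\alpha)$ and $-\chi^{V,\tau_{-\eps}}(\alpha)$, the minus sign coming from the induced boundary orientation on $\mu^{-1}(\tau_{-\eps})$. On the third component, the projection $\pi : Z_\delta \ra V$ pulls $\alpha$ back to a form cohomologous to $\alpha_A|_{\varphi^{-1}(0)}$, so that summand equals $\int_{\varphi^{-1}(0)/T} \pi^*\alpha$. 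Collecting the three terms and moving the last one to the other side yields exactly \ref{eqn:wall_crossing_1}.

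The hard part will not be the Stokes step but the bookkeeping at its two ends. First, one must confirm that $M_\delta/T$ really is an oriented orbifold-with-boundary to which invariant Stokes applies, even though $\partial W_\delta$ is only $\mathcal{C}^1$ along $\partial N_\delta$; and one must check that the induced boundary orientations are precisely those recorded above, so that the signs assemble into $\chi^{V,\tau_\eps} - \chi^{V,\tau_{-\eps}}$ on the left and a single overall minus sign on the right. Second, one must justify that restricting the single globally defined basic form $\alpha_A$ to $\varphi^{-1}(0)$ indeed represents $\pi^*\alpha$ rather than merely the bare restriction; this uses that $\varphi = \Phi|_{Z_\delta}$ factors the inclusion through $\pi$ and that the Cartan map is chain homotopic to the identity (Proposition \ref{prop:Cartan_map_properties}(3)), and it is the one identification that genuinely must be verified rather than assumed.
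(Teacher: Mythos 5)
Your proposal is correct and follows essentially the same route as the paper: the paper establishes the oriented boundary decomposition $\partial M_\delta \cong (-\mu^{-1}(\tau_{-\eps})) \sqcup \mu^{-1}(\tau_\eps) \sqcup \varphi^{-1}(0)$ and then simply invokes Stokes' formula for invariant integration on the cobordism $M_\delta$, exactly as you do. The additional bookkeeping you flag (Cartan representatives, the identification of the restriction with $\pi^*\alpha$, which is immediate here since classes in $H_T^*(V)\cong S(\LT^*)$ are just polynomials) is left implicit in the paper's one-line proof, so there is no discrepancy.
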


\begin{rem}
This wall crossing formula also holds for non-generic $\eta$ if we use the cobordism property for Euler classes and interprete the integral on the right hand side as the Euler class of a regular $T$-moduli problem over $Z_\delta$.
\end{rem}

\subsubsection*{The reduced problem}

The task is now to put the above formula for the wall crossing into a more computable shape. In fact it is possible to deform the right hand side of \ref{eqn:wall_crossing_1} into the Euler class of a reduced toric moduli problem. Note that the $(w_i)_{i \in I}$ and $\tau_0$ can be viewed as elements in $\LT_0^*$ since they vanish on $e_1$.

\begin{lem}
The element $\tau_0 \in \LT_0^*$ is regular for the collection of characters of $T_0$ given by the $(w_i)_{i \in I}$.
\end{lem}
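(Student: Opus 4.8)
The plan is to unwind the definition of regularity and argue by contradiction, exploiting the standing hypothesis that $\tau_0$ lies on the \emph{single} wall $W_I$. By definition $\tau_0$ is regular for the reduced collection $(w_i)_{i\in I}$ in $\LT_0^*$ exactly when every positive representation $\tau_0=\sum_{i\in I'}a_i w_i$ with $I'\subseteq I$ and all $a_i>0$ uses a subfamily $(w_i)_{i\in I'}$ that spans $\LT_0^*$. First I would record the facts that make this sensible: since each $w_i$ with $i\in I$, and $\tau_0\in W_I$, annihilate $e_1$, both the $w_i$ and $\tau_0$ lie in $\LT_0^*\cong\{w:\langle w,e_1\rangle=0\}$, which has dimension $k-1$; and since $W_I$ has codimension one, the full family $(w_i)_{i\in I}$ has rank $k-1$ and already spans $\LT_0^*$. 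The degenerate case $I'=\emptyset$ (i.e.\ $\tau_0=0$) does not occur, since properness forbids $0$ from being a positive combination of weights. So the content of the lemma is that no proper positive representation of $\tau_0$ can be rank-deficient.

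Assume then, for contradiction, that $\tau_0=\sum_{i\in I'}a_i w_i$ with all $a_i>0$ but $r:=\mathrm{rank}(w_i)_{i\in I'}\le k-2$, and set $U:=\mathrm{span}(w_i)_{i\in I'}\subseteq\LT_0^*$. The key step is to manufacture a \emph{second} wall through $\tau_0$. Because the full collection $(w_\nu)$ spans $\LT^*$ while $\LT_0^*$ is a proper hyperplane, I can choose a weight $w_m$ with $\langle w_m,e_1\rangle\neq 0$, so that $w_m\notin\LT_0^*$. Starting from a basis of $U$ together with $w_m$ (these are independent, as $w_m\notin\LT_0^*\supseteq U$), I would successively adjoin weights lying outside the current span --- always possible while that span is a proper subspace of $\LT^*$ --- until reaching a subspace $H$ of dimension exactly $k-1$ that is spanned by weights and contains both $U$ and $w_m$. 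Since $w_m\in H\setminus\LT_0^*$, we have $H\neq\LT_0^*$.

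Now put $J^*:=\{\,j:w_j\in H\,\}$. Then $(w_j)_{j\in J^*}$ has rank $k-1$, and $J^*$ is complete: any weight lying in the cone $W_{J^*}$ lies in its span $H$ and hence in $J^*$. Thus $W_{J^*}$ is a wall. The representation $\tau_0=\sum_{i\in I'}a_i w_i$ exhibits $\tau_0$ as a nonnegative combination of $(w_j)_{j\in J^*}$ --- all remaining coefficients being zero, and $I'\subseteq J^*$ because $U\subseteq H$ --- so $\tau_0\in W_{J^*}$. Finally $W_{J^*}\not\subseteq W_I$, for otherwise $H=\mathrm{span}(W_{J^*})\subseteq\mathrm{span}(W_I)=\LT_0^*$ would force $H=\LT_0^*$. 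This contradicts the assumption that every cone $W_J$ containing $\tau_0$ is contained in $W_I$, whence $r=k-1$ and the lemma follows. The only genuinely delicate point --- and the step I would be most careful about --- is this passage from a possibly low-dimensional deficient span $U$ to a bona fide wall $W_{J^*}$: one must check that the completion $J^*$ satisfies the paper's definition of a wall (rank exactly $k-1$ together with completeness for the cone) and that the extension keeps $w_m$, and therefore the whole span $H$, off the hyperplane $\LT_0^*$.
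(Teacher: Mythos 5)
Your proof is correct and follows essentially the same route as the paper's: the paper disposes of the lemma in one sentence by asserting that a rank-deficient positive representation of $\tau_0$ would place $\tau_0$ in a second wall, and your argument is precisely the explicit construction of that second wall $W_{J^*}$ (including the verification that it is complete, has rank $k-1$, and is not contained in $W_I$ because it contains $w_m\notin\LT_0^*$). The only quibble is your justification for excluding $\tau_0=0$: properness rules out positive representations of $0$ over \emph{nonempty} index sets, not $\tau_0=0$ itself, which is instead excluded because $0$ lies in every cone $W_{\{j\}}$ with $j\notin I$ (such $j$ exist since the weights span $\LT^*$), violating the standing assumption that every cone containing $\tau_0$ lies in $W_I$.
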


\begin{proof}
Given an index set $J \subset I$ and numbers $a_j > 0$ with $\sum_{j \in J} a_j w_j = \tau_0$ we observe that the $(w_j)_{j \in J}$ must have rank $k-1$ because otherwise $\tau_0$ would be contained in two different walls, which we assumed not to be the case.
\end{proof}

But in general $\tau_0$ will not be super-regular for the action of the reduced torus $T_0$. We introduce
\begin{eqnarray*}
\bar{w}_\nu & := & w_\nu - \frac{ \left\langle w_\nu , e_1 \right\rangle }{ \left\langle \eta , e_1 \right\rangle } \cdot \eta \\
\mu_0(v) & := & \pi \sum_{i \in I} \left| v_i \right|^2 \cdot w_i \\
R(v) & := & \pi \sum_{\nu \not\in I} \left| v_\nu \right|^2 \cdot \bar{w}_\nu \\
\varphi_s(v) & := & \mu_0(v) - \tau_0 + s \cdot R(v) , \quad \mathrm{for} \quad s \in [0,1]
\end{eqnarray*}
and observe that $\langle \varphi_s(v) , e_1 \rangle = 0$ for all $v$ and $s$. Hence we can --- and always will --- consider $\varphi_s$ as a map to $\LT_0^*$.

\begin{lem} \label{lem:phi_phi_0_diffeo}
$\varphi^{-1}(0)$ is $T$-equivariantly diffeomorphic to $\varphi_1^{-1}(0)$.
\end{lem}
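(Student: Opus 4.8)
The plan is to exhibit the diffeomorphism as the projection $(v,t) \mto v$ that forgets the $[-\eps,\eps]$-coordinate. The guiding observation is that on $\varphi^{-1}(0)$ the parameter $t$ is completely determined by $v$, so $\varphi^{-1}(0)$ is nothing but the graph of a $T$-invariant function over $\varphi_1^{-1}(0)$; the content of the lemma is then to identify the base of this graph with $\varphi_1^{-1}(0)$ and to check that the graph map stays inside $Z_\delta$.

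First I would record the splitting $\LT^* = \LT_0^* \oplus \R\eta$, legitimate because $\langle\eta,e_1\rangle > 0$ forces $\eta \not\in \LT_0^*$, and introduce the projection $P : \LT^* \ra \LT_0^*$ along $\R\eta$, given explicitly by $P(w) = w - \frac{\langle w,e_1\rangle}{\langle\eta,e_1\rangle}\eta$. By construction $P(w_\nu) = \bar{w}_\nu$, the projection $P$ fixes $\LT_0^*$ (so $P(\tau_0) = \tau_0$ and $P(w_i)=w_i$ for $i\in I$), and $P(\eta)=0$. Writing $\mu(v) = \mu_0(v) + \pi\sum_{\nu\not\in I}|v_\nu|^2 w_\nu$ and applying $P$ to the equation $\varphi(v,t) = \mu(v) - \tau_0 - t\eta = 0$ kills the $t\eta$-term and produces exactly $\mu_0(v) - \tau_0 + R(v) = \varphi_1(v) = 0$. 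Pairing the same equation with $e_1$, using $\langle w_i,e_1\rangle = 0$ for $i\in I$ and $\langle\tau_0,e_1\rangle = 0$, recovers formula \ref{eqn:t}, i.e.\ forces $t = t(v) := \frac{\pi}{\langle\eta,e_1\rangle}\sum_{\nu\not\in I}|v_\nu|^2\langle w_\nu,e_1\rangle$. Since $\LT^* = \LT_0^*\oplus\R\eta$, these two components together are equivalent to $\varphi(v,t)=0$; hence $(v,t)\in\varphi^{-1}(0)$ precisely when $v\in V_\delta^I$, $\varphi_1(v)=0$, and $t=t(v)$. As $\varphi_1$ is a map on $V_\delta^I$, these $v$ are exactly the points of $\varphi_1^{-1}(0)$.

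It remains to see that the candidate inverse $v\mto(v,t(v))$ takes values in $Z_\delta = V_\delta^I\x[-\rho,\rho]$. For $v\in V_\delta^I$ one has $\sum_{\nu\not\in I}|v_\nu|^2 = \delta$, whence $|t(v)| \le \frac{\pi}{\langle\eta,e_1\rangle}\big(\max_{\nu\not\in I}|\langle w_\nu,e_1\rangle|\big)\delta = \rho$; this is precisely the estimate for which $\rho$ was defined. Consequently the projection $(v,t)\mto v$ restricts to a bijection of $\varphi^{-1}(0)$ onto $\varphi_1^{-1}(0)$ with inverse $v\mto(v,t(v))$, both maps being smooth since $t(v)$ is polynomial in $v$; as $\varphi^{-1}(0)$ is already a manifold by the preceding proposition, this makes $\varphi_1^{-1}(0)$ a smooth manifold diffeomorphic to it. Finally, $T$ acts trivially on the $t$-factor while both $V_\delta^I$ and the function $t(v)$ are $T$-invariant (the action preserves each $|v_\nu|$), so the projection and its inverse are $T$-equivariant.

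The only genuinely delicate point is this range estimate $|t(v)|\le\rho$: one must be sure that the value of $t$ forced by a zero of $\varphi_1$ never leaves the slab $[-\rho,\rho]$, so that the inverse map indeed lands in $Z_\delta$ and no zeros are gained or lost. Everything else is the formal remark that, under $\LT^*=\LT_0^*\oplus\R\eta$, the map $\varphi$ is exactly the pair consisting of $\varphi_1$ and the explicit $v$-dependent $e_1$-component.
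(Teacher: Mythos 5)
Your proof is correct and follows essentially the same route as the paper's: the paper also exhibits the forgetful projection $(v,t)\mapsto v$ as the diffeomorphism, with inverse the graph map $v\mapsto(v,t(v))$ read off from equation \ref{eqn:t}. Your added verifications --- splitting $\varphi=0$ into its $\LT_0^*$- and $e_1$-components to identify the image with $\varphi_1^{-1}(0)$, and the estimate $|t(v)|\le\rho$ showing the graph stays in $Z_\delta$ (which is exactly the reason for the paper's choice of $\rho$) --- are details the paper leaves implicit.
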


\begin{proof}
The projection $\pi : Z_\delta = V_\delta^I \x [ -\delta , \delta ] \ra V_\delta^I$ restricts to a diffeomorphism on $\varphi^{-1}(0) \subset Z_\delta$ because the inverse is explicitly given by
$$
v = (v_\nu) \;\mto\; \left( (v_\nu) \quad,\quad t = \frac{\pi}{\langle \eta,e_1 \rangle} \sum_{\nu \not\in I} \left| v_\nu \right|^2 \langle w_\nu,e_1 \rangle \right)
$$
as can be seen from equation \ref{eqn:t}.
\end{proof}

Now $T_1$ by construction acts trivially on $V^I$. Hence the $T$-action on $V$ descends to a $T_0$-action on $V^I$ that is given by the weight vectors $(w_i)_{i \in I}$. Furthermore $\mu_0$ is a moment map for this $T_0$-action on $V^I$ and $\tau_0$ is a regular level. If we restrict to those $v \in V$ with $\sum_{\nu \not\in I} \left| v_\nu \right|^2 = \delta$ and choose $\delta$ small enough then $R(v)$ is as small as we want and hence also $\tau_0 - s \cdot R(v)$ will be regular for the $T_0$-action for all $s \in [0,1]$. So if we consider $\varphi_s$ as a map
$$
\varphi_s : V_\delta^I \cong V^I \x S_\delta \ra \LT_0^*
$$
we get that $0 \in \LT_0^*$ is a regular value of $\varphi_s$ for all $s \in [0,1]$. This shows
$$
\int_{\varphi_0^{-1}(0) / T} \;=\; \int_{\varphi_1^{-1}(0) / T} \;:\; H_T^*(V_\delta^I) \ra \R.
$$
Together with the preceding lemma we obtain the following refinement of proposition \ref{prop:wall_crossing_1}:

\begin{cor} \label{cor:wall_crossing_2}
We orient $V_\delta^I \cong V^I \x S_\delta$ by the complex orientation on $V^I$ and the standard orientation (via taking the outward pointing vector first) of the sphere $S_\delta$. We orient $\LT_0^*$ via the fixed orientation of $\LT^*$ as the kernel of the map ${e_1}^* : \LT^* \ra \R$. With the induced orientation on $\varphi_0^{-1}(0)$ we obtain
\begin{equation} \label{eqn:wall_crossing_2}
\chi^{V,\tau_\eps} - \chi^{V,\tau_{-\eps}} \;=\; \int_{\varphi_0^{-1}(0) / T} i^* \;:\; H_T^*(V) \cong S(\LT^*) \ra \R.
\end{equation}
with the inclusion $i : V_\delta^I \ra V$.
\end{cor}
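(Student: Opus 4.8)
The plan is to assemble the corollary from three facts already at hand: Proposition \ref{prop:wall_crossing_1}, the $T$-equivariant diffeomorphism $\varphi^{-1}(0) \cong \varphi_1^{-1}(0)$ of Lemma \ref{lem:phi_phi_0_diffeo}, and the homotopy invariance $\int_{\varphi_0^{-1}(0)/T} = \int_{\varphi_1^{-1}(0)/T}$ on $H_T^*(V_\delta^I)$ derived just above from the regular family $\varphi_s$. First I would factor the projection of Proposition \ref{prop:wall_crossing_1} as $\pi = i \circ p$, where $p : Z_\delta \ra V_\delta^I$ is the projection onto the first factor and $i : V_\delta^I \ra V$ is the inclusion, so that $\pi^*\alpha = p^*(i^*\alpha)$ for every $\alpha \in H_T^*(V)$. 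Since $p$ restricts on $\varphi^{-1}(0)$ to the diffeomorphism onto $\varphi_1^{-1}(0)$ furnished by Lemma \ref{lem:phi_phi_0_diffeo} (with inverse given by equation \ref{eqn:t}), chaining the three facts yields
\[
\chi^{V,\tau_\eps} - \chi^{V,\tau_{-\eps}} \;=\; -\int_{\varphi^{-1}(0)/T}\pi^* \;=\; -c \int_{\varphi_0^{-1}(0)/T} i^*,
\]
where $c = \pm 1$ records whether the resulting diffeomorphism from $\varphi^{-1}(0)$, with its boundary orientation, to $\varphi_0^{-1}(0)$, with the orientation prescribed in the statement, preserves or reverses orientation. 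Everything except the value of $c$ is then immediate.

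All remaining content is orientation bookkeeping, and the hard part is to show that this composite diffeomorphism is orientation-reversing, i.e.~$c = -1$, which converts the minus sign of \ref{eqn:wall_crossing_1} into the plus sign of \ref{eqn:wall_crossing_2}. The homotopy step contributes no sign, since $\varphi_s$ is a regular family and the induced orientations vary continuously; so $c$ is entirely the comparison, along $\varphi^{-1}(0)$, between the boundary orientation it inherits as a face of $M_\delta = \Phi^{-1}(0)$ and the product orientation of $V_\delta^I \cong V^I \x S_\delta$ prescribed in the statement. Here $M_\delta$ carries the orientation cut out by $\Phi = \mu - \tau_t$ using the fixed orientation of $\LT^*$, while $\varphi_0^{-1}(0)$ is cut out by $\mu_0 - \tau_0$ using the orientation of $\LT_0^* = \ker(e_1^* : \LT^* \ra \R)$; I would first check, using the splitting $\LT^* = \R\eta \oplus \LT_0^*$ and the conventions of section \ref{sec:orientation_of_toric_manifolds}, that the $t$-coordinate of the cylinder $Z_\delta$ is paired off against the $\eta$-direction of $\Phi$ and hence drops out of the reduced map, contributing no net sign.

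The decisive sign comes from the radial direction. Since $W_\delta = (V \x [-\eps,\eps]) \setminus N_\delta$ and $N_\delta$ is a neighbourhood of $V^I \x [-\rho,\rho]$, the outward normal to $W_\delta$ along the cylindrical face $Z_\delta$ points \emph{inward}, toward $V^I$, that is, opposite to the outward radial vector on $S_\delta$ that the corollary places first. This single reversal of the radial coordinate flips the orientation and gives $c = -1$, so that the overall sign becomes $+$ as claimed. I expect the main obstacle to be precisely this orientation computation: one must keep careful track of the boundary convention for $\del M_\delta$, the induced orientation on the factor $[-\rho,\rho]$, and the ordering conventions fixed in section \ref{sec:orientation_of_toric_manifolds}, and verify that every contribution other than the radial one cancels, leaving exactly the single sign change.
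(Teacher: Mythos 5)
Your proposal is correct and follows essentially the same route as the paper: the paper's own proof simply chains Proposition \ref{prop:wall_crossing_1}, Lemma \ref{lem:phi_phi_0_diffeo}, and the homotopy invariance of the regular family $\varphi_s$, and then remarks that the minus sign of \ref{eqn:wall_crossing_1} is absorbed by the change of orientation. Your explicit identification of the single sign flip --- the outward normal of $W_\delta$ along $Z_\delta$ pointing radially inward versus the outward-first convention on $S_\delta$ --- supplies the detail the paper leaves implicit.
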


\begin{proof}
We loose the minus sign from formula \ref{eqn:wall_crossing_1} by a change of orientation. All the rest is clear.
\end{proof}

\begin{rem}
Without the genericity assumption on the direction $\eta$ one shows more generally that the $\varphi_s$ for $s \in [0,1]$ define a homotopy of $T$-moduli problems. And lemma \ref{lem:phi_phi_0_diffeo} can be rephrased as a morphism between the moduli problems associated to $\varphi$ on $Z_\delta$ and $\varphi_1$ on $V_\delta^I$. The above result then remains true if we interprete the right hand side of \ref{eqn:wall_crossing_2} as the Euler class of the $T$-moduli problem associated to $\varphi_0$ on $V_\delta^I$ as explained below.
\end{rem}

Observe that $\varphi_0 : V_\delta^I \ra \LT_0^*$ defines a fibered $T$-moduli problem of the kind that we considered in section \ref{sec:fibred_G_moduli_problems}: We have the reduced $T / T_1 = T_0$-moduli problem on $V^I$ given by the section $\varphi_0|_{V^I} \;:\; V^I \ra \LT_0^*$. This is $T_0$-regular, because $\tau_0$ as an element of $\LT_0^*$ is regular for the weights $w_i \in \LT_0^*$ for $i \in I$. We write $\chi^{V^I,\tau_0}$ for the associated Euler class. The pull-back of the moduli problem under the projection
$$
\pi \;:\; V_\delta^I \;\cong\; V^I \x S_\delta \;\ra\; V^I
$$
yields the $T$-regular moduli problem $\left( V_\delta^I , V_\delta^I \x \LT_0^* , \varphi_0 \right)$, since $\varphi_0$ does not depend on the point in $S_\delta$ and $T_1$ acts locally freely. Hence by proposition \ref{prop:fibred_Euler_class} we can compute the right hand side of \ref{eqn:wall_crossing_2} by composing $\chi^{V^I,\tau_0}$ with invariant push-forward $\left( \pi / T_1 \right)_*$. So we finally obtain
\begin{equation*}
\chi^{V,\tau_\eps} - \chi^{V,\tau_{-\eps}} \;=\; \chi^{V^I,\tau_0} \circ \left( \pi / T_1 \right)_* \circ i^* \;:\; S(\LT^*) \ra \R.
\end{equation*}
With the genericity assumption on the direction $\eta$ this also follows immediately from the functoriality property $(4)$ in Proposition \ref{prop:invariant_integration} without referring to fibered moduli problems.

\subsubsection*{Localization}

Recall that we derived an explicit formula for the $T_1$-invariant integration $\left( \pi / T_1 \right)_* \circ i^* \;:\; S(\LT^*) \ra S(\LT_0^*)$ over an odd-dimensional sphere like $S_\delta$ in the localization example in section \ref{sec:example_localization}. Note that the weights for the $T$-action on $S_\delta$ are those $w_\nu$ with $\nu \not\in I$. Since the collection of the $w_\nu$ is assumed to be proper no two weights are negative multiples of each other. Also recall that every weight $w_\nu$ appears with multiplicity $n_\nu$ since $\dim_\C V_\nu = n_\nu$. So using proposition \ref{prop:example_localization} we can summarize and reformulate the wall crossing formula as follows.

\begin{thm} \label{thm:wall_crossing}
Let $\mathbbm{x} = \prod_{j = 1}^m x_j \in S^*(\LT^*)$. Then with the notation and conventions from above we have
$$
\chi^{V,\tau_\eps} (\mathbbm{x}) - \chi^{V,\tau_{-\eps}} (\mathbbm{x}) \;=\; \chi^{V^I,\tau_0} (\mathbbm{x}_0)
$$
with 
$$
\mathbbm{x}_0 (\xi) \;:=\; \frac{1}{2 \pi i} \oint \frac{ \prod_{j = 1}^m \langle x_j , \xi + z e_1 \rangle }{ \prod_{\nu \not\in I} \langle w_j , \xi + z e_1 \rangle^{n_\nu} } \; \dz \; .
$$
For every $\xi$ the integral is around a circle in the complex plane enclosing all poles of the integrand.
\end{thm}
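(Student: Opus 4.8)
The statement is essentially a repackaging of the factorization
$$\chi^{V,\tau_\eps} - \chi^{V,\tau_{-\eps}} \;=\; \chi^{V^I,\tau_0} \circ \left( \pi / T_1 \right)_* \circ i^* \;:\; S(\LT^*) \ra \R$$
derived immediately above, so the plan is to set $\mathbbm{x}_0 := \left( \pi / T_1 \right)_* \left( i^* \mathbbm{x} \right)$ and then to evaluate this invariant push-forward explicitly by invoking Proposition \ref{prop:example_localization}. First I would observe that $i^* \mathbbm{x}$ is nothing but the class $\prod_{j=1}^m x_j$ regarded in $H_T^*(V_\delta^I)$: since $\mathbbm{x} \in S(\LT^*) = H_T^*(V)$ is represented by a constant (form-degree zero) equivariant form, restriction to the $T$-invariant submanifold $V_\delta^I \subset V$ leaves it unchanged.

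Next I would reduce the computation of $\left( \pi / T_1 \right)_*$ to the sphere. Under the splitting $V_\delta^I \cong V^I \x S_\delta$ the map $\pi$ is the projection onto the first factor, $T_1$ acts trivially on $V^I$ and locally freely on the sphere factor, and $\mathbbm{x}$ restricted to a fiber is again $\prod_j x_j$. By the $\Omega^*(V^I)$-module property (property $(5)$ of Proposition \ref{prop:invariant_integration}) the $V^I$-direction factors through, so that $\left( \pi / T_1 \right)_*$ is computed fiberwise as the $T_1$-invariant push-forward of $S_\delta \ra \{\pt\}$. This is exactly the situation treated in Proposition \ref{prop:example_localization}, with ambient space $\bigoplus_{\nu \not\in I} V_\nu$ of complex dimension $\sum_{\nu \not\in I} n_\nu$.

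I would then read off the formula directly. The weights of the $T$-action on $S_\delta$ are the $w_\nu$ with $\nu \not\in I$, each occurring with multiplicity $n_\nu = \dim_\C V_\nu$. Substituting these weights (with multiplicity) into the contour-integral expression of Proposition \ref{prop:example_localization} turns the denominator into $\prod_{\nu \not\in I} \langle w_\nu , \xi + z e_1 \rangle^{n_\nu}$ while keeping the numerator $\prod_{j=1}^m \langle x_j , \xi + z e_1 \rangle$. This is precisely the claimed expression for $\mathbbm{x}_0(\xi)$, and the theorem follows.

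The one point that needs genuine checking — and where I expect the only real work to lie — is that the $T$-action on $S_\delta$ satisfies the hypotheses of Proposition \ref{prop:example_localization}. Properness of the collection $(\rho_\nu)$ guarantees that no two of the weights $w_\nu$ are negative multiples of one another. More delicately, one must verify $\langle w_\nu , e_1 \rangle \ne 0$ for every $\nu \not\in I$: this is equivalent to $T_1$ acting locally freely on $S_\delta$ (a point supported on a single summand $V_{\nu_0}$ with $\langle w_{\nu_0}, e_1 \rangle = 0$ would be $T_1$-fixed), and hence underlies the very definition of $\left( \pi / T_1 \right)_*$ here. It follows from the choice of $e_1$ together with the completeness of $I$ and the assumption that $\tau_0$ lies on a single wall. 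With these hypotheses in force the contour in Proposition \ref{prop:example_localization} encloses all poles of the integrand, which gives the stated formula.
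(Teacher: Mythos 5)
Your proposal is correct and follows the paper's own route: both arguments reduce the theorem to the factorization $\chi^{V,\tau_\eps} - \chi^{V,\tau_{-\eps}} = \chi^{V^I,\tau_0}\circ\left(\pi/T_1\right)_*\circ i^*$ established just before the statement, and then evaluate $\left(\pi/T_1\right)_*\circ i^*$ over the sphere $S_\delta$ by Proposition \ref{prop:example_localization}, with the weights $w_\nu$ for $\nu\not\in I$ entering with multiplicity $n_\nu = \dim_\C V_\nu$. Your explicit verification that $\langle w_\nu , e_1\rangle \ne 0$ for all $\nu\not\in I$ (forced by completeness of $I$ and the assumption that $\tau_0$ lies on only one wall) is a hypothesis of Proposition \ref{prop:example_localization} that the paper uses but leaves implicit, so it is a welcome addition rather than a deviation.
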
 

\subsection{Jeffrey-Kirwan localization}

\label{chap:Jeffrey_Kirwan_localization}

The preceding discussion on wall crossing and the computation of integrals over toric manifolds can be put into a much broader context. We will briefly outline the general theory. Suppose the torus $T$ acts on the compact manifold with boundary $M$ such that the action on the boundary $\partial M$ is regular. Let $\alpha$ be a $\mathrm{d}_T$-closed form on $M$. If the torus action was regular on all of $M$ then by Stokes' formula
$$
\int_{\partial M / T} \alpha \;=\; \int_{M / T} \mathrm{d} \alpha \;=\; 0.
$$
But $T$-invariant integration over $M$ is not defined as soon as there are points $p \in M$ with isotropy subgroups of positive dimension. As in section \ref{sec:torus_actions} we denote the isotropy subgroups by $S_p$ and their Lie algebras by $\LT_p$. Consider the set
$$
Z \;:=\; \left\{ p \in M \;|\; \dim( \LT_p ) \ge 1 \right\}.
$$
This set is $T$-invariant, but in general there is no reason why $Z$ should be a submanifold of $M$. But let us for the moment assume that
$$
Z \;=\; \bigsqcup_\nu Z_\nu
$$
is the disjoint union of connected and $T$-invariant submanifolds $Z_\nu$ such that all $p \in Z_\nu$ have isotropy $S_p = S_\nu$ for some fixed subgroup $S_\nu \subset T$.

We pick sufficiently small disjoint tubular neighbourhoods $U_\nu$ of the $Z_\nu$ and equivariantly identify the boundary of $U_\nu$ with the unit sphere bundle $SN_\nu$ in the normal bundle $N_\nu$ of $Z_\nu$ in $M$ with respect to an invariant metric. Now we obtain
\begin{eqnarray*}
&  \D \int_{\partial \left( M \setminus \sqcup_\nu U_\nu \right) / T} \alpha & \;=\; \quad \quad 0 \\
\iff \quad & \D \int_{\partial M / T} \alpha & \;=\; \quad \sum_\nu \int_{SN_\nu / T} \alpha.
\end{eqnarray*}
If we denote the projection $SN_\nu \ra Z_\nu$ by $\pi_\nu$ then we are precisely in the situation required to apply proposition \ref{prop:invariant_integration}: The group $S_\nu$ acts locally freely on the sphere bundle $SN_\nu$ but trivially on the base $Z_\nu$. If we introduce the notation
$$
T_\nu \;:=\; T / S_\nu
$$
then we obtain
\begin{equation*}
\D \int_{\partial M / T} \alpha \;=\; \sum_\nu \int_{Z_\nu / T_\nu} \left( \pi_\nu / S_\nu \right)_* \alpha.
\end{equation*}
This is the basic identity from which the Jeffrey-Kirwan localization formulae of Guillemin and Kalkman \cite{GK} and Martin \cite{Mar} and our results in section \ref{chap:toric_manifolds} can be deduced. In all three cases the manifold $M$ is obtained as the preimage of a certain path $\tau_t$ in $\LT^*$ under the moment map $\mu : X \ra \LT^*$ on some Hamiltonian $T$-space $X$ such that $\tau_t$ connects the point of reduction $\tau = \tau_0$ with the complement of the image of $\mu$. Hence
$$
\partial M / T \;=\; \mu^{-1}(\tau) / T \;=\; X /\!/ T (\tau)
$$
is the symplectic quotient of $X$. Compactness of $M$ is for example given if the moment map is proper.

The first step now is to ensure that the above assumption on the set $Z$ is satisfied. This is done by a suitable choice for the path $\tau_t$. In our case we have to demand that the path does not hit the intersection of two walls. Then the components $Z_\nu$ are the fixed point sets of certain one-dimensional subtori $S_\nu \subset T$. This can in fact be arranged for any Hamiltonian $T$-space $X$ (see Guillemin and Kalkman \cite{GK}).

Next one has to compute $\left( \pi_\nu / S_\nu \right)_* \alpha$. We obtain the corresponding formula in \ref{thm:wall_crossing} by applying our relative version of the usual Atiyah-Bott localization formula. This also works in the context studied in \cite{GK} and gives an alternative explanation for their \emph{residue operations}. In addition we can avoid to work with orbifolds.

Finally one has to iterate this procedure for the $T_\nu$-invariant integrals over the $Z_\nu$. Indeed our detailed exposition in section \ref{sec:toric_wall_crossing} is mainly in order to show that these \emph{reduced problems} are in fact again of the same form as the initial one and the combinatorial data can immediately be read off. This explicit correspondence has no analogue in the more general setting of Guillemin and Kalkman \cite{GK} and is the reason for why one can actually evaluate these iterated residues in the toric case, as it is done by Cieliebak and Salamon \cite{CS}.

\section{Vortex invariants}

\label{chap:vortex_invariants}

We now turn to vortex equations and the computation of vortex invariants for toric manifolds. First we summarize the usual vortex setup in the case of a linear torus action on $\C^N$. We then deform these equations as described in the introduction and prove that this deformation yields a homotopy of regular $T$-moduli problems. Then we study the deformed picture and gather consequences.

Throughout we freely use the notion and properties of moduli problems and associated Euler classes from section \ref{chap:moduli_problems}. For the definitions and facts about the Sobolev spaces that we use we refer to K.~Wehrheim \cite[Appendix B]{KW}. The gauge-theoretic results that we use originate in the work of Uhlenbeck \cite{Uhl} and of Donaldson and Kronheimer \cite{DK}, but we also use the book \cite{KW} for references.

\subsection{Setup}

\label{sec:vortex_setup}

Let $T$ be a $k$-dimensional torus. Using the notation from section \ref{chap:toric_manifolds} we consider a torus action $\rho$ on $\C^N$ that is given by $N$ weight vectors $w_\nu\in\Lambda^*$. We assume that this collection of weights is proper and spans $\LT^*$ so that we get non-empty regular quotients $X_{\C^N,\tau}$ for regular elements $\tau$ in the image of the moment map
$$
\mu : \C^N \ra \LT^* \; ; \; z \mto \pi \sum_{\nu = 1}^N \left| z_\nu \right|^2 \cdot w_\nu.
$$
We fix an element $\kappa \in \Lambda \cong H_2(\BT;\Z)$ and a principal $T$-bundle $\pi : P \ra \Sigma$ over a connected Riemann surface $\Sigma$ with characteristic vector $\kappa$. Explicitly we take $P := f^*\ET$ with a classifying map $f : \Sigma \ra \BT$ satisfying $f_*[\Sigma] = \kappa$. Since $T$ is connected $\BT$ is $1$-connected. Hence up to homotopy $f$ is uniquely determined by this property. Thus $P$ is uniquely determined by the choice for $\kappa$.

A $T$-equivariant map $u : P \ra \C^N$ is the same as a section in the Hermitian bundle $\mcV := P \x_\rho \C^N$ or a collection of sections $(u_1 , \ldots , u_N)$ in the line bundles $\mcL_\nu := P \x_{\rho_\nu} \C$ associated to the actions $\rho_\nu$ of $T$ on $\C$ given by the weights $w_\nu$. We use the convention that principal bundles carry right actions, hence we let $g \in T$ act on $P \x \C$ by
$$
g(x,z) := \left( xg^{-1} , \rho_\nu(g)z \right)
$$
and equivariance of $u : P \ra \C^N$ means that $u(xg^{-1}) = \rho(g)u(x)$. So for $p = \pi(x) \in \Sigma$ we set
$$
u_\nu(p) := \left[ x , \pi_\nu \circ  u(x) \right] \in \mcL_\nu
$$
with the projection $\pi_\nu : \C^N \ra \C$ onto the $\nu$-th component. This yields a well defined section of $\mcL_\nu$ since $\pi_\nu \circ \rho(g) = \rho_\nu(g) \circ \pi_\nu$. By construction the degree of $\mcL_\nu$ is given by $d_\nu := \left\langle w_\nu , \kappa \right\rangle$ and $\mcV = \bigoplus_{\nu = 1}^N \mcL_\nu$.

Let $A \in \Omega^1(P,\LT)$ be a connection form on $P$ and $u$ an equivariant map as above. The symplectic vortex equations for the pair $(u,A)$ at a parameter $\tau \in \LT^*$ now take the form
$$
(*) \quad \left\{ \quad
\begin{array}{lr@{\quad = \quad}l}
(\mathrm{I}) & \bar{\del}_A u & 0 \\
(\mathrm{II}) & *F_A + \mu(u) & \frac{\kappa}{\vol(\Sigma)} + \tau.
\end{array}
\right.
$$
In the first equation we have the linear Cauchy-Riemann operator $\bar{\del}_A$ on $\mcV$ that is associated to the covariant derivative $\mathrm{d}_A u := \mathrm{d}u - X_A(u)$. It is given by
$$
\bar{\del}_A u = \mathrm{d}_A u + i \circ \mathrm{d}_A u \circ j_\Sigma
$$
with a fixed complex structure $j_\Sigma$ on $\Sigma$ and the standard complex structure $i$ on $\C^N$. For the second equation we also fix a metric (respectively a volume form $\dvol_\Sigma$) on $\Sigma$ to get the Hodge $*$-operator and the volume $\vol(\Sigma)$ and we also have to choose an inner product on the Lie algebra $\LT$ to identify it with its dual space $\LT^*$. Note that in general the curvature $F_A$ is a two-form on $\Sigma$ with values in the bundle $P \x_\Ad \LT$, which in this case is trivial since $T$ is abelian. Hence $*F_A$ is just a map $\Sigma \ra \LT$. And by $T$-invariance of the moment map $\mu$ the composition $\mu(u)$ also descends to a map $\Sigma \ra \LT$.

We consider the gauge group $\mcG(P)$ of the principal bundle $P$ and fix a point $x_0 \in P$ to get the based gauge group
$$
\mcG_0(P) := \left\{ \gamma \in \mcG(P) \;|\; \gamma(x_0) = e \right\}.
$$
An element $\gamma \in \mcG(P)$ is a smooth $T$-invariant map $\gamma : P \ra T$ that acts on pairs $(u,A)$ by $\gamma(u,A) := ( \gamma^*u , \gamma^*A )$ with
$$
\left( \gamma^*u \right) (x) := u\left( x\gamma(x) \right) = \rho\left( \gamma(x)^{-1} \right) u(x)
$$
and for $v \in T_xP$
$$
\left( \gamma^*A \right)_x(v) := A_x(v) + {\gamma(x)^{-1}}_* \left( \mathrm{d}\gamma \right)_x (v),
$$
in short $\gamma^*A = A + \gamma^{-1} \mathrm{d}\gamma$. Now the based gauge group $\mcG_0(P)$ acts freely on the configuration space of pairs $(u,A)$. In fact $\gamma^*A = A$ implies that $\gamma : P \ra T$ is a constant map and since $P$ is connected the condition $\gamma(x_0) = e \in T$ for based gauge transformations $\gamma \in \mcG_0(P)$ implies that $\gamma(x) = e$ for all $x \in P$. Using suitable Sobolev completions the quotient of this action becomes a smooth Hilbert manifold $\mcB$. Explicitly we consider
\begin{equation} \label{def:mcB}
\mcB := \frac{ W^{k,2} \left( \Sigma , \mcV \right) \x \mcA^{k,2}(P) }{ \mcG_0^{k+1,2}(P) }.
\end{equation}
By the local slice theorem (see K.~Wehrheim \cite[Theorem 8.1]{KW}) the quotient $\mcA^{k,2}(P) / \mcG_0^{k+1,2}(P)$ is a smooth Hilbert manifold and the projection from $\mcB$ onto the connection-part makes $\mcB$ to a Hilbert space bundle over this quotient. Taking $k \ge 3$ ensures that all objects are $\mcC^2$. In the end it does not matter which $k \ge 3$ we take.

The map $\mcG(P) / \mcG_0(P) \ra T \;;\; [\gamma] \mto \gamma(x_0)$ is a diffeomorphism. Hence the action of the whole gauge group $\mcG(P)$ descends to a $T$-action on the quotient $\mcB$ given by 
$$
g \left[ u , A \right] := \left[ \rho\left( g^{-1} \right) u , A \right] \quad \mathrm{with} \quad g \in T , \left[ u , A \right] \in \mcB.
$$
Here we use that for an abelian Lie group the gauge group actually splits into the product $T \x \mcG_0(P)$ and that the constant gauge transformations $g \in T \subset \mcG(P)$ act trivially on connections.

The equations $(*)$ now give rise to a $T$-moduli problem with base $\mcB$ and total space
\begin{equation} \label{def:mcE}
\mcE := \frac{ W^{k,2} \left( \Sigma , \mcV \right) \x \mcA^{k,2}(P) \x \mcF }{ \mcG_0^{k+1,2}(P) }
\end{equation}
with fiber
\begin{equation} \label{def:mcF}
\mcF := W^{k-1,2} \left( \Sigma , \Lambda^{0,1} T^*\Sigma \otimes_\C \mcV \right) \oplus W^{k-1,2} \left( \Sigma , \LT \right)
\end{equation}
and section
$$
%\begin{equation} \label{def:mcS}
\mcS : \mcB \ra \mcE \; ; \; \left[ u , A \right] \mto 
\left[ u , A , 
\left(
\begin{array}{c}
\bar{\partial}_A u \\
*F_A + \mu(u)  - \frac{\kappa}{\vol(\Sigma)} - \tau
\end{array}
\right)
\right].
%\end{equation}
$$
On the fiber $\mcF$ the gauge group $\mcG(P)$ acts only on sections in the bundle $\mcV$. Thus a short computation shows that the section $\mcS$ is $T$-equivariant and well defined, i.e.~it descends to the $\mcG_0(P)$-quotients.

The real index of this moduli problem ist $(N-k) \cdot \chi(\Sigma) + 2\sum_{\nu = 1}^N d_\nu$ and it is regular if $\tau$ is regular. For the computation of the index, the regularity and the compactness properties we refer to Cieliebak, Gaio, Mundet and Salamon \cite{CGMS}. The definition of the orientation on $\det(\mcS)$ will be discussed together with all further orientation issues in section \ref{sec:orientations}. The Euler class of this $T$-moduli problem then gives rise to the vortex invariant. We denote it by
$$
\Psi^{\rho,\tau}_{\kappa,\Sigma} : S^*(\LT^*) \otimes H^*(\mcA/\mcG_0) \ra \R.
$$
In case $\Sigma = S^2$ has genus zero we omit the surface and just write $\Psi^{\rho,\tau}_\kappa$. This map is constructed as follows: Elements in $S^*(\LT^*) \otimes H^*(\mcA/\mcG_0)$ are pulled back to $H_T^*(\mcB)$ via the equivariant evaluation map $\mcB \ra \C^N$; $[u,A] \mto u(p_0)$ and the projection $\mcB \ra \mcA/\mcG_0$, and are then integrated $T$-invariantly over the compact space of solutions $\mcM := \mcS^{-1}(0)$. If the space $\mcM$ is not cut out transversally this integration is understood in terms of the Euler class.

\begin{rem}
In case of genus zero the space $\mcA(P)/\mcG_0(P)$ is in fact contractible. This follows for example from lemma \ref{lem:jacobian_torus} below. Hence the vortex invariant reduces to a map
$$
\Psi^{\rho,\tau}_\kappa : S^*(\LT^*) \ra \R.
$$
\end{rem}

\subsection{Deformation}

\label{sec:vortex_deformation}

The objects in the curvature equation $(\mathrm{II})$ of $(*)$ are considered as elements of $W^{k-1,2}(\Sigma,\LT)$. Using the fixed volume form $\dvol_\Sigma$ we can split this vector space into the direct sum
$$
W^{k-1,2}(\Sigma,\LT) = \underline{ W^{k-1,2}(\Sigma,\LT) } \oplus \LT
$$
with
$$
\underline{ W^{k-1,2}(\Sigma,\LT) } := \left\{ F \in W^{k-1,2}(\Sigma,\LT) \;|\; \int_\Sigma F \cdot \dvol_\Sigma = 0 \right\},
$$
the space of maps with mean value equal to zero. For any map $F : \Sigma \ra \LT$ we denote by
$$
\overline{F} := \frac{ \int_\Sigma F \cdot \dvol_\Sigma }{ \vol(\Sigma) } \in \LT
$$
its mean value and by
$$
\underline{F} := F - \overline{F}
$$
its projection onto the space of maps of zero mean value. Observe that by Chern-Weil theory we have
$$
\overline{*F_A} = \frac{ \int_\Sigma *F_A \cdot \dvol_\Sigma }{ \vol(\Sigma) } = \frac{\kappa}{\vol(\Sigma)}
$$
for any connection $A$. For later use we also compute
\begin{equation} \label{eqn:overline_mu}
\overline{\mu(u)} = \frac{\pi}{\vol(\Sigma)} \cdot \sum_{\nu = 1}^N \left\| u_\nu \right\|^2_{L^2(\Sigma,\mcL_\nu)} \cdot w_\nu.
\end{equation}
In this splitting the vortex equations become
$$
(*) \quad \left\{ \quad 
\begin{array}{lr@{\quad = \quad}l}
(\mathrm{I}) & \bar{\partial}_A u & 0 \\
(\mathrm{II}) & *F_A + \mu(u) & \frac{\kappa}{\vol(\Sigma)} + \overline{\mu(u)} \\
(\mathrm{II\/I}) & \overline{\mu(u)} & \tau.
\end{array}
\right.
$$
We regroup and introduce a parameter $\eps \in [0,1]$ in the second equation to obtain
$$
(*)_\eps \quad \left\{ \quad 
\begin{array}{lr@{\quad = \quad}l}
(\mathrm{I}) & \bar{\partial}_A u & 0 \\
(\mathrm{II})_\eps & *F_A - \frac{\kappa}{\vol(\Sigma)} & \eps \left( \overline{\mu(u)} - \mu(u) \right) \\
(\mathrm{II\/I}) & \overline{\mu(u)} & \tau.
\end{array}
\right.
$$

\begin{rem} \label{rem:epsilon_omega}
One can interprete the equations $(*)_\eps$ as the vortex equations for the same Hamiltonian group action, but with rescaled symplectic form $\eps \cdot \omega$ on the target manifold (hence the moment map for the action gets multiplied by $\eps$) and with rescaled parameter $\eps \cdot \tau$. So as long as $\eps \ne 0$ nothing particular will happen.
\end{rem}

We claim, and will prove it in the remainder of this section, that the deformed equations with $\eps = 0$ yield the same invariants as the usual vortex equations. We consider the Hilbert space bundle $[0,1] \x \mcE \ra [0,1] \x \mcB$ with $\mcB$ as in \ref{def:mcB} and $\mcE$ as in \ref{def:mcE}, but we write the fiber from \ref{def:mcF} as
$$
\mcF := W^{k-1,2} \left( \Sigma , \Lambda^{0,1} T^*\Sigma \otimes_\C \mcV \right) \oplus \underline{W^{k-1,2} \left( \Sigma , \LT \right)} \oplus \LT.
$$
In this bundle the equations $(*)_\eps$ give rise to the section
\begin{eqnarray*}
\mcS : [0,1] \x \mcB & \ra & [0,1] \x \mcE \\
\left( \eps , \left[ u , A \right] \right)
& \mto &
\left( \eps ,
\left[ u , A ,
\left(
\begin{array}{c}
 \bar{\partial}_A u \\
*F_A - \frac{\kappa}{\vol(\Sigma)} + \eps \cdot \mu(u) - \eps \cdot \overline{\mu(u)} \\
\overline{\mu(u)} - \tau
\end{array}
\right)
\right]
\right).
\end{eqnarray*}
Our claim now follows from the following theorem in combination with the cobordism property of the Euler class.

\begin{thm} \label{thm:homotopy}
The triple $\left\{ [0,1] \x \mcB , [0,1] \x \mcE , \mcS \right\}$ defines a homotopy of regular $T$-moduli problems if $\tau$ is regular.
\end{thm}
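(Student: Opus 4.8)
The plan is to verify the three defining conditions of Definition \ref{def:homotopy}: that for each fixed $\eps\in[0,1]$ the slice $(\mcB,\mcE|_{\{\eps\}\x\mcB},\mcS|_{\{\eps\}\x\mcB})$ is a regular $T$-moduli problem, and that the total parametrized zero set $\mcM$ is compact. The fact that each slice is a genuine $G$-moduli problem (Fredholm section of constant index, oriented determinant bundle, compact zero set for fixed $\eps$) follows almost verbatim from the standard vortex setup in \cite{CGMS}, since by Remark \ref{rem:epsilon_omega} the equations $(*)_\eps$ for fixed $\eps>0$ are just the ordinary vortex equations with rescaled symplectic form and parameter, and the $\eps=0$ case is a mild degeneration that leaves the linearization's principal symbol untouched. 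The Fredholm index is independent of $\eps$ because the deformation only alters a zeroth-order (compact) term in the curvature equation, so the vertical differential changes by a compact operator and the index is constant along $[0,1]$. Regularity of each slice --- that the based gauge group acts freely and the residual $T$-action on the zero set $\mcM_\eps$ has finite isotropy --- reduces to the corresponding statement for toric reductions: equation $(\mathrm{II\/I})$ forces $\overline{\mu(u)}=\tau$ with $\tau$ regular, and by the lemma on regular levels in section \ref{chap:toric_manifolds} this already pins down finite isotropy. I would make this precise by showing that a nontrivial one-parameter isotropy subgroup fixing a solution would force $u$ to vanish on too many components, contradicting $\overline{\mu(u)}=\tau$ via equation \eqref{eqn:overline_mu} and regularity of $\tau$.

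The genuinely hard step, as the author himself flags in the introduction, is compactness of the \emph{parametrized} moduli space
$$
\mcM=\bigl\{(\eps,[u,A])\in[0,1]\x\mcB \;\big|\; \mcS(\eps,[u,A])=0\bigr\}.
$$
This does not follow formally from compactness of the individual slices, because the constant in any a priori estimate could in principle blow up as $\eps\to 0$. My approach would be to establish a uniform $L^\infty$-bound on $u$ and a uniform bound on the curvature $F_A$, independent of $\eps$, and then run the usual elliptic bootstrap and Uhlenbeck gauge-fixing (citing \cite{Uhl}, \cite{DK}, \cite{KW}) to extract convergent subsequences. The key analytic input is a uniform pointwise bound $|u|\le C$: one derives it from the Kazdan–Warner / maximum-principle argument applied to $\Delta|u_\nu|^2$, where the crucial observation is that the properness of the collection of weights (Definition \ref{def:proper}) together with equation $(\mathrm{II\/I})$, $\overline{\mu(u)}=\tau$, controls the $L^2$-norms $\|u_\nu\|_{L^2}^2$ \emph{uniformly in $\eps$}, since that equation is $\eps$-independent. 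The deformation parameter enters only the right-hand side of $(\mathrm{II})_\eps$, multiplied by $\eps\le 1$, so the zeroth-order term is dominated uniformly and never degenerates in a way that loses the estimate.

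The main obstacle I anticipate is precisely controlling what happens \emph{at} $\eps=0$: there the curvature equation decouples from the pointwise value of $\mu(u)$ (only the zero-mean part survives through $\eps(\mu(u)-\overline{\mu(u)})$, which vanishes), so one must check that no energy escapes or concentrates as $\eps\to0$ --- i.e.~that there is no bubbling. I would argue that the standard energy identity for vortices gives a total energy bounded in terms of the topological quantity $\kappa$ and $\tau$ alone, hence uniformly in $\eps$, and that the small-energy regularity theorem then precludes bubbling for a fixed complex structure $j_\Sigma$ on a fixed genus-zero (or fixed genus) surface, since there is no target noncompactness once $|u|$ is bounded. Combining the uniform $L^\infty$-bound on $u$, the Chern–Weil identity $\overline{*F_A}=\kappa/\vol(\Sigma)$ which fixes the harmonic part of the curvature, and a uniform $L^2$-bound on $\underline{*F_A}$ from $(\mathrm{II})_\eps$, I would invoke Uhlenbeck compactness to place the connections in a bounded region of gauge-equivalence classes, bootstrap to $W^{k,2}$, and conclude that $\mcM$ is sequentially compact. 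Once compactness is in hand, together with the slice-wise regularity and the constancy of the index and orientation of $\det(\mcS)$ along the homotopy, all clauses of Definition \ref{def:homotopy} are met and the theorem follows.
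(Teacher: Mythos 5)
Your overall skeleton (orientation, slice-wise regularity, compactness of the parametrized zero set) matches the paper's, and your regularity argument is essentially the paper's: a common isotropy subgroup $S$ of $u$ forces $\overline{\mu(u)}=\tau$ to lie in the convex cone $\mu(V_S)$, so regularity of $\tau$ makes $S$ finite. The gap is in the compactness step, and it sits exactly at the point the whole theorem turns on. You claim a uniform pointwise bound $|u|\le C$ "from the Kazdan--Warner / maximum-principle argument applied to $\Delta|u|^2$". But the maximum-principle computation for $(*)_\eps$ gives, at a maximum point $x$ of $|u|^2$, only the inequality $\eps\,|\mu(u(x))|\le |\bar\kappa|+|\tau|$: the coercive term that makes the argument close in the undeformed case is $\eps\,|\mu(u)|^2$, and it degenerates as $\eps\to 0$. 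So this argument bounds $\eps|u|^2$, not $|u|^2$, and yields no uniform $\mcC^0$-bound along the homotopy. Your fallback --- energy identity plus small-energy regularity to exclude bubbling --- also does not close the gap: the $\eps$-energy identity gives only borderline $L^2$-bounds on $\mathrm{d}_Au$, and one cannot even begin the bubbling analysis without a $\mcC^0$-bound on $u$ (this is precisely the obstruction discussed in section \ref{chap:generalizations}).

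The paper's actual route is different and uses the linearity of the target in an essential way. The bound $\eps|\mu(u)|\le C$ \emph{is} enough to bound $\|F_A\|_{L^\infty}$ via $(\mathrm{II})_\eps$; Uhlenbeck compactness then gives uniform $W^{1,q}$-control of the connections in a suitable gauge; and only \emph{then} does one bound $u$, by combining the $\eps$-independent $L^2$-bound $\|u\|_{L^2}^2\le \langle\tau\vol(\Sigma),\eta\rangle/(\pi m)$ coming from $(\mathrm{II\/I})$ and properness with the \emph{linear} local equation $\bar{\del}u=B\cdot u$ ($B$ built from the connection coefficients, hence $L^\infty$-bounded), bootstrapping to $W^{1,2}$, then $W^{2,2}$, and hence to $\mcC^0$. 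This is where the standing assumption $X=\C^N$ with its standard complex structure is used, and it is how bubbling is circumvented entirely. If you want to salvage your first step, note that the differential inequality $\Delta|u|^2\ge -C|u|^2$ with $C$ independent of $\eps$ does hold (the $\eps$-dependent term $2\lambda^2\eps|\mu(u)|^2$ has the good sign), so a mean value inequality for subsolutions combined with the uniform $L^2$-bound could produce the $\mcC^0$-bound directly; but that is the local-boundedness theorem for subsolutions, not the maximum principle, and you would need to state and justify it --- as written, your proposal asserts the one estimate whose failure is the reason the theorem requires a proof at all.
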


There are three things to check that are not obvious:
\begin{description}
\item[(Orientation)] There is a natural orientation on all determinant bundles $\det\left( \mcS|_{ \{ \eps \} \x \mcB } \right)$ with $\eps \in [0,1]$ that fit together to define an orientation on $\det(\mcS)$ once we fix an orientation for the interval $[0,1]$.
\item[(Regularity)] The $T$-isotropy subgroup at every point $[u,A] \in \mcB$ with $(u,A)$ solving $(*)_\eps$ for some $\eps \in [0,1]$ is finite.
\item[(Compactness)] The space of solutions $\left\{ (\eps,[u,A]) \; | \; (u,A) \; \mathrm{solves} \; (*)_\eps \right\}$ in $[0,1] \x \mcB$ is compact.
\end{description}

\subsubsection*{Orientation}

We refer to the argument from Cieliebak, Gaio, Mundet and Salamon \cite[chapter 4.5]{CGMS} for the orientation in the original vortex moduli problem and argue that the additional parameter $\eps$ only induces a compact perturbation of the original operators and hence their determinants are canonically identified. We postpone the detailed discussion to section \ref{sec:orientations}.

\subsubsection*{Regularity}

The idea for this proof is taken from \cite[Remark 4.3]{CGMS}. An element $g \in T$ fixes a map $u : P \ra \C^N$ if
$$
g \in \bigcap_{x \in P} \Iso_{u(x)}(T) =: S.
$$
Now $V_S := \left\{ z \in \C^N \; | \; S \subset \Iso_z(T) \right\}$ is a linear subspace, so $\mu(V_S)$ is a convex cone in $\LT^*$ based at zero. By construction $u$ maps into $V_S$, hence we obtain $\int_\Sigma \mu(u) \cdot \dvol_\Sigma \in \mu(V_S)$ and hence also
$$
\overline{\mu(u)} = \frac{ \int_\Sigma \mu(u) \cdot \dvol_\Sigma }{ \vol(\Sigma) }  \in \mu(V_S).
$$
Equation $(\mathrm{II\/I})$ then implies $\tau \in \mu(V_S)$. Thus there exists a $z_0 \in \mu^{-1}(\tau) \cap V_S$. By definition of $V_S$ we have $S \subset \Iso_{z_0}(T)$. By assumption $\tau$ is regular, which implies that $T$ acts with finite isotropy on $z_0 \in \mu^{-1}(\tau)$, hence $S$ is finite.

\subsubsection*{Compactness}

Let $(\eps_j,u_j,A_j) \in [0,1] \x W^{k,2} \left( \Sigma , \mcV \right) \x \mcA^{k,2}(P)$ be a sequence of triples with $(u_j,A_j)$ solving $(*)_{\eps_j}$. We want to prove the existence of a convergent subsequence in the quotient by $\mcG_0^{k+1,2}(P)$. By Sobolev embedding it suffices to exhibit a uniform $W^{k+1,2}$-bound on all $(u_j,A_j)$ after modifying them by suitable gauge transformations. We do this in seven steps.

\subsubsection*{Step 1:}

The curvature of the connections $A_j$ is uniformly bounded, i.~e.~there exists a constant $C$ with
$$
\| F_{A_j} \|_{L^\infty} \le C
$$
for all $j$.

\begin{proof} The computation is analogous to the one in Cieliebak, Gaio and Salamon \cite[Proposition 3.5]{CGS}. There an $L^\infty$-bound on the maps $u$ of solutions $(u,A)$ to the usual vortex equations $(*)$ is shown. By the curvature equation $(\mathrm{II})$ this yields such a uniform bound on the curvatures. In our case for a solution $(\eps,u,A)$ of $(*)_\eps$ we only get an $L^\infty$-bound on $\eps \cdot \mu(u)$, which does not give a uniform bound on the maps $u$, but which is sufficient to also uniformly bound the curvatures via $(\mathrm{II})_\eps$.

We fix a cover of $\Sigma$ by a finite number of holomorphic charts $U_k$ and equivariant trivializations $P|_{U_K} \cong U_k \x T$. Thus we get induced trivializations $\mcV|_{U_k} \cong U_k \x \C^N$ via the local lifts $U_k \ra P|_{U_k} \; ; \; z \mto (z , \unit)$. In local holomorphic coordinates $(s,t)$ on $U_k$ we can write a connection $A$ as $\varphi(s,t) \ds + \psi(s,t) \dt$ with functions $\varphi, \psi : U_k \ra \LT$. Then the local expression for the curvature is
$$
F_A = ( \del_s \psi - \del_t \varphi ) \ds \wedge \dt,
$$
because the torus is abelian. The volume form $\dvol_\Sigma$ is locally given by $\lambda^2 \ds \wedge \dt$ with a positive function $\lambda$. The infinitesimal action of an element $\xi \in \LT$ at $z \in \C^N$ is given by $X_\xi(z) = \dot{\rho}(\xi) \cdot z$ with the skew-Hermitian matrix
$$
\dot{\rho}(\xi) := 2 \pi i \cdot \mathrm{diag}( \langle w_\nu , \xi \rangle ).
$$
With this notation the equations $(\mathrm{I})$ and $(\mathrm{II})_\eps$ in local holomorphic coordinates $(s,t)$ on a chart $U_k$ are equivalent to
\begin{eqnarray}
(\mathrm{I})^{loc} \quad \; \del_s u - \dot{\rho}(\varphi) u + i \left( \del_t u - \dot{\rho}(\psi) u \right) & = & 0,  \label{equ:I_loc} \\ 
(\mathrm{II})_\eps^{loc} \qquad \qquad \quad \frac{ \del_s \psi - \del_t \varphi }{ \lambda^2 } - \frac{\kappa}{\vol(\Sigma)} & = & \eps \left( \overline{\mu(u)} - \mu(u) \right). \quad \label{equ:II_eps_loc}
\end{eqnarray}
We introduce $\nabla_s := \del_s - \dot{\rho}(\varphi)$ and $\nabla_t := \del_t - \dot{\rho}(\psi)$ and denote the standard real inner product on $\C^N$ by $\langle \, , \, \rangle$. Then $\langle u , \nabla_s u \rangle = \langle u , \del_s u \rangle$, because $\dot{\rho}$ is skew-symmetric. With this observation a short computation shows that
$$
\frac{ \Delta |u|^2 }{2} := \frac{1}{2} \left( \del_s \del_s + \del_t \del_t \right) \langle u , u \rangle = \left| \nabla_s u \right|^2 + \left| \nabla_t u \right|^2 + \left\langle u , \nabla_s \nabla_s u + \nabla_t \nabla_t u \right\rangle.
$$
Now equation \ref{equ:I_loc} reads $\nabla_s u = - i \nabla_t u$ and hence
$$
\nabla_s \nabla_s u + \nabla_t \nabla_t u = \nabla_s \left( - i \nabla_t u \right) + \nabla_t \left( i \nabla_s \right) u = i \left( \nabla_t \nabla_s u - \nabla_s \nabla_t u \right).
$$
Furthermore
\begin{eqnarray*}
\nabla_t \nabla_s u & = & \left( \del_t - \dot{\rho}(\psi) \right) \left( \del_s - \dot{\rho}(\varphi) \right) u \\
& = & \del_t \del_s u - \dot{\rho}(\del_t \varphi) u - \dot{\rho}(\varphi) \del_t u - \dot{\rho}(\psi) \del_s u + \dot{\rho}(\psi) \dot{\rho}(\varphi) u
\end{eqnarray*}
and correspondingly with $(s,\varphi)$ and $(t,\psi)$ interchanged. Combining all these equations we obtain the following estimate:
$$
\frac{ \Delta |u|^2 }{2} \ge \left\langle u , i \left( \dot{\rho}( \del_s \psi ) - \dot{\rho}( \del_t \varphi ) \right) u \right\rangle
$$
Here we used that the partial derivatives $\del_s$ and $\del_t$ as well as diagonal matrices commute. If we write 
$$
\xi_\eps := \lambda^2 \left( \frac{\kappa}{\vol(\Sigma)} + \eps \overline{\mu(u)} - \eps \mu(u) \right)
$$
then equation \ref{equ:II_eps_loc} reads $\del_s \psi - \del_t \varphi = \xi_\eps$ and hence
\begin{eqnarray*}
\frac{ \Delta |u|^2 }{2} & \ge & \left\langle u , i \dot{\rho}( \xi_\eps ) u \right\rangle \\
& = & - \left\langle u , 2 \pi \cdot \mathrm{diag}( \langle w_\nu , \xi_\eps \rangle ) \cdot u \right\rangle \\
& = & -  \sum_\nu 2 \pi \cdot \langle w_\nu , \xi_\eps \rangle \cdot |u_\nu|^2 \\
& = & -2 \left\langle \pi \sum_\nu |u_\nu|^2 \cdot w_\nu , \xi_\eps \right\rangle \\
& = & -2 \left\langle \mu(u) , \xi_\eps \right\rangle.
\end{eqnarray*}
Now let $x \in P$ be a point at which the function $|u|^2$ attains its maximum. Then $0 \ge \Delta |u|^2(x)$ and hence
\begin{eqnarray*}
0 & \le & \left\langle \mu(u(x)) , \xi_\eps(x) \right\rangle \\
& = & \lambda^2 \cdot \left\langle \mu(u(x)) , \frac{\kappa}{\vol(\Sigma)} + \eps \tau - \eps \mu(u(x)) \right\rangle \\
& \le & \lambda^2 \cdot \left| \mu(u(x)) \right| \cdot \left| \frac{\kappa}{\vol(\Sigma)} + \eps \tau \right| - \lambda^2 \cdot \eps \cdot \left| \mu(u(x)) \right|^2,
\end{eqnarray*}
where we useed the third equation $\overline{\mu(u)} = \tau$ for solutions of $(*)_\eps$. This implies
$$
\eps \cdot \left| \mu(u(x)) \right| \le \left| \frac{\kappa}{\vol(\Sigma)} \right| + |\tau| =: c.
$$
Now since $\mu$ is proper this gives a bound on $\eps \cdot |u(x)|^2$, which by definition of $x$ holds for $\eps \cdot |u(y)|^2$ for all $y \in P$. This in turn gives a uniform bound for $\eps \cdot |\mu(u)|$. Explicitly: Since the collection of $w_\nu$ is proper, by definition \ref{def:proper} there exists an element $\eta \in \LT$ with $|\eta| = 1$ and $\langle w_\nu , \eta \rangle > 0$ for all $\nu$. We denote $m := \min \left\{ \langle w_\nu , \eta \rangle \; | \; \nu = 1,\ldots,N \right\} > 0$ and compute
$$ 
\eps \cdot |u(x)|^2 \le \eps \cdot \sum_{\nu=1}^N |u_\nu(x)|^2 \cdot \frac{\langle w_\nu , \eta \rangle}{m} = \eps \cdot \frac{\langle \mu(u(x)) , \eta \rangle}{\pi m} \le \eps \cdot |\mu(u(x))| \cdot \frac{|\eta|}{\pi m} \le \frac{c}{\pi m}.
$$
Hence we obtain
\begin{equation} \label{equ:eps_mu_u}
\eps \cdot |\mu(u)| \le \eps \cdot \pi \sum_{\nu=1}^N |u_\nu|^2 \cdot \max\{ |w_\nu| \} \le \frac{c \cdot \max\{ |w_\nu| \}}{m}
\end{equation}
for any triple $(\eps,u,A)$ with $(u,A)$ solving $(*)_{\eps}$. Now by $(\mathrm{II})_\eps$ we obtain a uniform bound on the curvatures as claimed.
\end{proof}

\subsubsection*{Step 2:}

Fix a real number $q > 1$ and a smooth reference connection $A_0$ on $P$. Then there exist gauge transformations $\gamma_j \in \mcG^{2,q}(P)$ and a constant $C$ such that
$$
\| \gamma_j^*A_j - A_0 \|_{W^{1,q}} \le C
$$
for all $j$.

\begin{proof} By step $1$ the curvatures $F_{A_j}$ are uniformly $L^q$-bounded. Hence the claim follows by Uhlenbeck compactness (see K.~Wehrheim \cite[Theorem A]{KW}).
\end{proof}

\subsubsection*{Step 3:}

There is a uniform $W^{2,2}$-bound on the maps $\gamma_j^*u_j$, i.~e.~there exists a constant $C$ with
$$
\| \gamma_j^*u_j \|_{W^{2,2}} \le C
$$
for all $j$.

\begin{proof} By equation $(\mathrm{II\/I})$ there is a uniform $L^2$-bound, because
$$
\| u \|_{L^2}^2 = \sum_{\nu = 1}^N \| u_\nu \|_{L^2}^2 \le \frac{ \sum_{\nu = 1}^N \| u_\nu \|_{L^2}^2 \cdot \langle w_\nu , \eta \rangle }{ m } = \frac{ \langle \tau \cdot \vol(\Sigma) , \eta \rangle }{ \pi m } =: {c_0}^2.
$$
We fix a smooth partition of unity $(f_k)$ subordinate to the fixed finite cover $(U_k)$. It now suffices to give uniform $W^{2,2}$-bounds for maps $f \cdot u$ having compact support in one chart $U$ with holomorphic coordinates $(s,t)$. We write $\| \cdot \|_{l,p}$ for $\| \cdot \|_{W^{l,p}(U)}$ and $\bar{\del} := \del_s + i \del_t$. We compute
$$
\| \del_s(fu) \|_{0,2}^2 + \| \del_t(fu) \|_{0,2}^2 = \| \bar{\del}(fu) \|_{0,2}^2 \le \left( c_f \cdot c_0 + \| \bar{\del} u \|_{0,2} \right)^2.
$$
Here $c_f$ is a constant depending only on the function $f$. Now equation \ref{equ:I_loc} implies $\bar{\del} u = ( \dot{\rho}(\varphi) + i \dot{\rho}(\psi) ) u =: B \cdot u$ and by step $2$ the entries of the diagonal matrix $B$ are $L^\infty$-bounded. Again using the uniform $L^2$-bound on $u$ this gives a uniform $W^{1,2}$-bound on $f \cdot u$ and hence a bound
$$
\| u \|_{W^{1,2}} \le c_1.
$$
Now consider the same computation as above with $(fu)$ replaced by $\del_s(fu)$. We obtain the estimate
$$
\| \del_s\del_s(fu) \|_{0,2}^2 + \| \del_t\del_s(fu) \|_{0,2}^2 \le \left( c_f \cdot (c_0+c_1) + \| \bar{\del} \del_s u \|_{0,2} \right)^2.
$$
The $s$-derivative of \ref{equ:I_loc} gives $\bar{\del} \del_s u = \del_s B \cdot u + B \cdot \del_s u$. We get an $L^2$-bound on $B \cdot \del_s u$ via the $L^\infty$-bound on $B$ and the $W^{1,2}$-bound on $u$. And for the $L^2$-bound on $\del_s B \cdot u$ note that the $W^{1,2}$-bound on $u$ gives rise to an $L^4$-bound on $u$, as well as a $W^{1,4}$-bound on the connection $A$ (from step $2$ if we take $q \ge 4$) gives an $L^4$-bound on $\del_s B$, and hence we get an $L^2$-bound on the product.

Together with the corresponding estimate from replacing $(fu)$ by $\del_t(fu)$ we finally get the desired $W^{2,2}$ bound.
\end{proof}

From this point the proof continues along the lines of Cieliebak, Gaio, Mundet and Salamon \cite[Theorem 3.2]{CGMS}. There compactness for solutions of the usual vortex equations is shown under the additional assumption that the first derivatives of the maps $u$ satisfy a uniform $L^\infty$-bound.

We replace the sequence $(\eps_j,u_j,A_j)$ by $(\eps_j,\gamma_j^*u_j,\gamma_j^*A_j)$, so we have a uniform $W^{1,q}$-bound on $(u_j,A_j)$ modulo $\mcG^{2,q}(P)$ with $q \ge 4$. Hence we get a weak $W^{1,q}$-limit $(\eps,u,A)$ for some subsequence that we again denote by $(\eps_j,u_j,A_j)$. It follows that $(u,A)$ satisfies $(*)_\eps$ and the next step is to improve the regularity of this limit solution.

\subsubsection*{Step 4:}

Fix a real number $q > 2$. Given a solution $(u,A)$ of $(*)_\eps$ of class $W^{1,q}$ there exists a gauge transformation $\gamma \in \mcG^{2,q}(P)$ such that $\gamma^*(u,A)$ is smooth.

\begin{proof} This follows exactly as in \cite[Theorem 3.1]{CGMS}. The additional factor of $\eps \in [0,1]$ in the curvature equation does not affect the argument.
\end{proof}

So if we replace the previous gauge transformations $\gamma_j$ by $\gamma_j \cdot \gamma$ then the limit solution $(u,A)$ is actually smooth. Note that the product of two $W^{2,q}$ gauge transformations on $P$ is again of class $W^{2,q}$.

Next we want to apply the local slice theorem \cite[Theorem 8.1]{KW} to put the connections $A_j$ into Coulomb gauge relative to $A$. We have a uniform bound on $\| A_j - A \|_{W^{1,q}}$ and since the $A_j$ converge strongly in $\mcC^0$ to $A$ a suitable subsequence lies in a sufficiently small $L^\infty$-neighbourhood of $A$ so that we can apply the local slice theorem. We obtain a sequence $\beta_j \in \mcG^{2,q}(P)$ such that
$$
\mathrm{d}_A^* ( \beta_j^* A_j - A ) = 0
$$
and a constant $C$ with
$$
\| \beta_j^* A_j - A \|_{L^\infty} \le C \cdot \| A_j - A \|_{L^\infty}
$$
and
$$
\| \beta_j^* A_j - A \|_{W^{1,q}} \le C \cdot \| A_j - A \|_{W^{1,q}}
$$
for all $j$.

By the second inequality a subsequence of $\beta_j^*A_j$ will again have a weak $W^{1,q}$-limit which by the first inequality has to be equal to $A$. But we also have to control $\beta_j^*u_j$.

\subsubsection*{Step 5:}

The sequence $\beta_j \in \mcG^{2,q}(P)$ is uniformly $W^{2,q}$-bounded.

\begin{proof}
By the second of the above inequalities we get a uniform bound on $\| \beta_j^{-1} \mathrm{d}\beta_j \|_{W^{1,q}}$ which in turn gives a uniform $W^{2,q}$-bound on $\beta_j$. See K.~Wehrheim \cite[Appendices A, B]{KW} for details.
\end{proof}

Thus replacing $(u_j,A_j)$ with $\beta_j^*(u_j,A_j)$ yields an equivalent sequence with uniform $W^{1,q}$-bounds and connections in relative Coulomb gauge to $A$ modulo $\mcG^{2,q}(P)$ with $q \ge 4$. In fact a uniform $W^{1,q}$-bound on the gauge transformations $\beta_j$ would suffice to keep the $\beta_j^*u_j$ bounded.

\subsubsection*{Step 6:}

For every $l \ge 1$ there is a number $C_l$ with the property that
$$
\| u_j \|_{W^{l,2}} \le C_l
$$
and
$$
\| A_j - A \|_{W^{l,2}} \le C_l
$$
for all $j$.

\begin{proof}
We already have the result for $l = 1$ and we get the estimates for higher $l$ by induction, because the gauge condition $\mathrm{d}_A^*(A_j-A) = 0$ together with the curvature equation $(\mathrm{II})_\eps$ and equation $(\mathrm{I})$ form a complete elliptic system.
\end{proof}

Finally we do not want these uniform estimates on $(u_j,A_j)$ modulo the action of gauge transformations $\gamma_j \in \mcG^{2,q}(P)$, but modulo based gauge transformations of class $W^{k+1,2}$. Arranging the $\gamma_j$ to be based is easy in our abelian case: We can just multiply every $\gamma_j$ by the constant gauge transformation $\gamma_j(x_0)^{-1}$, while keeping uniform bounds. To get the desired regularity we need

\subsubsection*{Step 7:}

Suppose $A \in \mcA^{k,2}(P)$ and $\gamma \in \mcG^{2,2}(P)$ are such that $\gamma^*A$ is again of class $W^{k,2}$ and let $k \ge 2$. Then in fact $\gamma \in \mcG^{k+1,2}(P)$.

\begin{proof}
By definition $\gamma^*A = \gamma^{-1}A\gamma + \gamma^{-1}d\gamma$, hence $d\gamma = \gamma (\gamma^*A) - A \gamma$. Now the product of $W^{2,2}$ with $W^{k,2}$ for sections over a $2$-dimensional manifold $\Sigma$ is again of class $W^{2,2}$ if $k \ge 2$. Hence $\gamma$ is in fact of class $W^{3,2}$ and the result follows by induction.
\end{proof}

This completes the proof for compactness. We finish the deformation discussion with a list of important remarks.

\begin{rem} The above arguments also show that the solution space and hence the associated invariants do not depend on the choice of $k$ in the definition of the configuration space $\mcB$: All solutions are gauge equivalent to smooth ones.
\end{rem}

\begin{rem}
The corresponding deformation is already used by Cieliebak, Gaio, Mundet and Salamon \cite[chapter 9]{CGMS} for the computation of vortex invariants of weighted projective spaces. But the claimed property of beeing a homotopy of regular moduli problems is not further justified.
\end{rem}

\begin{rem} \label{rem:general_Lie_groups}
It is essential for the deformation that the Lie group $T$ is abelian. The above proof for regularity does not extend to non-abelian groups. Furthermore the bundle $P \x_T \LT$ will in general not be trivial and sections therein cannot be integrated over $\Sigma$. Hence one can not simply split the integrated equation $(\mathrm{II\/I})$ from the curvature equation. An extension to general Lie groups is possible if one restricts to trivial principal bundles $P$. This case was considered by Gonzalez and Woodward \cite{GW}.
\end{rem}

\begin{rem}
In step $3$ of the above proof for compactness it is essential that we work with the standard complex structure on $\C^n$. So while this deformation could in principle be considered for any Hamiltonian $T$-space, it is not clear that it gives a homotopy of $T$-moduli problems in general. We return to this question in section \ref{chap:generalizations}.
\end{rem}

\subsection{Computation of vortex invariants}

\label{sec:vortex_computation}

We will now show how the deformation result simplifies the computation of vortex invariants by studying the moduli problem associated to the equations $(*)_{\eps = 0}$. This moduli problem is given by $\mcB$ as in \ref{def:mcB}, the bundle $\mcE$ as in \ref{def:mcE} with fiber
$$
\mcF := W^{k-1,2} \left( \Sigma , \Lambda^{0,1} T^*\Sigma \otimes_\C \mcV \right) \oplus \underline{W^{k-1,2} \left( \Sigma , \LT \right)} \oplus \LT,
$$
and the section
$$
\mcS : \mcB \ra \mcE \; ; \; \left[ u , A \right] \mto 
\left[ u , A , 
\left(
\begin{array}{c}
\bar{\partial}_A u \\
*F_A - \frac{\kappa}{\vol(\Sigma)} \\
\overline{\mu(u)} - \tau
\end{array}
\right)
\right].
$$

\subsubsection*{The Jacobian torus}

\label{sec:vortex_Jacobian_torus}

We begin with the following fact:

\begin{lem} \label{lem:jacobian_torus}
Let $\pi : P \ra \Sigma$ be a principal $T^k$-bundle over the closed Riemann surface $\Sigma$ of genus $g$ (with fixed volume form $\dvol_\Sigma$) with characteristic vector $\kappa \in \Lambda \subset \LT$. Then the constant map $\bar{\kappa} := \kappa / \vol(\Sigma)$ is a regular value of the map
\begin{eqnarray*}
\Phi \quad : \quad \mcA(P) / \mcG_0(P) & \ra & \left\{ F : \Sigma \ra \LT \;|\; \int_\Sigma F \cdot \dvol_\Sigma = \kappa \right\} \\
\left[ A \right] \qquad & \mto & \qquad *F_A.
\end{eqnarray*}
The preimage $\Phi^{-1}(\bar{\kappa})$ is diffeomorphic to the \emph{Jacobian torus}
$$
\T := \left[ \frac{ H^1(\Sigma;\R) }{ H^1(\Sigma;\Z) } \right]^k.
$$
\end{lem}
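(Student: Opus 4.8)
The plan is to exploit that, because $T$ is abelian, the adjoint bundle $P \times_{\Ad} \LT$ is trivial, so the affine space $\mcA(P)$ of connections is modelled on $\Omega^1(\Sigma,\LT)$ and the curvature is affine-linear: for $a \in \Omega^1(\Sigma,\LT)$ one has $F_{A+a} = F_A + \mathrm{d}a$, where $F_A$ is viewed as an element of $\Omega^2(\Sigma,\LT)$. Since $*F_A$ is gauge-invariant---the gauge action adds the closed form $\gamma^{-1}\mathrm{d}\gamma$ to $A$, leaving $F_A$ unchanged---the map $\Phi$ descends to $\mcA(P)/\mcG_0(P)$ as claimed, and the Chern--Weil identity $\int_\Sigma F_A = \kappa$ places its image in the stated target. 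I would then treat the two assertions---regularity of $\bar\kappa$ and the identification of the fiber---separately.

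For regularity, I would compute the differential of $\Phi$. At a connection $A$ the tangent space to $\mcA(P)/\mcG_0(P)$ is $\Omega^1(\Sigma,\LT)$ modulo the exact forms $\mathrm{d}\xi$ coming from the Lie algebra of $\mcG_0(P)$, and $\mathrm{d}\Phi_{[A]}(a) = *\,\mathrm{d}a$. The tangent space to the target at $\bar\kappa$ is $\{f : \Sigma \ra \LT \mid \int_\Sigma f\cdot\dvol_\Sigma = 0\}$. Surjectivity is then the elementary fact that $\mathrm{d} : \Omega^1(\Sigma,\LT) \ra \Omega^2(\Sigma,\LT)$ has image exactly the $2$-forms of total integral zero (Stokes in one direction, $H^2(\Sigma;\R)=\R$ detecting the integral in the other), so $a \mapsto *\mathrm{d}a$ already surjects onto the target tangent space; as it kills exact forms it descends to the quotient and remains surjective. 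Hence $\bar\kappa$ is a regular value, and the tangent space to the fiber is $\ker(*\mathrm{d})/\{\text{exact}\} = H^1(\Sigma;\LT)$, which is finite-dimensional of dimension $2gk$---consistent with the asserted torus.

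To identify the fiber itself, I would fix one connection $A_0$ whose curvature is the harmonic representative $\bar\kappa\cdot\dvol_\Sigma$; such $A_0$ exists because this $2$-form represents the correct class $\kappa$. Then $\Phi^{-1}(\bar\kappa)$, before quotienting, is $A_0 + Z^1(\Sigma,\LT)$, the set of closed $\LT$-valued $1$-forms. The based gauge group acts by $a \mapsto a + \gamma^{-1}\mathrm{d}\gamma$; using that $T$ is abelian, a gauge transformation is the same as a map $\gamma : \Sigma \ra T$, and the key point is that the logarithmic derivative $\gamma \mapsto \gamma^{-1}\mathrm{d}\gamma$ identifies $\mcG_0(P)$ bijectively with the closed $1$-forms whose de Rham class lies in the image of $H^1(\Sigma;\Lambda) \ra H^1(\Sigma;\LT)$, realizing all exact forms (from $\gamma = \exp(f)$) while the based condition forces injectivity. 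Passing to the quotient therefore gives
\[
\Phi^{-1}(\bar\kappa) \;\cong\; Z^1(\Sigma,\LT)\big/\{\gamma^{-1}\mathrm{d}\gamma\} \;\cong\; H^1(\Sigma;\LT)\big/H^1(\Sigma;\Lambda),
\]
and under the identification $\LT \cong \R^k$, $\Lambda \cong \Z^k$ this is precisely $\left[H^1(\Sigma;\R)/H^1(\Sigma;\Z)\right]^k = \T$.

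The main obstacle, and the step I would spend the most care on, is the gauge-theoretic identification in the last paragraph: verifying that $\gamma \mapsto \gamma^{-1}\mathrm{d}\gamma$ maps $\mcG_0(P)$ isomorphically onto the closed forms with periods in $\Lambda$. This rests on the fact that a closed form $a$ is a logarithmic derivative exactly when its periods lie in $\Lambda$, in which case $\gamma(x) = \exp\!\big(\int_{x_0}^{x} a\big)$ is well defined, together with the vanishing $\gamma^{-1}\mathrm{d}\gamma = 0 \Rightarrow \gamma \equiv e$ coming from connectedness of $\Sigma$ and the based condition. The based normalization and the Sobolev completions used in the definition of $\mcB$ require a little bookkeeping but introduce no new ideas.
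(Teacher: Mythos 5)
Your proposal is correct and follows essentially the same route as the paper: use the affine structure $F_{A+\pi^*\alpha}=F_A+\mathrm{d}\alpha$ together with the fact that a $2$-form of total integral zero is exact to get regularity and a reference connection $A_0$, then identify the fiber as closed $1$-forms modulo the image of the based gauge group under the logarithmic derivative. The only (cosmetic) difference is that the paper reduces to $k=1$ and splits the gauge quotient into two steps --- first by the contractible gauge transformations, yielding $H^1(\Sigma;\R)$, then by $\pi_0(\mcG_0)\cong H^1(\Sigma;\Z)$ --- whereas you characterize the image of $\gamma\mapsto\gamma^{-1}\mathrm{d}\gamma$ in one stroke as the closed forms with periods in $\Lambda$.
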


\begin{proof}
To be precise in the statement of the lemma we have to introduce Sobolev completions in source and target of the map $\Phi$. But then by smoothness of the element $\bar{\kappa}$ and elliptic regularity one shows that all solutions lie in the quotient $\mcA(P) / \mcG_0(P)$ of smooth objects. For simplicity we omit this discussion and deal with smooth objects right away.

Given any $A \in \mcA(P)$ the element $*F_A \in \mcC^\infty(\Sigma,\LT)$ is defined as follows: By the properties of connections the differential $\mathrm{d}A \in \Omega^2(P,\LT)$ descends to a $2$-form $F_A := \pi_*\mathrm{d}A \in \Omega^2(\Sigma,\LT)$. This curvature-form is invariant under the action of the gauge group $\mcG(P)$ on connections. Then $*F_A \cdot \dvol_\Sigma := F_A$. Recall that by Chern-Weil theory the integral $\int_\Sigma F_A$ does not depend on the chosen connection $A$ and this value was our definition for the characteristic vector $\kappa$.

We first consider the case of a principal $S^1$-bundle (i.~e.~we set $k = 1$). We identify $\mathrm{Lie}(S^1) \cong \R$ and write connections just as usual $1$-forms. We start with an arbitrary connection $A$ and note that $F_A$ and $\bar{\kappa} \cdot \dvol_\Sigma$ are cohomologous since their difference evaluates to zero upon pairing with the fundamental class $[\Sigma]$. Hence there is an element $\alpha \in \Omega^1(\Sigma)$ such that
$$
F_A - \bar{\kappa} \cdot \dvol_\Sigma = \mathrm{d}\alpha.
$$ 
Now define $A_0 := A - \pi^*\alpha$. First observe that $A_0$ again is a connection form. Furthermore it satisfies
$$
*F_{A_0} = * \pi_* \mathrm{d} ( A - \pi^*\alpha ) = * ( F_A - \mathrm{d}\alpha ) = \bar{\kappa}.
$$
The same reasoning as above shows that
$$
\left\{ F \in \mcC^\infty(\Sigma) \;|\; \int_\Sigma F \cdot \dvol_\Sigma = \kappa \right\} = \left\{ \bar{\kappa} + *\mathrm{d}\alpha \;|\; \alpha \in \Omega^1(\Sigma) \right\}.
$$
And for every connection $A$ with $*F_A = \bar{\kappa}$ we see that the elements of the affine space $\left\{ A + \pi^*\alpha \;|\; \alpha \in \Omega^1(\Sigma) \right\}$ are connections with curvature $\bar{\kappa} + *\mathrm{d}\alpha$. This shows that indeed $\bar{\kappa}$ is a regular value of $\Phi$.

To express $\Phi^{-1}(\bar{\kappa})$ we first note that any $A$ with $*F_A = \bar{\kappa}$ differs from $A_0$ by $\pi^*\alpha$ for some $\alpha \in \Omega^1(\Sigma)$ with $\mathrm{d}\alpha = 0$. Hence we obtain
$$
\Phi^{-1}(\bar{\kappa}) = \left\{ A_0 + \pi^*\alpha \;|\; \mathrm{d}\alpha = 0 \right\} / \mcG_0(P).
$$
Now consider the normal subgroup $\mcG_{0,\mathrm{contr.}}(P) \lhd \mcG_0(P)$ consisting of all those based gauge transformations $\gamma : \Sigma \ra S^1$ that are homotopic to the constant map $\Sigma \mto 1$. Any such map can be written as $\gamma(z) = e^{if(z)}$ for some smooth map $f : \Sigma \ra \R$. The action of such a transformation on a connection $A$ is given by $\gamma^*A = A + \pi^*\mathrm{d}f$. Hence we obtain
$$
\left\{ A_0 + \pi^*\alpha \;|\; \mathrm{d}\alpha = 0 \right\} / \mcG_{0,\mathrm{contr.}}(P) \cong H^1(\Sigma;\R)
$$
and the quotient $\mcG_0(P) / \mcG_{0,\mathrm{contr.}}(P)$ can be identified with homotopy classes of maps $\Sigma \ra S^1$, which in turn is $H^1(\Sigma,\Z)$. This proves the lemma in the case $k = 1$. The general case now follows by splitting the $k$-torus into a product of $k$ circles and considering each component of the connections in the induced splitting $\LT \cong \oplus_{j=1}^k \mathrm{Lie}(S^1)$ separately.
\end{proof}

This observation gives rise to the following morphism of moduli problems. We define
$$
\mcB_\kappa := \frac{ W^{k,2} \left( \Sigma , \mcV \right) \x \left\{ A \in \mcA^{k,2}(P) \;|\; *F_A = \bar{\kappa} \right\} }{ \mcG_0^{k+1,2}(P) }.
$$
This is a smooth submanifold of $\mcB$. Over $\mcB_\kappa$ we consider the bundle $\mcE_\kappa$ with fiber
$$
\mcF_\kappa := W^{k-1,2} \left( \Sigma , \Lambda^{0,1} T^*\Sigma \otimes_\C \mcV \right) \oplus \LT
$$
and section
$$
\mcS_\kappa : \mcB_\kappa \ra \mcE_\kappa \; ; \; \left[ u , A \right] \mto 
\left[ u , A , 
\left(
\begin{array}{c}
\bar{\partial}_A u \\
\overline{\mu(u)} - \tau
\end{array}
\right)
\right].
$$
The inclusion $\mcF_\kappa \ra \mcF$ given by $( \delta , v ) \mto ( \delta , 0 , v )$ induces a bundle homomorphism $F : \mcE_\kappa \ra \mcE$ covering the inclusion $f : \mcB_\kappa \ra \mcB$. It is now straightforward to show the following fact.

\begin{lem} \label{lem:morphism}
The pair $(f,F)$ is a morphism between the $T$-moduli problems $( \mcB_\kappa , \mcE_\kappa , \mcS_\kappa )$ and $( \mcB , \mcE , \mcS )$ in the sense of definition \ref{def:morphism}.
\end{lem}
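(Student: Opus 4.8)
The plan is to verify the four defining properties of a morphism in Definition \ref{def:morphism} for the pair $(f,F)$, with source $(\mcB_\kappa,\mcE_\kappa,\mcS_\kappa)$ and target $(\mcB,\mcE,\mcS)$. The first three are essentially formal. Since $\bar\kappa = \kappa/\vol(\Sigma)$, the regularity statement in Lemma \ref{lem:jacobian_torus} shows that $\mcB_\kappa$ is a smooth $T$-invariant submanifold of $\mcB$ (the $T$-action only moves the $u$-component and leaves the defining condition $*F_A = \bar\kappa$ untouched), so $f$ is a smooth $T$-equivariant embedding; one may take the neighbourhood $\mcB_0$ of Definition \ref{def:morphism} to be all of $\mcB_\kappa$. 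The bundle map $F$ induced by $(\delta,v)\mapsto(\delta,0,v)$ is a fibrewise linear, injective, smooth bundle homomorphism covering $f$. For the intertwining relation, note that on $\mcB_\kappa$ the second component $*F_A - \bar\kappa$ of $\mcS$ vanishes identically, so $\mcS\circ f = F\circ\mcS_\kappa$; and since every solution of $(*)_{\eps=0}$ satisfies $*F_A=\bar\kappa$, the full moduli space $\mcM = \mcS^{-1}(0)$ is contained in $\mcB_\kappa$ and equals $f(\mcM_\kappa)$ with $\mcM_\kappa := \mcS_\kappa^{-1}(0)$.

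The substantive point is the fourth property: that at each $x=[u,A]\in\mcM$ the maps $\mathrm{d}f(x)$ and $F_x$ restrict to isomorphisms on the kernels and cokernels of the vertical differentials. Write the vertical differential of $\mcS$ as $\mcD_x = (D_1,D_2,D_3)$ according to the three summands of $\mcF$. Here $D_3$ is the linearization of $\overline{\mu(u)}-\tau$ and depends only on the variation $\delta u$, while $D_2$ is the linearization of $*F_A - \bar\kappa$, namely $(\delta u,\delta A)\mapsto *\mathrm{d}(\delta A)$; this depends only on $\delta A$ and, by Stokes, automatically has vanishing mean, hence lands in $\underline{W^{k-1,2}(\Sigma,\LT)}$. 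The key input is Lemma \ref{lem:jacobian_torus}: since $\bar\kappa$ is a regular value of $\Phi$, the map $D_2$ is surjective onto $\underline{W^{k-1,2}(\Sigma,\LT)}$ and its kernel in $T_x\mcB$ is exactly $T_x\mcB_\kappa$. The vertical differential $\mcD^\kappa_x = (D_1,D_3)|_{T_x\mcB_\kappa}$ of $\mcS_\kappa$ is then simply the restriction of $(D_1,D_3)$ to this kernel.

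With these observations the comparison is a diagram chase. Under the splitting $\mcF \cong \mcF_\kappa \oplus \underline{W^{k-1,2}(\Sigma,\LT)}$ realizing $F$ as the inclusion of the first factor, one obtains a morphism of short exact sequences
\[
\begin{array}{ccccccccc}
0 & \to & T_x\mcB_\kappa & \to & T_x\mcB & \xrightarrow{D_2} & \underline{W^{k-1,2}(\Sigma,\LT)} & \to & 0 \\
  &     & \downarrow \mcD^\kappa_x & & \downarrow \mcD_x & & \downarrow \mathrm{id} & & \\
0 & \to & \mcF_\kappa & \to & \mcF & \to & \underline{W^{k-1,2}(\Sigma,\LT)} & \to & 0,
\end{array}
\]
where the top row is exact because $D_2$ is surjective with kernel $T_x\mcB_\kappa$, and the bottom row is the split sequence. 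Commutativity is immediate from $D_2|_{T_x\mcB_\kappa}=0$ together with the definition of $F$. The snake lemma, applied with the right vertical map equal to the identity, yields $\mathrm{d}f(x)\colon\ker\mcD^\kappa_x\xrightarrow{\ \cong\ }\ker\mcD_x$ and $F_x\colon\coker\mcD^\kappa_x\xrightarrow{\ \cong\ }\coker\mcD_x$, which are exactly the required isomorphisms; in particular $\mcS_\kappa$ inherits the constant Fredholm index of $\mcS$.

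The remaining and most delicate point is that the resulting isomorphism $\det(\mcS_\kappa)\to\det(\mcS)$ preserves orientations. I expect this to be the main obstacle, since it cannot be settled by the formal diagram chase alone but depends on the orientation conventions to be fixed in section \ref{sec:orientations}. The plan is to use that the two determinant lines are identified canonically through the snake-lemma isomorphisms of kernel and cokernel, that the excised factor $\underline{W^{k-1,2}(\Sigma,\LT)}$ enters the domain and target of $D_2$ via the identity and so contributes trivially to the determinant, and that the orientations of both problems descend from the same complex-linear data underlying the construction of section \ref{sec:orientations}; matching signs then reduces to a bookkeeping check in those conventions.
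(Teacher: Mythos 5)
Your proposal is correct and follows essentially the same route as the paper: the paper declares the first three properties and the kernel/cokernel isomorphisms straightforward and, like you, defers the orientation question to the later section on orientations (where it identifies $\ker/\coker\,\mcD_\kappa$ with those of the same operator $\widetilde{\mcD}$, exactly the content of your snake-lemma diagram). Your explicit use of the regularity of $\bar\kappa$ from Lemma \ref{lem:jacobian_torus} and the resulting morphism of short exact sequences is a careful spelling-out of what the paper leaves implicit.
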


\begin{proof}
Only the fourth property of a morphism in definition \ref{def:morphism} needs an explanation. In fact we have not yet defined the orientation on $\det(\mcS_\kappa)$. We refer to section \ref{sec:orientations} for a detailed discussion of orientations.
\end{proof}

\subsubsection*{Moduli spaces}

We digress a little and just look at the moduli space $\mcM_\kappa = \mcS_\kappa^{-1}(0)$ without worrying about the fact that in general it will not be cut out transversally.

\subsubsection*{Genus zero}

In case $\Sigma = S^2$ is a surface of genus zero the Jacobian torus is just one point. We fix a smooth connection $A$ representing this point, i.~e.~with $*F_A = \bar{\kappa}$, and identify
$$
\mcB_\kappa \cong  W^{k,2} \left( \Sigma , \mcV \right).
$$
The two components of the section $\mcS_\kappa$ are now completely uncoupled. The first one picks out the holomorphic sections (with respect to the fixed connection $A$) in the Hermitian vector bundle $\mcV = \oplus_{\nu = 1}^N \mcL_\nu$. This is a finite-dimensional vector space $V$. The $L^2$ inner product on sections of $\mcV$ makes $V$ into a Hermitian vector space.

If we denote the space of holomorphic sections in the line bundle $\mcL_\nu$ by $V_\nu$ we obtain $V = \oplus_{\nu = 1}^N V_\nu$. The dimension of $V_\nu$ is the dimension of the kernel of the linear Cauchy-Riemann operator in the bundle $\mcL_\nu$ of degree $d_\nu = \langle w_\nu , \kappa \rangle$. By the Riemann-Roch theorem (see McDuff and Salamon \cite[Theorem C.1.10]{McS} for the precise statements that we refer to) this is given by
$$
n_\nu := \dim_\C V_\nu =  \max \left( 0 , 1+d_\nu \right).
$$
The remaining $T \cong \mcG / \mcG_0$-action on $\mcB$ carries over to a linear torus action $\rho$ on $V$ that on the component $V_\nu$ is given by the weight vector $w_\nu$. Thus we are precisely in the setting of a Hamiltonian torus action as described in section \ref{sec:torus_actions_on_hermitian_vector_spaces}. Hence the moment map for this action is given by
\begin{align*}
V = \bigoplus_{\nu=1}^N V_\nu & \;\ra\; \quad \LT^* \\
\left( u_\nu \right) \quad & \;\mto\; \pi \sum_{\nu=1}^N \left\| u_\nu \right\|^2_{L^2(\Sigma,\mcL_\nu)} \cdot w_\nu.
\end{align*}
If we rescale the Hermitian form on $V$ by $\vol(\Sigma)^{-\frac{1}{2}}$ and compare this to equation \ref{eqn:overline_mu} we see that the moment map is exactly $\overline{\mu}$. Thus the second component of the section $\mcS_\kappa$ just picks out the $\tau$-level of the moment map. So in the notation of section \ref{sec:torus_actions_on_hermitian_vector_spaces} we obtain $\mcM_\kappa / T \cong X_{V,\tau}$.

\begin{thm} \label{thm:genus_0_moduli_space}
Consider the linear torus action on $\C^N$ that is given by weights $(w_\nu)_{\nu = 1, \ldots N}$. Then the moduli space of the deformed genus zero vortex equations in degree $\kappa$ for a regular parameter $\tau$ is the toric manifold $X_{V,\tau}$ with $V = \oplus_{\nu = 1}^N V_\nu$ as above.

The difference between the original toric manifold $X_{\C^N,\tau}$ and this moduli space lies solely in the dimensions of the vector spaces $V_\nu$. These dimensions are completely determined by the value of $\kappa$.
\end{thm}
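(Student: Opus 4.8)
The plan is to work directly with the $\eps=0$ moduli problem and reduce it, through the morphism already constructed, to the finite-dimensional toric picture of section \ref{sec:torus_actions_on_hermitian_vector_spaces}. First I would invoke Lemma \ref{lem:morphism}: the pair $(f,F)$ being a morphism of $T$-moduli problems means in particular that $f$ restricts to a $T$-equivariant diffeomorphism of $\mcM_\kappa = \mcS_\kappa^{-1}(0)$ onto the $\eps=0$ zero set $\mcM$. Hence it suffices to identify the quotient $\mcM_\kappa/T$, where $\mcM_\kappa$ is the zero set of the two-component section $(\bar\partial_A u,\ \overline{\mu(u)}-\tau)$ over $\mcB_\kappa$.

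The second step is the genus-zero simplification. Applying Lemma \ref{lem:jacobian_torus} with $g=0$, the Jacobian torus $\T = (H^1(\Sigma;\R)/H^1(\Sigma;\Z))^k$ is a single point, so the space $\{A \in \mcA^{k,2}(P) \mid *F_A = \bar\kappa\}/\mcG_0^{k+1,2}(P)$ consists of exactly one gauge-equivalence class. Fixing a smooth representative $A$ as a global slice trivializes the connection factor and yields a $T$-equivariant identification $\mcB_\kappa \cong W^{k,2}(\Sigma,\mcV)$, under which the two components of $\mcS_\kappa$ decouple completely: the first depends on $u$ only through the fixed-connection Cauchy--Riemann operator, and the second only through $\overline{\mu(u)}$.

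Next I would identify each factor. The first equation $\bar\partial_A u = 0$ cuts out the finite-dimensional space $V = \bigoplus_\nu V_\nu$ of $A$-holomorphic sections of $\mcV = \bigoplus_\nu \mcL_\nu$; by the Riemann--Roch theorem \cite[Theorem C.1.10]{McS}, $\dim_\C V_\nu = \max(0,\,1+d_\nu)$ with $d_\nu = \langle w_\nu,\kappa\rangle$. The residual $T$-action carries weight $w_\nu$ on $V_\nu$, and (after rescaling the Hermitian form by $\vol(\Sigma)^{-1/2}$ and comparing with \ref{eqn:overline_mu}) the induced moment map is exactly $\overline\mu$. Thus $\overline{\mu(u)} = \tau$ is the $\tau$-level set of the moment map of a linear torus action, and regularity of $\tau$ makes $\mcS_\kappa$ transverse, giving $\mcM_\kappa/T \cong X_{V,\tau}$ in the sense of section \ref{sec:torus_actions_on_hermitian_vector_spaces}. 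That this is a diffeomorphism of \emph{oriented} manifolds, and not merely a bijection, is the one point I would defer, matching the complex orientations on both sides as in the finite-dimensional discussion of section \ref{sec:orientations}.

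For the final assertion I would simply compare combinatorial data: both $X_{\C^N,\tau}$ and $X_{V,\tau}$ are symplectic quotients by the same torus $T$ at the same level $\tau$ using the same weight vectors $(w_\nu)$; only the multiplicities change, from $\dim_\C \C = 1$ to $n_\nu$. By the discussion in section \ref{chap:toric_manifolds}, properness of the collection and regularity of $\tau$ depend only on the weights, so both quotients are smooth and compact, and the formula $n_\nu = \max(0,\,1+\langle w_\nu,\kappa\rangle)$ exhibits the dependence as solely on $\kappa$. The main obstacle throughout is not any single estimate but the clean trivialization $\mcB_\kappa \cong W^{k,2}(\Sigma,\mcV)$ in the second step: it is what makes the section decouple and reduces an infinite-dimensional gauge-theoretic moduli problem to a finite-dimensional toric one, after which everything is an application of results already in hand.
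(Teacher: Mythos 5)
Your argument is essentially the paper's own: genus zero makes the Jacobian torus a point, fixing a smooth connection with $*F_A = \bar\kappa$ trivializes $\mcB_\kappa \cong W^{k,2}(\Sigma,\mcV)$, the two components of $\mcS_\kappa$ decouple, Riemann--Roch gives $\dim_\C V_\nu = \max(0,1+d_\nu)$, and the second component is the $\tau$-level set of $\overline{\mu}$, so $\mcM_\kappa/T \cong X_{V,\tau}$. One caveat: the full section $\mcS_\kappa$ is \emph{not} transverse when some $d_\nu < 0$ (the Cauchy--Riemann operators then have nontrivial cokernel, which is precisely why the Euler class later acquires the factor $\prod_\nu w_\nu^{m_\nu}$); what regularity of $\tau$ actually gives is transversality of $\overline{\mu}-\tau$ restricted to the linear zero set $V$ of the first equation, and that is all one needs to identify the moduli space.
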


This observation makes the computation of the vortex invariants for genus zero surfaces extremely transparent. We will turn to this point in the next section.

\subsubsection*{Higher genus}

If the genus $g$ of $\Sigma$ is greater or equal to one then the dimension of the Jacobian torus $\T$ is positive and the base manifold $\mcB_\kappa$ is a bundle over $\T$ with fiber $W^{k,2} \left( \Sigma , \mcV \right)$ and the remaining torus acts only in the fibers. Now the above discussion applies to the restriction of the $T$-moduli problem $\left( \mcB_\kappa , \mcE_\kappa , \mcS_\kappa \right)$ to any such fiber, but the picture in each fiber may look very different: The dimension of the kernel of the operator $\bar{\del}_A$ depends on the point $[A] \in \T$. But we can circumvent this difficulty if we make restrictions on the degrees $d_\nu$. If we assume that for every $\nu$ we either have
$$
d_\nu > 2g - 2 \quad \mathrm{or} \quad d_\nu < 0
$$
then we get that the spaces $V_\nu$ of holomorphic sections in $\mcL_\nu$ have complex dimension
$$
n_\nu = \max ( 0 , 1 - g + d_\nu )
$$
independent of the connection: If $d_\nu < 0$ we have $n_\nu = 0$ and in fact the Riemann-Roch theorem tells us that the Cauchy-Riemann operator of a complex line bundle of negative degree is injective. Hence $\dim(\ker\,\bar{\del}_A) = 0$ for any connection $A$. If on the other hand $d_\nu - 2 g + 2 > 0$, then the Cauchy-Riemann operator is surjective and the dimension of the kernel agrees with the index ($1-g+d_\nu$). And in fact we have $n_\nu = 1 - g + d_\nu$ in that case, because
$$
d_\nu - 2 g + 2 > 0 \;\Ra\; 1 - g + d_\nu > g - 1 \ge 0.
$$
This proves the following generalization of theorem \ref{thm:genus_0_moduli_space} for general genus $g$.

\begin{thm} \label{thm:genus_g_moduli_space}
With the notation and the assumptions on the degrees $d_\nu$ from above the vortex moduli space $\mcM_\kappa / T$ is a fiber bundle over the Jacobian torus $\T$ with toric fiber $X_{V,\tau}$.
\end{thm}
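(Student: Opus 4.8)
The plan is to exhibit the map $\mcM_\kappa / T \to \T$ coming from the fibration $\mcB_\kappa \to \T$ of Lemma \ref{lem:jacobian_torus}, and to identify it fiberwise with the genus-zero picture of Theorem \ref{thm:genus_0_moduli_space}. Projecting $\mcB_\kappa$ onto its connection component gives a bundle over $\Phi^{-1}(\bar\kappa) \cong \T$ with fiber $W^{k,2}(\Sigma,\mcV)$ and residual $T$-action in the fibers. Fixing $[A] \in \T$, the restriction of $\mcS_\kappa$ to the fiber decouples exactly as in the genus-zero case: the first component $\bar\partial_A u = 0$ cuts out the space of holomorphic sections $V^{[A]} = \bigoplus_\nu V^{[A]}_\nu$, a finite-dimensional Hermitian vector space carrying a $T$-action with weights $w_\nu$ on the summand $V^{[A]}_\nu$, and the second component $\overline{\mu(u)} = \tau$ selects the $\tau$-level of the moment map of section \ref{sec:torus_actions_on_hermitian_vector_spaces}. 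Hence the fiber of $\mcM_\kappa / T$ over $[A]$ is the toric manifold $X_{V^{[A]},\tau}$.

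First I would establish that the combinatorial data of these fibers is constant along $\T$. The weights $w_\nu$ and the level $\tau$ do not depend on $[A]$, so only the dimensions $n_\nu = \dim_\C V^{[A]}_\nu$ must be shown to be locally constant. This is exactly the point at which the degree hypothesis enters: as the discussion preceding the theorem records, if $d_\nu < 0$ then $\bar\partial_A$ on $\mcL_\nu$ is injective and $n_\nu = 0$, while if $d_\nu > 2g-2$ then Serre duality forces $\bar\partial_A$ to be surjective and $n_\nu = \dim \ker \bar\partial_A = 1 - g + d_\nu$ equals the index. In both regimes the kernel dimension is pinned down by Riemann-Roch and is independent of the holomorphic structure induced by $[A]$. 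Thus all fibers are diffeomorphic to one fixed toric manifold $X_{V,\tau}$ with $V = \bigoplus_\nu V_\nu$ and $\dim_\C V_\nu = n_\nu = \max(0, 1-g+d_\nu)$.

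The remaining task is to assemble these fibers into a genuine fiber bundle, for which I would produce local trivializations from a vector bundle structure on the holomorphic sections. Since $\{\bar\partial_A\}_{[A] \in \T}$ is a smooth family of Fredholm operators of constant index whose kernels have the (now established) constant dimension $n_\nu$, the kernels vary smoothly and form a smooth complex vector bundle $\mathcal{W} = \bigoplus_\nu \mathcal{W}_\nu \to \T$; concretely $\mathcal{W}_\nu$ is the analytic model of the degree-$d_\nu$ Picard bundle whose fiber over $[A]$ is $H^0(\Sigma,\mcL_\nu)$ for the corresponding holomorphic structure. The residual $T$-action is fiberwise linear with weights $w_\nu$ on $\mathcal{W}_\nu$, and the second component of $\mcS_\kappa$ is the associated fiberwise moment map $\overline\mu$. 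Over a trivializing neighbourhood $U \subset \T$ with $\mathcal{W}|_U \cong U \times V$ equivariantly, the zero set of $\overline\mu - \tau$ modulo $T$ is $U \times X_{V,\tau}$ by the construction of section \ref{sec:torus_actions_on_hermitian_vector_spaces}, which furnishes the bundle chart. Regularity of $\tau$ guarantees that the reduction is a smooth manifold in every fiber, so the charts are compatible and $\mcM_\kappa / T \to \T$ is a fiber bundle with fiber $X_{V,\tau}$.

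The main obstacle is the constancy of $n_\nu$, that is, the constancy of $h^0(\Sigma, \mcL_\nu)$ as the holomorphic structure moves over the Jacobian. For intermediate degrees $0 \le d_\nu \le 2g-2$ this number genuinely jumps (the special divisors form the theta divisor), so that $\mathcal{W}_\nu$ fails to be a vector bundle and the fibers $X_{V^{[A]},\tau}$ change with $[A]$; the hypotheses $d_\nu < 0$ or $d_\nu > 2g-2$ are imposed precisely to exclude this, via the injectivity and surjectivity statements above. Once constancy is in hand, the vector bundle structure on $\mathcal{W}$ and the fiberwise reduction are routine, the latter being merely a parametrized version of the finite-dimensional toric construction already carried out for Theorem \ref{thm:genus_0_moduli_space}.
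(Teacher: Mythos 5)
Your proposal is correct and follows essentially the same route as the paper: restrict the moduli problem to the fibers of $\mcB_\kappa \to \T$, apply the genus-zero analysis fiberwise, and use the hypotheses $d_\nu < 0$ or $d_\nu > 2g-2$ to force injectivity resp.\ surjectivity of $\bar\partial_A$ on $\mcL_\nu$ so that $n_\nu = \max(0,1-g+d_\nu)$ is independent of $[A]$. The only difference is that you spell out the assembly step (the kernels forming a smooth equivariant vector bundle over $\T$ and the resulting local trivializations), which the paper leaves implicit.
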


\subsubsection*{Finite-dimensional reduction}

\label{sec:vortex_finite_dimensional_reduction}

From now on we concentrate on the case $\Sigma = S^2$ of genus zero. The next step is to reduce the moduli problem $\left( \mcB_\kappa , \mcE_\kappa , \mcS_\kappa \right)$ to a finite-dimensional one. The existence of a finite-dimensional reduction is the general tool to define the Euler class for an equivariant moduli problem. In our case this reduction is easy and we can give an explicit description, because of linearity.

As in the above discussion of the genus zero case we fix a smooth connection $A$ with $*F_A = \bar{\kappa}$ and write
$$
\mcB_\kappa \cong  W^{k,2} \left( \Sigma , \mcV \right) = \bigoplus_{\nu = 1}^N W^{k,2} \left( \Sigma , \mcL_\nu \right).
$$
For every $\nu \in \left\{ 1 , \ldots , N \right\}$ we denote by $\bar{\del}_{A,\nu}$ the linear Cauchy-Riemann operator on $W^{k,2} \left( \Sigma , \mcL_\nu \right)$ with respect to the fixed connection $A$. We set
$$
V_\nu := \ker\,\bar{\del}_{A,\nu} \quad \mathrm{and} \quad W_\nu := \coker\,\bar{\del}_{A,\nu}
$$
and identify $W_\nu$ with the $L^2$-orthogonal complement to the image of $\bar{\del}_{A,\nu}$ in $W^{k-1,2} \left( \Sigma , \Lambda^{0,1} T^*\Sigma \otimes_\C \mcL_\nu \right)$. Thus all $V_\nu$ and $W_\nu$ are Hermitian vector spaces with othogonal $T$-actions given by the respective weights $w_\nu$. As explained above it makes sense to rescale the natural $L^2$ inner products on these spaces by $\vol(\Sigma)^{-\frac{1}{2}}$. By the Riemann-Roch theorem we have
$$
n_\nu := \dim_\C( V_\nu ) = \left\{
\begin{array}{c@{\quad \mathrm{if} \quad}l}
0 & d_\nu < 0, \\
1 + d_\nu & d_\nu \ge 0,
\end{array}
\right.
$$
and hence
$$
m_\nu := \dim_\C( W_\nu ) = \left\{
\begin{array}{c@{\quad \mathrm{if} \quad}l}
- 1 - d_\nu & d_\nu < 0, \\
0 & d_\nu \ge 0.
\end{array}
\right.
$$
Recall that the degree $d_\nu$ of the complex line bundle $\mcL_\nu$ is given by $\langle w_\nu , \kappa \rangle$. Hence the dimensions of these spaces depend on $\kappa$. We now consider the following $T$-moduli problem
$$
B_\kappa := \bigoplus_{\nu = 1}^N V_\nu \quad , \quad F_\kappa := \left( \bigoplus_{\nu = 1}^N W_\nu \right) \oplus \LT \quad , \quad E_\kappa := B_\kappa \x F_\kappa
$$
with
$$
S_\kappa : B_\kappa \ra F_\kappa \; ; \; v \mto
\left(
\begin{array}{c}
0 \\
\overline{\mu}(v) - \tau
\end{array}
\right).
$$
This finite-dimensional moduli problem is oriented by the natural orientation of complex vector spaces and a choice of orientation on $\LT$.

\begin{lem} \label{lem:finite_dimensional_reduction}
The inclusions $B_\kappa \subset \mcB_\kappa$ and $F_\kappa \subset \mcF_\kappa$ induce a morphism between the $T$-moduli problems $\left( B_\kappa , E_\kappa , S_\kappa \right)$ and $\left( \mcB_\kappa , \mcE_\kappa , \mcS_\kappa \right)$.
\end{lem}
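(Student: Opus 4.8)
The plan is to verify the four defining conditions of a morphism from Definition \ref{def:morphism} for the pair $(f,F)$, where, after fixing the smooth connection $A$ with $*F_A = \bar\kappa$ that identifies $\mcB_\kappa \cong W^{k,2}(\Sigma,\mcV)$ (legitimate in genus zero, where the Jacobian torus of Lemma \ref{lem:jacobian_torus} is a point), $f : B_\kappa \ra \mcB_\kappa$ is the linear inclusion of the space $V = \bigoplus_\nu V_\nu$ of $\bar{\del}_A$-holomorphic sections, and $F : E_\kappa \ra \mcE_\kappa$ is the bundle homomorphism induced by the fiberwise inclusion $\iota : F_\kappa = (\bigoplus_\nu W_\nu)\oplus\LT \hookrightarrow \mcF_\kappa$ that embeds $\bigoplus_\nu W_\nu$ as the $L^2$-orthogonal complement of $\im\,\bar{\del}_A$ and is the identity on $\LT$. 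The first three conditions are essentially formal; the linearized-isomorphism condition is the real content, and the orientation clause I will defer, exactly as in Lemma \ref{lem:morphism}.

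First I would dispatch the formal points. The map $f$ is a $T$-equivariant linear embedding (the $V_\nu$ are $T$-invariant subspaces) defined on all of $B_\kappa$, so one may take the neighbourhood $\mcB_0$ of the moduli space to be $B_\kappa$ itself; likewise $\iota$ is $T$-equivariant and injective (the $T$-action is trivial on the $\LT$ summand), so $F$ is a smooth injective bundle homomorphism over $f$. For the intertwining relation, note that for $v \in V$ one has $\bar{\del}_A v = 0$, hence $\mcS_\kappa(f(v)) = (\bar{\del}_A v, \overline{\mu}(v) - \tau) = (0, \overline{\mu}(v)-\tau) = \iota(S_\kappa(v)) = F\circ S_\kappa(v)$. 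The equality of zero sets $\mcM_\kappa = f(S_\kappa^{-1}(0))$ follows because a solution $u$ of $\mcS_\kappa = 0$ satisfies $\bar{\del}_A u = 0$, i.e.\ $u \in \ker\bar{\del}_A = V$, together with $\overline{\mu}(u) = \tau$, which is precisely the condition $S_\kappa(u) = 0$.

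The main step is the identification of kernels and cokernels of the vertical differentials. At a point $x \in S_\kappa^{-1}(0) \subset V$ the vertical differential of the finite-dimensional problem is $\mcD^{S_\kappa}_x = (0, \mathrm{d}\overline{\mu}(x)) : V \ra (\bigoplus_\nu W_\nu)\oplus\LT$, while that of the infinite-dimensional problem is $\mcD^{\mcS_\kappa}_{f(x)} = (\bar{\del}_A, \mathrm{d}\overline{\mu}(x)) : W^{k,2}(\Sigma,\mcV) \ra W^{k-1,2}(\Sigma,\Lambda^{0,1}T^*\Sigma\otimes_\C\mcV)\oplus\LT$. The key observation is that since $x$ is holomorphic and $\overline{\mu}$ is built from $L^2$-norms, the differential $\mathrm{d}\overline{\mu}(x)$ factors through the $L^2$-orthogonal projection onto $V$; consequently, with respect to the splitting $W^{k,2}(\Sigma,\mcV) = V \oplus V^\perp$, the combined operator decouples: on $V$ it equals $(0,\mathrm{d}\overline{\mu}(x)|_V)$ with $\mathrm{d}\overline{\mu}(x)|_V$ surjective onto $\LT$ by regularity of $\tau$, and on $V^\perp$ it equals $(\bar{\del}_A|_{V^\perp}, 0)$ with $\bar{\del}_A|_{V^\perp}$ an isomorphism onto $\im\,\bar{\del}_A$. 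It follows that $\ker\mcD^{\mcS_\kappa}_{f(x)} = \ker(\mathrm{d}\overline{\mu}(x)|_V) = \ker\mcD^{S_\kappa}_x$ and $\coker\mcD^{\mcS_\kappa}_{f(x)} \cong W^{k-1,2}(\Sigma,\Lambda^{0,1}T^*\Sigma\otimes_\C\mcV)/\im\,\bar{\del}_A \cong \bigoplus_\nu W_\nu = \coker\mcD^{S_\kappa}_x$, the latter because the $\LT$-summand is already hit. Under these identifications $\mathrm{d}f(x)$ is the inclusion restricted to the common kernel and $F_x$ is $\iota$ induced on cokernels, both of which are the identity and hence isomorphisms.

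The main obstacle is exactly this decoupling computation: one must check carefully that $\mathrm{d}\overline{\mu}(x)$ annihilates $V^\perp$ and is onto $\LT$ at regular $\tau$, and that $\coker\bar{\del}_A$ is faithfully modeled by the $L^2$-orthogonal complement $\bigoplus_\nu W_\nu$ --- this is what makes the finite-dimensional reduction capture the full linearization. The remaining fourth clause, that the induced isomorphism $\det(S_\kappa)\to\det(\mcS_\kappa)$ is orientation preserving, depends on orientation conventions not yet fixed here; I would record that the kernel and cokernel isomorphisms above are complex linear on the $V$- and $W$-summands and refer to section \ref{sec:orientations} for the compatible choice of orientations, just as in the proof of Lemma \ref{lem:morphism}.
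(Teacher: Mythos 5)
Your proposal is correct and follows the same route as the paper, which simply declares everything apart from orientations obvious and defers the orientation clause to the later section on orientations exactly as you do. Your explicit decoupling of the vertical differential along $W^{k,2}(\Sigma,\mcV) = V \oplus V^{\perp}$ (using that $\mathrm{d}\overline{\mu}(x)$ kills the $L^2$-orthogonal complement of $V$ and that $\bar{\del}_A$ restricts to an isomorphism from $V^{\perp}$ onto its image) is a sound and welcome elaboration of the details the paper leaves implicit.
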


\begin{proof}
Everything apart from orientations is obvious. We deal with orientations in section \ref{sec:orientations}.
\end{proof}

\subsubsection*{The Euler class}

We are now ready to compute the Euler class $\chi^{B_\kappa,E_\kappa,S_\kappa}$. Recall from section \ref{chap:moduli_problems} that for an oriented finite-dimensional moduli problem the Euler class is defined as follows: Pick a $T$-equivariant Thom structure $(U,\theta)$ on $\left( B_\kappa , E_\kappa , S_\kappa \right)$ and for a $\mathrm{d}_T$-closed equivariant form  $\alpha \in \Omega_T^*(B_\kappa)$ set
$$
\chi^{B_\kappa,E_\kappa,S_\kappa}(\alpha) := \int_{B_\kappa / T} \alpha \wedge {S_\kappa}^*\theta.
$$

\begin{lem}
The Thom form $\theta \in \Omega_T^*(E_\kappa)$ can be chosen such that
$$
{S_\kappa}^*\theta = \left( \prod_{\nu = 1}^N w_\nu^{m_\nu} \right) \wedge {S_\kappa}^* {\pi_F}^* \theta_\LT.
$$
Here $\theta_\LT$ is an ordinary Thom form on $\LT$ and $\pi_F : E_\kappa = B_\kappa \x F_\kappa \ra F_\kappa$ is the projection onto the fiber of $E$.
\end{lem}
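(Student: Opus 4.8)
The plan is to exploit that $E_\kappa = B_\kappa \x F_\kappa$ is a product bundle whose fiber splits $T$-equivariantly as $F_\kappa = W \oplus \LT$, where I abbreviate $W := \bigoplus_{\nu = 1}^N W_\nu$ and where $T$ acts trivially on the $\LT$-summand. Viewing $E_\kappa$ as the Whitney sum of the two subbundles $B_\kappa \x W$ and $B_\kappa \x \LT$, I would first choose the equivariant Thom form $\theta$ to be multiplicative along this splitting: pick an equivariant Thom form $\theta_W$ for the zero section of $B_\kappa \x W$ and an ordinary Thom form $\theta_\LT$ for $\LT$ (ordinary suffices since the $T$-action on $\LT$ is trivial), and set
\[
\theta \;:=\; \theta_W \wedge \pi_F^*\theta_\LT,
\]
where $\theta_W$ is pulled back to $E_\kappa$ along the projection $E_\kappa \ra B_\kappa \x W$ and $\theta_\LT$ is regarded as a form on $F_\kappa$ via the projection $F_\kappa \ra \LT$. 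By the standard multiplicativity of Thom forms under Whitney sum (recalled in the construction of section \ref{sec:equivariant_thom_forms}), this product is again $\mathrm{d}_T$-closed with fiber integral $1 \cdot 1 = 1$, and its support can be arranged inside any prescribed neighbourhood of the zero section meeting the fibers in convex sets; hence it yields a valid Thom structure for $\left( B_\kappa , E_\kappa , S_\kappa \right)$.

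With this choice, multiplicativity of pullback on the wedge product gives
\[
{S_\kappa}^*\theta \;=\; \left( {S_\kappa}^*\theta_W \right) \wedge \left( {S_\kappa}^*{\pi_F}^*\theta_\LT \right).
\]
The key observation is that the $W$-component of $S_\kappa$ vanishes identically, since $S_\kappa(v) = \left( 0 , \overline{\mu}(v) - \tau \right)$ has zero first entry. Therefore, composed with the projection $E_\kappa \ra B_\kappa \x W$, the section $S_\kappa$ is precisely the zero section of $B_\kappa \x W$. Because the pullback of a Thom form under the zero section represents the equivariant Euler class of the bundle, I obtain
\[
{S_\kappa}^*\theta_W \;=\; e_T\left( B_\kappa \x W \right).
\]

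It remains to evaluate this Euler class. The bundle $B_\kappa \x W$ is trivial with fiber $W = \bigoplus_\nu W_\nu$, and the linear $T$-action on $W_\nu \cong \C^{m_\nu}$ is diagonal with the weight $w_\nu$ repeated $m_\nu$ times. By the specialization of Proposition \ref{prop:equivariant_Euler_class} to trivial bundles with diagonal torus action, namely equation \ref{eqn:torus_euler_class}, the equivariant Euler class of such a bundle is the product of its weights, so
\[
e_T\left( B_\kappa \x W \right) \;=\; \prod_{\nu = 1}^N w_\nu^{m_\nu} \in S(\LT^*).
\]
This is a purely polynomial, constant equivariant form of total degree $2 \sum_\nu m_\nu$, which equals the real rank of $B_\kappa \x W$, as an Euler class must be. Substituting back yields the claimed identity. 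The only point demanding care is the very first step — arranging the product $\theta_W \wedge \pi_F^*\theta_\LT$ to be simultaneously equivariantly $\mathrm{d}_T$-closed and supported in a neighbourhood satisfying the convexity and properness requirements of a Thom structure — but this is exactly the Whitney-sum multiplicativity of the Thom-form construction of section \ref{sec:equivariant_thom_forms} and presents no genuine difficulty.
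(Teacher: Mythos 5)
Your proposal is correct and follows essentially the same route as the paper's proof: both exploit the triviality of $E_\kappa$ to factor the Thom form over the splitting of the fiber, observe that $S_\kappa$ is the zero section in the $W_\nu$-components so that the pullback there is the equivariant Euler class, and evaluate that Euler class as $\prod_\nu w_\nu^{m_\nu}$ via equation \ref{eqn:torus_euler_class}. The only cosmetic difference is that you bundle all the $W_\nu$ into a single summand $W$ while the paper keeps the factors $\theta_1 \wedge \ldots \wedge \theta_N$ separate.
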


\begin{proof}
Since the bundle $E$ is trivial the Thom form $\theta$ can be written as
$$
\theta = {\pi_F}^* ( \theta_1 \wedge \ldots \wedge \theta_N \wedge \theta_\LT ),
$$
where $\theta_\nu$ is an equivariant Thom form on the linear space $W_\nu$. The section $S_\kappa$ is zero in all components $W_\nu$, hence the pull-back of ${\pi_F}^* \theta_\nu$ under $S_\kappa$ is by definition the equivariant Euler class of the trivial bundle $B_\kappa \x W_\nu$. Now the complex dimension of $W_\nu$ is $m_\nu$ and the torus $T$ acts on $W_\nu$ via the weight vector $w_\nu$. Hence by \ref{eqn:torus_euler_class} this Euler class is given by
$$
{S_\kappa}^* {\pi_F}^* \theta_\nu = w_\nu^{m_\nu}.
$$
The claimed result follows.
\end{proof}

\begin{prop}
With $V := B_\kappa =  \oplus_{\nu=1}^N V_\nu$ and the notation from section \ref{sec:toric_manifolds_as_moduli_problems} we have
$$
\chi^{B_\kappa,E_\kappa,S_\kappa}(\alpha) = \chi^{V,\tau} \left( \alpha \cdot \prod_{\nu = 1}^N w_\nu^{m_\nu} \right)
$$
for every $\alpha \in S(\LT^*)$.
\end{prop}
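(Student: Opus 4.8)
The plan is to unwind the definition of the Euler class of the finite-dimensional reduction, feed in the explicit form of ${S_\kappa}^*\theta$ supplied by the preceding lemma, and recognize what remains as the Thom-class formula for the toric Euler class $\chi^{V,\tau}$.

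First I would start from the defining formula
$$
\chi^{B_\kappa,E_\kappa,S_\kappa}(\alpha) \;=\; \int_{B_\kappa/T} \alpha \wedge {S_\kappa}^*\theta
$$
for a $T$-equivariant Thom structure $(U,\theta)$, and substitute the factorization
$$
{S_\kappa}^*\theta \;=\; \left( \prod_{\nu=1}^N w_\nu^{m_\nu} \right) \wedge {S_\kappa}^* {\pi_F}^* \theta_\LT
$$
from the preceding lemma. Since $\prod_{\nu=1}^N w_\nu^{m_\nu}$ lies in $S(\LT^*)$, it is a $\mathrm{d}_T$-closed equivariant form of pure polynomial type (even degree), so in the Cartan model its wedge with $\alpha$ is just the product $\alpha \cdot \prod_{\nu=1}^N w_\nu^{m_\nu} \in H_T^*(V)$. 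Regrouping the integrand gives
$$
\chi^{B_\kappa,E_\kappa,S_\kappa}(\alpha) \;=\; \int_{B_\kappa/T} \left( \alpha \cdot \prod_{\nu=1}^N w_\nu^{m_\nu} \right) \wedge {S_\kappa}^* {\pi_F}^* \theta_\LT.
$$

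Next I would observe that, under the identification $\LT \cong \LT^*$ fixed by the inner product on $\LT$, the base $B_\kappa$ is exactly $V$ and the $\LT$-component of $S_\kappa$ is $\overline{\mu} - \tau$, which on $V$ coincides with the toric moment-map section $\mu - \tau$ of section \ref{sec:toric_manifolds_as_moduli_problems}. Hence ${S_\kappa}^* {\pi_F}^* \theta_\LT = (\mu-\tau)^* \theta_\LT$ is the pull-back of a fiber Thom form under the toric section. Choosing $\theta_\LT$ to be the Thom form in a Thom structure for the regular finite-dimensional toric problem $(V, V\times\LT^*, \mu-\tau)$ — regular because $\tau$ is regular — the Thom-class characterization of the Euler class identifies the integral above with $\chi^{V,\tau}\left(\alpha \cdot \prod_{\nu=1}^N w_\nu^{m_\nu}\right)$, which is the assertion.

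The assembly is short once the preceding lemma is available; the only delicate point is orientations, and I expect this to be the main obstacle. One must check that the complex orientation on $V = B_\kappa$ together with the chosen orientation of $T$, propagated through the factorization of $\theta$ and the Euler-class contributions $w_\nu^{m_\nu}$ coming from the $W_\nu$-factors, reproduces exactly the orientation conventions defining $\chi^{V,\tau}$ in section \ref{sec:orientation_of_toric_manifolds}, so that the two sides agree as signed real numbers rather than merely up to sign. This is the bookkeeping deferred to the orientation section; invoking the morphism of lemma \ref{lem:finite_dimensional_reduction} together with functoriality of the Euler class is the clean way to guarantee this compatibility.
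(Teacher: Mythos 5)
Your proposal is correct and follows essentially the same route as the paper: substitute the factorization of ${S_\kappa}^*\theta$ from the preceding lemma into the defining integral, absorb the polynomial factor $\prod_{\nu}w_\nu^{m_\nu}$ into the argument, and recognize the remaining integral as the Thom-class definition of $\chi^{V,\tau}$ for the toric moduli problem $(V,V\x\LT^*,\overline{\mu}-\tau)$. The orientation bookkeeping you flag is likewise deferred by the paper to its dedicated orientation section, so nothing is missing.
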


\begin{proof}
This follows immediately from the preceding lemma and the fact that $\chi^{V,\tau}$ is defined to be the Euler class of the $T$-moduli problem associated to the toric manifold that is described by the data $V_\nu$, $w_\nu$ and $\tau$.
\end{proof}

\begin{thm} \label{thm:computation_of_vortex_invariants}
With the notation from above the genus zero vortex invariant for a regular element $\tau$ is given by the formula
$$
\Psi^{\rho,\tau}_\kappa(\alpha) = \chi^{V,\tau} \left( \alpha \cdot \prod_{\nu = 1}^N w_\nu^{m_\nu} \right)
$$
for every $\alpha \in S(\LT^*)$.
\end{thm}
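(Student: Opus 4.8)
The plan is to assemble the theorem from the chain of cobordism and functoriality results already established; no new analysis is needed, only the careful composition of maps in hand. First I would invoke Theorem \ref{thm:homotopy} together with the cobordism property of the Euler class: since the deformation $(*)_\eps$, $\eps\in[0,1]$, is a homotopy of regular $T$-moduli problems, the Euler class of the original vortex problem $(\mcB,\mcE,\mcS)$ agrees with that of the deformed problem at $\eps=0$. Hence the vortex invariant $\Psi^{\rho,\tau}_\kappa$ is computed by the $\eps=0$ moduli problem, whose section decouples the curvature equation $*F_A=\bar{\kappa}$ from the remaining data and is exactly the problem $(\mcB,\mcE,\mcS)$ studied in section \ref{sec:vortex_computation}.

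Next I would run the two available morphisms through the functoriality property of the Euler class. Lemma \ref{lem:morphism} provides a morphism from $(\mcB_\kappa,\mcE_\kappa,\mcS_\kappa)$ to the $\eps=0$ problem $(\mcB,\mcE,\mcS)$ with $f\colon\mcB_\kappa\hookrightarrow\mcB$ the inclusion, so functoriality gives $\chi^{\mcB,\mcE,\mcS}(\beta)=\chi^{\mcB_\kappa,\mcE_\kappa,\mcS_\kappa}(f_T^*\beta)$ for every $\beta\in H_T^*(\mcB)$. Lemma \ref{lem:finite_dimensional_reduction} likewise yields a morphism from the finite-dimensional reduction $(B_\kappa,E_\kappa,S_\kappa)$ to $(\mcB_\kappa,\mcE_\kappa,\mcS_\kappa)$ with inclusion $g\colon B_\kappa\hookrightarrow\mcB_\kappa$, and a second application of functoriality brings the computation down to the finite-dimensional Euler class $\chi^{B_\kappa,E_\kappa,S_\kappa}$. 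Finally the Proposition immediately preceding the theorem evaluates this finite-dimensional class as $\chi^{V,\tau}(\alpha\cdot\prod_{\nu=1}^N w_\nu^{m_\nu})$, which is the desired right-hand side.

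The one point that needs genuine care --- and which I expect to be the main, if modest, obstacle --- is tracking the test class $\alpha$ through the successive pullbacks. In genus zero $\mcA(P)/\mcG_0(P)$ is contractible, so $\Psi^{\rho,\tau}_\kappa(\alpha)$ is by definition the Euler class of $(\mcB,\mcE,\mcS)$ applied to $\mathrm{ev}^*\alpha$, where $\mathrm{ev}\colon\mcB\ra\C^N$, $[u,A]\mapsto u(p_0)$, is the equivariant evaluation and $\alpha\in S(\LT^*)=H_T^*(\C^N)$. Composing $\mathrm{ev}$ with the inclusions $B_\kappa=V\hookrightarrow\mcB_\kappa\hookrightarrow\mcB$ gives a $T$-equivariant linear map $V\ra\C^N$; since both $V$ and $\C^N$ are equivariantly contractible to the origin, the induced map $H_T^*(\C^N)\ra H_T^*(V)$ is the identity on $S(\LT^*)$ under the canonical identifications $H_T^*(\C^N)=H_T^*(V)=S(\LT^*)$, so $(\mathrm{ev}\circ f\circ g)^*\alpha=\alpha$. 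Combining the three steps then yields
$$
\Psi^{\rho,\tau}_\kappa(\alpha)=\chi^{\mcB,\mcE,\mcS}(\mathrm{ev}^*\alpha)=\chi^{B_\kappa,E_\kappa,S_\kappa}(\alpha)=\chi^{V,\tau}\Bigl(\alpha\cdot\prod_{\nu=1}^N w_\nu^{m_\nu}\Bigr),
$$
completing the proof. The only ingredient deferred is the compatibility of orientations across the two morphisms, which is precisely the content postponed to section \ref{sec:orientations} and may be taken for granted here.
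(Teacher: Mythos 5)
Your proposal is correct and follows the same route as the paper: cobordism via Theorem \ref{thm:homotopy}, functoriality through the two morphisms of Lemmas \ref{lem:morphism} and \ref{lem:finite_dimensional_reduction}, the preceding proposition for the finite-dimensional Euler class, and orientations deferred to section \ref{sec:orientations}. Your explicit check that the pullbacks act as the identity on $S(\LT^*)$ is exactly the observation the paper's proof records in passing.
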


\begin{proof}
This follows immediately from the preceding proposition and the cobordism and the functoriality property of the Euler class. One only has to observe that the homotopy from our deformation and the subsequent morphisms induce the identity map on $S(\LT^*)$ and that the orientation conventions for the original vortex moduli problem and the moduli problem associated to a toric manifold coincide. We deal with the orientations in the following section.
\end{proof}

\subsection{Orientations}

\label{sec:orientations}

There are four issues on orientation that we still have to attend to:
\begin{itemize}
\item We need to define the orientation on the homotopy of moduli problems given by our deformation in section \ref{sec:vortex_deformation}.
\item We need to identify the orientation on the target moduli problem of the morphism in section \ref{sec:vortex_Jacobian_torus}.
\item We need to identify the orientation on the finite-dimensional reduction in section \ref{sec:vortex_finite_dimensional_reduction}.
\item We need to relate the resulting orientation to that of toric manifolds as discussed in section \ref{sec:orientation_of_toric_manifolds}.
\end{itemize}
We start with the first point, which is just a slight generalization of the definition of orientation on the original vortex moduli problem. Given a solution $[u,A] \in \mcB$ of $(*)_\eps$ for some $\eps$ we choose a local trivialization of the bundle $\mcE$ around this point to obtain the vertical differential
$$
\mcD \;:\; T_{[u,A]}\mcB \ra \mcF.
$$
This operator is Fredholm and its determinant $\det(\mcD)$ is the one-dimensional vector space
$$
\det(\mcD) \;=\; \Lambda^{\mathrm{max}} \left( \ker\,\mcD \right) \otimes \Lambda^{\mathrm{max}} \left( \coker\,\mcD \right).
$$
The tangent space $T_{[u,A]}\mcB$ can be identified with the kernel of the map
$$
\varphi \;:\; W^{k,2} \left( \Sigma , \mcV \right) \x W^{k,2} \left( \Sigma , T^*\Sigma \otimes \LT \right) \;\ra\; \widetilde{W}^{k-1,2} \left( \Sigma , \LT \right),
$$
which is the $L^2$-adjoint to the inclusion of the tangent space to the orbit of the based gauge group. Here $\widetilde{W}$ denotes the space of maps that vanish at the basepoint $x_0$. We write an additional summand $\LT$ into the target to get a map $\widetilde{\varphi}$ into $W^{k-1,2} \left( \Sigma , \LT \right)$ that corresponds to a local slice condition for the whole gauge group. Now after fixing an orientation on $\LT$ the determinant of $\mcD$ can be canonically identified with the determinant of the operator
$$
\widetilde{\mcD} \;:\; W^{k,2} \left( \Sigma , \mcV \right) \x W^{k,2} \left( \Sigma , T^*\Sigma \otimes \LT \right) \;\ra\; \mcF \oplus W^{k-1,2} \left( \Sigma , \LT \right)
$$
that is given by $\mcD \oplus \widetilde{\varphi}$.

Now source and target of $\widetilde{\mcD}$ carry complex structures. In the case $\eps = 1$ it is shown by Cieliebak, Gaio, Mundet and Salamon \cite[Chapter 4.5]{CGMS} that $\widetilde{\mcD}$ is a compact perturbation of a certain complex linear operator. But this is also true for any other value of $\eps$: The parameter $\eps$ only appears as a factor in front of some parts of this compact perturbation. Hence the natural orientation of the determinant for a complex linear Fredholm operator induces an orientation for $\det(\mcD)$. This shows that during the whole homotopy we can orient all determinant line bundles by connecting the corresponding operators to one fixed complex linear one in the space of Fredholm operators. This defines an orientation on our homotopy.

Let us be explicit about the complex structures that are used to define this orientation. On $W^{k,2} \left( \Sigma , \mcV \right)$ we get it from the fixed Hermitian structure on $\mcV$. On $W^{k,2} \left( \Sigma , T^*\Sigma \otimes \LT \right)$ we take the Hodge $*$-operator that is given by the fixed metric on $\Sigma$. On
$$
\mcF \oplus W^{k-1,2} \left( \Sigma , \LT \right) \;=\;  W^{k-1,2} \left( \Sigma , \Lambda^{0,1} T^*\Sigma \otimes \mcV \right) \oplus W^{k-1,2} \left( \Sigma , \LT \right) \oplus W^{k-1,2} \left( \Sigma , \LT \right)
$$
the complex structure is determined by fixing an order of the second and third component. Here we make the same choice as for the complex structure on $\LT \oplus \LT$ that we made in section \ref{sec:orientation_of_toric_manifolds} in order to define the orientation of toric manifolds.

Next we consider the morphism $(f,F)$ from lemma \ref{lem:morphism}. Recall that the source moduli problem $( \mcB , \mcE , \mcS )$ features the curvature equation $*F_A = \bar{\kappa}$ as part of the section $\mcS$, while the target moduli problem $( \mcB_\kappa , \mcE_\kappa , \mcS_\kappa )$ has this equation as part of the definition of $\mcB_\kappa$. The orientation of $\det(\mcS)$ is given as above and we can define the orientation on $\det(\mcS_\kappa)$ in just the same way: We split
$$
W^{k-1,2} \left( \Sigma , \LT \right) \;\cong\; \LT \oplus \underline{ W^{k-1,2} \left( \Sigma , \LT \right) }
$$
as in the introduction to section \ref{sec:vortex_deformation} and we can identify the kernel and co\-ker\-nel of the vertical differential $\mcD_\kappa$ with the kernel and cokernel of the same operator $\widetilde{\mcD}$ that we use to orient $\det(\mcS)$. Only the component $\underline{ W^{k-1,2} \left( \Sigma , \LT \right) }$ of the summand that belongs to $\mcF$ is no longer interpreted as part of the fiber in the moduli problem, but as defining equation for the tangent space to $\mcB_\kappa$.

With this orientation on $( \mcB_\kappa , \mcE_\kappa , \mcS_\kappa )$ it is clear that $(f,F)$ is orientation preserving. Note that in the above splitting of $W^{k-1,2} \left( \Sigma , \LT \right)$ we identify the subset of constant maps with $\LT$, which agrees with the identification of $\LT$ in the splitting
$$
W^{k-1,2} \left( \Sigma , \LT \right) \;\cong\; \LT \oplus \widetilde{W}^{k-1,2} \left( \Sigma , \LT \right)
$$
that we used to extend the map $\varphi$ to $\widetilde{\varphi}$.

Now in the process of finite-dimensional reduction in section \ref{sec:vortex_finite_dimensional_reduction} we first throw away all the data associated to the connections $A$ by fixing one. By the above remark we can describe the induced orientation on the remaining moduli problem by the complex structures on source and target of a map
$$
W^{k,2} \left( \Sigma , \mcV \right) \;\ra\; W^{k-1,2} \left( \Sigma , \Lambda^{0,1} T^*\Sigma \otimes_\C \mcV \right) \oplus \LT \oplus \LT,
$$
that are given as before. The inclusions into source and target of the finite-dimensional spaces $\bigoplus_{\nu = 1}^N V_\nu$ and $\bigoplus_{\nu = 1}^N W_\nu$ that describe the finite-dimensional reduction of the problem are complex linear. So if we define the orientation on $\left( B_\kappa , E_\kappa , S_\kappa \right)$ in lemma \ref{lem:finite_dimensional_reduction} by the complex orientations of
$$
\bigoplus_{\nu = 1}^N V_\nu \;\ra\; \bigoplus_{\nu = 1}^N W_\nu \oplus \LT \oplus \LT,
$$
then we indeed obtain a morphism.

Now finally we observe that this gives precisely the complex orientation on $X_{V,\tau}$ that we explained in section \ref{sec:orientation_of_toric_manifolds}.

\section{Generalizations}

\label{chap:generalizations}

In this section we address the question to what extend our deformation result from section \ref{chap:vortex_invariants} can be generalized. We consider general Hamiltonian $T$-spaces and we examine the notion of energy for solutions of our deformed equations. For the generalization to arbitrary Lie groups $G$ see remark \ref{rem:general_Lie_groups}.

\subsection{General Hamiltonian $T$-spaces}

We want to extend the deformation of the vortex equations from section \ref{sec:vortex_deformation} to more general Hamiltonian $T$-spaces $X$ than $\C^N$. As remarked above the given proof for compactness fails if the almost complex structure on $X$ is no longer constant. In general one will encounter the phenomenon of bubbling of holomorphic spheres. One could at least hope to retain the deformation result for manifolds $X$ that are symplectically aspherical, but this forces $X$ to be non-compact because of the following proposition.

\begin{prop}
Suppose $X$ is a closed symplectic manifold that carries a nontrivial Hamiltonian $S^1$-action. Then there exists an element $\alpha \in \pi_2(X)$ such that 
$$
\omega(\bar{\alpha}) \;>\; 0,
$$
where $\bar{\alpha}$ is the image of $\alpha$ under the map $\pi_2(X) \ra H_2(X;\Z)$.
\end{prop}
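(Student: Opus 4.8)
The plan is to produce the required sphere explicitly as a \emph{gradient sphere} swept out by a single gradient trajectory of the Hamiltonian under the circle action. Let $H : X \ra \R$ be the moment map of the action and let $V$ denote the generating vector field, so that $\iota_V\omega = -\mathrm{d}H$. Since $S^1$ is compact I would first choose an $S^1$-invariant $\omega$-compatible almost complex structure $J$ (by averaging in the contractible space $\mathcal{J}(\omega)$) and set $g := \omega(\,\cdot\,,J\,\cdot\,)$, an invariant Riemannian metric. A short computation using $J^{-1}=-J$ then gives $\mathrm{grad}_g H = -JV$, which is again $S^1$-invariant because $H$, $g$ and the action all commute. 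The zeros of $V$ are exactly the fixed points of the action, which coincide with the critical points of $H$; since the action is nontrivial there is a point $q$ with $V(q)\neq 0$.

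Next I would fix the gradient trajectory $\gamma : \R \ra X$ with $\dot\gamma = \mathrm{grad}_g H(\gamma)$ and $\gamma(0)=q$. As $X$ is closed the flow exists for all times, and since $H$ is a Morse--Bott function whose critical set is the fixed-point manifold, $\gamma(t)$ converges to fixed points $a := \gamma(-\infty)$ and $b := \gamma(+\infty)$. Because a gradient trajectory through a non-critical point never reaches the critical set in finite time, $V(\gamma(t))\neq 0$ for all $t\in\R$. I then define
$$
u : S^1 \x \R \ra X \; ; \; (\theta,t) \mto \theta\cdot\gamma(t),
$$
and compactify the cylinder to a sphere $S^2$ by collapsing its two ends to $a$ and $b$. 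The extension is continuous because as $t\to\mp\infty$ the whole orbit $\{\theta\cdot\gamma(t)\}$ shrinks uniformly to the fixed point; this yields a class $\alpha := [u]\in\pi_2(X)$.

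The key point is that $u$ is (anti)holomorphic. Indeed $\del_\theta u = V(u)$ is the fundamental vector field, while by invariance of the gradient $\del_t u = \mathrm{grad}_g H(u) = -JV(u) = -J\,\del_\theta u$. Hence, using $g=\omega(\,\cdot\,,J\,\cdot\,)$, one has $\omega(\del_\theta u,\del_t u) = -\,\omega(\del_\theta u, J\del_\theta u) = -\,|\del_\theta u|_g^2$, so the integrand has constant sign. Integrating and using $\tfrac{\mathrm{d}}{\dt}H(\gamma) = |\mathrm{grad}_g H(\gamma)|_g^2 = |V(\gamma)|_g^2$ (since $J$ is a $g$-isometry) gives the energy identity
$$
\left| \int_{S^2} u^*\omega \right| \;=\; \int_{S^1\x\R} |V(\gamma(t))|_g^2 \, \mathrm{d}\theta\,\dt \;=\; 2\pi\bigl(H(b)-H(a)\bigr).
$$
As $V(\gamma(t))\neq 0$ for every $t$, the function $H$ is strictly increasing along $\gamma$, so $H(b)-H(a)>0$ and the integral is nonzero. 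Orienting the sphere so that the sign is positive, I conclude $\omega(\bar\alpha) = \langle[\omega],u_*[S^2]\rangle = 2\pi(H(b)-H(a)) > 0$, as required.

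The main obstacle I anticipate is the behaviour at the two poles: one must know that the gradient trajectory converges to genuine fixed points (rather than merely accumulating on the fixed-point set) and that the resulting map extends continuously — indeed smoothly — across the poles. Both are standard: convergence to a single critical point is the Morse--Bott convergence property of gradient flows on a closed manifold, and smoothness of the extension follows from removal of singularities for the finite-energy (anti)holomorphic map $u$, finiteness of the energy being exactly the identity $E(u)=2\pi(H(b)-H(a))<\infty$ above. The only other point needing care is the existence of an invariant compatible $J$, which follows by averaging over $S^1$ in the contractible space $\mathcal{J}(\omega)$.
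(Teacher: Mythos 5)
Your proposal is correct and follows essentially the same route as the paper: take the gradient flowline of the moment map with respect to a compatible almost complex structure, let the $S^1$-orbit of its closure sweep out a sphere, and evaluate $\omega$ on it. The paper delegates the details (invariance of $J$, convergence at the ends, the energy identity $\left|\int_{S^2}u^*\omega\right| = 2\pi(H(b)-H(a)) > 0$) to the cited work of Ono, and your write-up supplies exactly those details correctly.
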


\begin{proof}
Consider a moment map $\mu : X \ra \R$ for the Hamiltonian $S^1$-action. Since $X$ is compact it has at least two critical points. Fix a compatible almost complex structure $J$ on $X$ and consider the gradient flow of $\mu$ with respect to the associated metric. Take a nontrivial flowline $\gamma$ that converges to one critical point $p$. Again due to compactness of $X$ the other end of $\gamma$ converges to another critical point $q$. Hence the closure of the $S^1$-orbit of $\gamma$ sweeps out the image of a continuous map $S^2 \ra X$. This map represents a class $\alpha$ with the desired property. See Ono \cite{Ono} for details and more general results.
\end{proof}

Now on a non-compact manifold $X$ to start the bubbling analysis we first need that the images of the maps $u$ stay in a compact subset. Else one could have a sequence of maps with diverging derivative on a sequence of points going off to infinity. But even in the linear case of $X \cong \C^N$ we do not get such an a priori bound: The analysis in step $1$ for the discussion of compactness in section \ref{sec:vortex_deformation} only gives a uniform bound on $\eps \cdot |u|^2$. We get the necessary $\mcC^0$-bound on the maps $u$ a posteriori from the $W^{2,2}$-bound that is established in step $3$ and which makes use of the fact that the complex structure on $\C^N$ is standard, so that we can circumvent the bubbling analysis completely.

\subsection{Energy of $\eps$-vortices}

Given a configuration $(u,A)$ for the usual vortex equations $(*)_{\eps = 1}$ the \emph{energy} is defined as
\begin{equation} \label{def:energy}
E(u,A) \;:=\; \frac{1}{2} \int_\Sigma \left( |\mathrm{d}_A u|^2 + |*F_A|^2 + |\mu(u) - \tau - \bar{\kappa}|^2  \right) \dvol_\Sigma
\end{equation}
with $\bar{\kappa} := \frac{\kappa}{\vol(\Sigma)}$. The norms of the second and third term of the above integrand are defined via the chosen inner product on $\LT$ that is also used to identify $\LT \cong \LT^*$. The norm of the first term is given by the metric associated to the symplectic form $\omega$ and an invariant compatible almost complex structure $J$ on the target manifold $X$. The following identity holds due to Cieliebak, Gaio, Mundet and Salamon \cite[Proposition 2.2]{CGMS}:
\begin{eqnarray} \label{eqn:energy_identity}
E(u,A) & = & \int_\Sigma \left( |\bar{\partial}_{J,A} u|^2 + \frac{1}{2} |*F_A + \mu(u) - \tau - \bar{\kappa}|^2  \right) \dvol_\Sigma \nonumber\\
& & + \left\langle \left[ \omega - \mu + \tau + \bar{\kappa} \right] , [u] \right\rangle
\end{eqnarray}
Here the last term is the pairing of equivariant cohomology with homology: The difference $\omega - \mu$ is a $\mathrm{d}_T$-closed equivariant differential form, as are the elements $\tau, \bar{\kappa} \in \LT^*$. And the equivariant map $u$ from a principal $T$-bundle $P$ into the manifold $X$ represents an equivariant homology class.

Now as remarked in \ref{rem:epsilon_omega} the deformed equations $(*)_\eps$ can be interpreted as the usual vortex equations with respect to the rescaled symplectic form $\eps \cdot \omega$. This suggests to define the $\eps$-energy $E_\eps(u,A)$ by introducing a factor $\eps$ to the norm $|\mathrm{d}_A u|^2$, the moment map $\mu$ and the parameter $\tau$ in \ref{def:energy}. Since $\kappa$ is not affected by this rescaling we include it into the curvature term and define
\begin{equation} \label{def:epsilon_energy}
E_\eps(u,A) \;:=\; \frac{1}{2} \int_\Sigma \left( \eps |\mathrm{d}_A u|^2 + |*F_A - \bar{\kappa}|^2 + \eps^2 |\mu(u) - \tau|^2  \right) \dvol_\Sigma.
\end{equation}
The analogous computations as in \cite[Section 2.3]{CGMS} then yield the following energy identity:
\begin{eqnarray} \label{eqn:epsilon_energy_identity}
E_\eps(u,A) & = & \int_\Sigma \left( \eps |\bar{\partial}_{J,A} u|^2 + \frac{1}{2} |*F_A + \eps \mu(u) - \eps \tau - \bar{\kappa}|^2  \right) \dvol_\Sigma \nonumber\\
& & + \eps \left\langle \left[ \omega - \mu + \tau \right] , [u] \right\rangle \nonumber\\
& & - \eps  \int_\Sigma \left\langle \mu(u) - \tau , \bar{\kappa} \right\rangle \dvol_\Sigma
\end{eqnarray}
Hence for a solution $(\eps,u,A)$ to $(*)_\eps$ we obtain
$$
E_\eps(u,A) \;=\; \eps \left\langle \left[ \omega - \mu + \tau \right] , [u] \right\rangle.
$$
Note that the third term in \ref{eqn:epsilon_energy_identity} vanishes because of equation $(\mathrm{II\/I})$. So if we restrict to solutions with $[u] \in H_T^2(X)$ representing a fixed class then this identity gives uniform $L^2$-bounds on
$$
|\mathrm{d}_A u| \quad , \quad \frac{1}{\sqrt{\eps}} \cdot |*F_A - \bar{\kappa}| \quad \mathrm{and} \quad \sqrt{\eps} \cdot |\mu(u) - \tau|.
$$
But as for the usual vortex equations this is a Sobolev borderline case and these bounds do not suffice to get compactness. The bounds would be good enough to carry out the bubbling analysis, but without a $\mcC^0$-bound on the maps $u$ we cannot even start it.

\section{Givental's toric map spaces}

\label{chap:giventals_toric_map_spaces}

Let $X := X_{\C^N,\tau}$ be a toric manifold given by a proper collection of weights $w_\nu$ and a super-regular element $\tau$. Recall the description of the associated genus zero vortex moduli space for degree $\kappa \in \Lambda$ as the toric manifold
$$
X_\kappa \;:=\; X_{V,\tau}
$$
from theorem \ref{thm:genus_0_moduli_space}: It is given by the same weights $w_\nu$  as $X$, only the spaces $V_\nu$ on which the torus $T$ acts via those weights are changed from $\C$ to the spaces of holomorphic sections in the complex line bundles $\mcL_\nu$ of degree $d_\nu = \langle w_\nu , \kappa \rangle$. The complex dimension of $V_\nu$ is $n_\nu = \max( 0 , 1 + d_\nu )$ and it is equipped with the natural Hermitian form
$$
(u,v) \;:=\; \int_{S^2} u \cdot \bar{v} \;\dvol_{S^2}.
$$
The same space $X_\kappa$ appears in Givental's work \cite{Giv} with the name \emph{toric map space}. More precisely Givental replaces every component of $\C^N$ by the $n_\nu$-dimensional space of polynomials $\gamma_\nu$ in one complex variable $\zeta$ of degree $\deg(\gamma_\nu) \le d_\nu$,
$$
\gamma_\nu ( \zeta ) \;=\; \sum_{j = 0}^{d_\nu} z_{\nu,j} \zeta^j.
$$
Both descriptions yield the same manifold, because the diffeomorphism type of $X_{V,\tau}$ is uniquely determined by the chamber of $\tau$ and the dimensions of the components $V_\nu$: Any two Hermitian vector spaces of the same dimension are isomorphic and since the $T$-actions are given by a fixed character $T \ra S^1$ and complex multiplication, this isomorphism is also $T$-equivariant. In fact we can give the following explicit, though not canonical identification: If we write
$$
S^2 \;\cong\; \C \cup \infty,
$$
remove the point at infinity and trivialize the bundles $\mcL_\nu$ over $\C$, then we can identify holomorphic sections $u_\nu$ with polynomials of degree at most $d_\nu$. Those are the holomorphic maps $\C \ra \C$ that extend to a section in $\mcL_\nu$ over the point at infinity.

The motivation for Givental's toric map space is the following. If $\tau$ is an element of the K\"ahler cone
$$
K \;:=\; \bigcap_{\nu = 1}^N W_{ \{ 1 , \ldots , N \} \setminus \{ \nu \} } \subset \LT^*
$$
then none of the weights $w_\nu$ vanishes in cohomology and we can identify $H^2(X,\R) \cong \LT^*$ such that $H_2(X,\Z) \cong \Lambda$. Hence $\kappa$ also specifies an integral second homology class of $X$. Now $X$ is canonically identified with the quotient of some open and dense subset $U \subset \C^N$ by an action of the complexified torus $T_\C$. This description uses the notion of \emph{fans} rather than weight vectors. See Audin \cite{Aud} for the correspondence of the two constructions. Hence for generic value of the complex variable $\zeta$ the collection
$$
\left( \gamma_1(\zeta) , \ldots , \gamma_N(\zeta) \right) \in \C^N
$$
will actually lie in $U$ and will thus represent a point in $X$. One can use this to show that generic elements $\left[ \gamma_1 , \ldots , \gamma_N \right] \in X_\kappa$ actually represent holomorphic maps of degree $\kappa$ from $\C \cup \infty$ into $X$. Hence $X_\kappa$ can be viewed as a compactification of the space of such maps.

The reason to consider these toric compactifications is that all the $X_\kappa$ for $\kappa$ ranging over $H_2(X,\Z)$ can be seen in one single infinite-dimensional toric manifold $\mathbbm{X}$, that is given by the same weights $w_\nu$ and the same element $\tau$ but for the $V_\nu$ we take infinite-dimensional spaces of Laurent polynomials $\gamma$ in one complex variable $\zeta$,
$$
V_\nu \;:=\; \C \left[ \zeta , \zeta^{-1} \right].
$$
We choose the Hermitian metric such that the monomials $\zeta^j$ form an orthonormal system. So if we write an element $\gamma_\nu \in V_\nu$ as
$$
\gamma_\nu ( \zeta ) \;=\; \sum_{j \in \Z} z_{\nu,j} \zeta^j
$$
with only finitely many nonzero coefficients $z_{\nu,j}$, then the moment map is given by
$$
\begin{array}{cccc}
\mu : & \D \bigoplus_{\nu=1}^N V_\nu & \ra & \LT^* \\
& \left( \gamma_1 , \ldots , \gamma_N \right) & \mto & \D \pi \sum_{\nu=1}^N \sum_{j\in\Z} \left| z_{\nu,j} \right|^2 \cdot w_\nu.
\end{array}
$$
There is the natural inclusion of $X_\kappa$ into $\mathbbm{X}$ given by
$$
X_\kappa \;\equiv\; X^\kappa_0 \;:=\; \left\{ \left[ \gamma_1 , \ldots , \gamma_N \right] \in \mathbbm{X} \;\left|\; \gamma_\nu(\zeta) = \sum_{j = 0}^{d_\nu} z_{\nu,j} \zeta^j \right. \right\} \subset \mathbbm{X}.
$$
But now we can shift these submanifolds to different powers of $\zeta$. For elements $\kappa_0 , \kappa_1 \in \Lambda$ we define
$$
X^{\kappa_1}_{\kappa_0} \;:=\; \left\{ \left[ \gamma_1 , \ldots , \gamma_N \right] \in \mathbbm{X} \;\left|\; \gamma_\nu(\zeta) = \sum_{j = \langle w_\nu , \kappa_0 \rangle}^{\langle w_\nu , \kappa_1 \rangle} z_{\nu,j} \zeta^j \right. \right\}
$$
and observe that
$$
X_\kappa \;\cong\; X^{\kappa_1}_{\kappa_0} \quad \mathrm{for} \quad \kappa = \kappa_1 - \kappa_0.
$$
In fact all of $\mathbbm{X}$ is built from these compact and finite-dimensional toric manifolds $X^{\kappa_1}_{\kappa_0}$, because we have
$$
\mathbbm{X} \;=\; \bigcup_{\kappa_0,\kappa_1 \in \Lambda} X^{\kappa_1}_{\kappa_0}.
$$
To see this it suffices to have an element $\eta \in \Lambda$ such that $\left\langle w_\nu , \eta \right\rangle > 0$ for all $\nu$. Then the spaces $X^{n\eta}_{-n\eta}$ exhaust all of $\mathbbm{X}$ for $n \in \N$. But the existence of such an element $\eta$ follows by the assumption that the collection of weight vectors $w_\nu$ is proper.

Now there is an additional $S^1$ action on $\C[\zeta,\zeta^{-1}]$ by complex multiplication on the variable $\zeta$. The above formula for the moment map shows that this action on Laurent polynomials indeed descends to an action on $\mathbbm{X}$. By Iritani \cite{Iri} it is shown that equivariant Morse-Bott theory with respect to this action on $\mathbbm{X}$ gives rise to an \emph{abstract $\mcD$-module structure} that can be identified with the \emph{quantum $\mcD$-module} of the underlying toric manifold $X$. Philosophically the quantum $\mcD$-module is an object associated to the mirror of $X$. So this result can be interpreted as a mirror theorem without explicitly knowing the mirror.

If we return to our point of view and consider $X_\kappa$ as a vortex moduli space, then it is slightly mysterious what the corresponding construction should be. The $S^1$ action on $\zeta \in \C$ clearly corresponds to a rotation of $S^2$. But this action does not naturally lift to sections in the bundles $\mcL_\nu$ over $S^2$. The point is that our identification of $X_\kappa$ with the corresponding toric map space in not canonical: We have to remove a point from $S^2$ and choose a certain trivialization to write sections $u_\nu$ as polynomials in $\zeta$. And if we want to obtain the identical moment map as the one used by Givental then the natural Hermitian metric by integration over $S^2$ does not work: There is no volume form on $\C$ such that the monomials $\zeta^j$ are $L^2$-orthonormal. Restriction to and integration over $S^1 \subset \C$ would give such a product. So if we also remove the origin and pick a suitably scaled metric $f(r) \mathrm{d}r \wedge \mathrm{d}\theta$ on $\C^*$ we could match the moment maps. This leads to the intuition that actually vortices on the cylinder are the correct object to study in this context. We refer to the results by Frauenfelder \cite{Frau} in this direction.

\end{document}